\newtheorem{lemma}{Lemma}[section]
\newtheorem{theorem}[lemma]{Theorem}
\newtheorem{proposition}[lemma]{Proposition}
\newtheorem{corollary}[lemma]{Corollary}
\theoremstyle{definition}
\newtheorem*{remark}{Remark}
\newcommand{\C}{\mathbb{C}}
\newcommand{\Z}{\mathbb{Z}}
\newcommand{\GL}{\operatorname{GL}}
\newcommand{\SL}{\operatorname{SL}}
\newcommand{\PGL}{\operatorname{PGL}}
\newcommand{\PSL}{\operatorname{PSL}}
\newcommand{\SO}{\operatorname{SO}}
\newcommand{\Or}{\operatorname{O}}
\newcommand{\GSp}{\operatorname{GSp}}
\newcommand{\PGSp}{\operatorname{PGSp}}
\newcommand{\Hom}{\operatorname{Hom}}
\newcommand{\Irr}{\operatorname{Irr}}
\newcommand{\Res}{\operatorname{Res}}
\newcommand{\Ind}{\operatorname{Ind}}
\newcommand{\cInd}{\operatorname{c-Ind}}
\newcommand{\old}{\mathrm{old}}
\newcommand{\new}{\mathrm{new}}
\title{Conductors and local newforms for the metaplectic group of rank 1}
\author{Hiroshi Ishimoto\thanks{Osaka Metropolitan University}}
\date{[\today]}
\begin{document}

\maketitle

\begin{abstract}
    In an earlier paper of W. Casselman, the theory of local newforms and conductors was initiated.
    Later, Roberts and Schmidt studied local newforms for the metaplectic group of rank 1.
    In this paper we define and calculate conductors of irreducible genuine representations of the metaplectic group of rank 1 over non-archimedean local field of characteristic zero and of odd residual characteristic.
    Moreover, we shall give an explicit formulae for dimensions of spaces of local newforms, and show a compatibility with the local theta correspondence.
\end{abstract}

\tableofcontents

\section{Introduction}
\subsection*{Background \& Main results}
In 1970, Atkin--Lehner \cite{al} introduced the notion of newforms for elliptic modular forms.
After that, Casselman \cite{cas} studied the Atkin--Lehner theory in the framework of local and global representations of $\GL_2$.
The theory of local newforms for $\GL_2$ given by Casselman in loc. cit. was generalized to the case for $\GL_N$ by Jacquet--Piatetski-Shapiro--Shalika \cite{jpss} and Atobe--Kondo--Yasuda \cite{aky}.
Moreover, the theory of local newforms has also been developed for several other groups.
For example, $\SL_2$ was treated by Lansky--Raghuram \cite{lr}, $\PGSp_4$ and $\widetilde{\SL_2}$ by Roberts--Schmidt \cite{rs-GSp4,rs-Mp2}, and $\SO_{2n+1}$ by Tsai \cite{tsai} and Cheng \cite{cheng}.

In \cite{rs-Mp2}, Roberts--Schmidt gave the number of local newforms for $\widetilde{\SL_2}$.
They defined the spaces of local newforms, and computed their dimensions without dealing with the conductors.
Let us review their theory briefly.
Let $F$ be a $p$-adic field whose residual characteristic is odd, $\mathcal{O}_F$ the ring of integers, and $\varpi_F$ a prime element.
The group $\widetilde{\SL_2}(F)$ is a unique nonlinear two-fold cover of $\SL_2(F)$ with the exact sequence
\begin{equation*}
    1 \longrightarrow \{\pm1\} \longrightarrow \widetilde{\SL_2}(F) \longrightarrow \SL_2(F) \longrightarrow 1.
\end{equation*}
We identify $\widetilde{\SL_2}(F)$ with $\SL_2(F)\times\{\pm1\}$ as sets.
Let $K^0_0=\SL_2(\mathcal{O}_F)$ and $K^1_0=\alpha^{-1} K_0 \alpha$, where $\alpha=\operatorname{diag}(\varpi_F,1)$.
They are, up to conjugacy, the two maximal open compact subgroups of $\SL_2(F)$.
For every integer $m\geq1$, let $K^0_m$ be the subgroup of $K^0_0$ consisting of matrices whose $(2,1)$-components belong to $\varpi_F^m\mathcal{O}_F$, and put $K^1_m=\alpha^{-1} K^0_m\alpha$.
For each $\varepsilon \in \{0,1\}$, fix a nontrivial additive character $\psi^\varepsilon$ of $F$ of conductor $-\varepsilon$.
The exact sequence above is known to split over $K^\varepsilon_0$, and hence $K^\varepsilon_0$ can be regarded as a subgroup of $\widetilde{\SL_2(F)}$.
Here, splitting is uniquely determined by $\psi^\varepsilon$.
Let $(\pi,V)$ be an irreducible admissible (infinite-dimensional) genuine representation of $\widetilde{\SL_2}(F)$, where we say that a representation $\pi$ of $\widetilde{\SL_2}(F)$ is genuine if it does not factor through $\SL_2(F)$.
For any character $\eta$ of $\mathcal{O}_F^\times$, put
\begin{align*}
    \pi^{K^\varepsilon_m}_\eta = \Set{v\in V | \pi\left( \left( \begin{array}{cc}a&b\\ c&d \end{array} \right) \right)v=\eta(d)v, \text{ for all } \left( \begin{array}{cc}a&b\\ c&d \end{array} \right)\in K^\varepsilon_m}.
\end{align*}
Note that there is a natural inclusion from $\pi^{K^\varepsilon_{m-1}}_\eta$ into $\pi^{K^\varepsilon_m}_\eta$.
Roberts--Schmidt \cite{rs-Mp2} introduced a linear operator $\alpha_2$ from $V$ to itself, which is an analog of the Atkin--Lehner involution and sends $\pi^{K^\varepsilon_{m-2}}_\eta$ to $\pi^{K^\varepsilon_m}_\eta$.
They defined the subspace $\pi^{K^\varepsilon_m, \old}_\eta$ of $\pi^{K^\varepsilon_m}_\eta$ spanned by these two images, and called vectors in the subspace oldforms.
Put
\begin{equation*}
    \pi^{K^\varepsilon_m, \new}_\eta = \pi^{K^\varepsilon_m}_\eta / \pi^{K^\varepsilon_m, \old}_\eta.
\end{equation*}
Then the sum
\begin{equation*}
    \sum_{m\geq 0} \dim_\C \pi^{K^\varepsilon_m, \new}_\eta
\end{equation*}
is finite and can be given explicitly.
Our aim in this paper is to describe the newforms more precisely.

We define the $\eta$-conductor
\begin{equation*}
    c^\varepsilon_\eta(\pi)=\min\left( 0\leq m \leq +\infty \mid \pi^{K^\varepsilon_m}_\eta\neq \{0\} \right).
\end{equation*}
We also put $c^\varepsilon_{\min}(\pi)=\min(c^\varepsilon_\eta(\pi) \mid \eta)$, where $\eta$ runs over characters of $\mathcal{O}_F^\times$.
Our first main result (Theorems \ref{thmps}, \ref{thmevenweil}, \ref{thmsteinberg}, and \ref{thmsc}) is:
\begin{theorem}\label{thm:main1}
    Assume that the conductor of $\eta$ is less than or equal to $c^\varepsilon_{\min}(\pi)/2$ if $\pi$ is supercuspidal.
    We have explicit formulae for the dimensions of the spaces $\pi^{K^\varepsilon_m}_\eta$, $\pi^{K^\varepsilon_m, \new}_\eta$.
    In particular, $c^\varepsilon_{\min}(\pi)$ and $c^\varepsilon_1(\pi)$ are also given explicitly.
\end{theorem}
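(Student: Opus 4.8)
The plan is to reduce the statement to the classification of irreducible admissible genuine representations of $\widetilde{\SL_2}(F)$ and to treat each class by a dedicated argument; the four cited theorems correspond to the four cases. Recall that, by Waldspurger's classification, such a $\pi$ is exactly one of the following: (i) an irreducible principal series representation; (ii) an even Weil representation; (iii) a special (Steinberg-type) representation, occurring as a subquotient of a reducible principal series; or (iv) a supercuspidal representation, a class that includes the odd Weil representations. For each class the core task is to compute $\dim_\C \pi^{K^\varepsilon_m}_\eta$ for all $m\geq 0$, both $\varepsilon\in\{0,1\}$, and all characters $\eta$ of $\mathcal{O}_F^\times$ (in case (iv) only under the stated restriction $c(\eta)\leq c^\varepsilon_{\min}(\pi)/2$); the formulae for $\dim_\C\pi^{K^\varepsilon_m,\new}_\eta$ and the conductors then follow formally, as explained at the end.

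For the principal series case I would work with the induced model $\pi=\Ind(\widetilde\chi)$ from the cover $\widetilde{B}(F)$ of the Borel subgroup, together with the Iwasawa decomposition $\widetilde{\SL_2}(F)=\widetilde{B}(F)\,K^\varepsilon_0$ and the Iwahori--Bruhat structure of $K^\varepsilon_0$ relative to the filtration $K^\varepsilon_0\supset K^\varepsilon_1\supset K^\varepsilon_2\supset\cdots$. Restriction of functions to $K^\varepsilon_0$ identifies $\pi^{K^\varepsilon_m}_\eta$ with an explicit finite-dimensional space of functions on the double coset space $(\widetilde{B}\cap K^\varepsilon_0)\backslash K^\varepsilon_0/K^\varepsilon_m$ subject to two compatibility conditions: the $\eta$-equivariance on the torus part and the condition coming from $\widetilde\chi$, the two being coupled through the metaplectic cocycle and its trivialization on $K^\varepsilon_0$ via $\psi^\varepsilon$. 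The resulting count depends only on the conductor of $\chi$ and on $\varepsilon$. The even Weil representations I would instead treat directly in the Schr\"odinger (or lattice) model of the Weil representation attached to $\psi^\varepsilon$, where the subgroups $K^\varepsilon_m$ act explicitly enough to write down bases of the fixed spaces. The special/Steinberg representations then fall out of the principal series computation through the standard short exact sequences relating them to reducible principal series, so their invariants are obtained by subtraction.

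The supercuspidal case (Theorem \ref{thmsc}) is the main obstacle, and it is precisely there that the hypothesis $c(\eta)\leq c^\varepsilon_{\min}(\pi)/2$ is needed. A genuine supercuspidal $\pi$ has a well-defined minimal level governing $c^\varepsilon_{\min}(\pi)$, and through Waldspurger's theta correspondence it is matched with a representation of $\PGL_2(F)$ whose conductor determines this level, up to a shift coming from the metaplectic cover and the choice of $\psi^\varepsilon$. Under the stated hypothesis the ramification of $\eta$ is dominated by that of $\pi$, so that the $\eta$-isotypic congruence invariants can be computed by the same method as in the untwisted case $\eta=1$ --- for instance via a Kirillov/Whittaker-type model, or equivalently via the Hecke-module structure at the relevant parahoric --- and one obtains the clean formula. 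Removing the hypothesis would force one to control the genuinely different behaviour of $\pi^{K^\varepsilon_m}_\eta$ when $\eta$ is highly ramified relative to $\pi$, which does not reduce transparently to the linear group; this is why we impose it.

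Finally, granting the values of $\dim_\C\pi^{K^\varepsilon_m}_\eta$ for all $m$, one passes to $\dim_\C\pi^{K^\varepsilon_m,\new}_\eta$ by computing the dimension of the oldform subspace $\pi^{K^\varepsilon_m,\old}_\eta$, which by definition is the span of the image of the natural inclusion $\pi^{K^\varepsilon_{m-1}}_\eta\hookrightarrow\pi^{K^\varepsilon_m}_\eta$ together with the image of $\alpha_2(\pi^{K^\varepsilon_{m-2}}_\eta)$; this amounts to determining the intersection of the two images, for which one invokes the relations among the natural inclusions and $\alpha_2$ established by Roberts--Schmidt \cite{rs-Mp2}, after which $\dim_\C\pi^{K^\varepsilon_m,\new}_\eta=\dim_\C\pi^{K^\varepsilon_m}_\eta-\dim_\C\pi^{K^\varepsilon_m,\old}_\eta$. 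Reading off the least $m$ with $\pi^{K^\varepsilon_m}_\eta\neq\{0\}$ gives $c^\varepsilon_\eta(\pi)$, and minimizing over $\eta$ (respectively specializing to $\eta=1$) gives $c^\varepsilon_{\min}(\pi)$ (respectively $c^\varepsilon_1(\pi)$), which completes the proof.
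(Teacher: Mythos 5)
Your high-level decomposition by representation type matches the paper's organization (Theorems \ref{thmps}, \ref{thmevenweil}, \ref{thmsteinberg}, \ref{thmsc}), and your treatment of the principal-series case (Iwasawa/Mackey double cosets), the even Weil case (Schr\"odinger model), and the deduction of newform dimensions from Roberts--Schmidt all align with what the paper actually does. However, two of the four cases have genuine problems.

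For the Steinberg case, the subtraction formula you invoke requires that the short exact sequence $0\to\mathit{St}_{\psi,\chi}\to\pi_\psi(\chi|{-}|^{1/2})\to\omega_{\psi,\chi}^+\to 0$ remain exact after taking $K_m$-$\eta$-equivariants, i.e.\ that $\mathcal{M}_m$ be surjective. Taking invariants is only left exact; the surjectivity is a real step, and the paper devotes Lemmas \ref{lemintwsurj1}, \ref{lemintwsurj2}, \ref{lemintwsurjall} to it (first establishing surjectivity at $m=c^\varepsilon_\eta(\omega_{\psi,\chi}^+)$ by combining Whittaker-functional and dimension arguments, then propagating to all $m$ using $\alpha_2$ and Roberts--Schmidt). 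Writing this off as ``standard'' leaves a gap.

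The more serious divergence is the supercuspidal case. You propose to fix the minimal level via the theta correspondence to $\PGL_2(F)$ and then compute $\dim_\C\pi^{K^\varepsilon_m}_\eta$ by a ``Kirillov/Whittaker-type model'' or a ``Hecke-module'' argument. Neither of these is what the paper does, and as stated neither is a workable plan: there is no Kirillov model for $\widetilde{\SL_2}(F)$ in the usual sense, and the theta-conductor compatibility (Theorem \ref{thmtheta}) is proved in the paper \emph{as a consequence} of the conductor formulae, so invoking it as an input would be circular within this framework. The paper's actual mechanism is different and quite specific: one writes $\pi=\cInd^{\widetilde{G}}_{\widetilde{K}^\delta}(\sigma\bm{s}^\delta)$ via Manderscheid's construction, applies Frobenius reciprocity and Mackey theory to express $\pi^{K^\varepsilon_m}_\eta$ as a sum of $\Hom$-spaces over $K^\varepsilon_m\backslash\widetilde{G}/\widetilde{K}^\delta$, and then uses the key observation that the splittings of the covering over $K^\varepsilon_m$ and over $\prescript{g}{}{K^\delta}$ \emph{agree} on their intersection. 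This identifies each summand with the corresponding $\Hom$-space for the $\SL_2(F)$-representation $\pi'=\cInd^G_{K^\delta}(\sigma)$, so $\dim_\C\pi^{K^\varepsilon_m}_\eta=\dim_\C(\pi')^{K^\varepsilon_m}_\eta$ and the computation is delegated wholesale to Lansky--Raghuram, which is also where the hypothesis $c(\eta)\leq c^\varepsilon_{\min}(\pi)/2$ actually enters (it translates into the restrictions $c(\eta)\leq c(\sigma)$ or $c(\eta)\leq c(\sigma)-1$ under which the $\SL_2$ formulae are available). Without this reduction, your outline for the supercuspidal case does not give a path to the explicit dimensions.
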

From Theorem \ref{thm:main1}, we can see that the newforms appear early (Corollaries \ref{corps}, \ref{corevenweil}, \ref{corsteinberg}, and \ref{corsc}).
\begin{corollary}
    Assume that $c^\varepsilon_\eta(\pi)$ is not $+\infty$.
    Under the assumption of Theorem \ref{thm:main1}, there is an integer $a_{\pi,\varepsilon,\eta}\in \{0, 1, 2, 3\}$ such that $\pi^{K^\varepsilon_m, \new}_\eta$ is nonzero if $c^\varepsilon_\eta(\pi) \leq m \leq c^\varepsilon_\eta(\pi) + a_{\pi,\varepsilon,\eta}$ and zero otherwise, and the integer $a_{\pi,\varepsilon,\eta}$ is given explicitly.
\end{corollary}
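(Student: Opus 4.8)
The plan is to deduce the statement directly from the explicit dimension formulae of Theorem~\ref{thm:main1}, treating the four families of irreducible genuine representations---principal series, even Weil representations, the Steinberg-type representation, and supercuspidal representations---separately, so that the corollary becomes the union of Corollaries~\ref{corps}, \ref{corevenweil}, \ref{corsteinberg}, and \ref{corsc}.

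First I would isolate the case-independent mechanism. Because the natural maps $\pi^{K^\varepsilon_{m-1}}_\eta \hookrightarrow \pi^{K^\varepsilon_m}_\eta$ are injective, the function $m \mapsto \dim_\C \pi^{K^\varepsilon_m}_\eta$ is non-decreasing; writing $c = c^\varepsilon_\eta(\pi)$, which is finite by hypothesis, this gives $\pi^{K^\varepsilon_m}_\eta = \{0\}$ precisely for $m < c$, and hence $\pi^{K^\varepsilon_m,\new}_\eta = \{0\}$ for $m < c$. At the conductor itself the oldform subspace $\pi^{K^\varepsilon_c,\old}_\eta$ is the span of the image of $\pi^{K^\varepsilon_{c-1}}_\eta$ under the inclusion and of $\alpha_2\!\left(\pi^{K^\varepsilon_{c-2}}_\eta\right)$; both source spaces sit at levels strictly below $c$ and hence vanish, so $\pi^{K^\varepsilon_c,\new}_\eta = \pi^{K^\varepsilon_c}_\eta \neq \{0\}$ and newforms already occur at level $c$. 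This reduces the corollary to a statement about $m > c$: that $\dim_\C \pi^{K^\varepsilon_m,\new}_\eta$ is strictly positive for $c < m \le c + a_{\pi,\varepsilon,\eta}$ and vanishes for $m > c + a_{\pi,\varepsilon,\eta}$, for some $a_{\pi,\varepsilon,\eta} \in \{0,1,2,3\}$.

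For that last point I would read off, in each of the four cases, the support as a function of $m$ of the closed-form expression for $\dim_\C \pi^{K^\varepsilon_m,\new}_\eta$ supplied by Theorem~\ref{thm:main1}, and define $a_{\pi,\varepsilon,\eta}$ to be the length of the resulting interval. The boundedness $a_{\pi,\varepsilon,\eta} \le 3$ is forced by the two level-raising maps in play---the inclusion raising the level by $1$ and the operator $\alpha_2$ raising it by $2$---together with the controlled growth of $\dim_\C \pi^{K^\varepsilon_m}_\eta$; for the Steinberg-type representation one gets a shorter interval, while for the supercuspidal case the length is governed by the conductor of the representation matched with $\pi$ under the theta correspondence.

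The heavy lifting is all inside Theorem~\ref{thm:main1}; granting it, the only genuinely nontrivial point in the corollary is the absence of internal gaps, namely that $\dim_\C \pi^{K^\varepsilon_m,\new}_\eta$ cannot drop back to $0$ and then become positive again within $[c, c + a_{\pi,\varepsilon,\eta}]$. I expect this to be most delicate in the supercuspidal case, where the hypothesis that the conductor of $\eta$ be at most $c^\varepsilon_{\min}(\pi)/2$ is precisely what keeps the dimension formula in its clean, monotone form, and where one must track the two oldform images against $\dim_\C \pi^{K^\varepsilon_m}_\eta$ until the new quotient stabilises at $0$. In the three remaining cases this is routine bookkeeping with the formulae of Theorem~\ref{thm:main1}.
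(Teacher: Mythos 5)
Your proposal follows the same route as the paper: the corollary is proved by reducing to the per-case Corollaries \ref{corps}, \ref{corevenweil}, \ref{corsteinberg}, and \ref{corsc}, each of which reads the newform dimension profile off the explicit formulae. Your preliminary observations (dimensions non-decreasing in $m$, oldforms vanish at level $c^\varepsilon_\eta(\pi)$, hence newforms appear there) are correct, if not strictly necessary.

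Two of your justifications are off, though neither is fatal once you grant the per-case corollaries. First, the bound $a_{\pi,\varepsilon,\eta}\leq 3$ is not ``forced by the two level-raising maps together with the controlled growth of $\dim_\C\pi^{K^\varepsilon_m}_\eta$.'' The real input is Theorem \ref{thm:rs-Mp2} (Roberts--Schmidt), which gives $\sum_m \dim_\C\pi^{K^\varepsilon_m,\new}_\eta=\#F_\psi(\pi)/F^{\times 2}\leq 4$; combined with the absence of internal gaps read off from the formulae, this caps the length of the support interval at $3$, with equality exactly in the unramified principal-series case where the sum is $4$ and each term contributes $1$. Your level-raising heuristic, on its own, would permit arbitrarily long intervals if the total sum of new dimensions were not bounded. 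Second, in the supercuspidal case the length of the interval is governed by the defect $d(\sigma)\in\{0,1\}$ of the inducing strongly cuspidal representation (Corollary \ref{corsc}: $a=0$ for defect $0$, $a=1$ for defect $1$), not by the conductor of a theta partner; the theta-compatibility result (Theorem \ref{thm:maintheta}) is a downstream consequence of the conductor computations, not an ingredient in them.
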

To prove Theorem \ref{thm:main1}, we reduce it to the case of $\SL_2$, the result of Lansky--Raghuram \cite{lr}.
The irreducible principal series representations and the supercuspidal ones of $\widetilde{\SL_2}(F)$ can be constructed in parallel with those of $\SL_2(F)$.
Thanks to this fact, the theorem can be proved in a similar way to Propositions in \cite[\S3.2]{lr} if $\pi$ is an irreducible principal series representation, and we can construct an irreducible representation $\pi'$ of $\SL_2(F)$ and a linear isomorphism between $\pi^{K^\varepsilon_m}_\eta$ and $(\pi')^{K^\varepsilon_m}_\eta$ for any $m\geq0$, if $\pi$ is supercuspidal.
For a nontrivial subquotient of a principal series representation, i.e., an even Weil representation or a Steinberg representation, a different treatment is required.
In the case that $\pi$ is an even Weil representation, which has the Schr\"{o}dinger model, we determine the spaces $\pi^{K^\varepsilon_m}_\eta$ in the model.
Then it remains to consider the case that $\pi$ is a Steinberg representation.
It is a unique irreducible nontrivial subrepresentation of a principal series representation with the quotient isomorphic to an even Weil representation, and this short exact sequence gives the dimensions of $\pi^{K^\varepsilon_m}_\eta$.
We have an additional property on newforms.
In \cite{cas}, Casselman proved that any infinite-dimensional representation of $\GL_2(F)$ has only one newform up to a scalar multiple.
This assertion is sometimes referred to as multiplicity one for newforms, and is compatible with the Atkin--Lehner theory on newforms for elliptic modular forms.
In the case of metaplectic group, unlike the case of $\GL_2(F)$, the spaces of newforms $\pi^{K^\varepsilon_m, \new}_\eta$ is not one dimensional in general.
More precisely, its dimension is less than or equal to $2$.
Nevertheless, we will show that appropriate Whittaker functionals do not vanish on these spaces.
Since the image of any nonzero Whittaker functional is one-dimensional, this is our substitute for multiplicity one for newforms.
Our second main result (Corollaries \ref{corwhittakerps}, \ref{corwhittakerevenweil}, \ref{corwhittakersteinberg}, and \ref{cor:whittakersupercuspidal}) is:
\begin{theorem}\label{thm:mainwhittaker_multiplicityone}
    Assume that the conductor of $\eta\mu^{-1}$ is less than or equal to that of $\eta\mu$, if $\pi$ is an irreducible subquotient of a principal series representation induced by a character $\mu$.
    Otherwise, i.e., if $\pi$ is supercuspidal, assume that the conductor of $\eta$ is less than or equal to $c^\varepsilon_{\min}(\pi)/2$.
    Assume moreover that $c^\varepsilon_\eta(\pi)$ is not $+\infty$.
    Let $\Psi$ be a nontrivial additive character of $F$ with conductor $-\varepsilon$ or $-\varepsilon-1$ such that $\pi$ is $\Psi$-generic.
    Then the followings hold.
    \begin{enumerate}[(1)]
        \item If $\Psi$ has conductor $-\varepsilon$, then any $\Psi$-Whittaker functional is nonzero on $\pi^{K^\varepsilon_{c^\varepsilon_\eta(\pi)}}_\eta$.
        \item If $\pi$ is not $\Psi'$-generic for any additive character $\Psi'$ with conductor $-\varepsilon$, then any $\Psi$-Whittaker functional is nonzero on $\pi^{K^\varepsilon_{c^\varepsilon_\eta(\pi)}}_\eta$.
        \item If $\pi$ is $\Psi'$-generic for some additive character $\Psi'$ with conductor $-\varepsilon$ and $\Psi$ has conductor $-\varepsilon-1$, then any $\Psi$-Whittaker functional is nonzero on $\pi^{K^\varepsilon_{c^\varepsilon_\eta(\pi)+a_{\pi,\varepsilon,\eta}}}_\eta$.
    \end{enumerate}
\end{theorem}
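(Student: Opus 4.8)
The plan is to prove Theorem \ref{thm:mainwhittaker_multiplicityone} case by case according to the classification of $\pi$ (irreducible principal series, even Weil representation, Steinberg representation, supercuspidal), exactly mirroring the case division used for Theorem \ref{thm:main1}. In each case the explicit description of $\pi^{K^\varepsilon_m}_\eta$ obtained there gives us a concrete small-dimensional space, often with an explicit spanning set of vectors written down in a model, and the task reduces to checking that a chosen Whittaker functional does not annihilate the bottom nonzero space $\pi^{K^\varepsilon_{c^\varepsilon_\eta(\pi)}}_\eta$ (parts (1),(2)) or a slightly higher space $\pi^{K^\varepsilon_{c^\varepsilon_\eta(\pi)+a_{\pi,\varepsilon,\eta}}}_\eta$ (part (3)). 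The conductor hypotheses on $\eta\mu^{\pm1}$ and on $\eta$ (supercuspidal case) are precisely the hypotheses under which the dimension formulae of Theorem \ref{thm:main1} take their clean form, so invoking them keeps the relevant spaces small enough to handle by hand.

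First I would treat the irreducible principal series case. Here $\pi = \pi(\mu)$ is realized on a space of functions on $\widetilde{\SL_2}(F)$, and a $\Psi$-Whittaker functional is given by the standard Jacquet integral $f \mapsto \int f\!\left(w\,n(x)\right)\Psi(-x)^{-1}\,dx$ (suitably interpreted on the cover). The newform vectors constructed in the proof of Theorem \ref{thm:main1} (following \cite[\S3.2]{lr}) are explicit $K^\varepsilon_m$-eigenfunctions, essentially characteristic functions of double cosets; I would compute the Jacquet integral against such a vector directly. The conductor hypothesis $\operatorname{cond}(\eta\mu^{-1})\le\operatorname{cond}(\eta\mu)$ selects which of the two ``sides'' of the induced representation the newform lives on, and on that side the integral evaluates to a nonzero Gauss-sum-type expression; the two sub-cases according to the conductor of $\Psi$ correspond to whether the vanishing happens at level $c^\varepsilon_\eta(\pi)$ or only at $c^\varepsilon_\eta(\pi)+a_{\pi,\varepsilon,\eta}$, giving parts (1) and (3). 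For the even Weil representation I would instead work in the Schr\"odinger model used in the proof of Theorem \ref{thm:main1}: there the spaces $\pi^{K^\varepsilon_m}_\eta$ are spanned by explicit Schwartz functions on $F$, the Whittaker functional is evaluation-type (e.g. $\phi\mapsto\phi(1)$ up to normalization), and non-vanishing is immediate once one checks the spanning function does not vanish at the relevant point — this is also where part (2) is used, since the even Weil representation is $\Psi'$-generic only for characters in one square class. The Steinberg case then follows from the short exact sequence $0\to\operatorname{St}\to\pi(\mu)\to\omega_\psi^{\mathrm{even}}\to0$: a Whittaker functional on $\operatorname{St}$ either extends to $\pi(\mu)$ or is ``new'', and by comparing the newform spaces of the three representations via the long exact sequence in $(K^\varepsilon_m,\eta)$-invariants one transports the non-vanishing from the principal series (or reads off part (3) from the position of the Steinberg newform inside that of $\pi(\mu)$).

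For the supercuspidal case I would use the linear isomorphism $\pi^{K^\varepsilon_m}_\eta \cong (\pi')^{K^\varepsilon_m}_\eta$ with an irreducible $\pi'$ of $\SL_2(F)$ constructed in the proof of Theorem \ref{thm:main1}, under the standing hypothesis that $\operatorname{cond}(\eta)\le c^\varepsilon_{\min}(\pi)/2$. The point is that this isomorphism should be (or can be arranged to be) compatible with Whittaker functionals — both $\pi$ and $\pi'$ are compactly induced from the same, or corresponding, open compact-mod-center data, so a generator of the (one-dimensional) space of $\Psi$-Whittaker functionals on $\pi$ matches one on $\pi'$ up to scalar. Then the desired non-vanishing on $\pi^{K^\varepsilon_{c^\varepsilon_\eta(\pi)}}_\eta$ (or the shifted space) reduces to the corresponding statement for $\SL_2$, which is Lansky--Raghuram's result \cite{lr}; since supercuspidal representations of $\SL_2(F)$ of odd residual characteristic are never generic for two inequivalent characters of opposite conductor parity in a way that would trigger case (3), parts (1) and (2) cover this case, and the argument closes.

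The main obstacle I expect is the principal series computation in part (3): one must show that the Whittaker functional \emph{vanishes} on the lower spaces $\pi^{K^\varepsilon_m}_\eta$ for $c^\varepsilon_\eta(\pi)\le m < c^\varepsilon_\eta(\pi)+a_{\pi,\varepsilon,\eta}$ and is nonzero exactly at $m=c^\varepsilon_\eta(\pi)+a_{\pi,\varepsilon,\eta}$ when $\Psi$ has conductor $-\varepsilon-1$. This requires carefully tracking how the Jacquet integral interacts with the two-fold cover's cocycle and the choice of splitting over $K^\varepsilon_0$ (which depends on $\psi^\varepsilon$), and distinguishing the behavior for $\Psi$ of conductor $-\varepsilon$ versus $-\varepsilon-1$ — essentially a delicate local Gauss sum / partial-Fourier-transform computation rather than a conceptual difficulty. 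Getting the normalization of the splitting and of $\alpha_2$ consistent with \cite{rs-Mp2} throughout is the bookkeeping that must be done with care.
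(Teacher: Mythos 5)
Your case-by-case decomposition matches the paper's structure (Corollaries \ref{corwhittakerps}, \ref{corwhittakerevenweil}, \ref{corwhittakersteinberg}, \ref{cor:whittakersupercuspidal}), and the principal series and even Weil arguments are the same as the paper's: explicit Jacquet integrals against the double-coset-supported vectors $f^m_w, f^m_1, f^m_{i,2}, f^m_{i,\xi}$ reduced to the Gauss sums of \S\ref{secgausssum}, and evaluation at $1$ in the Schr\"odinger model, respectively. However, there are two concrete problems.

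For the Steinberg case you take the exact sequence $0 \to \mathit{St}_{\psi,\chi} \to \pi_\psi(\chi|-|^{1/2}) \to \omega^+_{\psi,\chi} \to 0$ with $\mathit{St}$ as a subrepresentation and then argue via ``extension'' of Whittaker functionals and a ``long exact sequence of invariants''. The paper instead uses the dual sequence from Proposition \ref{propreducibility}(3), $0 \to \omega^+_{\psi,\chi} \to \pi_\psi(\chi|-|^{-1/2}) \xrightarrow{\mathcal{L}} \mathit{St}_{\psi,\chi} \to 0$, where $\mathit{St}$ is a \emph{quotient}: then $\Lambda_\Psi\circ\mathcal{L}$ is automatically a $\Psi$-Whittaker functional on the full principal series, equal to $\lambda_\Psi$ up to scalar by uniqueness, so non-vanishing of $\lambda_\Psi$ on $\pi_\psi(\chi|-|^{-1/2})^{K_m}_\eta$ (Corollary \ref{corwhittakerps}) immediately gives non-vanishing of $\Lambda_\Psi$ on $(\mathit{St}_{\psi,\chi})^{K_m}_\eta$. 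Your version would require first arguing that the restriction of the functional to the subrepresentation $\mathit{St}$ is nonzero and that the newform vector actually lies in $\mathit{St}$; this can be patched up (using genericity dichotomy between $\mathit{St}$ and $\omega^+$), but it is noticeably more awkward than the quotient argument.

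The supercuspidal case contains a genuine error: you claim that ``parts (1) and (2) cover this case'' because supercuspidals of $\SL_2(F)$ are never generic for characters of both conductor parities. That is false for defect-$1$ (ramified) supercuspidals. By Theorem \ref{thm:lrwhittakersc}(2) and Corollary \ref{cor:whittakersupercuspidal}(2), a defect-$1$ representation is $\Psi$-generic for exactly one $\Psi \in \{\psi,\psi_\xi\}$ \emph{and} exactly one $\Psi' \in \{\psi_\varpi,\psi_{\xi\varpi}\}$, so it is generic for both conductor parities and part (3) of Theorem \ref{thm:mainwhittaker_multiplicityone} is genuinely invoked, with the Whittaker functional of the wrong-parity character non-vanishing only at level $M+1$, not $M$. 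Finally, the compatibility of the linear isomorphism $\pi^{K_m}_\eta \cong (\pi')^{K_m}_{\eta'}$ (note also the twist $\eta' = \eta\cdot(\chi_{\psi^\varepsilon}\chi_{\psi^\delta})$, which you omit) with the Whittaker functionals on $\widetilde{G}$ and $G$ is exactly the content of the nontrivial Lemma \ref{lem:Yamamotowhittakersupercuspidal}; ``should be or can be arranged to be compatible'' does not substitute for the careful construction of the bilinear map $\Xi$ there that identifies $\lambda\circ f$ with $\lambda'\circ f'$ under the Mackey decompositions.
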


In this paper, we have another main result on the conductors and the local theta correspondence.
In a seminal 1973 paper \cite{shi}, Shimura established a lifting from Hecke eigenforms of half-integral weight of level $\Gamma_0(4)$ to those of integral weight of level $\SL(2,\Z)$.
Subsequently in 1980, Kohnen \cite{koh} rewrote it as an isomorphism between the plus space of cusp forms of half-integral weight of level $\Gamma_0(4)$ and the space of those of integral weight of level $\SL(2,\Z)$.
Later, Kohnen \cite{kohnew} and Ueda--Yamana \cite{uy} gave a correspondence between newforms of half-integral weight and those of integral weight.
Since Shimura's lifting is closely related to the theta correspondence, it is natural to expect that the theories of local newforms for $\widetilde{\SL_2}(F)$ and $\GL_2(F)$ are compatible with the local theta correspondence.
The third main result (Theorem \ref{thmtheta}) of this paper is:
\begin{theorem}\label{thm:maintheta}
    Assume that $\pi$ is $\psi^\varepsilon$-generic and $c^\varepsilon_1(\pi)$ is not $+\infty$.
    Then its local theta lift $\theta_{\psi^\varepsilon}(\pi)$ to $\PGL_2(F)$ is nonzero irreducible, and the conductor of its pullback to $\GL_2(F)$ is equal to $c^\varepsilon_1(\pi)$.
\end{theorem}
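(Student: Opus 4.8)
The plan is to combine the explicit conductor computation of Theorem \ref{thm:main1} (in the case $\eta=1$) with known facts about the local theta correspondence between $\widetilde{\SL_2}(F)$ and $\PGL_2(F)$. First I would recall that for a $\psi^\varepsilon$-generic irreducible genuine $\pi$, the theta lift $\theta_{\psi^\varepsilon}(\pi)$ to $\PGL_2(F)$ is nonzero and irreducible: this follows from the Howe duality (available here since the residual characteristic is odd) together with the fact that genericity is preserved under the theta correspondence in this dual pair, so the nonvanishing is automatic and the lift is again generic, hence infinite-dimensional, hence has a well-defined conductor in the sense of Casselman. The substantive content is then the equality $c(\theta_{\psi^\varepsilon}(\pi)) = c^\varepsilon_1(\pi)$ of the two conductors.

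To prove the equality I would argue case by case along the classification used throughout the paper: $\pi$ an irreducible principal series, an even Weil representation, a Steinberg representation, or supercuspidal. For each case Theorem \ref{thm:main1} gives $c^\varepsilon_1(\pi)$ explicitly, and on the $\PGL_2$ side one knows the theta lift explicitly: an irreducible principal series of $\widetilde{\SL_2}(F)$ induced from a character $\mu$ lifts to the principal series of $\PGL_2(F)$ attached to (essentially) $\mu^2$, the even Weil representation lifts to the trivial-central-character twist of a character-type representation, the Steinberg-type metaplectic representation lifts to the Steinberg representation of $\PGL_2(F)$, and supercuspidals lift to supercuspidals (or to special representations in the exceptional ramified cases). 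In each case the classical conductor of the $\GL_2$-side representation — i.e.\ the level of its Casselman newform — is computed from the $\varepsilon(s,\pi',\psi)$-factor or equivalently from the known formula $c(\mathrm{Ind}\,\mu_1\otimes\mu_2)=c(\mu_1)+c(\mu_2)$, $c(\mathrm{St}\otimes\mu)=\max(1,2c(\mu))$, etc. The matching is then reduced to an identity between the arithmetic invariants of $\mu$ (its conductor, and the ramification of $\mu^2$ versus $\mu$) entering the two formulas, which is checked by a short computation exploiting that the residual characteristic is odd (so squaring on $\mathcal{O}_F^\times$ behaves predictably on conductors).

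I expect the main obstacle to be the ramified supercuspidal case, and more precisely bookkeeping the additive character. The conductor $c^\varepsilon_1(\pi)$ produced by the $\widetilde{\SL_2}$-side argument is defined using the congruence subgroups $K^\varepsilon_m$ and the splitting determined by $\psi^\varepsilon$, whereas the $\GL_2$-side conductor is intrinsic; one must check that the normalization of the theta lift via $\psi^\varepsilon$ is exactly the one that makes the two bookkeeping conventions agree, with no stray shift by $1$ coming from the choice of conductor $-\varepsilon$ versus $-\varepsilon-1$ of the auxiliary character. I would handle this by tracking the effect of the outer twist $\alpha=\mathrm{diag}(\varpi_F,1)$ (which intertwines the $\varepsilon=0$ and $\varepsilon=1$ pictures on the metaplectic side) and the corresponding Atkin--Lehner normalization on the $\PGL_2$ side, and by testing the identity against a single explicit example (e.g.\ an unramified principal series, where $c^\varepsilon_1(\pi)=0$ and the lift is unramified) to pin down the normalization once and for all; the remaining cases then follow by the case-by-case conductor formulas cited above.
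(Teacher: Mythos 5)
Your overall strategy --- reduce to the case-by-case classification, compare the explicit formula for $c^\varepsilon_1(\pi)$ from Theorem \ref{thm:main1} against the known conductor of the explicitly identified theta lift --- is exactly the paper's strategy, and the non-supercuspidal cases as you sketch them (principal series to $\pi(\mu,\mu^{-1})$ of conductor $2c(\mu)$, Steinberg to twisted Steinberg, odd Weil to Steinberg) match Propositions \ref{propthetanonsc} and \ref{propthetaoddweil}. A small caution: your parenthetical ``$\mu^2$'' for the lift of $\pi_\psi(\mu)$ is misleading; the lift is $\pi(\mu,\mu^{-1})$ and the conductor is $c(\mu)+c(\mu^{-1})=2c(\mu)$, which you do then cite correctly.

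The genuine gap is in the supercuspidal case. You locate the difficulty in ``bookkeeping the additive character'' and propose to resolve it by a normalization-pinning example; that would not suffice. The paper's argument (Proposition \ref{propthetasc}) needs two nontrivial inputs you do not mention. First, it uses Manderscheid's explicit supercuspidal theta duality (\cite{man3}) to produce a strongly cuspidal $\sigma'$ on some $K^{\delta'}$ with the \emph{same conductor and defect} as the $\sigma$ inducing $\pi$, such that $\theta_\psi(\pi)|_{\SL_2(F)}$ contains $\cInd^G_{K^{\delta'}}(\sigma')$. Second, it requires Lemma \ref{lemmin}: every irreducible supercuspidal representation of $\GL_2(F)$ with trivial central character is minimal (this exploits oddness of $p$ so that squaring is an automorphism of $1+\mathfrak{p}^l$). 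Minimality is what lets one read off the Casselman conductor of $\theta_\psi(\pi)$ from the level of the very cuspidal representation via Proposition \ref{propkutcond}, and then Lansky--Raghuram's \cite[Proposition 3.4.1]{lr} transfers this to $c(\cInd^G_{K^{\delta'}}(\sigma'))$, which equals $c^\varepsilon_{\min}(\pi)=c^\varepsilon_1(\pi)$ by Theorem \ref{thmsc}. Without the minimality lemma there is no control over the conductor of the $\GL_2$-side object, and without Manderscheid's matching of conductor and defect there is no way to compare the two sides; a single test example would not rule out defect- or level-dependent discrepancies. Your instinct that the Atkin--Lehner element $\alpha$ intertwines the $\varepsilon=0,1$ pictures is correct but is handled in the paper by proving everything for both $\varepsilon$ simultaneously, not by a normalization check.

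One more point worth flagging: the hypothesis ``$c^\varepsilon_1(\pi)\ne+\infty$'' is doing real work --- it is equivalent to $z_{\psi^\varepsilon}(\pi)=+1$, which excludes $\omega_\psi^-$ (which is $\psi$-generic and lifts to $\mathit{St}$ of conductor $1$, yet has $c^\varepsilon_\eta(\omega_\psi^-)=2$ for all relevant $\eta$). Your proposal does not address why the identity could fail without this hypothesis, and this is precisely the remark the paper appends after the theorem.
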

Theorem \ref{thm:maintheta} is a consequence of Theorem \ref{thm:main1}, since the local theta correspondence between $\widetilde{\SL_2}(F)$ and $\PGL_2(F)$ is given explicitly by Waldspurger \cite{wal1} and Manderscheid \cite{man3}.

\subsection*{Organization}
We shall first give evaluations of two variants of the Gauss sums in \S\ref{secgausssum}.
One variant appear in the calculation of the dimensions of $\pi^{K^\varepsilon_m}_\eta$ when $\pi$ is an even Weil representation, and the other appear in the calculation of Whittaker functionals for principal series representations.
\S\ref{secmetap} is the preliminary section, where the notation for the metaplectic double covering of $\SL_2(F)$ is established and we recall some known results on its representations.
The main theorems are given in \S\S\ref{sec:ps} and \ref{secsc}.
We consider irreducible non-supercuspidal representations in \S\ref{sec:ps}, and supercuspidal ones in \S\ref{secsc}.
In the latter, we review the theories on constructions of supercuspidal representations before the proof of the main theorems for irreducible supercuspidal representations.
In \S\ref{sectheta}, we observe the behaviour of the conductor under the local theta correspondence.

\subsection*{Convention \& Notation}
Let $F$ be a $p$-adic field, i.e., a non-archimedean local field of characteristic 0 with residual characteristic $p$.
In this paper, we assume that $p$ is an odd prime number.
Let $\mathcal{O}$ be the ring of integers of $F$, and $\mathfrak{p}$ the maximal ideal of $\mathcal{O}$.
Fix a prime element $\varpi$ in $\mathcal{O}$.
Put $q=\#(\mathcal{O}/\mathfrak{p})$, and let $|-|=|-|_F$ be the valuation on $F$ normalized so that $|\varpi|=q^{-1}$.
Let $\operatorname{ord} \colon F\to \Z\cup\{\infty\}$ be a function defined by $|x|=q^{-\operatorname{ord}(x)}$ ($x\neq0$) and $\operatorname{ord}(0)=\infty$.
Since $p$ is odd, a subgroup $\mathcal{O}^{\times2}$ of $\mathcal{O}^\times$ has index 2.
Fix an element $\xi$ in $\mathcal{O}^\times \setminus \mathcal{O}^{\times2}$, so that $\mathcal{O}^\times=\mathcal{O}^{\times2}\sqcup \xi\mathcal{O}^{\times2}$.
Let $dx$ be the Haar measure on $F$ such that $\int_\mathcal{O} dx=1$.
We shall write $(-,-)_2$ for the Hilbert symbol of degree 2 over $F$.

For a nontrivial additive character $\psi\colon F\to \C^\times$, let $c(\psi)$ denote its conductor, i.e., the minimal integer $c$ such that $\psi$ is trivial on $\mathfrak{p}^c$.
For a nontrivial character $\eta \colon \mathcal{O}^\times \to \C^\times$, let $c(\eta)$ denote its conductor, i.e., the minimal positive integer $c$ such that $\eta$ is trivial on $1+\mathfrak{p}^c$.
For the trivial character $1$ of $\mathcal{O}^\times$, its conductor $c(1)$ is defined to be $0$.
For a character $\mu$ of $F^\times$, we write $c(\mu)=c(\mu|_{\mathcal{O}^\times})$ and call it the conductor of $\mu$.
Especially, $\mu$ is said to be unramified if $c(\mu)=0$.
For any $a\in F^\times$, let $\chi_a$ denote a quadratic or trivial character of $F^\times$ given by $x\mapsto (x,a)_2$.
It is known that the assignment $a\mapsto \chi_a$ gives a bijection between $F^\times/F^{\times2}$ and the set of quadratic or trivial characters of $F^\times$.

For a nontrivial additive character $\psi$ of $F$, let $\gamma_F(\psi)$ denote the (unnormalized) Weil index of $F\ni x \mapsto \psi(x^2)\in \C^\times$, which is an eighth root of unity in $\C^\times$.
For $a \in F^\times$, we shall define an additive character $\psi_a$ by $\psi_a(x)=\psi(ax)$, and put $\gamma_F(a,\psi)=\gamma_F(\psi_a)/\gamma_F(\psi)$, which we shall call the (normalized) Weil index.
Recall from \cite[Appendix]{rao} that
\begin{align*}
     & \gamma_F(ac^2,\psi)  =\gamma_F(a,\psi),                       &
     & \gamma_F(ab,\psi)   =\gamma_F(a,\psi)\gamma_F(b,\psi)(a,b)_2, & \\
     & \gamma_F(a,\psi_c)=(a,c)_2 \gamma_F(a,\psi),                  &
     & \gamma_F(-1,\psi)  =\gamma_F(\psi)^{-2},                      &
\end{align*}
for any $a,b,c\in F^\times$.
They and \cite[Lemma 1.6]{szp} imply that
\begin{equation*}
    \gamma_F(ac, \psi) = \gamma_F(c,\psi),
\end{equation*}
for any $a\in\mathcal{O}^\times$ and $c\in F^\times$ such that the parity of $\operatorname{ord}(c)$ coincides with that of $c(\psi)$.

For an abstract group $G$ and its elements $g$ and $h$, we shall write $\prescript{g}{}{h}=ghg^{-1}$ and $h^g=g^{-1}hg$.
In addition, for a subset $X$ of $G$, we write $\prescript{g}{}{X}$ (resp. $X^g$) for the set of elements of the form $\prescript{g}{}{x}$ (resp. $x^g$), where $x\in X$.
Moreover, for a map $\Psi$ from $X$, we write $\prescript{g}{}{\Psi}$ (resp. $\Psi^g$) for the map $\prescript{g}{}{X}\ni y\mapsto\Psi(y^g)$ (resp. $X^g\ni y\mapsto\Psi(\prescript{g}{}{y})$).

In this paper, every representation is a $\C$-representation.
Moreover, every representation of a locally profinite group is smooth and admissible.
For a topological group $G$, its subgroup $H$, and a representation $(\sigma,W)$ of $H$, we write $\Ind^G_H(\sigma)$ for the induced representation, i.e., $\Ind^G_H(\sigma)=(\pi,V)$, where $V$ is the set of locally constant functions $f \colon G\to W$ such that $f(hg)=\sigma(h)f(g)$ for any $h\in H$ and $g\in G$, and $[\pi(g)f](x)=f(xg)$, for any $g,x\in G$.
We shall also write $\cInd^G_H(\sigma)$ for the subrepresentation of $\Ind^G_H(\sigma)$ consisting of the functions $f\in\Ind^G_H(\sigma)$ such that the support of $f$ is compact modulo $H$.
Conversely, for any representation $(\pi,V)$ of $G$, we write $\Res^G_H(\pi)$ for the restriction of $(\pi,V)$ to $H$.

\subsection*{Acknowledgement}
The author would like to thank Atsushi Ichino for suggesting a problem on local newforms for metaplectic groups.
The author also would like to thank Shunsuke Yamana and Hiraku Atobe for many helpful comments.
Yuki Yamamoto explained to the author an argument in the proof of Lemma \ref{lem:Yamamotowhittakersupercuspidal}.
The author is grateful to him.
This work is supported by JSPS Research Fellowships for Young Scientists KAKENHI Grant Number 23KJ1824.
This work was partly supported by MEXT Promotion of Distinctive Joint Research Center Program JPM XP0619217849.

\section{Variants of Gauss sum}\label{secgausssum}
In this section, we shall calculate the following two variants of Gauss sum, which will be needed later.
For any nontrivial additive character $\psi$ of $F$ and any character $\chi$ of $F^\times$, we put
\begin{equation*}
    g(\chi,\psi) = \int_{\mathcal{O}^\times} \chi(x) \psi(x) dx,
\end{equation*}
and
\begin{equation*}
    h(\chi,\psi) = \int_{\mathcal{O}^\times} \chi(x) \psi(x^2) dx.
\end{equation*}
\begin{lemma}\label{lemgausssum}
    \begin{enumerate}[(1)]
        \item If $c(\chi)=0$, then we have
              \begin{align*}
                  g(\chi,\psi)=\begin{cases*}
                                   1-q^{-1}, & if $c(\psi)\leq0$, \\
                                   -q^{-1},  & if $c(\psi)=1$,    \\
                                   0,        & if $c(\psi)\geq2$.
                               \end{cases*}
              \end{align*}
        \item Assume that $c(\chi)\geq1$. Then
              \begin{align*}
                  |g(\chi,\psi)|=\begin{cases*}
                                     q^{-\frac{c(\psi)}{2}}, & if $c(\psi)=c(\chi)$,     \\
                                     0,                      & if $c(\psi)\neq c(\chi)$.
                                 \end{cases*}
              \end{align*}
              In particular, $g(\chi,\psi)\neq0$ if and only if $c(\chi)=c(\psi)$.
    \end{enumerate}
\end{lemma}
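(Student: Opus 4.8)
\section*{Proof proposal for Lemma \ref{lemgausssum}}

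The plan is to reduce everything to the elementary fact that for any $k\in\Z$ the integral $\int_{\mathfrak{p}^k}\psi(x)\,dx$ equals $q^{-k}$ when $\psi$ is trivial on $\mathfrak{p}^k$, i.e. $k\geq c(\psi)$, and equals $0$ otherwise, since in the remaining case one is integrating a nontrivial character over the compact group $\mathfrak{p}^k$. For part (1), $c(\chi)=0$ means $\chi|_{\mathcal{O}^\times}\equiv 1$, so $g(\chi,\psi)=\int_{\mathcal{O}}\psi(x)\,dx-\int_{\mathfrak{p}}\psi(x)\,dx$, and applying the fact above with $k=0$ and $k=1$ yields $1-q^{-1}$, $0-q^{-1}$, or $0-0$ according as $c(\psi)\leq 0$, $c(\psi)=1$, or $c(\psi)\geq 2$. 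I would also record at this point the consequence (used in part (2)) that for $a\in F^\times$ with $\operatorname{ord}(a)\geq 0$ the substitution $x\mapsto ax$ gives $\int_{\mathcal{O}^\times}\psi(ax)\,dx=g(1,\psi_a)$, whose value is $1-q^{-1}$, $-q^{-1}$, or $0$ according as $\operatorname{ord}(a)\geq c(\psi)$, $\operatorname{ord}(a)=c(\psi)-1$, or $\operatorname{ord}(a)\leq c(\psi)-2$, since $c(\psi_a)=c(\psi)-\operatorname{ord}(a)$.

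For part (2) the cleanest route is to compute $|g(\chi,\psi)|^2=g(\chi,\psi)\overline{g(\chi,\psi)}$ directly. Writing it as a double integral over $\mathcal{O}^\times\times\mathcal{O}^\times$, using $\overline{\chi}=\chi^{-1}$ and $\overline{\psi(y)}=\psi(-y)$, and substituting $x=uy$ with $u\in\mathcal{O}^\times$, one obtains $|g(\chi,\psi)|^2=\int_{\mathcal{O}^\times}\chi(u)\bigl(\int_{\mathcal{O}^\times}\psi((u-1)y)\,dy\bigr)\,du$. By the intermediate step the inner integral depends only on $\operatorname{ord}(u-1)$: it is $1-q^{-1}$ on $1+\mathfrak{p}^{c(\psi)}$, it is $-q^{-1}$ on $(1+\mathfrak{p}^{c(\psi)-1})\setminus(1+\mathfrak{p}^{c(\psi)})$, and $0$ elsewhere. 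Hence $|g(\chi,\psi)|^2$ is a combination of $\int_{1+\mathfrak{p}^{c(\psi)}}\chi$ and $\int_{1+\mathfrak{p}^{c(\psi)-1}}\chi$, each of which equals the volume of the corresponding group if $\chi$ is trivial there and $0$ otherwise. Using that $\chi$ is trivial on $1+\mathfrak{p}^{c(\chi)}$ but nontrivial on $1+\mathfrak{p}^{c(\chi)-1}$, a three-way case analysis on whether $c(\psi)$ is $>$, $=$, or $<$ $c(\chi)$ gives $|g(\chi,\psi)|^2=0$ in the unequal cases and $|g(\chi,\psi)|^2=(1-q^{-1})q^{-c(\psi)}+q^{-1}q^{-c(\psi)}=q^{-c(\psi)}$ when $c(\psi)=c(\chi)$; this is the assertion, and the ``in particular'' is then immediate.

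The only delicate point is the bookkeeping of the $q^{-1}$-corrections coming from the passage between $\mathcal{O}^\times$ and $\mathcal{O}$, together with keeping straight which coset of $1+\mathfrak{p}^{c(\psi)-1}$ a given unit lies in; there is no genuine obstacle, and I do not expect this to be hard. I will also note that the lemma only requires $|g(\chi,\psi)|$, so the $|g|^2$ computation suffices and one need not evaluate the Gauss sum (which would involve its argument as well). As a remark, the vanishing in part (2) can also be seen more directly without the $|g|^2$ computation: if $c(\chi)>c(\psi)$, substitute $x\mapsto ux$ with $u\in 1+\mathfrak{p}^{c(\psi)}$ chosen so that $\chi(u)\neq 1$, which forces $g(\chi,\psi)=\chi(u)g(\chi,\psi)$; if $c(\chi)<c(\psi)$, substitute $x\mapsto x(1+t)$ with $t\in\mathfrak{p}^{c(\psi)-1}$ (legitimate since then $\chi(1+t)=1$) and average over $t\in\mathfrak{p}^{c(\psi)-1}$, using that $\psi$ is nontrivial on $\mathfrak{p}^{c(\psi)-1}$.
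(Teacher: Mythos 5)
Your proof is correct and follows essentially the same strategy as the paper: evaluate part (1) by writing $\int_{\mathcal{O}^\times}\psi=\int_{\mathcal{O}}\psi-\int_{\mathfrak p}\psi$, and for part (2) compute $|g(\chi,\psi)|^2$ as a double integral, substitute $x=uy$, and reduce the inner integral to the additive Gauss sums from part (1), then finish by a case analysis on $c(\psi)$ versus $c(\chi)$ via orthogonality on $1+\mathfrak p^{c(\psi)}$ and $1+\mathfrak p^{c(\psi)-1}$. The only difference is that you skip the paper's preliminary averaging of $|g(\chi,\psi_a)|^2$ over $a\in\mathcal{O}^\times$ — that step is redundant for this lemma (though it is genuinely needed in the companion Lemma~\ref{lemgausssum2}), so your streamlining is a harmless improvement.
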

\begin{proof}
    The proof is similar to that of the evaluation of classical Gauss sums.
    If $c(\psi)=0$, then
    \begin{equation*}
        g(\chi,\psi)
        =\int_{\mathcal{O}^\times} \psi(x) dx
        =\int_{\mathcal{O}} \psi(x) dx - \int_{\mathfrak{p}} \psi(x) dx.
    \end{equation*}
    This proves the assertion (1).
    Now suppose that $c(\chi)\geq1$.
    If $c(\psi)\leq0$, it is easy to see that $g(\chi,\psi)=0$.
    Suppose that $c(\psi)\geq1$.
    For any $a\in\mathcal{O}^\times$, we have $g(\chi,\psi_a)=\chi(a)^{-1} g(\chi,\psi)$.
    Thus
    \begin{equation*}
        \int_{\mathcal{O}^\times} g(\chi,\psi_a)\overline{g(\chi,\psi_a)} da
        = \int_{\mathcal{O}^\times} |g(\chi,\psi)|^2 da
        = \operatorname{vol}(\mathcal{O}^\times) |g(\chi,\psi)|^2.
    \end{equation*}
    On the other hand,
    \begin{align*}
        \int_{\mathcal{O}^\times} g(\chi,\psi_a)\overline{g(\chi,\psi_a)} da
         & = \int_{\mathcal{O}^\times} \int_{\mathcal{O}^\times} \int_{\mathcal{O}^\times} \chi(x) \psi(ax) \overline{\chi(y)\psi(ay)} dxdyda \\
         & = \int_{\mathcal{O}^\times} \int_{\mathcal{O}^\times} \chi(xy^{-1}) \left( \int_{\mathcal{O}^\times} \psi(a(x-y)) da \right) dxdy.
    \end{align*}
    Changing the variables $x\mapsto u=xy^{-1}$ and $a\mapsto t=ay$, this equals
    \begin{align*}
         & \int_{\mathcal{O}^\times} \int_{\mathcal{O}^\times} \chi(u) \left( \int_{\mathcal{O}^\times} \psi(t(u-1)) dt \right) dudy                                                     \\
         & = \int_{\mathcal{O}^\times} \left( \sum_{\ell=0}^{\infty} \int_{U^\ell \setminus U^{\ell+1}} \chi(u) \left( \int_{\mathcal{O}^\times} \psi(t(u-1)) dt \right) du \right)  dy,
    \end{align*}
    where we put $U^0=\mathcal{O}^\times$ and $U^\ell=1+\mathfrak{p}^\ell$ for $\ell\geq1$.
    For any $u\in U^\ell \setminus U^{\ell+1}$, we have $\operatorname{ord}(u-1)=\ell$ and hence
    \begin{align*}
        \int_{\mathcal{O}^\times} \psi(t(u-1)) dt
        =\begin{cases*}
             0,        & if $\ell \leq c(\psi)-2$, \\
             -q^{-1},  & if $\ell= c(\psi)-1$,     \\
             1-q^{-1}, & if $\ell \geq c(\psi)$.
         \end{cases*}
    \end{align*}
    Thus, the integral above is equal to
    \begin{align*}
         & \int_{\mathcal{O}^\times} \left( -q^{-1}\int_{U^{c(\psi)-1} \setminus U^{c(\psi)}} \chi(u) du + (1-q^{-1}) \sum_{\ell=c(\psi)}^{\infty} \int_{U^\ell \setminus U^{\ell+1}} \chi(u) du \right)  dy \\
         & = \operatorname{vol}(\mathcal{O}^\times) \left( -q^{-1} \int_{U^{c(\psi)-1}} \chi(u) du +q^{-1} \int_{U^{c(\psi)}} \chi(u) du + (1-q^{-1}) \int_{U^c(\psi)} \chi(u) du \right)                    \\
         & =\operatorname{vol}(\mathcal{O}^\times) \left( -q^{-1} \int_{U^{c(\psi)-1}} \chi(u) du + \int_{U^{c(\psi)}} \chi(u) du \right).
    \end{align*}
    Consequently, we have
    \begin{equation*}
        |g(\chi,\psi)|^2
        = -q^{-1} \int_{U^{c(\psi)-1}} \chi(u) du + \int_{U^{c(\psi)}} \chi(u) du.
    \end{equation*}
    Now the assertion (2) follows.
\end{proof}
Next we consider $h(\chi,\psi)$.
Note that $h(\chi,\psi)=0$ if $\chi(-1)=-1$.
\begin{lemma}\label{lemgausssum2}
    Assume that $\chi(-1)=+1$.
    \begin{enumerate}[(1)]
        \item If $c(\psi)\leq 0$, then we have
              \begin{align*}
                  h(\chi,\psi)=\begin{cases*}
                                   1-q^{-1}, & if $c(\chi)=0$,    \\
                                   0,        & if $c(\chi)\geq1$.
                               \end{cases*}
              \end{align*}
        \item Assume that $c(\psi) \geq 1$. Then we have
              \begin{align*}
                  |h(\chi,\psi)|^2 + |h(\chi,\psi_\xi)|^2
                  = \begin{cases*}
                        4q^{-c(\psi)},   & if $c(\chi)=c(\psi)$,           \\
                        2q^{-1}+2q^{-2}, & if $c(\chi)=0$ and $c(\psi)=1$, \\
                        0,               & otherwise.
                    \end{cases*}
              \end{align*}
              In particular, at least one of $h(\chi,\psi)$ or $h(\chi,\psi_\xi)$ is nonzero if and only if $c(\chi)=c(\psi)$ or $(c(\chi),c(\psi))=(0,1)$.
    \end{enumerate}
\end{lemma}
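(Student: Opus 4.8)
The plan is to mimic the proof of Lemma \ref{lemgausssum}(2): twist $\psi$ by units, average over $\mathcal{O}^\times$, and reduce the phase to a single ``difference'' variable, the only new feature being that the quadratic phase $\psi(x^2)$ forces us to work with $u^2-1$ instead of $u-1$.

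Part (1) is immediate. When $c(\psi)\le 0$ the character $\psi$ is trivial on $\mathcal{O}\supseteq\mathcal{O}^{\times2}$, so $\psi(x^2)=1$ for all $x\in\mathcal{O}^\times$ and $h(\chi,\psi)=\int_{\mathcal{O}^\times}\chi(x)\,dx$, which is $\operatorname{vol}(\mathcal{O}^\times)=1-q^{-1}$ if $c(\chi)=0$ and $0$ if $c(\chi)\ge1$ by orthogonality of characters.

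For part (2), the first step is the reduction to an average. For $b\in\mathcal{O}^\times$ the substitution $x\mapsto b^{-1}x$ gives $h(\chi,\psi_{b^2})=\chi(b)^{-1}h(\chi,\psi)$, and since $\mathcal{O}^\times=\mathcal{O}^{\times2}\sqcup\xi\mathcal{O}^{\times2}$ this shows $|h(\chi,\psi_a)|$ equals $|h(\chi,\psi)|$ for $a\in\mathcal{O}^{\times2}$ and $|h(\chi,\psi_\xi)|$ for $a\in\xi\mathcal{O}^{\times2}$; hence
\[
|h(\chi,\psi)|^2+|h(\chi,\psi_\xi)|^2=\frac{2}{\operatorname{vol}(\mathcal{O}^\times)}\int_{\mathcal{O}^\times}|h(\chi,\psi_a)|^2\,da .
\]
Expanding the square and, exactly as in Lemma \ref{lemgausssum}, changing variables $x\mapsto u=xy^{-1}$ and $a\mapsto t=ay^2$ (legitimate since $y^2\in\mathcal{O}^\times$), one collapses the $y$-integral and is left with
\[
|h(\chi,\psi)|^2+|h(\chi,\psi_\xi)|^2=2\int_{\mathcal{O}^\times}\chi(u)\left(\int_{\mathcal{O}^\times}\psi\bigl(t(u^2-1)\bigr)\,dt\right)du ,
\]
and the inner integral is $1-q^{-1}$, $-q^{-1}$, or $0$ according as $\operatorname{ord}(u^2-1)\ge c(\psi)$, $=c(\psi)-1$, or $\le c(\psi)-2$, as already computed there.

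The crucial step — and where the hypotheses $p$ odd and $\chi(-1)=+1$ enter — is the description of the level sets of $u\mapsto\operatorname{ord}(u^2-1)$ on $\mathcal{O}^\times$. Writing $u^2-1=(u-1)(u+1)$ and noting that $(u-1)$ and $(u+1)$ generate comaximal ideals (their difference $2$ is a unit), one gets, for each $\ell\ge1$,
\[
\{u\in\mathcal{O}^\times:\operatorname{ord}(u^2-1)\ge\ell\}=(1+\mathfrak{p}^\ell)\sqcup(-1+\mathfrak{p}^\ell),
\]
a disjoint union because $1\not\equiv-1\bmod\mathfrak{p}$. The substitution $u\mapsto-u$ together with $\chi(-1)=+1$ identifies the $\chi$-integrals over the two components, so the integral of $\chi$ over this set is $2\int_{1+\mathfrak{p}^\ell}\chi(u)\,du$, which equals $2q^{-\ell}$ if $c(\chi)\le\ell$ and $0$ otherwise. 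Substituting this into the sum over the order-shells, telescoping, and treating $c(\psi)\ge2$ and $c(\psi)=1$ separately (in the latter the shell of order $c(\psi)-1=0$ is all of $\mathcal{O}^\times$, to which the two-component decomposition does not apply), one arrives at $4\bigl(-q^{-1}\int_{1+\mathfrak{p}^{c(\psi)-1}}\chi+\int_{1+\mathfrak{p}^{c(\psi)}}\chi\bigr)$ when $c(\psi)\ge2$ and at $2\bigl(-q^{-1}\int_{\mathcal{O}^\times}\chi+2\int_{1+\mathfrak{p}}\chi\bigr)$ when $c(\psi)=1$. Plugging in $\int_{1+\mathfrak{p}^\ell}\chi=q^{-\ell}$ or $0$ and $\int_{\mathcal{O}^\times}\chi=1-q^{-1}$ or $0$ according to $c(\chi)$ then produces the three cases of the statement, and the ``in particular'' clause follows since $4q^{-c(\psi)}$ and $2q^{-1}+2q^{-2}$ are nonzero. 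I expect the only genuine subtlety to be the bookkeeping of these level sets — in particular correctly isolating the case $c(\psi)=1$ and tracking the factor $2$ coming from the components $\pm(1+\mathfrak{p}^\ell)$ — since everything else is a routine repetition of the computation already done for $g(\chi,\psi)$.
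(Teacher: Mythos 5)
Your proof is correct and mirrors the paper's argument precisely: both average $|h(\chi,\psi_a)|^2$ over $a\in\mathcal{O}^\times$, change variables to reduce to the inner phase $\psi(t(u^2-1))$, stratify $\mathcal{O}^\times$ by the shells $V^\ell=\{\pm1\}+\mathfrak{p}^\ell$, and telescope. The only cosmetic difference is that you spell out the disjoint decomposition $V^\ell=(1+\mathfrak{p}^\ell)\sqcup(-1+\mathfrak{p}^\ell)$ for $\ell\ge1$ and the use of $\chi(-1)=+1$ to evaluate $\int_{V^\ell}\chi$, a step the paper leaves implicit when passing from the telescoped expression to the final case analysis.
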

\begin{proof}
    The proof is similar to that of Lemma \ref{lemgausssum}.
    If $c(\psi)\leq0$, then we have
    \begin{equation*}
        h(\chi,\psi) = \int_{\mathcal{O}} \chi(x) dx - \int_{\mathfrak{p}} \chi(x) dx.
    \end{equation*}
    This proves the assertion (1).
    Suppose that $c(\psi)\geq1$.
    For any $\alpha\in \mathcal{O}^\times$, we have $h(\chi,\psi_{\alpha^2})=\chi(\alpha)^{-1} h(\chi,\psi)$ and $h(\chi,\psi_{\xi\alpha^2})=\chi(\alpha)^{-1} h(\chi,\psi_\xi)$.
    Since $\mathcal{O}^\times=\mathcal{O}^{\times2} \sqcup \xi\mathcal{O}^{\times2}$, we have
    \begin{align*}
        \int_{\mathcal{O}^\times} h(\chi,\psi_a)\overline{h(\chi,\psi_a)} da
         & = \int_{\mathcal{O}^{\times2}} |h(\chi,\psi)|^2 da + \int_{\xi\mathcal{O}^{\times2}} |h(\chi,\psi_\xi)|^2 da \\
         & =\frac{1}{2} \operatorname{vol}(\mathcal{O}^\times) \left( |h(\chi,\psi)|^2 + |h(\chi,\psi_\xi)|^2 \right).
    \end{align*}
    On the other hand,
    \begin{align*}
        \int_{\mathcal{O}^\times} h(\chi,\psi_a)\overline{h(\chi,\psi_a)} da
         & = \int_{\mathcal{O}^\times} \int_{\mathcal{O}^\times} \int_{\mathcal{O}^\times} \chi(x) \psi(ax^2) \overline{\chi(y)\psi(ay^2)} dxdyda \\
         & = \int_{\mathcal{O}^\times} \int_{\mathcal{O}^\times} \chi(xy^{-1}) \left( \int_{\mathcal{O}^\times} \psi(a(x^2-y^2)) da \right) dxdy.
    \end{align*}
    Changing the variables $x\mapsto u=xy^{-1}$ and $a\mapsto t=ay^2$, this equals
    \begin{align*}
         & \int_{\mathcal{O}^\times} \int_{\mathcal{O}^\times} \chi(u) \left( \int_{\mathcal{O}^\times} \psi(t(u^2-1)) dt \right) dudy                                                   \\
         & =\int_{\mathcal{O}^\times} \left( \sum_{\ell=0}^{\infty} \int_{V^\ell \setminus V^{\ell+1}} \chi(u) \left( \int_{\mathcal{O}^\times} \psi(t(u^2-1)) dt \right) du \right) dy,
    \end{align*}
    where we put $V^0=\mathcal{O}^\times$ and $V^\ell=\{\pm1\}+\mathfrak{p}^\ell$ for $\ell\geq1$.
    For any $u\in V^\ell \setminus V^{\ell+1}$, we have $\operatorname{ord}(u^2-1)=\ell$.
    Thus, the integral above is equal to
    \begin{align*}
         & \int_{\mathcal{O}^\times} \left( -q^{-1}\int_{V^{c(\psi)-1} \setminus V^{c(\psi)}} \chi(u) du + (1-q^{-1}) \sum_{\ell=c(\psi)}^{\infty} \int_{V^\ell \setminus V^{\ell+1}} \chi(u) du \right)  dy \\
         & =\operatorname{vol}(\mathcal{O}^\times) \left( -q^{-1} \int_{V^{c(\psi)-1}} \chi(u) du +q^{-1} \int_{V^{c(\psi)}} \chi(u) du + (1-q^{-1}) \int_{V^c(\psi)} \chi(u) du \right)                     \\
         & =\operatorname{vol}(\mathcal{O}^\times) \left( -q^{-1} \int_{V^{c(\psi)-1}} \chi(u) du + \int_{V^{c(\psi)}} \chi(u) du \right).
    \end{align*}
    Consequently, we have
    \begin{align*}
        \frac{1}{2} \left( |h(\chi,\psi)|^2 + |h(\chi,\psi_\xi)|^2 \right)
        = -q^{-1} \int_{V^{c(\psi)-1}} \chi(u) du + \int_{V^{c(\psi)}} \chi(u) du.
    \end{align*}
    The assertion (2) follows.
\end{proof}
\begin{remark}
    If $c(\chi)=c(\psi)\geq2$, one can show that exactly one of $|h(\chi,\psi)|$ or $|h(\chi,\psi_\xi)|$ is $2q^{-\frac{c(\psi)}{2}}$ and the other is $0$, by evaluating an integral
    \begin{equation*}
        \int_{1+\mathfrak{p}} h(\chi,\psi_a)\overline{h(\chi,\psi_a)} da,
    \end{equation*}
    in two ways as above.
\end{remark}

\section{The metaplectic group}\label{secmetap}
In this section, we shall establish some notations for the metaplectic group and its subgroups, and review several notions concerning its representations.
\subsection{The metaplectic group}
Put $G=\SL_2(F)$, and let $\widetilde{G}=\widetilde{\SL_2}(F)$ be the nonlinear double cover of $G$.
As a set, we shall write $\widetilde{G}=G\times\{\pm1\}$ with the group law $(g_1,\epsilon_1)\cdot(g_2,\epsilon_2)=(g_1g_2,\epsilon_1\epsilon_2\bm{c}(g_1,g_2))$, where
\begin{align*}
    \bm{c}(g_1,g_2)                      =\left( \frac{\bm{x}(g_1)}{\bm{x}(g_1g_2)}, \frac{\bm{x}(g_2)}{\bm{x}(g_1g_2)} \right)_2, \\
    \bm{x}\left(\left( \begin{array}{cc}
                                   a & b \\c&d
                               \end{array} \right)\right) = \begin{cases*}
                                                        c, & if $c\neq0$, \\
                                                        d, & if $c=0$.
                                                    \end{cases*}
\end{align*}
The 2-cocycle $\bm{c}$ is called the Kubota cocycle.
For any subset $A\subset G$, we shall write $\widetilde{A}$ for the preimage of $A$ in $\widetilde{G}$.
A function $f\colon \widetilde{A} \to \C$ is said to be genuine if $f((a,-1))=-f((a,1))$ for any $a\in A$.
Analogously, when $A$ is a subgroup of $G$, a representation of $\widetilde{A}$ is said to be genuine if it does not factor through the covering map $\widetilde{A}\to A$.

Put
\begin{align*}
     & t(a)=\left( \begin{array}{cc}a & \\ &a^{-1} \end{array} \right), &  & n(b)=\left( \begin{array}{cc}1&b\\ &1\end{array} \right), &  & n^{\mathrm{op}}(b)=\left( \begin{array}{cc}1& \\ b&1\end{array} \right), &  & \beta=\left( \begin{array}{cc}1&\\ &\varpi\end{array} \right), &  & w=\left( \begin{array}{cc}&1\\ -1&\end{array} \right), &
\end{align*}
for $a\in F^\times$ and $b\in F$.
Put
\begin{align*}
     & T=\Set{t(a)|a\in F^\times}, &  & N=\Set{n(b)|b\in F}, &  & B=TN, &
\end{align*}
\begin{gather*}
    K^0 =\SL_2(\mathcal{O}),\quad K^1=\Set{\left( \begin{array}{cc}a&b \\ c&d \end{array} \right) \in \SL_2(F) | a,d\in \mathcal{O},\ b\in \mathfrak{p}^{-1},\ c\in \mathfrak{p}},\\
    K^0_m=\Set{\left( \begin{array}{cc}a&b \\ c&d \end{array} \right) \in K^0 | c\in \mathfrak{p}^m},\quad K^1_m=\Set{\left( \begin{array}{cc}a&b \\ c&d \end{array} \right) \in K^1 | c\in \mathfrak{p}^{m+1}}.
\end{gather*}
Note that $K^0=K^0_0$ and $K^1_m=\prescript{\beta}{}{K^0_m}=\beta K^0_m \beta^{-1}$.
We shall regard $N$ as a subgroup of $\widetilde{G}$ via a splitting $N\ni n\mapsto(n,1)\ni \widetilde{G}$.
Fix additive characters $\psi^0$ and $\psi^1$ of $F$ with the conductors $0$ and $-1$, respectively.
Then it is well-known that $K^0$ and $K^1$ are maximal compact subgroups of $G$, and that they split the covering map $\widetilde{G}\to G$ uniquely.
We shall write $x\mapsto(x,\bm{s}^0(x))$ and $x\mapsto(x,\bm{s}^1(x))$ for the splittings over $K^0$ and $K^1$, respectively.
The splittings are given explicitly by
\begin{align*}
    \bm{s}^\varepsilon(n(b))                    & =1,                            & b & \in\mathfrak{p}^{-\varepsilon}, \\
    \bm{s}^\varepsilon(t(a))                    & =\gamma_F(a,\psi^\varepsilon), & a & \in\mathcal{O}^\times,          \\
    \bm{s}^\varepsilon(n^{\mathrm{op}}(c))      & =1,                            & c & \in\mathfrak{p}^\varepsilon,    \\
    \bm{s}^\varepsilon(w t(\varpi^\varepsilon)) & = 1,                           &   &
\end{align*}
where $\varepsilon\in\{0,1\}$.
See \cite[Lemma 1.1]{hi} for the proof.
Via these splittings, we regard $K^0_m$ and $K^1_m$ as subgroups of $\widetilde{G}$.

We have one lemma, which will be needed later.
\begin{lemma}\label{lemcoset}
    Let $m\geq0$ be an integer.
    We have a natural bijection
    \begin{equation*}
        K^0_m\backslash K^0/ B\cap K^0 \to K^0_m\backslash G/ B,\quad K^0_m g(B\cap K^0) \mapsto K^0_m g B,
    \end{equation*}
    and the set
    \begin{align*}
        \begin{cases*}
            \{1_2\},                                                                                            & when $m=0$,    \\
            \{1_2, w\},                                                                                         & when $m=1$,    \\
            \set{1_2, w} \cup \set{n^{\mathrm{op}}(\varpi^i), n^{\mathrm{op}}(\xi \varpi^i) | i=1,\ldots, m-1}, & when $m\geq2$,
        \end{cases*}
    \end{align*}
    is a complete representative system of the double coset spaces.
\end{lemma}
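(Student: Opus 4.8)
The plan is to prove the two assertions separately. First I would establish the bijection $K^0_m\backslash K^0/(B\cap K^0) \to K^0_m\backslash G/B$. The natural map is well-defined and surjective because $G = K^0 B$ by the Iwasawa decomposition (and $K^0$ is open, so this is the Iwasawa decomposition for $\SL_2(F)$ relative to the standard maximal compact). For injectivity, suppose $K^0_m g B = K^0_m g' B$ with $g, g' \in K^0$. Then $g' = k g b$ for some $k \in K^0_m$ and $b \in B$, so $b = g^{-1} k^{-1} g' \in K^0 \cap B$ since $K^0_m \subset K^0$. Hence $g'(B\cap K^0) = kg(B\cap K^0)$, which gives the claim. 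This step is routine.

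Next I would compute a complete set of representatives for $K^0_m\backslash K^0/(B\cap K^0)$. Reduce modulo $\mathfrak{p}^m$: the image of $K^0 = \SL_2(\mathcal{O})$ in $\SL_2(\mathcal{O}/\mathfrak{p}^m)$, with $K^0_m$ going to the lower-triangular-unipotent-mod-center... more precisely $K^0_m$ maps onto the subgroup $\overline{B}$ of upper-triangular matrices in $\SL_2(\mathcal{O}/\mathfrak{p}^m)$ (the $(2,1)$-entry being $\equiv 0$), while $B\cap K^0$ maps onto the same $\overline{B}$. So the double coset space is $\overline{B}\backslash \SL_2(\mathcal{O}/\mathfrak{p}^m)/\overline{B}$, a Bruhat-type decomposition for $\SL_2$ over the finite ring $\mathcal{O}/\mathfrak{p}^m$. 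For $m=0$ this is a single coset. For $m=1$ we get the genuine Bruhat decomposition over the residue field $\F_q$: two cells, represented by $1_2$ and $w$. For $m\ge 2$ I would use the standard refined Bruhat decomposition: $\SL_2(\mathcal{O}/\mathfrak{p}^m) = \overline{B} \sqcup \overline{B} w \overline{B} \sqcup \bigsqcup \overline{B} n^{\mathrm{op}}(c) \overline{B}$, where $c$ runs over representatives of $(\mathfrak{p}\setminus\mathfrak{p}^m)/\!\sim$ for the appropriate equivalence, namely $n^{\mathrm{op}}(c)$ and $n^{\mathrm{op}}(c')$ are in the same $\overline{B}$-double coset iff $\mathrm{ord}(c)=\mathrm{ord}(c')$ and $c/c'$ is a square unit mod the relevant power. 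Since $\mathcal{O}^\times/\mathcal{O}^{\times 2} = \{1,\xi\}$ (as $p$ is odd), for each valuation $i \in \{1,\dots,m-1\}$ one gets exactly the two representatives $n^{\mathrm{op}}(\varpi^i)$ and $n^{\mathrm{op}}(\xi\varpi^i)$. Together with $1_2$ and $w$ this gives the asserted list. I would verify disjointness by comparing the $(2,1)$-entries and their valuations and square-classes, and verify exhaustiveness either by a direct matrix argument (row-reducing an arbitrary element of $K^0$ by left/right multiplication by elements of $B\cap K^0$, using that the $(2,1)$-entry can be scaled by $\mathcal{O}^{\times 2}$ from each side) or by a counting argument matching $\#\SL_2(\mathcal{O}/\mathfrak{p}^m)$.

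The main obstacle I expect is the $m\ge 2$ case: carefully pinning down the equivalence relation on the "middle" double cosets $\overline{B} n^{\mathrm{op}}(c)\overline{B}$ and checking that $n^{\mathrm{op}}(\varpi^i), n^{\mathrm{op}}(\xi\varpi^i)$ for $1\le i\le m-1$ are pairwise inequivalent and exhaust everything not in $\overline{B}\cup\overline{B}w\overline{B}$. Concretely, for $b_1 n^{\mathrm{op}}(c) b_2$ with $b_1,b_2\in B\cap K^0$, writing $b_j = t(a_j)n(x_j)$, the $(2,1)$-entry of the product is $a_1^{-1} c\, a_2^{-1}$ up to a unit coming from the $n$-parts interacting with $c$; since $a_1,a_2\in\mathcal{O}^\times$ and $\mathcal{O}^\times = \mathcal{O}^{\times 2}\sqcup\xi\mathcal{O}^{\times 2}$, the valuation of the $(2,1)$-entry and its class in $\mathcal{O}^\times/\mathcal{O}^{\times 2}$ (read mod the appropriate power of $\mathfrak{p}$) are the complete invariants. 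Establishing that these invariants are well-defined — i.e. independent of the choice of $b_1,b_2$ — and that $n^{\mathrm{op}}(c)$ with $\mathrm{ord}(c)\ge m$ collapses into $\overline{B}$ while $\mathrm{ord}(c)=0$ puts it (after clearing) into the $\overline{B}w\overline{B}$ cell, is the delicate bookkeeping; everything else is formal.
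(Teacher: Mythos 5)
Your bijection argument matches the paper's exactly (Iwasawa decomposition $G=K^0B$ plus a routine injectivity check). Your route to the representatives --- reducing mod $\mathfrak p^m$ and working with $\overline B\backslash\SL_2(\mathcal O/\mathfrak p^m)/\overline B$ --- is a mild repackaging of what the paper does by direct matrix manipulation in $K^0$, and it is sound: the kernel $K(\mathfrak p^m)$ lies in $K^0_m$, both $K^0_m$ and $B\cap K^0$ surject onto $\overline B$, and injectivity of the induced map on double cosets follows by lifting. In both arguments the crux is the same: for $p$ odd, $1+\mathfrak p\subset\mathcal O^{\times2}$, which pins the square class.

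One point you should tighten: you state that ``the valuation of the $(2,1)$-entry and its class in $\mathcal O^\times/\mathcal O^{\times 2}$'' are the complete invariants of a middle double coset. Taken literally this is false. Right multiplication by $t(a)\in B\cap K^0$ scales the $(2,1)$-entry by an arbitrary unit $a$, so the square class of $g_{21}$ itself is not preserved. The correct invariant is the square class of $g_{21}/g_{11}$ (the paper works with $g_{21}g_{11}^{-1}=\xi^\delta a^2\varpi^i$ for exactly this reason), or equivalently you must first normalize to the form $n^{\mathrm{op}}(c)$ before reading off the class of $c$. Once normalized, left and right multiplication by elements of $\overline B$ preserving the shape $n^{\mathrm{op}}(c)$ forces (via the $(1,1)$-entry and $\det=1$) the scaling factor on $c$ to lie in $\mathcal O^{\times 2}(1+\mathfrak p^i)=\mathcal O^{\times 2}$, which is where $p$ odd enters. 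Your parenthetical hedges gesture at this, but the verification you flag as ``delicate bookkeeping'' is precisely the paper's concluding contradiction ($\xi\in d^2+\mathfrak p\subset\mathcal O^{\times 2}$) and needs to be done with the ratio $g_{21}/g_{11}$, not $g_{21}$ alone.
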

\begin{proof}
    The natural bijection of double coset spaces is implied by the Iwasawa decomposition $G=K^0B$.

    Now we consider the assertion on representatives.
    If $m=0$, the assertion is trivial.
    Suppose that $m\geq1$.
    Let $g=(g_{ij})\in K^0$.
    If $g_{21}\in\mathcal{O}^\times$, we have
    \begin{equation*}
        t(-g_{21})n(-g_{21}^{-1}g_{11}) \cdot g \cdot n(-g_{21}^{-1}g_{22})=w.
    \end{equation*}
    Since $t(-g_{21})n(-g_{21}^{-1}g_{11})\in K^0_m$ and $n(-g_{21}^{-1}g_{22})\in B\cap K^0$, this means that
    \begin{equation*}
        K^0_m g (B\cap K^0) = K^0_m w (B\cap K^0).
    \end{equation*}
    Suppose that $g_{21}\in \mathfrak{p}$.
    If $g_{21}\in \mathfrak{p}^m$, then $g\in K^0_m$ and we have
    \begin{equation*}
        K^0_m g (B\cap K^0) = K^0_m 1_2 (B\cap K^0).
    \end{equation*}
    If $g_{21}\in \mathfrak{p}\setminus \mathfrak{p}^m$, then $g_{11} \in\mathcal{O}^\times$, and there exist $\delta\in\{0,1\}$, $a\in\mathcal{O}^\times$, and $1\leq i \leq m-1$ such that $g_{21}g_{11}^{-1}=\xi^\delta a^2\varpi^i$.
    Then we have
    \begin{equation*}
        t(a) \cdot g \cdot n(-g_{11}^{-1}g_{12}) t(g_{11}^{-1}) t(a^{-1}) = n^{\mathrm{op}}(\xi^\delta \varpi^i),
    \end{equation*}
    which means that
    \begin{equation*}
        K^0_m g (B\cap K^0) = K^0_m n^{\mathrm{op}}(\xi^\delta \varpi^i) (B\cap K^0).
    \end{equation*}
    Thus we have
    \begin{align*}
        K^0= K^0_m w (B\cap K^0) \cup K^0_m 1_2 (B\cap K^0) \cup \bigcup_{i=1}^{m-1} \left( K^0_m n^{\mathrm{op}}(\varpi^i) (B\cap K^0) \cup K^0_m n^{\mathrm{op}}(\xi\varpi^i) (B\cap K^0) \right).
    \end{align*}
    If $m=1$, it is of course understood to be a union of the former two cosets.
    Note that if the (2,1)-component of $g\in K^0$ is an element of $\varpi^i \mathcal{O}^\times$ and $0\leq i\leq m-1$, then so is that of every element in $K^0_m g (B\cap K^0)$.
    Hence we have
    \begin{align*}
        K^0= K^0_m w (B\cap K^0) \sqcup K^0_m 1_2 (B\cap K^0) \sqcup \bigsqcup_{i=1}^{m-1} \left( K^0_m n^{\mathrm{op}}(\varpi^i) (B\cap K^0) \cup K^0_m n^{\mathrm{op}}(\xi\varpi^i) (B\cap K^0) \right).
    \end{align*}
    Now, it remains to show that if $1\leq i \leq m-1$, then $n^{\mathrm{op}}(\varpi^i)$ and $n^{\mathrm{op}}(\xi\varpi^i)$ are not contained in a same coset.
    Suppose that they are.
    Then there exist
    \begin{equation*}
        \left( \begin{array}{cc}a&b\\ c&d \end{array} \right)\in K^0_m \quad \text{and} \quad \left( \begin{array}{cc}x&y\\ &x^{-1} \end{array} \right)\in B\cap K^0,
    \end{equation*}
    such that
    \begin{align*}
        \left( \begin{array}{cc}a&b\\ c&d \end{array} \right) n^{\mathrm{op}}(\varpi^i) = n^{\mathrm{op}}(\xi\varpi^i) \left( \begin{array}{cc}x&y\\ &x^{-1} \end{array} \right).
    \end{align*}
    In other words, there exist $a, d, x\in\mathcal{O}^\times$, $b, y\in \mathcal{O}$, and $c\in\mathfrak{p}^m$, such that
    \begin{align*}
        \left\{
        \begin{aligned}
             & ad-bc=1,                  \\
             & a+\varpi^ib=x,            \\
             & b=y,                      \\
             & c+\varpi^id=\xi\varpi^ix, \\
             & d=\xi\varpi^iy+x^{-1}.
        \end{aligned}
        \right.
    \end{align*}
    Combining the assumption $1\leq i \leq m-1$, these imply that $\xi\in d^2+\mathfrak{p}$.
    Since the residual characteristic $p$ is odd, we have $d^2 + \mathfrak{p} \subset \mathcal{O}^{\times2}$.
    This is a contradiction.
\end{proof}
Similarly, we have the following.
\begin{corollary}\label{corcoset}
    The set
    \begin{align*}
        \begin{cases*}
            \{1_2\},                                                                                                    & when $m=0$,    \\
            \{1_2, w\},                                                                                                 & when $m=1$,    \\
            \set{1_2, w} \cup \set{n^{\mathrm{op}}(\varpi^{i+1}), n^{\mathrm{op}}(\xi \varpi^{i+1}) | i=1,\ldots, m-1}, & when $m\geq2$,
        \end{cases*}
    \end{align*}
    is a complete representative system of $K^1_m\backslash G/ B$.
\end{corollary}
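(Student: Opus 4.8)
The plan is to deduce this corollary from Lemma~\ref{lemcoset} by transporting the double coset space through conjugation by $\beta$. First I would note that $\Int(\beta)\colon g\mapsto \prescript{\beta}{}{g}$ is an automorphism of $G=\SL_2(F)$ which carries $K^0_m$ onto $K^1_m$ by the very definition of $K^1_m$, and which stabilises $B$: indeed $\prescript{\beta}{}{t(a)}=t(a)$ and $\prescript{\beta}{}{n(b)}=n(\varpi^{-1}b)$, hence $\prescript{\beta}{}{B}=B$. Therefore $\Int(\beta)$ induces a bijection $K^0_m\backslash G/B\to K^1_m\backslash G/B$ sending $K^0_m g B$ to $K^1_m\,\prescript{\beta}{}{g}\,B$, and a complete system of representatives for the right-hand side is obtained by applying $\prescript{\beta}{}{(-)}$ to the system produced in Lemma~\ref{lemcoset}. (The same conjugation also transports the ``natural bijection'' part, via $\prescript{\beta}{}{K^0}=K^1$, should one want it.)

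Next I would compute these images by direct matrix calculation: $\prescript{\beta}{}{1_2}=1_2$, $\prescript{\beta}{}{w}=w\,t(\varpi)$, and $\prescript{\beta}{}{n^{\mathrm{op}}(c)}=n^{\mathrm{op}}(\varpi c)$, so in particular $\prescript{\beta}{}{n^{\mathrm{op}}(\varpi^i)}=n^{\mathrm{op}}(\varpi^{i+1})$ and $\prescript{\beta}{}{n^{\mathrm{op}}(\xi\varpi^i)}=n^{\mathrm{op}}(\xi\varpi^{i+1})$. Since $t(\varpi)\in T\subset B$, we have $K^1_m\,w\,t(\varpi)\,B=K^1_m\,w\,B$, so $w$ may be substituted for $w\,t(\varpi)$; the remaining representatives already have the shape displayed in the statement, the shift $i\mapsto i+1$ of exponents being exactly the effect of conjugation by $\beta$ on $n^{\mathrm{op}}$. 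Matching the three cases $m=0$, $m=1$, $m\ge 2$ with those of Lemma~\ref{lemcoset} then finishes the proof.

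I do not expect any genuine obstacle here; the only steps requiring a moment's care are verifying $\prescript{\beta}{}{B}=B$ and keeping track of the exponent shift $i\mapsto i+1$, both one-line matrix computations. One could instead simply rerun the proof of Lemma~\ref{lemcoset} verbatim with $K^1_m$ in place of $K^0_m$, but routing the argument through $\Int(\beta)$ reuses the work already done.
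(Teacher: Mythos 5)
Your proof is correct and follows essentially the same route as the paper: conjugation by $\beta$ carries $K^0_m\backslash G/B$ bijectively onto $K^1_m\backslash G/B$, the images of the representatives from Lemma~\ref{lemcoset} are computed to be $1_2$, $wt(\varpi)$, and $n^{\mathrm{op}}(\varpi^{i+1})$, $n^{\mathrm{op}}(\xi\varpi^{i+1})$, and $wt(\varpi)$ is replaced by $w$ since $t(\varpi)\in B$. You spell out the steps (stability of $B$ under $\Int(\beta)$, the $B$-coset simplification) that the paper leaves implicit, but the argument is the same.
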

\begin{proof}
    A bijection $g\mapsto \beta g\beta^{-1}$ from $G$ to $G$ gives a bijection between $K^0_m\backslash G/ B$ and $K^1_m\backslash G/ B$.
    We have
    \begin{align*}
         & \beta 1_2 \beta^{-1}=1_2, &  & \beta w \beta^{-1}=wt(\varpi), &  & \beta n^{\mathrm{op}}(y)\beta^{-1}=n^{\mathrm{op}}(\varpi y). &
    \end{align*}
    Then the assertion follows from Lemma \ref{lemcoset}.
\end{proof}

\subsection{Representations of the metaplectic group}\label{secrep}
Let $(\pi,V)$ be an irreducible admissible representation of $\widetilde{G}$, and $\Psi$ a nontrivial additive character of $F$.
The representation $(\pi,V)$ is said to be $\Psi$-generic if
\begin{equation*}
    \Hom_N(\pi,\Psi) \neq\{0\},
\end{equation*}
i.e., there exists a nonzero linear map $\ell_\Psi \colon V\to \C$ such that $\ell_\Psi(\pi(n(b))v)=\Psi(b)\ell_\Psi(v)$, for any $b\in F$ and $v\in V$.
Any nonzero element $\ell_\Psi\in \Hom_N(\pi,\Psi)$ is called a $\Psi$-Whittaker functional, and it is proven in \cite{wal1} that the space $\Hom_N(\pi,\Psi)$ of $\Psi$-Whittaker functionals is at most one dimensional.

The representation $(\pi,V)$ is said to be supercuspidal if for any $v\in V$ there exists a compact open subgroup $N(v)$ of $N$ such that
\begin{equation*}
    \int_{N(v)} \pi(n)v dn=0,
\end{equation*}
where $dn$ is a Haar measure on $N(v)$.
We shall review a classification of irreducible supercuspidal representations of $\widetilde{G}$ later in \S\ref{sectype}.

Now we shall review the Weil representations of $\widetilde{G}$.
For every nontrivial additive character $\psi$ of $F$, let $(\omega_\psi, \mathcal{S}(F))$ be the Schr\"{o}dinger model of the Weil representation of $\widetilde{G}$ associated to $\psi$, i.e., $\mathcal{S}(F)$ is the space of locally constant and compactly supported functions on $F$ and the action $\omega_\psi$ is determined by
\begin{align*}
    [\omega_\psi((t(a),\epsilon))\cdot\varphi](y) & =\epsilon|a|^{\frac{1}{2}}\gamma_F(a,\psi)^{-1} \varphi(ay), \\
    [\omega_\psi(n(b)) \cdot \varphi](y)          & =\psi(by^2)\varphi(y),                                       \\
    [\omega_\psi((w,1))\cdot\varphi](y)           & =\gamma_F(\psi)\int_F \varphi(x)\psi(2xy)d_{\psi_2} x,
\end{align*}
where $d_{\psi_2}x$ denotes the self-dual measure on $F$ with respect to $\psi_2$.
Note that $d_{\psi_2}x=q^{\frac{c(\psi_2)}{2}}dx$, and that $c(\psi_2)=c(\psi)$ since $p\neq2$.
Let $\mathcal{S}^+(F)$ (resp. $\mathcal{S}^-(F)$) be the subspace of $\mathcal{S}(F)$ consisting of all even (resp. odd) functions, i.e., the functions $\varphi\in\mathcal{S}(F)$ such that $\varphi(-x)=\varphi(x)$ (resp. $\varphi(-x)=-\varphi(x)$).
Then a decomposition $\mathcal{S}(F)=\mathcal{S}^+(F)\oplus\mathcal{S}^-(F)$ gives the decomposition of the Weil representation, which we shall write $\omega_\psi=\omega_\psi^+ \oplus \omega_\psi^-$.
The representation $\omega_\psi^+$ (resp. $\omega_\psi^-$) is called the even (resp. odd) Weil representation.
The even and odd Weil representations are irreducible.
Note that $\omega_\psi\cong\omega_{\psi_a}$ if and only if $a\in F^{\times2}$.
Thus, for any quadratic or trivial character $\chi=\chi_a$, we shall write $\omega_{\psi,\chi}=\omega_{\psi_a}$, $\omega_{\psi,\chi}^+=\omega_{\psi_a}^+$, and $\omega_{\psi,\chi}^-=\omega_{\psi_a}^-$.
We note:
\begin{itemize}
    \item $\omega_{\psi_a}^\pm$ is $\psi_b$-generic if and only if $ab^{-1}\in F^{\times2}$;
    \item $\omega_{\psi_a}^+$ is not supercuspidal, and $\omega_{\psi_a}^-$ is supercuspidal;
    \item $\omega_{\psi_a}^\pm$ is characterized by these properties.
\end{itemize}

Next we define principal series representations of $\widetilde{G}$.
For any nontrivial additive character $\psi$ of $F$, we have a genuine character $\chi_\psi$ of $\widetilde{T}$ defined by
\begin{align*}
     & \chi_\psi((t(a),\epsilon))=\epsilon\gamma_F(a,\psi)^{-1}, &  & a\in F^\times,\ \epsilon\in\{\pm1\}. &
\end{align*}
Given a character $\mu$ of $F^\times$, we shall write $\pi_\psi(\mu)$ for the representation parabolically induced from $\widetilde{B}$ by $\mu \cdot \chi_\psi$, and realize it on the space of locally constant functions $f\colon \widetilde{G}\to \C$ such that
\begin{equation*}
    f((t(a),\epsilon)(n(b),1)g)=|a|\mu(a)\chi_\psi((t(a),\epsilon))f(g),
\end{equation*}
for any $a\in F^\times$, $\epsilon\in\{\pm1\}$, $b\in F$, and $g\in\widetilde{G}$.
The representation is defined by $[g\cdot f](x)=f(xg)$.
\begin{remark}
    There exists $z\in F^\times$ such that $\psi^1=\psi^0_z$.
    Then we have $\gamma_F(a,\psi^1)=\gamma_F(a,\psi^0)\chi_z(a)$, and hence $\pi_{\psi^1}(\mu)=\pi_{\psi^0}(\mu\chi_z)$.
\end{remark}
The reducibility property of these representations are summarized as follows.
\begin{proposition}\label{propreducibility}
    \begin{enumerate}[(1)]
        \item The representation $\pi_\psi(\mu)$ is irreducible if and only if $\mu^2 \neq |-|^{\pm1}$. In this case we have $\pi_\psi(\mu)\cong\pi_\psi(\mu^{-1})$.
        \item If $\mu=\chi \cdot |-|^{\frac{1}{2}}$, where $\chi$ is a quadratic or trivial character, then $\pi_\psi(\mu)$ is reducible and there are an irreducible representation $\mathit{St}_{\psi,\chi}$ and a short exact sequence
              \begin{equation*}
                  0 \to \mathit{St}_{\psi,\chi} \to \pi_\psi(\mu) \to \omega_{\psi,\chi}^+ \to 0.
              \end{equation*}
              We shall call the representation $\mathit{St}_{\psi,\chi}$ the Steinberg representation associated to $\psi$ and $\chi$.
        \item If $\mu= \chi \cdot |-|^{-\frac{1}{2}}$, where $\chi$ is a quadratic or trivial character, then $\pi_\psi(\mu)$ is reducible and there is a short exact sequence
              \begin{equation*}
                  0 \to \omega_{\psi,\chi}^+ \to \pi_\psi(\mu) \to \mathit{St}_{\psi,\chi} \to 0.
              \end{equation*}
    \end{enumerate}
\end{proposition}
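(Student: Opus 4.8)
The plan is to run the classical rank-one argument — the Jacquet module together with standard intertwining operators — in parallel with the case of $\SL_2(F)$ (or rather $\PGL_2(F)$), and then to identify the two constituents in the reducible cases by an explicit computation in the Schr\"{o}dinger model of the Weil representation. First I would compute the Jacquet module of $\pi_\psi(\mu)$ along $N$ via the geometric lemma: the two Bruhat cells $\widetilde{B}$ and $\widetilde{B}w\widetilde{B}$ produce a two-step filtration whose graded pieces are the genuine characters of $\widetilde{T}$ associated to $\mu\chi_\psi$ and to ${}^{w}(\mu\chi_\psi)$. The preliminary computation here is that the canonical lift $(w,1)$ — available because $\bm{s}^0(w)=1$ — conjugates $(t(a),1)$ to $(t(a^{-1}),1)$, which one checks directly with the Kubota cocycle; combined with $\gamma_F(a^{-1},\psi)=\gamma_F(a,\psi)$ (one of the Weil-index identities recorded in the Convention section) this gives ${}^{w}\chi_\psi=\chi_\psi$, hence ${}^{w}(\mu\chi_\psi)=\mu^{-1}\chi_\psi$. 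Therefore $\pi_\psi(\mu)$ has length at most $2$; and when $\mu^2\neq1$ the two graded pieces are distinct, so the standard computation of $\Hom$-spaces between principal series gives $\dim\End_{\widetilde{G}}\pi_\psi(\mu)=1$, and in particular $\pi_\psi(\mu)$ is indecomposable.

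Next I would bring in the standard intertwining operator $M(\mu)\colon\pi_\psi(\mu)\to\pi_\psi(\mu^{-1})$, defined for $\mu=\mu_0|-|^{s}$ with $\operatorname{Re}(s)\gg0$ by integration over $N$ (which lifts canonically to $\widetilde{G}$) and continued meromorphically. A rank-one reduction shows that $M(\mu^{-1})\circ M(\mu)=j(\mu)\operatorname{id}$, where $j(\mu)$ is an explicit ratio of local zeta factors — a metaplectic Gindikin--Karpelevich formula — in which the Weil index in $\chi_\psi$ enters only through $\gamma_F(\cdot,\psi)^2$, a character of $F^\times$, so that $j(\mu)$ depends on $\mu$ only through $\mu^2$; one reads off that $j(\mu)\in\{0,\infty\}$ precisely when $\mu^2=|-|^{\pm1}$, while $j(\mu)$ is finite and nonzero when $\mu^2=1$. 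Suppose $\mu^2\neq|-|^{\pm1}$. If in addition $\mu^2\neq1$, then $M(\mu)$ and $M(\mu^{-1})$ are isomorphisms; a proper submodule of $\pi_\psi(\mu)$ would then embed both in $\pi_\psi(\mu)$ and in $\pi_\psi(\mu^{-1})$, and hence, by Frobenius reciprocity, its (irreducible) Jacquet module would have to match both $\mu\chi_\psi$ and $\mu^{-1}\chi_\psi$, which is impossible; so $\pi_\psi(\mu)$ is irreducible, and $M(\mu)$ realizes the isomorphism $\pi_\psi(\mu)\cong\pi_\psi(\mu^{-1})$. If instead $\mu^2=1$, then $\mu\chi_\psi$ is unitary, so $\pi_\psi(\mu)$ is semisimple, hence irreducible if and only if $\End_{\widetilde G}\pi_\psi(\mu)=\C$, i.e. if and only if the normalized self-intertwining operator $M^{*}(\mu)=j(\mu)^{-1/2}M(\mu)$, which squares to the identity, is a \emph{scalar}; the scalarity of $M^{*}(\mu)$ — which is exactly where $\widetilde{\SL_2}(F)$ departs from $\SL_2(F)$, whose unitary principal series can be reducible at nontrivial quadratic characters — is then checked by an explicit Weil-index/Gauss-sum evaluation.

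For the reducible cases $\mu=\chi\,|-|^{\pm1/2}$ ($\chi$ quadratic or trivial), $\pi_\psi(\mu)$ is indecomposable of length exactly $2$, so it has a unique irreducible submodule and a unique irreducible quotient. To identify a constituent with the even Weil representation I would use the Schr\"{o}dinger model $(\omega_\psi,\mathcal{S}(F))$: the functional $\varphi\mapsto\varphi(0)$ is $N$-invariant and transforms under $(t(a),\epsilon)$ by $\epsilon|a|^{1/2}\gamma_F(a,\psi)^{-1}$, so by Frobenius reciprocity it gives a nonzero $\widetilde{G}$-map $\omega_\psi^{+}\to\pi_\psi(|-|^{-1/2})$ (the odd part is killed, odd functions vanishing at $0$); being irreducible, $\omega_\psi^{+}$ embeds, and hence is the unique irreducible submodule of $\pi_\psi(|-|^{-1/2})$. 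Twisting by $\psi_a$ with $\chi=\chi_a$ and using $\pi_{\psi_a}(\nu)=\pi_\psi(\nu\chi_a)$ gives $\omega_{\psi,\chi}^{+}\hookrightarrow\pi_\psi(\chi\,|-|^{-1/2})$, and defining $\mathit{St}_{\psi,\chi}$ as the quotient yields the exact sequence of part (3). For part (2), apply the degenerate normalized intertwining operator $\pi_\psi(\chi\,|-|^{-1/2})\to\pi_\psi(\chi\,|-|^{1/2})$: it is nonzero but not an isomorphism, its kernel is the submodule $\omega_{\psi,\chi}^{+}$, hence its image is isomorphic to $\mathit{St}_{\psi,\chi}$ and is the unique irreducible submodule of $\pi_\psi(\chi\,|-|^{1/2})$, the remaining composition factor being $\omega_{\psi,\chi}^{+}$ by the Jacquet-module count; non-splitness of both sequences is immediate from indecomposability. (Alternatively, the whole proposition can be deduced from the explicit local theta correspondence between $\widetilde{\SL_2}(F)$ and $\PGL_2(F)$ of \cite{wal1} — under which $\pi_\psi(\mu)$ corresponds to the principal series of $\PGL_2(F)$ attached to $\mu$, and $\omega_{\psi,\chi}^{+}$ to the appropriate trivial representation — together with the classical reducibility of principal series of $\PGL_2(F)$.)

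The main obstacle is twofold. First, establishing that $M^{*}(\mu)$ is a scalar when $\mu^2=1$, i.e. that the metaplectic principal series stay irreducible at nontrivial quadratic characters: this is the conceptual heart of the statement and rests on the explicit Weil-index/Gauss-sum computation of the normalized intertwining constant. Second, matching the abstract composition factors of the reducible principal series with the Weil representations, which are presented concretely in the Schr\"{o}dinger model — here the functional $\varphi\mapsto\varphi(0)$ together with a short bookkeeping of genericity (using the characterization of $\omega_{\psi_a}^{\pm}$ recalled earlier) is what does the work. The rest is the standard rank-one machinery.
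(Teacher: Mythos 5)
The paper does not actually prove this proposition; it is stated as standard background with no reference attached (the text simply invokes it as ``known''), so your blind proof is being matched against nothing in the source. On its own terms your outline is the standard rank-one argument and is essentially sound: the Jacquet-module bound on the length, the identity ${}^{w}\chi_\psi = \chi_\psi$ (which does follow from $\gamma_F(a^{-1},\psi) = \gamma_F(a\cdot a^{-2},\psi) = \gamma_F(a,\psi)$ together with a direct Kubota-cocycle check that $(w,1)(t(a),1)(w,1)^{-1}=(t(a^{-1}),1)$), the intertwining-operator dichotomy in $j(\mu)$, and the functional $\varphi\mapsto\varphi(0)$ on the Schr\"{o}dinger model, which transforms by $|a|^{1/2}\chi_\psi$ and hence matches the inducing character $|a|\mu\chi_\psi$ exactly when $\mu=|-|^{-1/2}$, giving the embedding $\omega_\psi^+\hookrightarrow\pi_\psi(|-|^{-1/2})$. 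For the passage from part (3) to part (2) a cleaner route than the degenerate intertwining operator is the contragredient: $\pi_\psi(\chi\cdot|-|^{1/2})^\vee\cong\pi_\psi(\chi\cdot|-|^{-1/2})$ since $\chi$ is quadratic, and $\omega_{\psi,\chi}^+$ and $\mathit{St}_{\psi,\chi}$ are self-dual, so duality simply swaps sub and quotient; your route also works but would need a word about holomorphy and nonvanishing of the normalized operator at the reducibility point.

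The one genuine gap is the case $\mu^2=1$ with $\mu$ nontrivial. You correctly reduce irreducibility there to scalarity of the normalized self-intertwining operator $M^{*}(\mu)$, but then only assert that this ``is then checked by an explicit Weil-index/Gauss-sum evaluation'' without carrying it out or citing it. That is not a formality: it is precisely the point where $\widetilde{\SL_2}(F)$ differs from $\SL_2(F)$, whose unitary principal series at nontrivial quadratic $\mu$ genuinely do reduce, so the conclusion has to rest on a concrete computation that distinguishes the two covers. The required computation, the metaplectic local coefficient, is exactly the content of \cite{szp}, which the present paper already cites for Weil-index identities; without that computation or a pointer to it, part (1) is incomplete in the unitary quadratic case. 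Your parenthetical alternative via Waldspurger's explicit theta correspondence with $\PGL_2(F)$ \cite{wal1} would close this gap (and indeed the whole proposition) at a stroke, and is plausibly what the paper has in mind when it calls the statement known.
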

It is known that the proposition gives all the irreducible non-supercuspidal genuine representations of $\widetilde{G}$.

\subsection{Definition of conductors}
Let $(\pi,V)$ be an admissible representation of $\widetilde{G}$, and $\psi$ a nontrivial additive character of $F$.
Consider an element $-1_\psi=(-1_2, \gamma_F(-1,\psi))$ in $\widetilde{G}$.
It is central in $\widetilde{G}$ and has order 2.
Hence it acts on $V$ by $\pm1$.
We define the central sign $z_\psi(\pi)\in\{\pm1\}$ with respect to $\psi$ by $\pi(-1_\psi)=z_\psi(\pi)1_V$.

Let $\eta$ be any character of $\mathcal{O}^\times$.
When $m\geq c(\eta)$, $\eta$ gives characters of $K^0_m$ and $K^1_m$ both defined by
\begin{align*}
    \left( \begin{array}{cc}a&b \\ c&d \end{array} \right) \mapsto \eta(d),
\end{align*}
for which we shall write $\eta$.
Let $(\pi,V)$ be an admissible representation of $\widetilde{G}$.
For $K_m=K^0_m$ or $K^1_m$, we put
\begin{align*}
    \pi^{K_m}_\eta=V^{K_m}_\eta
    =\begin{cases*}
         \Set{v\in V | \pi(x)v=\eta(x)v,\ \text{for all $x\in K_m$}}, & if $m \geq c(\eta)$, \\
         \{0\},                                                       & if $m<c(\eta)$.
     \end{cases*}
\end{align*}
Let $\varepsilon=0$ or $1$.
Let $\eta$ be a character of $\mathcal{O}^\times$ such that $\eta(-1)=z_{\psi^\varepsilon}(\pi)$.
We put
\begin{align*}
    c^\varepsilon_\eta(\pi)
    =\min(m\geq0 \mid \pi^{K^\varepsilon_m}_\eta \neq \{0\}).
\end{align*}
We also define $c^\varepsilon_{\min}(\pi)=\min(c^\varepsilon_\eta(\pi) \mid \eta)$, where $\eta$ runs over characters of $\mathcal{O}^\times$ such that $\eta(-1)=z_{\psi^\varepsilon}(\pi)$.
In this paper, we calculate the dimensions of $\pi^{K_m}_\eta$ and $c^\varepsilon_\eta(\pi)$ for irreducible genuine representations $\pi$ of $\widetilde{G}$.

\subsection{The result of Roberts--Schmidt}\label{subsec:Roberts--Schmidt}
We shall recall the result of Roberts--Schmidt \cite{rs-Mp2}.
Let $\varepsilon\in\{0,1\}$.
Put $\psi=\psi^\varepsilon$ and $K_m=K^\varepsilon_m$.
For any admissible representation $(\pi, V)$ of $\widetilde{G}$, Roberts--Schmidt defined a linear automorphism $\alpha_2$ on $V$ by
\begin{equation*}
    v \mapsto \alpha_2 v = \pi(\left( \begin{array}{cc}\varpi^{-1}&\\ &\varpi \end{array} \right),1) v.
\end{equation*}
Let $\eta$ be a character of $\mathcal{O}^\times$ such that $\eta(-1)=z_{\psi^\varepsilon}(\pi)$.
It follows immediately that $\alpha_2(\pi^{K_m}_\eta) \subset \pi^{K_{m+2}}_\eta$.
In addition, $\pi^{K_m}_\eta$ is a subspace of $\pi^{K_{m+1}}_\eta$.
Following \cite{rs-Mp2}, we define the subspace $\pi^{K_m, \old}_\eta$ of local oldforms in $\pi^{K_m}_\eta$ as the linear subspace generated by $\pi^{K_{m-1}}_\eta$ and $\alpha_2(\pi^{K_{m-2}}_\eta)$, unless $m=0,1$.
In the cases of $m=0, 1$, we define $\pi^{K_0, \old}_\eta$ to be $\{0\}$, and $\pi^{K_1, \old}_\eta$ to be $\pi^{K_0}_\eta$.
Then we shall define
\begin{equation*}
    \pi^{K_m, \new}_\eta = \pi^{K_m}_\eta / \pi^{K_m, \old}_\eta.
\end{equation*}
Let $F_\psi(\pi)$ be the set of $a$ in $F^\times$ such that $\pi$ is $\psi_a$-generic.
The group $F^{\times2}$ acts on $F_\psi(\pi)$.
The number $\# F_\psi(\pi) / F^{\times2}$ is given by Waldspurger as follows.
\begin{proposition}\label{prop:numgeneric}
    Let $\pi$ be an irreducible genuine representation of $\widetilde{G}$.
    Then we have
    \begin{align*}
        \# F_\psi(\pi) / F^{\times2}
        =\begin{cases*}
             1, & if $\pi$ is an even or odd Weil representation,               \\
             2, & if $\pi$ is supercuspidal but not an odd Weil representation, \\
             3, & if $\pi$ is a Steinberg representation,                       \\
             4, & if $\pi$ is an irreducible principal series representation.
         \end{cases*}
    \end{align*}
\end{proposition}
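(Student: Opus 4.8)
The plan is to treat each of the four cases separately, since the structure of the argument differs. First, for the two Weil representations $\omega_\psi^\pm$, the key point is that these have the explicit Schrödinger model $\mathcal{S}^\pm(F)$, in which a $\psi_b$-Whittaker functional can be written down directly: for a test function $\varphi$ one evaluates a functional essentially of the form $\varphi\mapsto\varphi(y_0)$ (or its derivative) at a point $y_0$ with $y_0^2$ in the appropriate square class, using the formula $[\omega_\psi(n(b))\varphi](y)=\psi(by^2)\varphi(y)$. This shows $\omega_{\psi_a}^\pm$ is $\psi_b$-generic exactly when $ab^{-1}\in F^{\times2}$, which is a single class in $F^\times/F^{\times2}$, giving the value $1$. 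This is the base case and is already recorded in the bulleted list preceding the proposition.

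For the remaining three cases I would exploit the theta correspondence (Waldspurger) together with the reducibility structure in Proposition \ref{propreducibility}. For an \emph{irreducible principal series} $\pi_\psi(\mu)$, a Whittaker functional is obtained by the standard Jacquet integral over $N$, and one checks that for each additive character $\psi_a$ the relevant integral converges/extends and is nonzero; equivalently, $\pi_\psi(\mu)$ being a full induced representation from the Borel, its twisted Jacquet modules $(\pi_\psi(\mu))_{N,\psi_a}$ are all one-dimensional, one for each $a$. Counting square classes on which this is nonzero — and here all four classes in $F^\times/F^{\times2}$ occur because the induction is genuine and the obstruction coming from $\gamma_F(a,\psi)$ is nontrivial — yields $4$. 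For the \emph{Steinberg} representation $\mathit{St}_{\psi,\chi}$, I would use the exact sequence $0\to \mathit{St}_{\psi,\chi}\to\pi_\psi(\chi|-|^{1/2})\to\omega_{\psi,\chi}^+\to 0$: applying the exact twisted-Jacquet functor $(-)_{N,\psi_a}$, the principal series contributes a one-dimensional space for all four classes, the even Weil quotient contributes a one-dimensional space for exactly one class, and a dimension count (using that $\omega_{\psi,\chi}^+$ is a quotient, not a sub, so the long exact sequence forces the Steinberg piece to be one-dimensional on precisely the other three classes) gives $3$. Finally, for a \emph{supercuspidal} $\pi$ that is not an odd Weil representation, the Waldspurger–Manderscheid theta correspondence matches $\pi$ with its lift to $\PGL_2(F)$ (or the relevant inner form), and the genericity behaviour is governed by local epsilon factors / the theta dichotomy, which pins down exactly two square classes; the odd Weil representation is the degenerate case where only one class survives, already covered above.

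The main obstacle will be the supercuspidal case, and more precisely making the count ``exactly $2$'' rather than ``at most $2$''. The inequality $\le 2$ can be extracted from the fact that an irreducible genuine supercuspidal representation of $\widetilde{G}$ theta-lifts nontrivially to at most the two members $\{\PGL_2,\ \mathrm{PD}^\times\}$ of a Vogan packet, each contributing at most one square class of generic character; the harder direction, that both classes genuinely occur (for $\pi$ not an odd Weil representation), requires the non-vanishing of the relevant theta lift together with the explicit description of the Whittaker datum under the correspondence — this is exactly where one must invoke Waldspurger's results rather than derive things by hand. For the principal series and Steinberg cases the only subtlety is verifying that the Jacquet integral is nonzero for every one of the four characters, i.e.\ that no accidental vanishing occurs; this is routine once one writes the integral in the explicit model, but it does use the precise form of $\chi_\psi$ and the cocycle. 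I would present the Weil, principal series, and Steinberg cases with short direct arguments and cite Waldspurger \cite{wal1} and Manderscheid \cite{man3} for the supercuspidal count.
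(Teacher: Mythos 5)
The paper gives no argument here at all: it simply records the count and attributes it to Waldspurger (``The number $\# F_\psi(\pi) / F^{\times2}$ is given by Waldspurger as follows''), so the ``paper's proof'' is a citation of \cite{wal1}. Your proposal is a genuinely different, more self-contained route. For the Weil representations the Schr\"{o}dinger-model argument you sketch is exactly the characterization already recorded in \S\ref{secrep} and is correct. For the principal series and Steinberg cases, the right underlying mechanism is the one you name: exactness of the twisted Jacquet functor $V\mapsto V_{N,\psi_a}$ applied to the Bruhat filtration of the full induced representation gives $\dim(\pi_\psi(\mu))_{N,\psi_a}=1$ for \emph{every} nontrivial $\psi_a$ (the big cell contributes a copy of $C^\infty_c(N)$, which has one-dimensional twisted coinvariants, and the small cell contributes nothing); feeding this into the short exact sequence of Proposition \ref{propreducibility} and subtracting the one square class on which $\omega_{\psi,\chi}^+$ is generic yields $3$ for $\mathit{St}_{\psi,\chi}$. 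One small inaccuracy: the sentence ``all four classes occur because the induction is genuine and the obstruction coming from $\gamma_F(a,\psi)$ is nontrivial'' is a red herring --- nothing about the cocycle or the Weil index is needed; the count $4$ follows purely from the Bruhat-filtration computation, and the same count holds for any (not necessarily irreducible) full principal series, which is in fact what you need in the Steinberg step. For the supercuspidal case you defer to Waldspurger--Manderscheid, exactly as the paper does, and you correctly identify that this is where a genuinely external input (theta dichotomy plus the explicit correspondence) is unavoidable by these methods. So: same conclusion, but your approach replaces a blanket citation with direct twisted-Jacquet arguments in three of the four cases, which is more transparent; the paper's approach is shorter but opaque. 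Either is acceptable; if you write yours up, tighten the principal series paragraph by dropping the $\gamma_F(a,\psi)$ remark and stating the Bruhat-filtration lemma cleanly.
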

\begin{theorem}\label{thm:rs-Mp2}
    Let $\pi$ be an irreducible genuine representation of $\widetilde{G}$, and $\eta$ a character of $\mathcal{O}^\times$ such that $\eta(-1)=z_{\psi^\varepsilon}(\pi)$.
    Then $\pi^{K_m, \new}_\eta$ is finite dimensional for all $m\geq0$, and zero-dimensional for all but finitely many $m\geq 0$, and we have
    \begin{equation*}
        \sum_{m=0}^\infty \dim_\C \pi^{K_m, \new}_\eta = \# F_\psi(\pi) / F^{\times2}.
    \end{equation*}
\end{theorem}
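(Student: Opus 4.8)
The plan is to deduce the statement about new subspaces from a computation of the ordinary invariant dimensions $d_m:=\dim_\C\pi^{K_m}_\eta$, together with one structural lemma on oldforms. Since $K_m$ is compact, the functor $V\mapsto V^{K_m}_\eta$ on smooth representations of $\widetilde{G}$ --- equivalently, multiplication by the idempotent $v\mapsto\operatorname{vol}(K_m)^{-1}\int_{K_m}\eta(k)^{-1}\pi(k)v\,dk$ --- is exact, so $d_m$ is additive in short exact sequences of representations. Hence, by Propositions \ref{propreducibility} and \ref{prop:numgeneric}, it suffices to treat four cases: $\pi$ an irreducible principal series $\pi_\psi(\mu)$, an even Weil representation $\omega^+_{\psi,\chi}$, a Steinberg representation $\mathit{St}_{\psi,\chi}$, or a supercuspidal representation (the odd Weil representations $\omega^-_{\psi,\chi}$ being a special case of the latter). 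Furthermore the short exact sequence $0\to\mathit{St}_{\psi,\chi}\to\pi_\psi(\chi\cdot|-|^{1/2})\to\omega^+_{\psi,\chi}\to0$ of Proposition \ref{propreducibility} recovers $d_m(\mathit{St}_{\psi,\chi})$ from the principal series and even Weil computations, so only three genuine computations are required.

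The bridge from $d_m$ to the new dimensions is the identity
\[
  \pi^{K_{m-1}}_\eta\cap\alpha_2\bigl(\pi^{K_{m-2}}_\eta\bigr)=\alpha_2\bigl(\pi^{K_{m-3}}_\eta\bigr)\qquad(m\geq 2),
\]
with the convention $\pi^{K_j}_\eta:=\{0\}$ for $j<0$. This is proved by a direct computation with the explicit matrix descriptions of the groups $K^\varepsilon_m$: the operator $\alpha_2$ corresponds to conjugation by $t(\varpi^{-1})$, so $\alpha_2^{-1}K^\varepsilon_{m-1}\alpha_2$ is again described by congruence conditions, and one checks that the subgroup of $\widetilde{G}$ generated by $\alpha_2^{-1}K^\varepsilon_{m-1}\alpha_2$ and $K^\varepsilon_{m-2}$ equals $K^\varepsilon_{m-3}$ when $m\geq 3$ (using the standard generation of the Iwahori-type groups $K^\varepsilon_j$, $j\geq 1$, by their unipotent and torus parts) and equals all of $\widetilde{G}$ when $m=2$, so that the intersection vanishes there (matching the convention $\pi^{K_{-1}}_\eta=\{0\}$). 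Combined with the Roberts--Schmidt conventions $\pi^{K_0,\old}_\eta=\{0\}$ and $\pi^{K_1,\old}_\eta=\pi^{K_0}_\eta$ at the bottom, and using that $\alpha_2$ is invertible, this yields
\[
  \dim_\C\pi^{K_m,\new}_\eta=(d_m-d_{m-2})-(d_{m-1}-d_{m-3})\qquad(m\geq 0),
\]
so $\sum_{m=0}^{M}\dim_\C\pi^{K_m,\new}_\eta=d_M-d_{M-2}$ telescopes. Thus the theorem is reduced to showing that $d_M-d_{M-2}$ is eventually constant in $M$ --- which at once gives finiteness of the sum and the vanishing of $\pi^{K_m,\new}_\eta$ for large $m$ --- with stable value $\#F_\psi(\pi)/F^{\times2}$.

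It remains to compute $d_m$. For $\pi_\psi(\mu)$ one works in the induced model: by the Iwasawa decomposition together with Lemma \ref{lemcoset} (for $\varepsilon=0$) or Corollary \ref{corcoset} (for $\varepsilon=1$), a function in $\pi^{K^\varepsilon_m}_\eta$ is determined by its values at the listed double-coset representatives $g$, and the value at $g$ is forced to vanish unless $\eta$ and the inducing character $|-|\cdot\mu\cdot\chi_\psi$ are compatible on $\widetilde{B}\cap g^{-1}K^\varepsilon_m g$; counting the surviving representatives as $m$ grows produces a piecewise-linear $d_m$ with $d_M-d_{M-2}=4$ for $M$ large. For $\omega^+_{\psi,\chi}$ one computes $(\omega^+_{\psi,\chi})^{K^\varepsilon_m}_\eta$ inside the Schr\"{o}dinger model $\mathcal{S}(F)$, where the invariance conditions from $N$, from $t(\mathcal{O}^\times)$ and from the remaining generators of $K^\varepsilon_m$ cut out an explicit finite-dimensional space of even functions on $F$; the nonvanishing of the coefficients entering these conditions is exactly governed by the Gauss sums $h(\chi,\psi)$ of \S\ref{secgausssum} (Lemma \ref{lemgausssum2}), and one finds $d_M-d_{M-2}=1$, whence $d_M-d_{M-2}=4-1=3$ for $\mathit{St}_{\psi,\chi}$. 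For $\pi$ supercuspidal, one uses its realization as a compactly induced representation $\cInd^{\widetilde{G}}_{\widetilde{J}}\lambda$ from the classification recalled in \S\ref{sectype}: either evaluate $(\cInd^{\widetilde{G}}_{\widetilde{J}}\lambda)^{K^\varepsilon_m}_\eta$ by Mackey theory as a finite direct sum of $\Hom$-spaces indexed by $\widetilde{J}\backslash\widetilde{G}/K^\varepsilon_m$, or --- as indicated in the introduction --- construct an irreducible supercuspidal representation $\pi'$ of $\SL_2(F)$ with a linear isomorphism $\pi^{K^\varepsilon_m}_\eta\cong(\pi')^{K^\varepsilon_m}_\eta$ for every $m$ and invoke Lansky--Raghuram \cite{lr}; either way $d_M-d_{M-2}=2$, and $=1$ for $\omega^-_{\psi,\chi}$. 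Matching the four stable values $4,3,2,1$ against Proposition \ref{prop:numgeneric} completes the argument.

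The main obstacle is the supercuspidal case: fixing the type $(\widetilde{J},\lambda)$ precisely, carrying out the Mackey decomposition (or building the companion $\SL_2(F)$-representation and justifying the isomorphism of invariant spaces level by level), and keeping the combinatorics of the open compact subgroups under control. A secondary difficulty is the bookkeeping for the even Weil representation --- where the Gauss-sum evaluations of \S\ref{secgausssum} are genuinely needed --- together with the a priori non-obvious fact that the stable value of $d_M-d_{M-2}$ does not depend on the chosen $\eta$; this independence is an output of the case-by-case count that one must verify. By contrast, the oldform intersection identity, although it is precisely the place where the failure of multiplicity one for newforms originates (the new spaces being as large as two-dimensional), is essentially a mechanical subgroup computation.
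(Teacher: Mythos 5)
The paper does not actually reprove this statement: its proof is a one-line citation of Roberts--Schmidt for $\varepsilon=0$, with the remark that $\varepsilon=1$ is similar. Your proposal is therefore an independent reproof attempt, and as such it deserves a careful look. The overall strategy --- deduce the newform count from the invariant dimensions $d_m$ via a telescoping identity --- is reasonable, and the exactness observation (that $V\mapsto V^{K_m}_\eta$ is a projection onto an isotypic component of the compact group $K_m$, hence exact) neatly yields $d_m(\mathit{St}_{\psi,\chi})=d_m(\pi_\psi(\chi|\cdot|^{1/2}))-d_m(\omega^+_{\psi,\chi})$ without any further work, avoiding a potential circularity that the paper does not face only because it cites Roberts--Schmidt. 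However, there are two genuine gaps.

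First, and most seriously, your reduction ultimately rests on the explicit formulas for $d_m=\dim\pi^{K_m}_\eta$. For principal series and even Weil representations these are established for all $\eta$ (Theorems~\ref{thmps} and~\ref{thmevenweil}). But in the supercuspidal case, both the present paper's Theorem~\ref{thmsc} and the Lansky--Raghuram formulas it imports (Theorem~\ref{thm:lrsc}) are stated only under the hypothesis $c(\eta)\leq c(\sigma)-d(\sigma)$, whereas Theorem~\ref{thm:rs-Mp2} is asserted for \emph{every} $\eta$ with $\eta(-1)=z_{\psi^\varepsilon}(\pi)$. The Mackey-theoretic reduction $\dim\pi^{K_m}_\eta=\dim(\pi')^{K_m}_\eta$ to an $\SL_2$-representation $\pi'$ is valid unconditionally, but the final appeal to Lansky--Raghuram to evaluate the right-hand side inherits exactly the same conductor restriction. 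So you have not shown that $d_M-d_{M-2}$ stabilizes to the required value when $c(\eta)$ is large relative to $c(\sigma)$; you would need either to carry out the Mackey double-coset count explicitly in that range or to produce a stability argument that avoids the explicit formulas.

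Second, the key identity $\pi^{K_{m-1}}_\eta\cap\alpha_2(\pi^{K_{m-2}}_\eta)=\alpha_2(\pi^{K_{m-3}}_\eta)$ is sketched but not proved, and the sketch has a real gap beyond the $m=2$ non-compactness point you flag. Showing that $\alpha_2^{-1}K_{m-1}\alpha_2$ and $K_{m-2}$ generate $K_{m-3}$ only tells you that a vector in the intersection has its line stabilized by $K_{m-3}$, acting by \emph{some} character extending $\eta|_{K_{m-2}}$. When $m-3\geq c(\eta)$ one can argue (every character of $K^\varepsilon_j$ has the form $k\mapsto\chi(d)$ with $c(\chi)\leq j$, and such a character is determined on $K_{m-3}$ by its restriction to the torus, hence equals $\eta$) that the line is an $\eta$-eigenline; but when $m-3<c(\eta)$, i.e.\ $m\in\{c(\eta),c(\eta)+1,c(\eta)+2\}$, one must instead show the intersection \emph{vanishes}, which requires the observation that no character of $K_{m-3}$ of conductor $\leq m-3$ restricts to $\eta$ on $K_{m-2}$ if $c(\eta)>m-3$. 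This extra character-extension step is where, in the paper's treatment of the unramified principal series (the proof of Corollary~\ref{corps}), an explicit computation with the Kubota cocycle is used to pin down the failure of $\alpha_2\pi^{K_0}_\eta$ to land in $\pi^{K_1}_\eta$; calling the argument ``essentially mechanical'' undersells it.
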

\begin{proof}
    If $\varepsilon=0$, this is the main theorem of \cite{rs-Mp2}.
    Even if $\varepsilon=1$, it can be proven similarly.
\end{proof}

\section{Non-supercuspidal representations}\label{sec:ps}
In this section, we shall consider irreducible non-supercuspidal genuine representations of $\widetilde{G}=\widetilde{\SL_2}(F)$.
Let $\varepsilon\in\{0,1\}$.
Put $\psi=\psi^\varepsilon$ and $K_m=K^\varepsilon_m$.
We shall fix them throughout this section.
\subsection{Principal series representations}\label{subsec:ps}
We first consider the principal series representations.
For a character $\mu$ of $F^\times$ and $\eta$ of $\mathcal{O}^\times$, by slight abuse of notation, we write $\eta\mu$ or $\mu\eta$ for the product of $\eta$ and $\mu|_{\mathcal{O}^\times}$.
\begin{theorem}\label{thmps}
    Let $\mu$ be a character of $F^\times$, and $\eta$ a character of $\mathcal{O}^\times$.
    \begin{enumerate}[(1)]
        \item Assume that $\eta(-1)\neq\mu(-1)$. Then $\pi_\psi(\mu)^{K_m}_\eta=\{0\}$ for all $m\geq0$.
        \item Assume that $\eta(-1)=\mu(-1)$. If $c(\mu)=c(\eta)=m=0$, then $\pi_\psi(\mu)^{K_m}_\eta=\pi_\psi(\mu)^{K_0}_1$ is 1-dimensional. Otherwise, we have
              \begin{align*}
                   & \dim_\C \pi_\psi(\mu)^{K_m}_\eta =                                                                                                                               \\
                   & \quad \begin{dcases*}
                               0,                                                                                                            & if $m< c(\mu)$,             \\
                               2( (m-c(\mu)-c(\eta\mu)+1)^+ + (m-c(\mu)-c(\eta\mu^{-1})+1)^+)-\delta_{c(\eta\mu)}-\delta_{c(\eta\mu^{-1})} , & if $c(\mu)\leq m< 2c(\mu)$, \\
                               2( m-c(\eta\mu)-c(\eta\mu^{-1})+1 )^+ -\delta_{c(\eta\mu)}-\delta_{c(\eta\mu^{-1})},                          & if $2c(\mu)\leq m$,
                           \end{dcases*}
              \end{align*}
              where $r^+$ denotes the positive part $\max(0,r)$, and $\delta_s$ does the Kronecker delta $\delta_{s,0}$.
    \end{enumerate}

    In particular, $c^\varepsilon_{\min}(\pi_\psi(\mu))=c^\varepsilon_\eta(\pi_\psi(\mu))$ if and only if $\eta=\mu^{\pm1}|_{\mathcal{O}^\times}$, and $c^\varepsilon_{\min}(\pi_\psi(\mu))=c(\mu)$.
\end{theorem}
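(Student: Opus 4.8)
The plan is to realize $\pi_\psi(\mu)$ on its standard induced model and to parametrize $K_m$-fixed (or rather $\eta$-isotypic) vectors by their restrictions to a set of double coset representatives. First I would invoke the Iwasawa decomposition $G=K^\varepsilon_0 B$ together with Lemma \ref{lemcoset} (for $\varepsilon=0$) and Corollary \ref{corcoset} (for $\varepsilon=1$): a function $f$ in the induced space is determined by its values on $\widetilde{G}$, and since $f$ transforms on the left under $\widetilde B$ via $|a|\mu(a)\chi_\psi$ and we want it to transform on the right under $K_m$ via $\eta$, the vector $f$ is determined by the finitely many values $f(\dot g_i)$ for $\dot g_i$ running over the coset representatives $1_2$, $w$ (and $n^{\mathrm{op}}(\xi^\delta\varpi^i)$ for $2\le m$). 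For each representative $\dot g_i$ one gets a compatibility condition: the value $f(\dot g_i)$ must be simultaneously an eigenvector for the left $\widetilde B$-action restricted to $\widetilde B \cap {}^{\dot g_i}\!K_m$ and for the right $K_m$-action, and these are compatible precisely when the characters $\mu\chi_\psi\cdot(\text{Iwasawa cocycle})$ and $\eta$ agree on the relevant compact subgroup. Translating each such agreement into the conductor language of the Convention section will produce, coset by coset, a contribution to the dimension governed by $c(\mu)$, $c(\eta\mu)$, $c(\eta\mu^{-1})$.

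The parity statement (1) is the easy warm-up: the central element $-1_\psi$ acts on $\pi_\psi(\mu)$ by the sign coming from $\mu\chi_\psi$ evaluated at $t(-1)$, i.e.\ by $\mu(-1)$ (the Weil-index factor $\gamma_F(-1,\psi)$ is absorbed into the genuine normalization), while it also acts on $\pi_\psi(\mu)^{K_m}_\eta$ by $\eta(-1)$ since $-1_2\in K_m$; so $\eta(-1)\ne\mu(-1)$ forces the space to be zero. For the three-case formula in (2), the main calculation is to evaluate, for each $i$, the dimension of the space of functions on $\widetilde B\cap {}^{n^{\mathrm{op}}(\xi^\delta\varpi^i)}\!K_m$-compatible, $\eta$-equivariant local sections; this is where the positive parts $(m-c(\mu)-c(\eta\mu^{\pm1})+1)^+$ and the Kronecker-delta corrections $\delta_{c(\eta\mu^{\pm1})}$ appear, the latter accounting for the boundary case where a conductor is $0$ and an extra dimension is lost (or the torus part forces an equality rather than a strict inequality). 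The split into $m<c(\mu)$, $c(\mu)\le m<2c(\mu)$, $2c(\mu)\le m$ reflects whether the coset $w$ contributes, whether only the "small" unipotent cosets $n^{\mathrm{op}}(\varpi^i)$ with $i$ up to roughly $c(\mu)$ contribute, or whether all of them do; I expect this bookkeeping, and in particular matching the two regimes at $m=2c(\mu)$, to be the most delicate part. The final "in particular" clause then drops out: $\dim_\C\pi_\psi(\mu)^{K_m}_\eta$ first becomes nonzero at $m=c(\mu)$ only when both $c(\eta\mu)$ and $c(\eta\mu^{-1})$ are as small as possible, which happens iff $\eta=\mu^{\pm1}|_{\mathcal O^\times}$, and in that case $c^\varepsilon_\eta(\pi_\psi(\mu))=c(\mu)$.

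The structural shortcut I would actually use, rather than redoing everything by hand, is the reduction to $\SL_2(F)$ advertised in the introduction: because the induced model of $\pi_\psi(\mu)$ differs from that of the $\SL_2(F)$ principal series $I(\mu)$ only by the genuine twist $\chi_\psi$, and $\chi_\psi$ restricted to $T\cap K^\varepsilon_0$ is the unramified-looking character $a\mapsto\gamma_F(a,\psi^\varepsilon)$ which is trivial on $1+\mathfrak p$, the $\eta$-isotypic computation for $\widetilde{\SL_2}$ is formally identical to the one for $\SL_2$ with $\eta$ replaced by $\eta$ (the twist does not change conductors, by the Weil-index identities quoted in the Convention section). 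Hence the proof can follow the template of \cite[\S3.2]{lr}: set up the coset sum, compute each coset's contribution as a dimension of a space of compatible functions, and assemble. The one place requiring genuine care beyond \cite{lr} is checking that $\chi_\psi$ contributes no conductor shift on any of the cosets $w$ and $n^{\mathrm{op}}(\xi^\delta\varpi^i)$ — this uses precisely the identity $\gamma_F(ac,\psi)=\gamma_F(c,\psi)$ for $a\in\mathcal O^\times$ with $\operatorname{ord}(c)\equiv c(\psi)\bmod 2$, recorded at the end of the Convention section — and that is the step I would flag as the main obstacle to a mechanical transcription of Lansky--Raghuram's argument.
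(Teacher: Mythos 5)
Your plan is essentially the paper's own proof: parametrize $\eta$-isotypic vectors coset by coset via Lemma~\ref{lemcoset} (and Corollary~\ref{corcoset} for $\varepsilon=1$), use the Mackey/Frobenius reciprocity formula to reduce to character compatibility on $K_m^g\cap B$, extract the dimension from the coset count, and handle part (1) by the central sign; the paper does this via \cite[Corollary 2.7(1)]{yam} and then explicitly mimics \cite[Proposition 3.2.8]{lr}, exactly as you anticipate.

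The one place where you should tighten the argument is the step you correctly flag as the main obstacle. You describe the genuine twist $\chi_\psi$ as restricting on $T\cap K^\varepsilon_0$ to the quadratic character $a\mapsto\gamma_F(a,\psi^\varepsilon)$, which is trivial only on $1+\mathfrak p$. But the relevant composition is $\chi_\psi\circ\bm s^\varepsilon$, and since the splitting is normalized so that $\bm s^\varepsilon(t(a))=\gamma_F(a,\psi^\varepsilon)$ for $a\in\mathcal O^\times$, one has
\begin{equation*}
\chi_\psi\bigl((t(a),\bm s^\varepsilon(t(a)))\bigr)=\gamma_F(a,\psi^\varepsilon)\,\gamma_F(a,\psi^\varepsilon)^{-1}=1
\end{equation*}
for all $a\in\mathcal O^\times$, so the twist is \emph{identically} trivial on the image of the splitting when $\psi=\psi^\varepsilon$, not merely trivial on $1+\mathfrak p$. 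This distinction matters: a genuinely nontrivial quadratic character of $\mathcal O^\times$ (e.g.\ $\gamma_F(\cdot,\psi^1)$ with $\varepsilon=1$, which equals $(\cdot,\varpi)_2$) has conductor $1$ and would shift $c(\eta\mu^{\pm1})$ by $\pm1$ in borderline cases, ruining the Kronecker-delta terms. It is precisely the choice of normalization of $\bm s^\varepsilon$ (pinned by the choice of $\psi^\varepsilon$) that makes $\chi_\psi$ disappear cleanly from the Mackey Hom-spaces and lets the $\SL_2$ bookkeeping from Lansky--Raghuram carry over verbatim. With that correction your plan matches the paper's proof.
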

\begin{proof}
    The proof is similar to that of \cite[Proposition 3.2.8]{lr}.

    Note that the central sign of $\pi_\psi(\mu)$ with respect to $\psi$ is equal to $\mu(-1)$.
    Hence we have $\pi_\psi(\mu)^{K_m}_\eta=\{0\}$ for all $m\geq0$, if $\mu(-1)\neq \eta(-1)$.
    Suppose that $\mu(-1)=\eta(-1)$.
    Recall that $\pi_\psi(\mu)^{K_m}_\eta$ is defined to be $\{0\}$ when $m<c(\eta)$.
    Suppose that $m\geq c(\eta)$.

    We first consider the case $\varepsilon=0$.
    By the proof of \cite[Corollary 2.7 (1)]{yam}, the space $\pi_\psi(\mu)^{K_m}_\eta$ is isomorphic to
    \begin{align*}
        \Hom_{K_m} (\eta, \Res^G_{K_m}(\Ind^{\widetilde{G}}_{\widetilde{B}}( |-| \mu \chi_\psi)))
         & \cong \bigoplus_{g\in K_m \backslash \widetilde{G}/ \widetilde{B}} \Hom_{K_m \cap \prescript{g}{}{\widetilde{B}}}(\eta, \prescript{g}{}{(|-| \mu \chi_\psi)}) \\
         & \cong \bigoplus_{g\in K_m \backslash G/ B} \Hom_{K_m \cap \prescript{g}{}{B}}(\eta, \prescript{g}{}{\mu})                                                     \\
         & \cong \bigoplus_{g\in K_m \backslash G/ B} \Hom_{K_m^g \cap B}(\eta^g, \mu),
    \end{align*}
    where $\eta^g$ and $\mu$ denote characters which send $\left( \begin{array}{cc}a&b\\ c&d \end{array} \right)$ to $\eta(gdg^{-1})$ and $\mu(a)$, respectively.
    Thus we have
    \begin{align*}
        \dim_\C \pi_\psi(\mu)^{K_m}_\eta
         & =\sum_{g\in K_m \backslash G/ B} \dim_\C \Hom_{K_m^g \cap B}(\eta^g, \mu),
    \end{align*}
    which is equal to the number of double cosets $K_m g B$ such that $\eta^g=\mu$ on $K_m^g\cap B$.
    Since we are now considering the case $\varepsilon=0$, the double cosets are classified in Lemma \ref{lemcoset}.
    If $g=1_2$ (resp. $g=w$), then it can be easily seen that $\eta^g=\mu$ on $K_m^g\cap B$ if and only if $\eta=\mu^{-1}$ (resp. $\eta=\mu$).
    Now we assume that $m\geq2$ and consider the case $g=n^{\mathrm{op}}(z_i)$, where $1\leq i \leq m-1$ and $z_i=\varpi^i$ or $\xi\varpi^i$.
    A straightforward calculation shows that
    \begin{align*}
        \left( \left( \begin{array}{cc}a&b\\ c&d \end{array} \right)^g \right)
        =\left( \begin{array}{cc}1&\\ -z_i&1 \end{array} \right) \left( \begin{array}{cc}a&b\\ c&d \end{array} \right) \left( \begin{array}{cc}1&\\ z_i&1 \end{array} \right)
        = \left( \begin{array}{cc}a+z_ib&b\\ c-z_a+z_id-z_i^2b&d-z_ib \end{array} \right),
    \end{align*}
    and hence
    \begin{align*}
        K_m^g \cap B
         & =\Set{\left( \begin{array}{cc}a+z_ib&b\\ 0&d-z_ib \end{array} \right) | a,d\in\mathcal{O}^\times,\ b\in\mathcal{O},\ c\in\mathfrak{p}^m,\ ad-bc=1,\ c-z_ia+z_id-z_i^2b=0 }       \\
         & =\Set{\left( \begin{array}{cc}(d-z_ib)^{-1}&b\\ 0&d-z_ib \end{array} \right) | d\in\mathcal{O}^\times,\ b\in\mathcal{O},\ z_i((d-z_ib)^{-1}-z_ib)-z_id+z_i^2b\in\mathfrak{p}^m } \\
         & =\Set{\left( \begin{array}{cc}(d-z_ib)^{-1}&b\\ 0&d-z_ib \end{array} \right) | d\in\mathcal{O}^\times,\ b\in\mathcal{O},\ d^2-1-z_ibd\in\mathfrak{p}^{m-i} }.
    \end{align*}
    Thus, for
    \begin{align*}
        x=\left( \begin{array}{cc}(d-z_ib)^{-1}&b\\ 0&d-z_ib \end{array} \right)\in K_m^g \cap B,
    \end{align*}
    we have $\eta^g(x)=\eta(d)$ and $\mu(x)=\mu(d-z_ib)^{-1}$.
    In the case $i\geq m/2$, since $i\geq m-i$, the condition
    \begin{equation*}
        d^2-1-z_ibd\in\mathfrak{p}^{m-i}
    \end{equation*}
    is equivalent to
    \begin{equation*}
        d\in \pm1+\mathfrak{p}^{m-i}.
    \end{equation*}
    Then one can see that $\eta^g=\mu$ on $K_m^g\cap B$ if and only if $c(\mu)\leq i$ and $c(\eta\mu)\leq m-i$.
    In the case $i\leq m/2$, put $c'=d^2-1-z_ibd$.
    Since $i\leq m-i$, the set
    \begin{align*}
        \set{(d,b)\in\mathcal{O}^\times \times \mathcal{O} | d^2-1-z_ibd\in\mathfrak{p}^{m-i}}
    \end{align*}
    is equal to
    \begin{align*}
        \Set{\left( d, \frac{d^2-1-c'}{z_id} \right)| d\in\pm1+\mathfrak{p}^i, c'\in \mathfrak{p}^{m-i}}.
    \end{align*}
    Then one can see that $\eta^g=\mu$ on $K_m^g\cap B$ if and only if $c(\mu)\leq m-i$ and $c(\eta\mu^{-1})\leq i$.
    Now the assertions for $\varepsilon=0$ follow.

    We finally consider the case $\varepsilon=1$.
    As in the case of $\varepsilon=0$, we can see that the dimension of $\pi_\psi(\mu)^{K^1_m}_\eta$ is equal to the number of double cosets $K^1_mgB$ such that $\eta^g=\mu$ on $(K^1_m)^g\cap B$.
    Note that $\eta^\beta=\eta$ and $\mu^\beta=\mu$ as functions on $K^1$.
    Hence we can see that $\eta^g=\mu$ on $(K^1_m)^g\cap B$ if and only if $\eta^{\beta^{-1} g \beta}=\mu$ on $(K^0_m)^{\beta^{-1} g\beta}\cap B$.
    Since a bijection $g\mapsto \beta^{-1}g\beta$ from $G$ to $G$ gives a bijection between $K^1_m\backslash G/ B$ and $K^0_m\backslash G/ B$, we are reduced to the case $\varepsilon=0$.
\end{proof}
\begin{corollary}\label{corps}
    Let $\mu$ be a character of $F^\times$ and $\eta$ that of $\mathcal{O}^\times$ with $\eta(-1)=\mu(-1)$.
    Put $M=c^\varepsilon_\eta(\pi_\psi(\mu))$.
    \begin{enumerate}[(1)]
        \item If $c(\mu)=c(\eta)=0$, then we have $M=0$ and
              \begin{align*}
                  \dim_\C \pi_\psi(\mu)^{K_0, \new}_1
                                                            & =\dim_\C \pi_\psi(\mu)^{K_1, \new}_1
                  =\dim_\C \pi_\psi(\mu)^{K_2, \new}_1
                  =\dim_\C  \pi_\psi(\mu)^{K_3, \new}_1 =1, &                                      &  &                     \\
                  \dim_\C  \pi_\psi(\mu)^{K_m, \new}_1      & = 0,                                 &  & \text{if }m\geq4. &
              \end{align*}
        \item If $\eta=\mu|_{\mathcal{O}^\times}\neq \mu^{-1}|_{\mathcal{O}^\times}$ or $\eta=\mu^{-1}|_{\mathcal{O}^\times}\neq \mu|_{\mathcal{O}^\times}$, then we have $M=c(\mu)$ and
              \begin{align*}
                  \dim_\C  \pi_\psi(\mu)^{K_M, \new}_1     & = 1, &  &                       & \\
                  \dim_\C  \pi_\psi(\mu)^{K_{M+1}, \new}_1 & = 2, &  &                       & \\
                  \dim_\C  \pi_\psi(\mu)^{K_{M+2}, \new}_1 & = 1, &  &                       & \\
                  \dim_\C  \pi_\psi(\mu)^{K_m, \new}_1     & = 0, &  & \text{if } m\geq M+3. &
              \end{align*}
        \item Otherwise, we have
              \begin{align*}
                  \dim_\C  \pi_\psi(\mu)^{K_M, \new}_1     & = 2, &  &                       & \\
                  \dim_\C  \pi_\psi(\mu)^{K_{M+1}, \new}_1 & = 2, &  &                       & \\
                  \dim_\C  \pi_\psi(\mu)^{K_m, \new}_1     & = 0, &  & \text{if } m\geq M+2. &
              \end{align*}
    \end{enumerate}
\end{corollary}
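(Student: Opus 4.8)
The plan is to reduce the whole corollary to one structural identity among the invariant spaces and then read off the numbers from Theorem \ref{thmps}. Put $d(m) = \dim_\C \pi_\psi(\mu)^{K_m}_\eta$, with the convention $d(m) = 0$ for $m < 0$. Since $\alpha_2$ is invertible and each inclusion $\pi_\psi(\mu)^{K_{m-1}}_\eta \hookrightarrow \pi_\psi(\mu)^{K_m}_\eta$ is injective, unwinding the definition of $\pi_\psi(\mu)^{K_m, \old}_\eta$ as the span of $\pi_\psi(\mu)^{K_{m-1}}_\eta$ and $\alpha_2 \pi_\psi(\mu)^{K_{m-2}}_\eta$ gives, for every $m \geq 0$,
\begin{equation*}
    \dim_\C \pi_\psi(\mu)^{K_m, \new}_\eta = d(m) - d(m-1) - d(m-2) + \dim_\C\left( \pi_\psi(\mu)^{K_{m-1}}_\eta \cap \alpha_2 \pi_\psi(\mu)^{K_{m-2}}_\eta \right),
\end{equation*}
where the conventions built into $\pi_\psi(\mu)^{K_0, \old}_\eta$ and $\pi_\psi(\mu)^{K_1, \old}_\eta$ make this hold also for $m = 0, 1$. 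So everything comes down to the last intersection.

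The key claim I would prove is that, for all $m \geq 2$,
\begin{equation*}
    \pi_\psi(\mu)^{K_{m-1}}_\eta \cap \alpha_2 \pi_\psi(\mu)^{K_{m-2}}_\eta = \alpha_2 \pi_\psi(\mu)^{K_{m-3}}_\eta
\end{equation*}
(again with $\pi_\psi(\mu)^{K_j}_\eta = \{0\}$ for $j < 0$). The inclusion $\supseteq$ is immediate from $\alpha_2 \pi_\psi(\mu)^{K_j}_\eta \subseteq \pi_\psi(\mu)^{K_{j+2}}_\eta$. For $\subseteq$, let $v \in \pi_\psi(\mu)^{K_{m-2}}_\eta$ with $\alpha_2 v \in \pi_\psi(\mu)^{K_{m-1}}_\eta$. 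Since $\alpha_2 = \pi((t(\varpi^{-1}), 1))$, this says precisely that $v$ is fixed, up to the character $g \mapsto \eta(g_{22})$, by both $K^\varepsilon_{m-2}$ and the conjugate $t(\varpi) K^\varepsilon_{m-1} t(\varpi)^{-1}$ (conjugation by the diagonal torus not changing the lower-right entry, so the two characters agree). For $\varepsilon = 0$ a direct matrix computation identifies $t(\varpi) K^0_{m-1} t(\varpi)^{-1}$ with $\Set{g \in G | g_{12} \in \mathfrak{p}^2,\ g_{21} \in \mathfrak{p}^{m-3},\ g_{11}, g_{22} \in \mathcal{O}}$. For $m \geq 3$ this lies inside $K^0_{m-3}$ and, because it contains every $n^{\mathrm{op}}(x)$ with $x \in \mathfrak{p}^{m-3}$, it generates $K^0_{m-3}$ together with $K^0_{m-2}$; hence $v \in \pi_\psi(\mu)^{K^0_{m-3}}_\eta$. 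For $m = 2$ the conjugate contains $n^{\mathrm{op}}(\varpi^{-1})$, and $n^{\mathrm{op}}(\varpi^{-1})$ together with $\SL_2(\mathcal{O})$ generates $\SL_2(F)$, which forces $v = 0 = \alpha_2 \pi_\psi(\mu)^{K_{-1}}_\eta$ because $\pi_\psi(\mu)$ has no finite-dimensional subquotient. The case $\varepsilon = 1$ follows by conjugating everything with $\beta$, which commutes with $t(\varpi^{-1})$. The step needing most care is the metaplectic bookkeeping: one must verify that the subgroups of $\widetilde{G}$ occurring here are the expected lifts of the corresponding subgroups of $G$ and that the transformation characters stay consistent, using uniqueness of the splittings over $\SL_2(\mathcal{O})$ and over the pro-$p$ congruence subgroups $K^\varepsilon_j$ ($j \geq 1$), the structure of their character groups, and the hypothesis $\eta(-1) = \mu(-1)$; I expect this to be the main obstacle.

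Granting the claim, $\dim_\C \pi_\psi(\mu)^{K_m, \new}_\eta = d(m) - d(m-1) - d(m-2) + d(m-3)$ for all $m \geq 0$, and the rest is a finite computation: substitute the piecewise formula of Theorem \ref{thmps} and take these second differences in each of the three cases. In case (1) one has $d(0) = 1$ and $d(m) = 2m$ for $m \geq 1$, so the sequence of new-form dimensions is $1, 1, 1, 1, 0, 0, \dots$. In case (2), where $M := c^\varepsilon_\eta(\pi_\psi(\mu)) = c(\mu)$ and $\{c(\eta\mu), c(\eta\mu^{-1})\} = \{0, c(\mu)\}$, the nonzero terms occur at $m = M, M+1, M+2$ with values $1, 2, 1$. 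In case (3) one first reads off $M$ from the formula (it is $c(\eta\mu) + c(\eta\mu^{-1})$ or $c(\mu) + \min(c(\eta\mu), c(\eta\mu^{-1}))$, according to where the formula first becomes nonzero), then uses the relations among $c(\mu)$, $c(\eta\mu)$, $c(\eta\mu^{-1})$ — in particular $c(\mu^2) = \max(c(\eta\mu), c(\eta\mu^{-1}))$ when these differ, and $c(\mu^2) = c(\mu)$ when $c(\mu) \geq 2$ — to check that $d(M) = 2$ and $d(M+1) = 4$, so the nonzero terms occur at $m = M, M+1$ with values $2, 2$. In each case the vanishing for larger $m$ drops out of the telescoping, and Theorem \ref{thm:rs-Mp2} gives the consistency check that the values sum to $4$ ($1+1+1+1 = 1+2+1 = 2+2$).
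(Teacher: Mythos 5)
Your proposal takes a genuinely different route from the paper. For case (1) the paper makes an explicit computation with the Kubota cocycle and Weil index to show that $\alpha_2(\pi_\psi(\mu)^{K_0}_1)$ is not contained in $\pi_\psi(\mu)^{K_1}_1$, then finishes using monotonicity and the sum constraint from Theorem~\ref{thm:rs-Mp2}; cases (2) and (3) are dispatched by the sum constraint alone. You instead propose a uniform structural identity $\pi_\psi(\mu)^{K_{m-1}}_\eta \cap \alpha_2 \pi_\psi(\mu)^{K_{m-2}}_\eta = \alpha_2 \pi_\psi(\mu)^{K_{m-3}}_\eta$, to be proven by conjugating level structures by $t(\varpi)$ and analyzing which compact open subgroup the two relevant groups jointly generate. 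The identity itself is true (the corollary implies it), and the subgroup-generation computation at the level of $G=\SL_2(F)$ is correct. However, your proof has two unresolved gaps. First, the metaplectic bookkeeping you flag is a real issue, not merely a formality: the conjugate-by-$(t(\varpi^{-1}),1)$ of the splitting $\bm{s}^\varepsilon$ on $K^\varepsilon_{m-1}$ need not agree with the restriction of $\bm{s}^\varepsilon$ to the conjugate subgroup — the discrepancy is a quadratic character supported on the torus part — and this is exactly the phenomenon the paper's Weil-index calculation is designed to track. Second, when $c(\eta) \leq m < c(\eta)+3$ (which hits precisely the degrees carrying newforms in cases (2) and (3)) you have $m-3 < c(\eta)$, so $\eta$ does not descend to a character of $K^\varepsilon_{m-3}$; the conclusion ``$v \in \pi_\psi(\mu)^{K^\varepsilon_{m-3}}_\eta$'' then needs to be reinterpreted as ``no such character exists, forcing $v=0$,'' a separate argument you do not give.

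The good news is that your proposal contains a much cheaper complete argument, hiding in the sentence you treat as a mere consistency check. You already observe that the inclusion $\alpha_2 \pi_\psi(\mu)^{K_{m-3}}_\eta \subseteq \pi_\psi(\mu)^{K_{m-1}}_\eta \cap \alpha_2 \pi_\psi(\mu)^{K_{m-2}}_\eta$ is immediate. This one-sided inclusion already gives $\dim_\C \pi_\psi(\mu)^{K_m, \new}_\eta \geq L(m) := d(m)-d(m-1)-d(m-2)+d(m-3)$ for every $m\geq 0$ (it is an equality for $m=0,1$ by the conventions on oldforms). The partial sums telescope: $\sum_{m=0}^{M} L(m) = d(M) - d(M-2)$, and since by Theorem~\ref{thmps} the leading slope of $d$ is $2$, this tends to $4$. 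Theorem~\ref{thm:rs-Mp2} and Proposition~\ref{prop:numgeneric} give $\sum_{m\geq 0} \dim_\C \pi_\psi(\mu)^{K_m, \new}_\eta = 4$ as well, so the termwise inequalities must all be equalities: $\dim_\C \pi_\psi(\mu)^{K_m, \new}_\eta = L(m)$ for all $m$. This proves all three cases at once, renders the full intersection identity (and the metaplectic bookkeeping) unnecessary, and in fact subsumes the paper's explicit computation for case (1) as well. I would strongly recommend reorganizing the proof along these lines.
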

\begin{proof}
    The assertions (2) and (3) follow easily from Theorems \ref{thm:rs-Mp2} and \ref{thmps} and Proposition \ref{prop:numgeneric}.
    Assume that both of $\mu$ and $\eta$ are unramified.
    Suppose that $\alpha_2 (\pi_\psi(\mu)^{K_0}_1)$ is not contained in $\pi_\psi(\mu)^{K_1}_1$.
    Then, by Theorem \ref{thmps}, we have
    \begin{align*}
        \dim_\C  \pi_\psi(\mu)^{K_0, \new}_1 & = \dim_\C  \pi_\psi(\mu)^{K_0}_1 =1,                                                                          \\
        \dim_\C  \pi_\psi(\mu)^{K_1, \new}_1 & = \dim_\C  \pi_\psi(\mu)^{K_1}_1 - \dim_\C  \pi_\psi(\mu)^{K_0}_1= 2-1 =1,                                    \\
        \dim_\C  \pi_\psi(\mu)^{K_2, \new}_1 & = \dim_\C  \pi_\psi(\mu)^{K_2}_1 - \dim_\C  \pi_\psi(\mu)^{K_1}_1 - \dim_\C  \pi_\psi(\mu)^{K_0}_1 = 4-2-1=1,
    \end{align*}
    and
    \begin{align*}
        \dim_\C  \pi_\psi(\mu)^{K_3, \new}_1
         & = \dim_\C  \pi_\psi(\mu)^{K_3}_1 - \dim_\C  \pi_\psi(\mu)^{K_2}_1 - \dim_\C  \pi_\psi(\mu)^{K_1}_1 + \dim_\C \pi_\psi(\mu)^{K_2}_1 \cap \alpha_2 \pi_\psi(\mu)^{K_1}_1 \\
         & \geq 6-4-2 +1= 1.
    \end{align*}
    Combining this with Theorem \ref{thm:rs-Mp2}, we have
    \begin{align*}
        \dim_\C  \pi_\psi(\mu)^{K_3, \new}_1 & = 1, \\
        \dim_\C  \pi_\psi(\mu)^{K_m, \new}_1 & = 0,
    \end{align*}
    for $m\geq 4$.
    It is thus enough to show that $\alpha_2 (\pi_\psi(\mu)^{K_0}_1)$ is not contained in $\pi_\psi(\mu)^{K_1}_1$.
    Let $f^0$ be a nonzero function in $\pi_\psi(\mu)^{K_0}_1$.
    We shall show that $\alpha_2f^0$ does not belong to $\pi_\psi(\mu)^{K_1}_1$.
    Let $c\in \varpi^{1+\varepsilon}\mathcal{O}^\times$.
    Since
    \begin{equation*}
        (n^{\mathrm{op}}(c), \bm{s}^\varepsilon(n^{\mathrm{op}}(c)) ) (t(\varpi^{-1}), 1) = (\left( \begin{array}{cc}\varpi^{-1}&\\ c\varpi^{-1}&\varpi \end{array} \right), (\varpi,c)_2),
    \end{equation*}
    we have
    \begin{equation*}
        [n^{\mathrm{op}}(c) \cdot \alpha_2 f^0](1) = f^0((\left( \begin{array}{cc}\varpi^{-1}&\\ c\varpi^{-1}&\varpi \end{array} \right), (\varpi,c)_2)).
    \end{equation*}
    On the other hand, we have
    \begin{align*}
         & (n(c^{-1}), 1) \cdot (wt(\varpi^\varepsilon), \bm{s}^\varepsilon(wt(\varpi^\varepsilon))) \cdot (n(c\varpi^{-2\varepsilon}), \bm{s}^\varepsilon(n(c\varpi^{-2\varepsilon}))) \cdot (t(-c\varpi^{-1-\varepsilon}), \bm{s}^\varepsilon(t(-c\varpi^{-1-\varepsilon})))                                    \\
         & = ( \left( \begin{array}{cc}-c^{-1}\varpi^\varepsilon&\varpi^{-\varepsilon}\\ -\varpi^\varepsilon& \end{array} \right), 1) (\left( \begin{array}{cc}-c\varpi^{-1-\varepsilon}&-\varpi^{1-\varepsilon}\\ &-c^{-1}\varpi^{1+\varepsilon} \end{array} \right), \gamma_F(-c\varpi^{-1-\varepsilon}, \psi)) \\
         & = (\left( \begin{array}{cc}\varpi^{-1}&\\ c\varpi^{-1}&\varpi \end{array} \right), (-c\varpi^{-1+\varepsilon}, -\varpi^\varepsilon)_2 \gamma_F(-c\varpi^{-1-\varepsilon}, \psi)).
    \end{align*}
    Since $f^0$ is left $N$-invariant and right $K^\varepsilon_0$-invariant, this implies that
    \begin{equation*}
        f^0((\left( \begin{array}{cc}\varpi^{-1}&\\ c\varpi^{-1}&\varpi \end{array} \right), (\varpi,c)_2)) = (\varpi,c)_2 (-c\varpi^{-1+\varepsilon}, -\varpi^\varepsilon)_2 \gamma_F(-c\varpi^{-1-\varepsilon}, \psi) f^0(1).
    \end{equation*}
    Moreover, $(-c\varpi^{-1+\varepsilon}, -\varpi^\varepsilon)_2 \gamma_F(-c\varpi^{-1-\varepsilon}, \psi)$ is equal to
    \begin{align*}
         & (-c\varpi, -\varpi^\varepsilon)_2 \gamma_F(-c\varpi^{1+\varepsilon}, \psi)     \\
         & = (-c\varpi, -1)_2 \gamma_F(-c\varpi, \psi) \gamma_F(\varpi^\varepsilon, \psi) \\
         & = (-c\varpi, -1)_2 \gamma_F(\varpi^\varepsilon, \psi)^2                        \\
         & = (-c\varpi, -1)_2 (\varpi^\varepsilon, -1)_2                                  \\
         & = (-c\varpi^{-1-\varepsilon}, -1)_2 = 1.
    \end{align*}
    Therefore, we have
    \begin{align*}
        [n^{\mathrm{op}}(c) \cdot \alpha_2 f^0](1) = (\varpi,c)_2 f^0(1).
    \end{align*}
    Since $n^{\mathrm{op}}(c)$ is an element in $K_1$, this means that $\alpha_2 f^0$ is not an element in $\pi_\psi(\mu)^{K_1}_1$.
    This completes the proof.
\end{proof}

Although the dimension of a space $\pi_\psi(\mu)^{K_{c^\varepsilon_\eta(\pi_\psi(\mu))}}_\eta$ of local newforms is greater than one for some $\eta$, in this paper, we will give a multiplicity one property for local newforms in terms of the Whittaker functional for certain important characters $\eta$.
For a nontrivial additive character $\Psi$ of $F$, we consider a $\Psi$-Whittaker functional $\lambda_\Psi$ on $\pi_\psi(\mu)$ defined by
\begin{equation*}
    \lambda_\Psi(f) = \lim_{r\to \infty} \int_{\mathfrak{p}^{-r}} f((w n(x), 1)) \overline{\Psi(x)} dx, \qquad f \in \pi_\psi(\mu).
\end{equation*}
\begin{corollary}\label{corwhittakerps}
    Let $\mu$ be a character of $F^\times$, and $\eta$ a character of $\mathcal{O}^\times$ such that $\eta(-1)=\mu(-1)$.
    Put $M=c^\varepsilon_\eta(\pi_\psi(\mu))$.
    For $m\geq M$ and $i \geq 0$, let $f^m_w$ (resp. $f^m_1$, resp. $f^m_{i,2}$, resp. $f^m_{i, \xi}$) be a nonzero vector in $\pi_\psi(\mu)^{K_m}_\eta$ supported on $\widetilde{B}(x,1)K_m$, where $x=w$ (resp. $1$, resp. $n^{\mathrm{op}}(\varpi^{c(\eta\mu^{-1})+i+\varepsilon})$, resp. $n^{\mathrm{op}}(\xi\varpi^{c(\eta\mu^{-1})+i+\varepsilon})$), if it exists.
    \begin{enumerate}[(1)]
        \item Assume that $c(\mu)=c(\eta)=0$. Then we have $M=0$ and
              \begin{align*}
                  \pi_\psi(\mu)^{K_0}_1 & =\C f^0_w,                                                           \\
                  \pi_\psi(\mu)^{K_1}_1 & =\C f^1_w \oplus \C f^1_1,                                           \\
                  \pi_\psi(\mu)^{K_2}_1 & =\C f^2_w \oplus \C f^2_1 \oplus \C f^2_{1,2} \oplus \C f^2_{1,\xi}.
              \end{align*}
              We also have
              \begin{itemize}
                  \item $\lambda_\psi(f^0_w) \neq 0$ (resp. $\lambda_{\psi_\xi}(f^0_w) \neq 0$) unless $\mu = \chi_\xi \cdot |-|^{-\frac{1}{2}}$ (resp. $\mu = |-|^{-\frac{1}{2}}$);
                  \item $\lambda_\psi(f^1_w)\neq 0$, $\lambda_\psi(f^1_1)\neq 0$, $\lambda_{\psi_\xi}(f^1_w)\neq 0$, and $\lambda_{\psi_\xi}(f^1_1)\neq 0$;
                  \item at least one of $\lambda_{\psi_\varpi}(f^2_{1,2})$ (resp. $\lambda_{\psi_\varpi}(f^2_{1,2})$, resp. $\lambda_{\psi_\varpi}(f^2_{1,\xi})$, resp. $\lambda_{\psi_{\xi\varpi}}(f^2_{1,2})$) or $\lambda_{\psi_\varpi}(f^2_{1,\xi})$ (resp. $\lambda_{\psi_{\xi\varpi}}(f^2_{1,2})$, resp. $\lambda_{\psi_{\xi\varpi}}(f^2_{1,\xi})$, resp. $\lambda_{\psi_{\xi\varpi}}(f^2_{1,\xi})$) is nonzero, for any $\mu$.
              \end{itemize}
        \item Assume that $\mu$ is nontrivial quadratic on $\mathcal{O}^\times$ and $\eta=\mu|_{\mathcal{O}^\times}$.
              Then we have $M=1$ and
              \begin{align*}
                  \pi_\psi(\mu)^{K_1}_\eta & =\C f^1_w \oplus \C f^1_1,                                           \\
                  \pi_\psi(\mu)^{K_2}_\eta & =\C f^2_w \oplus \C f^1_1 \oplus \C f^2_{1,2} \oplus \C f^2_{1,\xi}.
              \end{align*}
              We also have
              \begin{itemize}
                  \item $\lambda_\psi(f^1_w)\neq 0$, $\lambda_\psi(f^1_1)\neq 0$, $\lambda_{\psi_\xi}(f^1_w)\neq 0$, and $\lambda_{\psi_\xi}(f^1_1)\neq 0$;
                  \item at least one of $\lambda_{\psi_\varpi}(f^2_{1,2})$ (resp. $\lambda_{\psi_\varpi}(f^2_{1,2})$, resp. $\lambda_{\psi_\varpi}(f^2_{1,\xi})$, resp. $\lambda_{\psi_{\xi\varpi}}(f^2_{1,2})$) or $\lambda_{\psi_\varpi}(f^2_{1,\xi})$ (resp. $\lambda_{\psi_{\xi\varpi}}(f^2_{1,2})$, resp. $\lambda_{\psi_{\xi\varpi}}(f^2_{1,\xi})$, resp. $\lambda_{\psi_{\xi\varpi}}(f^2_{1,\xi})$) is nonzero.
              \end{itemize}
        \item Assume that $\eta=\mu|_{\mathcal{O}^\times}\neq \mu^{-1}|_{\mathcal{O}^\times}$.
              Then we have $M=c(\mu)$ and
              \begin{align*}
                  \pi_\psi(\mu)^{K_M}_\eta     & =\C f^M_w,                                                       \\
                  \pi_\psi(\mu)^{K_{M+1}}_\eta & =\C f^{M+1}_w \oplus \C f^{M+1}_{1,2} \oplus \C f^{M+1}_{1,\xi}.
              \end{align*}
              We also have
              \begin{itemize}
                  \item $\lambda_\psi(f^M_w)\neq 0$ and $\lambda_{\psi_\xi}(f^M_w)\neq 0$;
                  \item at least one of $\lambda_{\psi_\varpi}(f^{M+1}_{1,2})$ (resp. $\lambda_{\psi_\varpi}(f^{M+1}_{1,2})$, resp. $\lambda_{\psi_\varpi}(f^{M+1}_{1,\xi})$, resp. $\lambda_{\psi_{\xi\varpi}}(f^{M+1}_{1,2})$) or $\lambda_{\psi_\varpi}(f^{M+1}_{1,\xi})$ (resp. $\lambda_{\psi_{\xi\varpi}}(f^{M+1}_{1,2})$, resp. $\lambda_{\psi_{\xi\varpi}}(f^{M+1}_{1,\xi})$, resp. $\lambda_{\psi_{\xi\varpi}}(f^{M+1}_{1,\xi})$) is nonzero.
              \end{itemize}
        \item Assume that $0<c(\eta\mu^{-1})\leq c(\eta\mu)$.
              Then we have
              \begin{align*}
                  \pi_\psi(\mu)^{K_M}_\eta     & =\C f^M_{0,2} \oplus \C f^M_{0,\xi},                                                           \\
                  \pi_\psi(\mu)^{K_{M+1}}_\eta & =\C f^{M+1}_{0,2} \oplus \C f^{M+1}_{0,\xi} \oplus \C f^{M+1}_{1,2} \oplus \C f^{M+1}_{1,\xi}.
              \end{align*}
              Moreover, for each $i=0, 1$, at least one of $\lambda_{\psi_{\varpi^i}}(f^{M+i}_{i,2})$ (resp. $\lambda_{\psi_{\varpi^i}}(f^{M+i}_{i,2})$, resp. $\lambda_{\psi_{\varpi^i}}(f^{M+i}_{i,\xi})$, resp. $\lambda_{\psi_{\xi\varpi^i}}(f^{M+i}_{i,2})$) or $\lambda_{\psi_{\varpi^i}}(f^{M+i}_{i,\xi})$ (resp. $\lambda_{\psi_{\xi\varpi^i}}(f^{M+i}_{i,2})$, resp. $\lambda_{\psi_{\xi\varpi^i}}(f^{M+i}_{i,\xi})$, resp. $\lambda_{\psi_{\xi\varpi^i}}(f^{M+i}_{i,\xi})$) is nonzero.
    \end{enumerate}
\end{corollary}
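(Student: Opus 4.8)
The plan has two parts: first to read off the asserted bases from the proof of Theorem~\ref{thmps}, and then, for each basis vector $f$, to unfold the Jacquet integral defining $\lambda_\Psi$ and reduce it to a Gauss sum of the type $g(\chi,\psi)$ treated in Lemma~\ref{lemgausssum}.

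\emph{The bases.} In the proof of Theorem~\ref{thmps} the space $\pi_\psi(\mu)^{K_m}_\eta$ was identified with $\bigoplus_g\Hom_{K_m^g\cap B}(\eta^g,\mu)$, the sum over $K_m\backslash G/B$; each nonzero summand is one-dimensional and is spanned by the unique (up to a scalar) vector in $\pi_\psi(\mu)^{K_m}_\eta$ supported on the single double coset $\widetilde{B}\,g^{-1}K_m$. Feeding in the representatives of Lemma~\ref{lemcoset} (and Corollary~\ref{corcoset} for $\varepsilon=1$, which accounts for the shift of the exponent by $\varepsilon$) together with the criterion ``$\eta^g=\mu$ on $K_m^g\cap B$'' worked out there --- that is, $\eta=\mu^{-1}$ on $\mathcal{O}^\times$ for $g=1_2$, $\eta=\mu$ on $\mathcal{O}^\times$ for $g=w$, $c(\mu)\le m-j$ and $c(\eta\mu^{-1})\le j$ for $g=n^{\mathrm{op}}(z_j)$ with $j\le m/2$, and $c(\mu)\le j$ and $c(\eta\mu)\le m-j$ with $j\ge m/2$ --- and comparing with the dimension formula of Theorem~\ref{thmps}, one obtains in each of the four cases exactly the displayed decompositions into lines; the generators $f^m_w$, $f^m_1$, $f^m_{i,2}$, $f^m_{i,\xi}$ are those attached respectively to $g^{-1}=w$, $1_2$, $n^{\mathrm{op}}(\varpi^{c(\eta\mu^{-1})+i+\varepsilon})$, $n^{\mathrm{op}}(\xi\varpi^{c(\eta\mu^{-1})+i+\varepsilon})$ (in case (1) one must also handle $m=0$, where the only coset is $\{1_2\}$ and $\widetilde{B}K_0=\widetilde{G}$).

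\emph{Unfolding $\lambda_\Psi$.} Fix such an $f$, supported on $\widetilde{B}yK_m$ at the level $m$ where it lives. Since the Kubota cocycle $\bm{c}(w,n(x))$ is trivial, $f((wn(x),1))$ is supported on $S_y:=\{x\in F\mid wn(x)\in\widetilde{B}yK_m\}$, which is computed from an Iwasawa/Bruhat decomposition $wn(x)=b_x\,y\,k_x$ with $b_x=t(\alpha_x)n(\cdot)\in B$ and $k_x\in K^0$: for $y=w$ one finds $S_w\supseteq\mathfrak{p}^{-\varepsilon}$ (plus, when $m=0$, finitely many shells of negative valuation), for $y=1_2$ one finds $S_{1_2}=\{\operatorname{ord}(x)\le -m\}$, and for $y=n^{\mathrm{op}}(\varpi^{j})$ (resp.\ $n^{\mathrm{op}}(\xi\varpi^{j})$) one finds a single shell $\operatorname{ord}(x)=-j$ on which the unit part of $\varpi^{j}x$ is a square (resp.\ $\xi$ times a square) --- this is exactly what separates the ``$2$'' from the ``$\xi$'' vectors. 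Lifting $wn(x)=b_x y k_x$ to $\widetilde{G}$ while keeping track of the Kubota cocycle and the splittings $\bm{s}^\varepsilon$ --- hence of the Weil indices $\gamma_F(\alpha_x,\psi)$ and the Hilbert symbols --- yields $f((wn(x),1))=f(y)\,|\alpha_x|\,\mu(\alpha_x)\,\gamma_F(\alpha_x,\psi)^{-1}\,\eta(d_{k_x})\,\vartheta(x)$ with $d_{k_x}$ the lower-right entry of $k_x$ and $\vartheta(x)$ a root of unity from the cocycle, so that $\lambda_\Psi(f)=f(y)\int_{S_y}|\alpha_x|\mu(\alpha_x)\gamma_F(\alpha_x,\psi)^{-1}\eta(d_{k_x})\vartheta(x)\,\overline{\Psi(x)}\,dx$.

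\emph{Evaluation.} If $\Psi$ has conductor $-\varepsilon$, then on $\mathfrak{p}^{-\varepsilon}\subseteq S_w$ the integrand is a nonzero constant and $\Psi$ is trivial, while the remaining shells (present only for $m=0$) contribute either zero or Gauss sums $g(\chi,\psi')$ with $c(\psi')\ge1$ that do not cancel the main term, so $\lambda_\Psi(f^m_w)\ne0$; in particular, the exceptional value $\mu=\chi_\xi\cdot|-|^{-1/2}$ of the conductor $-\varepsilon-1$ statement in case (1) does not occur, since there $c(\mu)=0$. For the other vectors one substitutes $x=\varpi^{-j}w$, $w\in\mathcal{O}^\times$, on the single shell; there $\operatorname{ord}(\alpha_x)=j$, the unit part of $\alpha_x$ is (up to a controlled sign) a square root of that of $\varpi^{j}x$, and $|\alpha_x|\mu(\alpha_x)\gamma_F(\alpha_x,\psi)^{-1}\eta(d_{k_x})\vartheta(x)$ reduces to $q^{-j}$, times a power of $\varpi$, times $\rho(w)$, where $\rho$ is a character of $\mathcal{O}^\times$ built from $\eta\mu^{-1}$ (or $\eta\mu$ in the $j\ge m/2$ regime, which under the running hypothesis $c(\eta\mu^{-1})\le c(\eta\mu)$ only occurs when these two conductors coincide) corrected by a tamely ramified quadratic twist coming from $\gamma_F$ and the Hilbert symbols; in all four cases $c(\eta\mu^{-1})\in\{c(\rho),c(\rho\chi_\xi)\}$. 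One checks moreover that for every admissible $\Psi$ in the statement, $\overline{\Psi(x)}$ restricts on that shell to an additive character of $w$ of conductor exactly $c(\eta\mu^{-1})$. Restricting $w$ to $\mathcal{O}^{\times2}$ (resp.\ $\xi\mathcal{O}^{\times2}$) for the ``$2$'' (resp.\ ``$\xi$'') vector turns the $w$-integral into $\tfrac12\big(g(\rho,\psi')\pm g(\rho\chi_\xi,\psi')\big)$, and replacing $\Psi=\psi_{\varpi^i}$ by $\psi_{\xi\varpi^i}$ multiplies the term $g(\rho\chi_\xi,\psi')$ by $\chi_\xi(\xi)=-1$ relative to $g(\rho,\psi')$. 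Since $c(\psi')=c(\eta\mu^{-1})$ lies in $\{c(\rho),c(\rho\chi_\xi)\}$, Lemma~\ref{lemgausssum} gives that at least one of $g(\rho,\psi')$, $g(\rho\chi_\xi,\psi')$ is nonzero, and therefore no sum and difference of them vanish simultaneously --- which is precisely the ``at least one of \ldots'' assertion in each of the four clauses. (The same argument, with $\rho$ of conductor $\le 1$ and only the shell $\operatorname{ord}(x)=-1$ contributing, gives $\lambda_\psi(f^1_1)\ne0$ and $\lambda_{\psi_\xi}(f^1_1)\ne0$ in cases (1) and (2).)

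\emph{The main difficulty.} The delicate step is the unfolding: the explicit decomposition $wn(x)=b_x y k_x$ and, above all, its lift to $\widetilde{G}$ must be carried out with complete control of the Kubota cocycle and of the splittings $\bm{s}^\varepsilon$, because it is the resulting Weil index $\gamma_F(\alpha_x,\psi)$ --- which, unlike in the $\GL_2$ situation, is \emph{not} a character of $\mathcal{O}^\times$ --- that pins down the quadratic twist in $\rho$, hence the conductor that is fed into Lemma~\ref{lemgausssum}; an error of a factor $\chi_\xi$ there would wreck the non-vanishing statements. As in the proof of Theorem~\ref{thmps}, once $\varepsilon=0$ is settled, the case $\varepsilon=1$ follows by conjugating by $\beta$.
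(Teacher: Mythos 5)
The overall strategy — read off the asserted bases from the double coset decomposition underlying Theorem~\ref{thmps}, then unfold the Jacquet integral $\lambda_\Psi$ over the shells of $F$ and reduce to Gauss sums — matches the paper's. Your variant of reducing to $g(\cdot,\cdot)$ (Lemma~\ref{lemgausssum}) rather than the quadratic variant $h(\cdot,\cdot)$ (Lemma~\ref{lemgausssum2}) is a legitimate alternative, but there are two problems, one cosmetic and one genuine.

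The cosmetic problem is notational: throughout the evaluation step you write $\chi_\xi$ where you need the \emph{Legendre symbol} character of $\mathcal{O}^\times$. In this paper's conventions $\chi_\xi(x)=(x,\xi)_2$; since $\xi$ is a nonsquare unit, $F(\sqrt\xi)/F$ is unramified, and every unit is a norm, so $\chi_\xi$ is \emph{trivial on $\mathcal{O}^\times$} (and $\chi_\xi(\xi)=(\xi,\xi)_2=1$, not $-1$ as you assert). Thus $g(\rho\chi_\xi,\psi')=g(\rho,\psi')$ and your sum-and-difference formula $\tfrac12(g(\rho,\psi')\pm g(\rho\chi_\xi,\psi'))$ collapses: both entries are equal. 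The character you need is $\chi_\varpi|_{\mathcal{O}^\times}$ (equivalently $\chi_{\xi\varpi}|_{\mathcal{O}^\times}$), the nontrivial quadratic character of $\mathcal{O}^\times$ with $\chi_\varpi(\xi)=-1$, which is what actually cuts out $\mathcal{O}^{\times 2}$. With this replacement your argument for the ``at least one of'' assertions is sound and does give an alternative route to what the paper obtains directly from Lemma~\ref{lemgausssum2}.

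The genuine gap is in your treatment of $f^0_w$. You assert that for $\Psi$ of conductor $-\varepsilon$ the remaining shell ``does not cancel the main term, so $\lambda_\Psi(f^m_w)\neq 0$,'' and then dismiss the exceptional value $\mu=\chi_\xi\cdot|-|^{-1/2}$ on the grounds that it is incompatible with $c(\mu)=0$. Both claims are wrong. First, the Corollary's own statement carves out the exceptions: $\lambda_\psi(f^0_w)$ vanishes when $\mu=\chi_\xi\cdot|-|^{-1/2}$ and $\lambda_{\psi_\xi}(f^0_w)$ vanishes when $\mu=|-|^{-1/2}$. Second, since $\chi_\xi$ is unramified, $c(\chi_\xi\cdot|-|^{-1/2})=0$, so these exceptional $\mu$'s \emph{do} arise under the hypothesis $c(\mu)=c(\eta)=0$. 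The paper's proof explicitly computes the shell term (via \cite[Lemmas~1.11, 1.12]{szp}) and finds $\lambda_\psi(f^0_w)=q^\varepsilon f^0_w((w,1))\bigl(1+q^{-1/2}\mu(\varpi)\bigr)$ up to a unit factor, which does vanish at the exceptional $\mu$. Your argument, as written, proves the wrong statement for $m=0$ and gives no way to identify the exceptional characters; some version of the explicit evaluation of the shell integral (or of the relevant Weil-index Gauss sum) cannot be avoided here.
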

\begin{proof}
    The assertions on the structures of $\pi_\psi(\mu)^{K_m}_\eta$ follow from Theorem \ref{thmps}.
    We shall now calculate the Whittaker functionals.

    Suppose first that both $\eta$ and $\mu$ are unramified, and consider $f^0_w$.
    For $\Psi=\psi$ or $\psi_\xi$, we have
    \begin{align*}
        \lambda_\Psi(f^0_w)
         & =\lim_{r\to \infty} \int_{\mathfrak{p}^{-r}} f^0_w((w n(x), 1)) \overline{\Psi(x)} dx                                                                                                                                                                                               \\
         & =\int_{\mathfrak{p}^{-\varepsilon}} f^0_w((w n(x), 1)) \overline{\Psi(x)} dx + \sum_{r=\varepsilon+1}^{\infty} \int_{\varpi^{-r}\mathcal{O}^\times} f^0_w((w n(x), 1)) \overline{\Psi(x)} dx                                                                                        \\
         & = f^0_w((w,1)) \int_{\mathfrak{p}^{-\varepsilon}} dx + \sum_{r=\varepsilon+1}^{\infty} \int_{\varpi^{-r}\mathcal{O}^\times} f^0_w((\left( \begin{array}{cc}-x^{-1}&1\\ &-x \end{array} \right) \left( \begin{array}{cc}1&\\ x^{-1}&1 \end{array} \right), 1)) \overline{\Psi(x)} dx \\
         & = q^\varepsilon f^0_w((w,1)) + f^0_w(1) \sum_{r=\varepsilon+1}^{\infty} \mu(\varpi)^r \int_{\mathcal{O}^\times} \gamma_F(\varpi^{-r} u, \psi)^{-1} \Psi(\varpi^{-r}u) du,
    \end{align*}
    where we change variables $x=-\varpi^{-r}u$.
    By Lemma 1.11 in \cite{szp}, the last integral vanishes for $r>\varepsilon+1$.
    Hence we have
    \begin{align*}
        \lambda_\Psi(f^0_w)
         & = q^\varepsilon f^0_w((w,1))  +  \mu(\varpi)^{\varepsilon+1} f^0_w(1) \int_{\mathcal{O}^\times} \gamma_F(\varpi^{-\varepsilon-1} u, \psi'_{\varpi^\varepsilon})^{-1} \Psi'(\varpi^{-1} u) du.
    \end{align*}
    where $\psi'=\psi_{\varpi^{-\varepsilon}}$ and $\Psi'=\Psi_{\varpi^{-\varepsilon}}$.
    Since
    \begin{equation*}
        (n(\varpi^{-\varepsilon}),1) (n^{\mathrm{op}}(-\varpi^\varepsilon),1) (n(\varpi^{-\varepsilon}), 1) = (t(\varpi^{-\varepsilon}), (-1,\varpi)_2^\varepsilon ) (w,1),
    \end{equation*}
    and the left hand side is an element of $K_0$, we have
    \begin{equation*}
        f^0_w(1)=q^\varepsilon \mu(\varpi)^{-\varepsilon} (-1,\varpi)_2^\varepsilon \gamma_F(\varpi^\varepsilon,\psi)^{-1} f^0_w((w,1)).
    \end{equation*}
    Thus we have
    \begin{align*}
        \frac{1}{q^\varepsilon f^0_w((w,1))} \lambda_\psi(f^0_w)
         & = 1 + \mu(\varpi) (-1,\varpi)_2^\varepsilon \gamma_F(\varpi^\varepsilon,\psi)^{-1} \int_{\mathcal{O}^\times} \gamma_F(\varpi^{-\varepsilon-1} u, \psi'_{\varpi^\varepsilon})^{-1} \Psi'(\varpi^{-1} u) du \\
         & = 1 + \mu(\varpi) \int_{\mathcal{O}^\times} \gamma_F(\varpi^{-1} u, \psi')^{-1} \Psi'(\varpi^{-1} u) du.
    \end{align*}
    Note that the conductor of $\psi'$ is 0.
    If $\Psi=\psi$, then by Lemma 1.12 in \cite{szp}, the last integral is equal to $q^{-\frac{1}{2}}$.
    Therefore we have
    \begin{equation*}
        \frac{1}{q^\varepsilon f^0_w((w,1))} \lambda_\psi(f^0_w)
        = 1 + q^{-\frac{1}{2}} \mu(\varpi),
    \end{equation*}
    which means that $\lambda_\psi(f^0_w) \neq 0$ if and only if $\mu \neq \chi_\xi \cdot |-|^{-\frac{1}{2}}$.
    On the other hand, if $\Psi=\psi_\xi$, the last integral is equal to $\chi_\xi(\varpi) q^{-\frac{1}{2}}$, since $\gamma_F(\varpi^{-1} \xi u, \psi')^{-1}=(\varpi, \xi)_2 \gamma_F(\varpi^{-1} u, \psi')^{-1}$.
    Thus we have $\lambda_{\psi_\xi}(f^0_w) \neq 0$ if and only if $\mu \neq |-|^{-\frac{1}{2}}$.

    Suppose next that $\mu^2|_{\mathcal{O}^\times}$ is trivial and $\eta=\mu|_{\mathcal{O}^\times}$, and we consider $f^1_1 \in \pi_\psi(\mu)^{K_1}_\eta$.
    For $\Psi=\psi$ or $\psi_\xi$, we have
    \begin{align*}
        \lambda_\Psi(f^1_1)
         & = \int_{\mathfrak{p}^{-\varepsilon}} f^1_1((w n(x), 1)) \overline{\Psi(x)} dx + \sum_{r=\varepsilon+1}^{\infty} \int_{\varpi^{-r}\mathcal{O}^\times} f^1_1((\left( \begin{array}{cc}-x^{-1}&1\\ &-x \end{array} \right) \left( \begin{array}{cc}1&\\ x^{-1}&1 \end{array} \right), 1)) \overline{\Psi(x)} dx \\
         & = 0 + \sum_{r=\varepsilon+1}^{\infty} \int_{\varpi^{-r}\mathcal{O}^\times} |x|^{-1} \mu(-x)^{-1} \gamma_F(-x^{-1}, \psi)^{-1} f^1_1(1) \overline{\Psi(x)} dx.
    \end{align*}
    Changing the variables $x\mapsto u=-\varpi^r x$, we see that this equals
    \begin{align*}
         & f^1_1(1) \sum_{r=\varepsilon+1}^{\infty} \mu(\varpi)^r \int_{\mathcal{O}^\times} \mu(u)^{-1} \gamma_F(\varpi^r u^{-1}, \psi)^{-1} \overline{\Psi(-\varpi^{-r}u)} du                                                                                                        \\
         & = f^1_1(1) \gamma_F(\varpi^{-\varepsilon},\psi)^{-1} \sum_{r=\varepsilon+1}^{\infty} \mu(\varpi)^r \int_{\mathcal{O}^\times} \mu(u)^{-1} \gamma_F(\varpi^{-r+\varepsilon}u , \psi_{\varpi^{-\varepsilon}})^{-1} \Psi_{\varpi^{-\varepsilon}}(\varpi^{-r+\varepsilon}u) du.
    \end{align*}
    Since $\gamma_F(\varpi^{-r+\varepsilon}u , \psi_{\varpi^{-\varepsilon}})=(\varpi^{-r+\varepsilon},\xi)_2 \gamma_F(\varpi^{-r+\varepsilon}u , \psi_{\xi\varpi^{-\varepsilon}})$, it follows from the proofs of \cite[Lemmas 1.11, 1.12, and 1.13]{szp} that the integral
    \begin{equation*}
        \int_{\mathcal{O}^\times} \mu(u)^{-1} \gamma_F(\varpi^{-r+\varepsilon}u , \psi_{\varpi^{-\varepsilon}})^{-1} \Psi_{\varpi^{-\varepsilon}}(\varpi^{-r+\varepsilon}u) du
    \end{equation*}
    is nonzero if and only if $r=\varepsilon+1$, and hence we have $\lambda_\Psi(f^1_1) \neq 0$.

    Third, suppose that $m\geq 1$ and $f^m_w \in \pi_\psi(\mu)^{K_m}_\eta$.
    For $\Psi=\psi$ or $\psi_\xi$, by Lemma \ref{lemcoset} and Corollary \ref{corcoset}, we have
    \begin{align*}
        \lambda_\Psi(f^m_w)
         & =\int_{\mathfrak{p}^{-\varepsilon}} f^m_w((w n(x), 1)) \overline{\Psi(x)} dx + \sum_{r=\varepsilon+1}^{\infty} \int_{\varpi^{-r}\mathcal{O}^\times} f^m_w((w n(x), 1)) \overline{\Psi(x)} dx                                                                                        \\
         & = f^m_w((w,1)) \int_{\mathfrak{p}^{-\varepsilon}} dx + \sum_{r=\varepsilon+1}^{\infty} \int_{\varpi^{-r}\mathcal{O}^\times} f^m_w((\left( \begin{array}{cc}-x^{-1}&1\\ &-x \end{array} \right) \left( \begin{array}{cc}1&\\ x^{-1}&1 \end{array} \right), 1)) \overline{\Psi(x)} dx \\
         & = q^\varepsilon f^m_w((w,1)) + 0 \neq 0.
    \end{align*}

    Finally we consider $f^m_{i,2}$ and $f^m_{i,\xi}$.
    Let $i=0$ or $1$, and $m \geq M$.
    Suppose that $\pi_\psi(\mu)^{K_m}_\eta$ contains $f^m_{i,2}$ and $f^m_{i,\xi}$.
    Since $f^m_{i,2}$ is supported on $\widetilde{B}(n^{\mathrm{op}}(\varpi^{c(\eta\mu^{-1})+i+\varepsilon}),1)K_m$, the routine calculation shows that $\lambda_{\psi_{\varpi^i}}(f^m_{i,2})$ equals
    \begin{equation*}
        \int_{\varpi^{-c(\eta\mu^{-1})-i-\varepsilon}\mathcal{O}^{\times2}} f^m_{i,2}((\left( \begin{array}{cc}-x^{-1}&1\\ &-x \end{array} \right) \left( \begin{array}{cc}1&\\ x^{-1}&1 \end{array} \right), 1)) \overline{\psi_{\varpi^i}(x)} dx.
    \end{equation*}
    Put $x=\varpi^{-c(\eta\mu^{-1})-i-\varepsilon} u^2$ and change the variables $x\mapsto u$.
    Then, up to a nonzero constant multiple, this equals
    \begin{align*}
         & \int_{\mathcal{O}^\times} \mu(u)^{-2}  f^m_{i,2}(( n^{\mathrm{op}}(\varpi^{c(\eta\mu^{-1})+i+\varepsilon} u^{-2}), 1)) \psi(-\varpi^{-c(\eta\mu^{-1})-\varepsilon} u^2) du                         \\
         & =\int_{\mathcal{O}^\times} \mu(u)^{-2}  f^m_{i,2}((t(u), (u,u)_2) (n^{\mathrm{op}}(\varpi^{c(\eta\mu^{-1})+i+\varepsilon}), 1) (t(u^{-1}), 1)) \psi(-\varpi^{-c(\eta\mu^{-1})-\varepsilon} u^2) du \\
         & =\int_{\mathcal{O}^\times} \mu(u)^{-2} \cdot \mu(u) (u,u)_2 \gamma_F(u,\psi)^{-1}                                                                                                                  \\
         & \qquad \times \gamma_F(u^{-1},\psi)^{-1}\eta(u) f^m_{i,2}( (n^{\mathrm{op}}(\varpi^{c(\eta\mu^{-1})+i+\varepsilon}), 1) ) \psi(-\varpi^{-c(\eta\mu^{-1})-\varepsilon} u^2) du                      \\
         & =f^m_{i,2}( (n^{\mathrm{op}}(\varpi^{c(\eta\mu^{-1})+i+\varepsilon}), 1) ) h(\eta\mu^{-1}, \psi_{-\varpi^{-c(\eta\mu^{-1})-\varepsilon}}).
    \end{align*}
    Therefore, $\lambda_\psi(f^m_{i,2})$ is nonzero if and only if $h(\eta\mu^{-1}, \psi_{-\varpi^{-c(\eta\mu^{-1})-\varepsilon}})$ is.
    Similarly, $\lambda_\psi(f^m_{i,\xi})$ (resp. $\lambda_{\psi_\xi}(f^m_{i,2})$, resp. $\lambda_{\psi_\xi}(f^m_{i,\xi})$) is nonzero if and only if $h(\eta\mu^{-1}, \psi_{-\xi\varpi^{-c(\eta\mu^{-1})-\varepsilon}})$ (resp. $h(\eta\mu^{-1}, \psi_{-\xi\varpi^{-c(\eta\mu^{-1})-\varepsilon}})$, resp. $h(\eta\mu^{-1}, \psi_{-\varpi^{-c(\eta\mu^{-1})-\varepsilon}})$) is.
    Now the assertions follow from Lemma \ref{lemgausssum2}.
\end{proof}
\subsection{Even Weil representations and Steinberg representations}\label{subsec:evenweilsteinberg}
Next we shall consider the even Weil representations and the Steinberg representations.
\begin{theorem}\label{thmevenweil}
    Let $\chi$ be a quadratic or trivial character of $F^\times$, and $\eta$ a character of $\mathcal{O}^\times$ such that $\eta(-1)=\chi(-1)$.
    Then we have $c^\varepsilon_\eta(\omega_{\psi,\chi}^+)=2c(\eta\chi)+c(\chi)$ and
    \begin{align*}
        \dim_\C \left( \omega_{\psi,\chi}^+ \right)^{K_m}_\eta
        =\left( \left\lfloor \frac{m-2c(\eta\chi)-c(\chi)}{2} \right\rfloor +1 \right)^+.
    \end{align*}
    In particular, $c^\varepsilon_{\min}(\omega_{\psi,\chi}^+)=c^\varepsilon_\eta(\omega_{\psi,\chi}^+)$ if and only if $\eta=\chi|_{\mathcal{O}^\times}$, and $c^\varepsilon_{\min}(\omega_{\psi,\chi}^+)=c(\chi)$.
\end{theorem}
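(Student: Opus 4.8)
The plan is to compute the space directly in the Schr\"odinger model. Write $\chi=\chi_a$ with $a\in F^\times$ chosen so that $\operatorname{ord}(a)=c(\chi)$, so that $\omega_{\psi,\chi}^+=\omega_{\psi^\varepsilon_a}^+$ is realized on $\mathcal{S}^+(F)$; put $d=c(\chi)$ and $e=c(\eta\chi)$ (note $e=c(\eta\chi^{-1})$ too, since $\chi^2=1$). For $m\ge1$, every element of $K^\varepsilon_m$ is uniquely of the form $n^{\mathrm{op}}(c)\,t(u)\,n(b)$ with $c\in\mathfrak{p}^{m+\varepsilon}$, $u\in\mathcal{O}^\times$, $b\in\mathfrak{p}^{-\varepsilon}$, and since the three factors lie in $K^\varepsilon_0$ (on which $x\mapsto(x,\bm{s}^\varepsilon(x))$ is a homomorphism) this factorization lifts to $\widetilde{G}$; for $m=0$ one uses instead that $K^\varepsilon_0$ is generated by $n(\mathfrak{p}^{-\varepsilon})$ and $n^{\mathrm{op}}(\mathfrak{p}^\varepsilon)$, while $c(\eta)=0$ forces $\eta=1$. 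Hence $\varphi\in(\omega_{\psi,\chi}^+)^{K^\varepsilon_m}_\eta$ if and only if $\varphi$ satisfies: (a) $\omega_{\psi_a}(n(b),1)\varphi=\varphi$ for all $b\in\mathfrak{p}^{-\varepsilon}$; (b) $\omega_{\psi_a}(t(u),\gamma_F(u,\psi^\varepsilon))\varphi=\eta(u)^{-1}\varphi$ for all $u\in\mathcal{O}^\times$; and (c) $\omega_{\psi_a}(n^{\mathrm{op}}(c),1)\varphi=\varphi$ for all $c\in\mathfrak{p}^{m+\varepsilon}$ (for $m=0$, conditions (a) and (c) already suffice).

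I would first unwind (a) and (b). Since $\omega_{\psi_a}(n(b),1)$ acts by multiplication by $\psi^\varepsilon(aby^2)$, condition (a) is equivalent to $\operatorname{supp}\varphi\subseteq\mathcal{O}$. Using the formula for $\omega_{\psi_a}(t(u),\epsilon)$, the relation $\gamma_F(u,\psi_a)=\chi(u)\gamma_F(u,\psi^\varepsilon)$, and $|u|=1$, condition (b) becomes $\varphi(uy)=(\eta\chi)^{-1}(u)\,\varphi(y)$ for $u\in\mathcal{O}^\times$. As $\eta(-1)=\chi(-1)$, the character $(\eta\chi)^{-1}$ is trivial at $-1$, so the even functions satisfying (a) and (b) are precisely: when $e\ge1$, the finite linear combinations $\sum_{k\ge0}v_k\varphi_k$, where $\varphi_k$ is the unique even $(\eta\chi)^{-1}$-equivariant function supported on $\varpi^k\mathcal{O}^\times$; and when $e=0$, the elements of $\operatorname{span}\{\mathbf{1}_{\mathfrak{p}^k}\mid k\ge0\}$.

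It remains to impose (c). A short Kubota-cocycle computation gives the identity $(n^{\mathrm{op}}(c),1)=(w,1)^{-1}(n(-c),1)(w,1)$ in $\widetilde{G}$, so with $\widehat\varphi:=\omega_{\psi_a}(w,1)\varphi$ condition (c) becomes $\psi^\varepsilon(-acy^2)=1$ on $\operatorname{supp}\widehat\varphi$ for all $c\in\mathfrak{p}^{m+\varepsilon}$, that is, $\operatorname{supp}\widehat\varphi\subseteq\mathfrak{p}^{\lceil(-m-2\varepsilon-d)/2\rceil}$. Now $\widehat\varphi$ is, up to the nonzero scalar $\gamma_F(\psi_a)$ and the self-dual measure, the Fourier transform of $\varphi$ with kernel $\psi_a(2xy)$; evaluating it on the above functions by means of Lemma~\ref{lemgausssum}, one finds that when $e\ge1$ the function $\omega_{\psi_a}(w,1)\varphi_k$ is a \emph{nonzero} multiple of the $(\eta\chi)$-equivariant function supported exactly on $\varpi^{-\varepsilon-d-e-k}\mathcal{O}^\times$ (the Gauss sum $g((\eta\chi)^{-1},\psi^\varepsilon_{2a\varpi^k y})$ being nonzero precisely when its conductor equals $e$), while when $e=0$ the function $\omega_{\psi_a}(w,1)\mathbf{1}_{\mathfrak{p}^k}$ is a nonzero multiple of $\mathbf{1}_{\mathfrak{p}^{-\varepsilon-d-k}}$. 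Thus $\widehat\varphi$ is supported on the union of the annuli (resp.\ ideals) attached to the indices $k$ that occur, so (c) holds if and only if every such $k$ satisfies $-\varepsilon-d-e-k\ge\lceil(-m-2\varepsilon-d)/2\rceil$, i.e.\ $0\le k\le\lfloor(m-d)/2\rfloor-e=\lfloor(m-2e-d)/2\rfloor$. Therefore $(\omega_{\psi,\chi}^+)^{K^\varepsilon_m}_\eta$ has a basis indexed by these $k$, giving $\dim=(\lfloor(m-2c(\eta\chi)-c(\chi))/2\rfloor+1)^+$; reading off the least $m$ with positive dimension yields $c^\varepsilon_\eta(\omega_{\psi,\chi}^+)=2c(\eta\chi)+c(\chi)$, minimized exactly when $\eta\chi|_{\mathcal{O}^\times}=1$, i.e.\ $\eta=\chi|_{\mathcal{O}^\times}$, where it equals $c(\chi)$.

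The main obstacle is the bookkeeping when $e=c(\eta\chi)=0$: there the Fourier transforms $\omega_{\psi_a}(w,1)\mathbf{1}_{\mathfrak{p}^k}$ are supported on the \emph{nested} ideals $\mathfrak{p}^{-\varepsilon-d-k}$ rather than on single annuli, so one must use linear independence of the $\mathbf{1}_{\mathfrak{p}^j}$ to see that a combination $\sum_j c_j\mathbf{1}_{\mathfrak{p}^j}$ lies in $\mathfrak{p}^L$ exactly when $c_j=0$ for all $j<L$, and then transport this constraint back through the Fourier transform to bound the index set; the upshot is the same dimension as in the case $e\ge1$. Two smaller points also need attention: the cocycle identity for $n^{\mathrm{op}}(c)$ (a direct Hilbert-symbol calculation), and the cases $m=0$ and $\varepsilon=1$, where $w\notin K^1$ and one argues with $wt(\varpi)\in K^1_0$ — or, more simply, notes that $\mathbf{1}_{\mathcal{O}}$ is the (unique) newvector when $\chi$ is unramified and that there is none when $\chi$ is ramified.
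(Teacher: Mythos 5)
Your proposal is correct and takes essentially the same route as the paper's proof: realize $\omega_{\psi,\chi}^+$ in the Schr\"odinger model, reduce the $(K^\varepsilon_m,\eta)$-invariance to the three conditions on $n(b)$, $t(u)$, $n^{\mathrm{op}}(c)$, convert the $n^{\mathrm{op}}$-condition via $w$-conjugation into a support constraint on the Fourier transform, and settle it with Lemma~\ref{lemgausssum}. The only genuine (and harmless) difference is bookkeeping: you diagonalize first — computing the support of $\omega(w,1)\varphi_k$ for each shell function $\varphi_k$ and then imposing the support bound termwise, with the nested-ideals argument when $c(\eta\chi)=0$ — whereas the paper writes the constraint as the single relation $\sum_i q^{-i}\varphi(\varpi^i)\,g(\chi\eta^{-1},\psi'_{2y\varpi^i})=0$ ranging over small $\operatorname{ord}(y)$ and reads off either vanishing (for $c(\eta\chi)\ge1$) or eventual constancy (for $c(\eta\chi)=0$) of $\varphi(\varpi^i)$ directly; the two are manifestly equivalent and lead to the same count.
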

\begin{proof}
    Let $\psi'$ be an additive character of $F$ such that $\omega_{\psi,\chi}^+\cong \omega_{\psi'}^+$.
    Put $\nu=c(\psi')+\varepsilon$, which is even (resp. odd) if $\chi$ is unramified (resp. ramified).
    Let us realize the even Weil representation $\omega_{\psi,\chi}^+$ as $(\omega_{\psi'}^+, \mathcal{S}^+(F))$, and $\varphi$ be any nonzero vector in $(\omega_{\psi'}^+, \mathcal{S}^+(F))$.
    Since $K_m$ is generated by $t(a)$, $n(b)$, and $n^{\mathrm{op}}(c)$, where $a\in\mathcal{O}^\times$, $b\in\mathfrak{p}^{-\varepsilon}$, and $c\in\mathfrak{p}^{m+\varepsilon}$, the vector $\varphi$ is contained in $(\omega_{\psi'}^+)^{K_m}_\eta$ if and only if all of the followings hold:
    \begin{enumerate}[(a)]
        \item $\omega_{\psi'}^+(t(a))\varphi=\eta(a^{-1})\varphi$, for all $a\in\mathcal{O}^\times$;
        \item $\omega_{\psi'}^+(n(b))\varphi=\varphi$, for all $b\in\mathfrak{p}^{-\varepsilon}$;
        \item $\omega_{\psi'}^+(n^{\mathrm{op}}(c))\varphi=\varphi$, for all $c\in\mathfrak{p}^{m+\varepsilon}$.
    \end{enumerate}

    Suppose that the conditions (a)-(c) hold.
    We shall determine the function $\varphi$.
    First we consider the condition (a).
    Since $\omega_{\psi'}^+(t(a))\varphi(y)$ equals to $\gamma_F(a,\psi)\gamma_F(a,\psi')^{-1}\varphi(ay)$ for any $a\in \mathcal{O}^\times$ and $y\in F$, the condition (a) is equivalent to
    \begin{align}\label{eqa}
        \varphi(ay)=\gamma_F(a,\psi')\gamma_F(a,\psi)^{-1} \eta(a)^{-1}\varphi(y)=\chi\eta^{-1}(a)\varphi(y),\quad \text{for all $a\in \mathcal{O}^\times$ and $y\in F$.}
    \end{align}
    Therefore, $\varphi$ is determined by $\{\varphi(\varpi^i) \mid i\in\Z\}$.
    Note that $\varphi(\varpi^i)$ is constant for sufficiently large $i$, since $\varphi$ is locally constant.

    Next we shall consider the condition (b).
    We know that $\omega_{\psi'}^+(n(b))\varphi(y)$ equals to $\psi'(by^2)\varphi(y)$ for any $b\in\mathfrak{p}^{-\varepsilon}$ and $y\in F$.
    Suppose that (b) holds.
    Then for any $y\in \operatorname{supp}(\varphi)$, we have $\psi'(by^2)=1$ for all $b\in\mathfrak{p}^{-\varepsilon}$.
    This implies that $y^2\in\mathfrak{p}^\nu$.
    Hence we have
    \begin{equation}\label{eqb}
        \operatorname{supp}(\varphi) \subset \mathfrak{p}^{\left\lceil \frac{\nu}{2} \right\rceil}.
    \end{equation}

    Now we come to the condition (c).
    Since $(w,1)n^{\mathrm{op}}(c)=n(-c)(w,1)$, the condition (c) is equivalent to
    \begin{equation*}
        \psi'(-cy^2) [\omega_{\psi'}^+((w,1))\varphi](y) = [\omega_{\psi'}^+((w,1))\varphi](y),\quad \text{ for all $c\in \mathfrak{p}^{m+\varepsilon}$ and $y\in F$.}
    \end{equation*}
    Suppose that $\operatorname{ord}(y)\leq \left\lceil \frac{c(\psi')-m-\varepsilon}{2} \right\rceil-1$.
    Then $\psi'(-cy^2)\neq1$ for some $c\in\mathfrak{p}^{m+\varepsilon}$.
    Thus $[\omega_{\psi'}^+((w,1))\varphi](y)$ vanishes.
    By \eqref{eqa} and \eqref{eqb}, $[\omega_{\psi'}^+((w,1))\varphi](y)$ is equal, up to a nonzero scalar multiple, to
    \begin{align*}
        \int_F \varphi(x)\psi'(2yx) dx
         & =\int_{\mathfrak{p}^{\left\lceil \frac{\nu}{2} \right\rceil}} \varphi(x)\psi'(2yx) dx                                                              \\
         & =\sum_{i=\left\lceil \frac{\nu}{2} \right\rceil}^\infty q^{-i} \int_{\mathcal{O}^\times} \varphi(\varpi^i u) \psi'(2y\varpi^i u) du                \\
         & =\sum_{i=\left\lceil \frac{\nu}{2} \right\rceil}^\infty q^{-i} \varphi(\varpi^i) \int_{\mathcal{O}^\times} \chi\eta^{-1}(u) \psi'(2y\varpi^i u) du \\
         & =\sum_{i=\left\lceil \frac{\nu}{2} \right\rceil}^\infty q^{-i} \varphi(\varpi^i) g(\chi\eta^{-1}, \psi'_{2y\varpi^i}).
    \end{align*}
    Therefore, we have
    \begin{equation}\label{eqc}
        \sum_{i=\left\lceil \frac{\nu}{2} \right\rceil}^\infty q^{-i} \varphi(\varpi^i) g(\chi\eta^{-1}, \psi'_{2y\varpi^i})=0.
    \end{equation}

    When $\eta=\chi|_{\mathcal{O}^\times}$, Lemma \ref{lemgausssum} tells us that
    \begin{align*}
        g(\chi\eta^{-1}, \psi'_{2y\varpi^i})
        =\begin{cases*}
             1-q^{-1}, & if $i\geq c(\psi')-\operatorname{ord}(y)$,   \\
             -q^{-1},  & if $i= c(\psi')-\operatorname{ord}(y)-1$,    \\
             0,        & if $i\leq c(\psi')-\operatorname{ord}(y)-2$.
         \end{cases*}
    \end{align*}
    Combining this with \eqref{eqc}, $\varphi(\varpi^i)$ is constant for
    \begin{equation*}
        i\geq c(\psi')-\left( \left\lceil \frac{c(\psi')-m-\varepsilon}{2} \right\rceil-1 \right) -1
        =\left\lfloor \frac{m+\nu}{2} \right\rfloor.
    \end{equation*}
    Hence $\varphi$ is determined by
    \begin{equation*}
        \Set{\varphi(\varpi^i) | \left\lceil \frac{\nu}{2} \right\rceil \leq i \leq \left\lfloor \frac{m+\nu}{2} \right\rfloor}.
    \end{equation*}
    Conversely, for any finite sequence $(a_i \mid \lceil\frac{\nu}{2} \rceil \leq i \leq \lfloor \frac{m+\nu}{2}\rfloor)$ of complex numbers, one can construct $\varphi\in\mathcal{S}^+(F)$ satisfying the conditions (a), (b), (c), and $\varphi(\varpi^i)=a_i$.
    Therefore,
    \begin{align*}
        \dim_\C \left( \omega_{\psi,\chi}^+ \right)^{K_m}_\eta
         & =\left\lfloor \frac{m+\nu}{2} \right\rfloor - \left\lceil \frac{\nu}{2} \right\rceil+1 \\
         & =\begin{dcases*}
                1+\left\lfloor \frac{m}{2} \right\rfloor, & if $\nu$ is even, \\
                \left\lfloor \frac{m+1}{2} \right\rfloor, & if $\nu$ is odd.
            \end{dcases*}
    \end{align*}

    When $\eta\neq\chi|_{\mathcal{O}^\times}$, we have $c(\chi\eta^{-1})\geq1$.
    Thus, by Lemma \ref{lemgausssum}, $g(\chi\eta^{-1}, \psi'_{2y\varpi^i})$ is nonzero if and only if $i=c(\psi')-c(\chi\eta^{-1})-\operatorname{ord}(y)$.
    Combining this with \eqref{eqc}, one can see that $\varphi(\varpi^i)=0$ for
    \begin{equation*}
        i\geq c(\psi')-c(\chi\eta^{-1})- \left( \left\lceil \frac{c(\psi')-m-\varepsilon}{2} \right\rceil -1 \right)
        =1-c(\chi\eta^{-1}) + \left\lfloor \frac{m+\nu}{2} \right\rfloor.
    \end{equation*}
    Hence $\varphi$ is determined by
    \begin{equation*}
        \Set{\varphi(\varpi^i) | \left\lceil \frac{\nu}{2} \right\rceil \leq i < 1-c(\chi\eta^{-1}) + \left\lfloor \frac{m+\nu}{2} \right\rfloor}.
    \end{equation*}
    Conversely, for any finite sequence $(a_i \mid \lceil\frac{\nu}{2} \rceil \leq i < 1-c(\chi\eta^{-1}) + \lfloor \frac{m+\nu}{2} \rfloor)$ of complex numbers, one can construct $\varphi\in\mathcal{S}^+(F)$ satisfying the conditions (a), (b), (c), and $\varphi(\varpi^i)=a_i$.
    Therefore,
    \begin{align*}
        \dim_\C \left( \omega_{\psi,\chi}^+ \right)^{K_m}_\eta
         & =\max\left(0, 1-c(\chi\eta^{-1}) + \left\lfloor \frac{m+\nu}{2} \right\rfloor - \left\lceil \frac{\nu}{2} \right\rceil \right) \\
         & =\begin{dcases*}
                \max\left(0, 1+\left\lfloor \frac{m}{2} \right\rfloor -c(\chi\eta^{-1}) \right),  & if $\nu$ is even, \\
                \max\left( 0, \left\lfloor \frac{m+1}{2} \right\rfloor -c(\chi\eta^{-1}) \right), & if $\nu$ is odd.
            \end{dcases*}
    \end{align*}
    This completes the proof.
\end{proof}
\begin{corollary}\label{corevenweil}
    Let $\chi$ be a quadratic or trivial character of $F^\times$ and $\eta$ a character of $\mathcal{O}^\times$ with $\eta(-1)=\chi(-1)$.
    Then we have
    \begin{equation*}
        \dim_\C \left( \omega_{\psi,\chi}^+ \right)^{K_m, \new}_\eta = \begin{cases*}
            1, & if $m=c^\varepsilon_\eta(\omega_{\psi,\chi}^+)=2c(\eta\chi)+c(\chi)$, \\
            0, & otherwise.
        \end{cases*}.
    \end{equation*}
\end{corollary}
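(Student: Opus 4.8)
The plan is to read this off from Theorem~\ref{thmevenweil} together with the global newform count in Theorem~\ref{thm:rs-Mp2}, so that no further computation in the Schr\"{o}dinger model is needed. Write $M = c^\varepsilon_\eta(\omega_{\psi,\chi}^+) = 2c(\eta\chi)+c(\chi)$, the value supplied by Theorem~\ref{thmevenweil}. For $m < M$ we have $(\omega_{\psi,\chi}^+)^{K_m}_\eta = \{0\}$ by that theorem, hence $(\omega_{\psi,\chi}^+)^{K_m,\new}_\eta = \{0\}$; this disposes of the part of the complementary range lying below $M$.

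Next I would compute the dimension at $m = M$. Theorem~\ref{thmevenweil} gives $\dim_\C (\omega_{\psi,\chi}^+)^{K_M}_\eta = \lfloor 0/2 \rfloor + 1 = 1$, so it is enough to check that the oldform subspace $(\omega_{\psi,\chi}^+)^{K_M,\old}_\eta$ is zero. When $M \geq 2$ it is spanned by $(\omega_{\psi,\chi}^+)^{K_{M-1}}_\eta$ and $\alpha_2\bigl((\omega_{\psi,\chi}^+)^{K_{M-2}}_\eta\bigr)$, both zero since $M-1, M-2 < M$; when $M = 1$ it equals $(\omega_{\psi,\chi}^+)^{K_0}_\eta$, which is $\{0\}$ because $0 < M$; and when $M = 0$ it is $\{0\}$ by the convention fixed in \S\ref{subsec:Roberts--Schmidt}. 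So $\dim_\C (\omega_{\psi,\chi}^+)^{K_M,\new}_\eta = 1$ in all cases. This small case distinction over $M\in\{0,1\}$ versus $M\geq 2$ is the only fiddly point, and it is routine.

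Finally I would invoke the global constraint. Since $\omega_{\psi,\chi}^+$ is an even Weil representation, Proposition~\ref{prop:numgeneric} gives $\#F_\psi(\omega_{\psi,\chi}^+)/F^{\times2} = 1$, so Theorem~\ref{thm:rs-Mp2} yields $\sum_{m\geq 0} \dim_\C (\omega_{\psi,\chi}^+)^{K_m,\new}_\eta = 1$. Each summand is a non-negative integer and the $m = M$ term is already $1$, so every term with $m\neq M$ vanishes. This proves the asserted formula; in particular I never need to analyze the oldform subspaces for $m > M$ directly. The genuine obstacle was Theorem~\ref{thmevenweil} itself; granting that, the corollary is essentially bookkeeping.
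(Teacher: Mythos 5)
Your proof is correct and follows essentially the same route as the paper's: the paper's proof simply states that the corollary follows immediately from Theorem~\ref{thm:rs-Mp2} and Proposition~\ref{prop:numgeneric}, and your argument is a careful unpacking of exactly that deduction, using Theorem~\ref{thmevenweil} (which underlies the corollary's statement anyway) to pin down $M$ and the dimension at $m=M$. The case split over $M\in\{0,1\}$ versus $M\geq 2$ that you flag is the right detail to check, and you handle it correctly.
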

\begin{proof}
    This follows immediately from Theorem \ref{thm:rs-Mp2} and Proposition \ref{prop:numgeneric}.
\end{proof}
For any nontrivial additive character $\Psi$ of $F$, the $\Psi$-Whittaker functional on the Schr\"{o}dinger model $(\omega_\Psi^+, \mathcal{S}^+(F))$ is given by the evaluation at 1:
\begin{equation*}
    \varphi \mapsto \varphi(1).
\end{equation*}
\begin{corollary}\label{corwhittakerevenweil}
    Let $\chi$ be a quadratic or trivial character of $F^\times$ and $\eta$ a character of $\mathcal{O}^\times$ such that $\eta(-1)=\chi(-1)$.
    Put $M=c^\varepsilon_\eta(\omega_{\psi,\chi}^+)$.
    Choose the additive character $\psi'\in \{\psi, \psi_\xi, \psi_\varpi, \psi_{\xi\varpi}\}$ such that $\omega_{\psi,\chi}^+$ is isomorphic to $\omega_{\psi'}^+$.
    Then the $\psi'$-Whittaker functional is nonzero on $(\omega_{\psi,\chi}^+)^{K_M}_\eta$.
\end{corollary}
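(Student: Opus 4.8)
The plan is to transport the whole question into the Schr\"odinger model. Realize $\omega_{\psi,\chi}^+$ on $\mathcal{S}^+(F)$ via the isomorphism $\omega_{\psi,\chi}^+\cong\omega_{\psi'}^+$; as recalled just above, under this identification the $\psi'$-Whittaker functional becomes the evaluation $\varphi\mapsto\varphi(1)$. So the corollary amounts to showing that the space $(\omega_{\psi'}^+)^{K_M}_\eta$, which is one-dimensional by Theorem~\ref{thmevenweil}, is not annihilated by evaluation at $1$; equivalently, that a nonzero vector of it has nonvanishing value at $1$.

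First I would pin down the integer $\nu=c(\psi')+\varepsilon$ used in the proof of Theorem~\ref{thmevenweil}. Since $\psi'$ is one of $\psi,\psi_\xi,\psi_\varpi,\psi_{\xi\varpi}$ with $\psi=\psi^\varepsilon$ of conductor $-\varepsilon$, we have $c(\psi')=-\varepsilon$ when $\chi$ is unramified (so $\nu=0$) and $c(\psi')=-\varepsilon-1$ when $\chi$ is ramified (so $\nu=-1$); in both cases $\nu=-c(\chi)$, hence $c(\chi)+\nu=0$ and $\lceil\nu/2\rceil=0$. In particular every vector of $(\omega_{\psi'}^+)^{K_m}_\eta$ is supported in $\mathfrak{p}^{\lceil\nu/2\rceil}=\mathcal{O}$, so its value at $1$ is meaningful. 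Then I would quote the explicit description established in the proof of Theorem~\ref{thmevenweil}: a vector $\varphi\in(\omega_{\psi'}^+)^{K_m}_\eta$ is determined by the finitely many values $\varphi(\varpi^i)$ for $i$ ranging over $\lceil\nu/2\rceil\le i\le\lfloor(m+\nu)/2\rfloor$ when $\eta=\chi|_{\mathcal{O}^\times}$, and over $\lceil\nu/2\rceil\le i<1-c(\chi\eta^{-1})+\lfloor(m+\nu)/2\rfloor$ when $\eta\neq\chi|_{\mathcal{O}^\times}$, with $\varphi(\varpi^i)=0$ outside the range. Now substitute $m=M=2c(\eta\chi)+c(\chi)$ and use $c(\chi\eta^{-1})=c(\eta\chi)$ together with $c(\chi)+\nu=0$, which gives $M+\nu=2c(\eta\chi)$ and $\lfloor(M+\nu)/2\rfloor=c(\eta\chi)$. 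In the case $\eta=\chi|_{\mathcal{O}^\times}$ one has $c(\eta\chi)=0$, so the range is $0\le i\le 0$; in the case $\eta\neq\chi|_{\mathcal{O}^\times}$ the range is $0\le i<1-c(\eta\chi)+c(\eta\chi)=1$. In either case the index set collapses to the single value $i=0$, so $\varphi$ is determined by $\varphi(1)$ alone.

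To finish, observe that since $\dim_\C(\omega_{\psi,\chi}^+)^{K_M}_\eta=1$, a nonzero vector $\varphi$ of this space must have $\varphi(1)\neq0$: otherwise all the values $\varphi(\varpi^i)$ vanish and $\varphi=0$. Hence evaluation at $1$, i.e.\ the $\psi'$-Whittaker functional, is nonzero on $(\omega_{\psi,\chi}^+)^{K_M}_\eta$. The only nonroutine point is the floor/ceiling bookkeeping showing the index range shrinks to $\{0\}$ exactly at $m=M$; this splits into the two cases $\eta=\chi|_{\mathcal{O}^\times}$ and $\eta\neq\chi|_{\mathcal{O}^\times}$, and within each the computation is a one-liner once one notes $c(\chi)+\nu=0$ for all four admissible $\psi'$. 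I do not expect any genuine obstacle beyond keeping the conventions consistent with the proof of Theorem~\ref{thmevenweil}.
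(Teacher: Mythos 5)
Your proposal is correct and rests on the same ingredients as the paper's proof: realizing $\omega_{\psi,\chi}^+$ in the Schrödinger model $(\omega_{\psi'}^+,\mathcal S^+(F))$, identifying the $\psi'$-Whittaker functional with $\varphi\mapsto\varphi(1)$, and then invoking the explicit description from the proof of Theorem~\ref{thmevenweil}. The only (superficial) difference is the logical direction: the paper quotes the \emph{converse} part of that proof, namely that one may construct $\varphi$ with any prescribed values $\varphi(\varpi^i)$ on the determining range, and so may take $\varphi(1)=1$; you instead quote the \emph{forward} part, namely that a vector in $(\omega_{\psi'}^+)^{K_M}_\eta$ is determined by the values $\varphi(\varpi^i)$ on a range that collapses to $\{0\}$ when $m=M$ (because $c(\chi)+\nu=0$, $\lceil\nu/2\rceil=0$, $\lfloor(M+\nu)/2\rfloor=c(\eta\chi)$, and $c(\chi\eta^{-1})=c(\eta\chi)$), so any nonzero vector has $\varphi(1)\neq0$. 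Since the space is one-dimensional, these two formulations are equivalent, and your bookkeeping in all four cases of $\psi'$ is accurate.
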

\begin{proof}
    Let us realize the even Weil representation $\omega_{\psi,\chi}^+$ as $(\omega_{\psi'}^+, \mathcal{S}^+(F))$.
    In the proof of Theorem \ref{thmevenweil}, we showed that there exists a function $\varphi \in \mathcal{S}^+(F)$ such that $\varphi(1)=\varphi(\varpi^0)=1$ and $\varphi \in (\omega_{\psi,\chi}^+)^{K_M}_\eta$.
    The assertion now follows.
\end{proof}

We now come to the Steinberg representations.
Recall from Proposition \ref{propreducibility} the exact sequence
\begin{equation*}
    0 \longrightarrow \mathit{St}_{\psi,\chi} \longrightarrow \pi_\psi(\chi \cdot |-|^{\frac{1}{2}}) \overset{\mathcal{M}}{\longrightarrow} \omega_{\psi,\chi}^+ \longrightarrow 0.
\end{equation*}
Here, we put $\mathcal{M}$ to be the surjective map from $\pi_\psi(\chi \cdot |-|^{\frac{1}{2}})$ to $\omega_{\psi,\chi}^+$.
This gives an exact sequence
\begin{equation}\label{eqseqm}
    0 \longrightarrow (\mathit{St}_{\psi,\chi})^{K_m}_\eta \longrightarrow \pi_\psi(\chi \cdot |-|^{\frac{1}{2}})^{K_m}_\eta \overset{\mathcal{M}_m}{\longrightarrow} (\omega_{\psi,\chi}^+)^{K_m}_\eta,
\end{equation}
where we write $\mathcal{M}_m$ for the restriction of $\mathcal{M}$ to $\pi_\psi(\chi \cdot |-|^{\frac{1}{2}})^{K_m}_\eta$.
\begin{lemma}\label{lemintwsurj1}
    Let $\chi$ be a quadratic or trivial character of $F^\times$.
    If $\eta=\chi|_{\mathcal{O}^\times}$, then the map $\mathcal{M}_{c^\varepsilon_\eta(\omega_{\psi,\chi}^+)}$ is surjective.
\end{lemma}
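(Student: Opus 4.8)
The plan is to reduce the statement to showing that the map $\mathcal{M}_M$ is nonzero, where $M:=c^\varepsilon_\eta(\omega_{\psi,\chi}^+)$, and then to detect this nonvanishing with a Whittaker functional. Since $\eta=\chi|_{\mathcal{O}^\times}$ and $\chi$ is quadratic or trivial, the product $\eta\chi$ is trivial on $\mathcal{O}^\times$, so $c(\eta\chi)=0$; by Theorem \ref{thmevenweil} this gives $M=c(\chi)$ and $\dim_\C (\omega_{\psi,\chi}^+)^{K_M}_\eta=1$. In view of the left-exact sequence \eqref{eqseqm}, surjectivity of $\mathcal{M}_M$ onto this one-dimensional space is equivalent to $\mathcal{M}_M\neq 0$, i.e.\ to the existence of $f\in\pi_\psi(\chi\cdot|-|^{1/2})^{K_M}_\eta$ with $\mathcal{M}(f)\neq 0$.

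First I would exhibit a candidate vector. Put $\mu=\chi\cdot|-|^{1/2}$, so $c(\mu)=c(\chi)=M\in\{0,1\}$, the last since $p$ is odd. By Theorem \ref{thmps}---equivalently, by the descriptions in Corollary \ref{corwhittakerps}(1) if $\chi$ is unramified and (2) if $\chi$ is ramified---the space $\pi_\psi(\mu)^{K_M}_\eta$ contains a nonzero vector $f^M_w$ supported on the big Bruhat cell $\widetilde{B}(w,1)K_M$, and necessarily $f^M_w((w,1))\neq 0$. Let $\psi'\in\{\psi,\psi_\xi,\psi_\varpi,\psi_{\xi\varpi}\}$ be the additive character with $\omega_{\psi,\chi}^+\cong\omega_{\psi'}^+$. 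The Jacquet-integral computations carried out in the proof of Corollary \ref{corwhittakerps} then give $\lambda_{\psi'}(f^M_w)\neq 0$: when $M=0$ this is the ``both unramified'' case there, using that $\mu$ is neither $|-|^{-1/2}$ nor $\chi_\xi\cdot|-|^{-1/2}$; when $M=1$ it is the computation of $\lambda_\Psi(f^m_w)$, which yields $\lambda_\Psi(f^1_w)=q^\varepsilon f^1_w((w,1))$ for every nontrivial $\Psi$ trivial on $\mathfrak{p}^{-\varepsilon}$, in particular for $\Psi=\psi_\varpi,\psi_{\xi\varpi}$.

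Next I would show that $\lambda_{\psi'}$ factors through $\mathcal{M}$, which amounts to showing that $\mathit{St}_{\psi,\chi}$ is not $\psi'$-generic. Applying the twisted Jacquet (co-invariants) functor $V\mapsto V_{N,\psi_a}$---a standard exact functor for $p$-adic groups---to $0\to\mathit{St}_{\psi,\chi}\to\pi_\psi(\mu)\to\omega_{\psi,\chi}^+\to0$ gives, for every $a\in F^\times$, the equality $\dim\Hom_N(\pi_\psi(\mu),\psi_a)=\dim\Hom_N(\mathit{St}_{\psi,\chi},\psi_a)+\dim\Hom_N(\omega_{\psi,\chi}^+,\psi_a)$, each summand being at most $1$ (Waldspurger \cite{wal1}). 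Corollary \ref{corwhittakerps} exhibits, for each of the four classes $a\in\{1,\xi,\varpi,\xi\varpi\}$ representing $F^\times/F^{\times2}$, a vector in $\pi_\psi(\mu)$ on which $\lambda_{\psi_a}$ is nonzero, so $\pi_\psi(\mu)$ is $\psi_a$-generic for all four classes. Since $\#F_\psi(\mathit{St}_{\psi,\chi})/F^{\times2}=3$ and $\#F_\psi(\omega_{\psi,\chi}^+)/F^{\times2}=1$ by Proposition \ref{prop:numgeneric}, the four generic classes of $\pi_\psi(\mu)$ split disjointly into the three carried by $\mathit{St}_{\psi,\chi}$ and the single one carried by $\omega_{\psi,\chi}^+$; the latter is $\psi'$, as $\omega_{\psi,\chi}^+\cong\omega_{\psi'}^+$ is $\psi'$-generic, so $\mathit{St}_{\psi,\chi}$ is not $\psi'$-generic. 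Hence $\lambda_{\psi'}$, a $\psi'$-Whittaker functional on $\pi_\psi(\mu)$, vanishes on the subrepresentation $\mathit{St}_{\psi,\chi}=\ker\mathcal{M}$, so it descends to $\lambda_{\psi'}=\lambda'\circ\mathcal{M}$ for some (automatically $\psi'$-Whittaker) functional $\lambda'$ on $\omega_{\psi,\chi}^+$. Then $\lambda'(\mathcal{M}(f^M_w))=\lambda_{\psi'}(f^M_w)\neq 0$, whence $\mathcal{M}(f^M_w)\neq 0$ and $\mathcal{M}_M$ is surjective.

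The main obstacle is this middle step: Proposition \ref{prop:numgeneric} tells us only that $\mathit{St}_{\psi,\chi}$ is generic for exactly three of the four classes, and one must identify those three and in particular rule out $\psi'$. I propose to settle this by the dimension count through the exact Jacquet functor above, which reduces it to the all-four-classes genericity of the reducible principal series $\pi_\psi(\chi\cdot|-|^{1/2})$ (itself read off from the nonvanishing Jacquet integrals in Corollary \ref{corwhittakerps}); an alternative would be to quote Waldspurger's explicit list of generic characters directly, or to compute the standard intertwining operator realizing $\mathcal{M}$ on $f^M_w$, but the latter is considerably more laborious.
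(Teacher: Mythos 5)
Your proof is correct, but it takes a genuinely different route from the paper's. The paper splits into cases: when $\chi$ is unramified it shows that $\pi_\psi(\chi|-|^{1/2})^{K_0}_1$ and $(\omega_{\psi,\chi}^+)^{K_0}_1$ are both one-dimensional while $(\mathit{St}_{\psi,\chi})^{K_0}_1=\{0\}$ (quoting \cite[Proposition~5.3]{hi}), so $\mathcal{M}_0$ is injective hence surjective; when $\chi$ is ramified it explicitly computes the intertwining operator, evaluating $[\mathcal{M}_1 f](1)=q^\varepsilon f((w,1))\neq 0$ for $f$ supported on $\widetilde{B}(w,1)K_1$, by splitting the integral into the compact piece $\mathfrak{p}^{-\varepsilon}$ and the vanishing shell contributions. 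You instead give a uniform argument: you detect nonvanishing of $\mathcal{M}_M$ with the Jacquet functional $\lambda_{\psi'}$, reading off $\lambda_{\psi'}(f^M_w)\neq 0$ from the computations behind Corollary~\ref{corwhittakerps}, and you show that $\lambda_{\psi'}$ factors through $\mathcal{M}$ because $\mathit{St}_{\psi,\chi}$ is not $\psi'$-generic, proving that last fact by a counting argument using exactness of the twisted Jacquet functor and Proposition~\ref{prop:numgeneric}. The computational core in the ramified case is essentially the same calculation (the intertwining operator $\mathcal{M}f(1)$ is just $\lambda_\Psi(f)$ with $\Psi$ trivial, so the shell-vanishing is identical), but your packaging avoids the appeal to \cite{hi} in the unramified case and avoids the case split entirely. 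Interestingly, the fact that $\mathit{St}_{\psi,\chi}$ is not $\psi'$-generic is exactly the hinge the paper also uses (without proof, as a known fact) in the companion Lemma~\ref{lemintwsurj2}; your Jacquet-functor counting is a clean self-contained proof of it. One small point worth flagging: when $\chi$ is ramified, Corollary~\ref{corwhittakerps} records $\lambda_\Psi(f^1_w)=q^\varepsilon f^1_w((w,1))\neq 0$ only for $\Psi\in\{\psi,\psi_\xi\}$, whereas you need the same for $\Psi\in\{\psi_\varpi,\psi_{\xi\varpi}\}$; as you note, the proof of that corollary gives it verbatim since any such $\Psi$ is trivial on $\mathfrak{p}^{-\varepsilon}$ and the shell terms vanish purely by support considerations independent of $\Psi$, but this extension should be stated explicitly rather than cited.
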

\begin{proof}
    The map $\mathcal{M}$ can be realized as
    \begin{equation*}
        [\mathcal{M} f](g) = \int_F f((w,1)n(x)g) dx.
    \end{equation*}
    Note that the image is not the Schr\"{o}dinger model.
    The maps $\mathcal{M}_m$ ($m>0$) are obtained by the restrictions of $\mathcal{M}$ to $\pi_\psi(|-|^{\frac{1}{2}}\cdot \chi)^{K_m}_\eta$.

    First, we suppose that $\chi$ is unramified.
    Then $\eta=\chi|_{\mathcal{O}^\times}$ is trivial, and Theorem \ref{thmevenweil} implies that $c^\varepsilon_\eta(\omega_{\psi,\chi}^+)=0$.
    Both of $\pi_\psi(|-|^{\frac{1}{2}}\cdot \chi)^{K_0}_1$ and $(\omega_{\psi,\chi}^+)^{K_0}_1$ are 1-dimensional.
    On the other hand, by \cite[Proposition 5.3]{hi}, one can see that $(\mathit{St}_{\psi,\chi})^{K_0}_1=\{0\}$.
    Thus the map $\mathcal{M}_0$ is injective, and hence surjective.

    Next, we consider the case that $\chi$ is ramified.
    Then by Theorem \ref{thmevenweil}, we know that $c^\varepsilon_\eta(\omega_{\psi,\chi}^+)=1$ and $(\omega_{\psi,\chi}^+)^{K_1}_\eta$ is 1-dimensional.
    We shall show that $\mathcal{M}_1\neq0$.
    Lemma \ref{lemcoset} and Corollary \ref{corcoset} imply that
    \begin{equation*}
        \widetilde{G}=\widetilde{B}K_1 \sqcup \widetilde{B}(w,1)K_1.
    \end{equation*}
    Let $f$ be a nonzero element in $\pi_\psi(|-|^{\frac{1}{2}}\cdot \chi)^{K_1}_\eta$ such that
    \begin{equation*}
        f((w,1))\neq0,\ \ f|_{\widetilde{B}K_1}=0.
    \end{equation*}
    Then, decomposing the integral on $F$ into those on $\mathfrak{p}^{-\varepsilon}$ and $\varpi^{-i}\mathcal{O}^\times$ ($i\geq\varepsilon+1$), we have
    \begin{equation*}
        [\mathcal{M}_1 f](1)
        =q^\varepsilon f((w,1)) + \sum_{i=\varepsilon+1}^\infty \int_{\varpi^{-i}\mathcal{O}^\times} f((\left( \begin{array}{cc}&1\\ -1&-x \end{array} \right),1)) dx.
    \end{equation*}
    In the second term, for every $i\geq\varepsilon+1$ and any $x\in \varpi^{-i}\mathcal{O}^\times$, we have
    \begin{equation*}
        f((\left( \begin{array}{cc}&1\\ -1&-x \end{array} \right),1))
        =f((\left( \begin{array}{cc}x^{-1}&-1\\ &x \end{array} \right),1)(\left( \begin{array}{cc}-1&\\-x^{-1}&-1 \end{array} \right),1))
        =0.
    \end{equation*}
    Thus we have
    \begin{equation*}
        [\mathcal{M}_1 f](1) = q^\varepsilon f((w,1)) \neq 0,
    \end{equation*}
    and hence $\mathcal{M}_1 \neq 0$.
    This completes the proof.
\end{proof}
\begin{lemma}\label{lemintwsurj2}
    Let $\chi$ be a quadratic or trivial character of $F^\times$ and $\eta$ a character of $\mathcal{O}^\times$ such that $\eta(-1)=\chi(-1)$.
    If $\eta\neq \chi|_{\mathcal{O}^\times}$, then the map $\mathcal{M}_{c^\varepsilon_\eta(\omega_{\psi,\chi}^+)}$ is surjective.
\end{lemma}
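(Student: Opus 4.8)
The plan is to reduce the statement to Corollary \ref{corwhittakerps}(4) by means of the uniqueness of Whittaker functionals. Write $\mu=\chi\cdot|-|^{\frac{1}{2}}$, so that $\mathcal{M}$ is the canonical surjection $\pi_\psi(\mu)\twoheadrightarrow\omega_{\psi,\chi}^+$ with kernel $\mathit{St}_{\psi,\chi}$ (Proposition \ref{propreducibility}). Since $\mu|_{\mathcal{O}^\times}=\mu^{-1}|_{\mathcal{O}^\times}=\chi|_{\mathcal{O}^\times}$ and $\chi^2=1$, we have $c(\eta\mu)=c(\eta\mu^{-1})=c(\eta\chi)=:d$, and the hypothesis $\eta\neq\chi|_{\mathcal{O}^\times}$ forces $d\geq1$. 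By Theorem \ref{thmevenweil}, $M:=c^\varepsilon_\eta(\omega_{\psi,\chi}^+)=2d+c(\chi)$ and $\dim_\C(\omega_{\psi,\chi}^+)^{K_M}_\eta=1$. Hence $\mathcal{M}_M$ is surjective if and only if it is nonzero, and it suffices to show $\mathcal{M}_M\neq0$.

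Next I would bring in Whittaker functionals. Let $\psi'$ be the additive character among $\psi,\psi_\xi,\psi_\varpi,\psi_{\xi\varpi}$ with $\omega_{\psi,\chi}^+\cong\omega_{\psi'}^+$; then $\psi'\in\{\psi,\psi_\xi\}$ when $c(\chi)=0$ and $\psi'\in\{\psi_\varpi,\psi_{\xi\varpi}\}$ when $c(\chi)=1$. By Corollary \ref{corwhittakerevenweil}, the $\psi'$-Whittaker functional $\lambda'$ on the Schr\"{o}dinger model $(\omega_{\psi'}^+,\mathcal{S}^+(F))$, namely evaluation at $1$, does not vanish on $(\omega_{\psi,\chi}^+)^{K_M}_\eta$. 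Since $\mathcal{M}$ is surjective and $\lambda'\neq0$, the composite $\lambda'\circ\mathcal{M}$ is a nonzero $\psi'$-Whittaker functional on $\pi_\psi(\mu)$. The space of $\psi'$-Whittaker functionals on the principal series $\pi_\psi(\mu)$ is one-dimensional --- equivalently, $\mathit{St}_{\psi,\chi}=\ker\mathcal{M}$ is not $\psi'$-generic, so every $\psi'$-Whittaker functional on $\pi_\psi(\mu)$ kills it and therefore factors through $\mathcal{M}$ --- so $\lambda'\circ\mathcal{M}$ is a nonzero scalar multiple of the Jacquet integral $\lambda_{\psi'}$ from the discussion preceding Corollary \ref{corwhittakerps}. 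Thus $\lambda'(\mathcal{M}_Mf)$ is a nonzero multiple of $\lambda_{\psi'}(f)$ for every $f\in\pi_\psi(\mu)^{K_M}_\eta$, and it is enough to exhibit one such $f$ with $\lambda_{\psi'}(f)\neq0$.

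For the final step I would apply Corollary \ref{corwhittakerps}(4), whose hypothesis $0<c(\eta\mu^{-1})\leq c(\eta\mu)$ holds here with equality. By Theorem \ref{thmps} we have $c^\varepsilon_\eta(\pi_\psi(\mu))=2d$, so $M=c^\varepsilon_\eta(\pi_\psi(\mu))$ if $c(\chi)=0$ and $M=c^\varepsilon_\eta(\pi_\psi(\mu))+1$ if $c(\chi)=1$; in either case $M=c^\varepsilon_\eta(\pi_\psi(\mu))+i$ with $i=c(\chi)\in\{0,1\}$. Corollary \ref{corwhittakerps}(4) then tells us that $\pi_\psi(\mu)^{K_M}_\eta$ contains the two-dimensional subspace $\C f^M_{i,2}\oplus\C f^M_{i,\xi}$ and that both $\lambda_{\psi_{\varpi^i}}$ and $\lambda_{\psi_{\xi\varpi^i}}$ are nonzero on it; since $\psi'\in\{\psi_{\varpi^i},\psi_{\xi\varpi^i}\}$, we obtain $f\in\pi_\psi(\mu)^{K_M}_\eta$ with $\lambda_{\psi'}(f)\neq0$. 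Then $\mathcal{M}_Mf\neq0$, so $\mathcal{M}_M$ is nonzero and hence surjective. (Alternatively one could argue directly as in the proof of Lemma \ref{lemintwsurj1}, choosing $f$ supported on a single coset $\widetilde{B}(n^{\mathrm{op}}(z),1)K_M$ and evaluating $\mathcal{M}_Mf$, but this reproves the Gauss-sum bookkeeping of Corollary \ref{corwhittakerps}.)

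The main obstacle I expect is the middle paragraph: justifying cleanly that $\lambda'\circ\mathcal{M}$ coincides, up to a nonzero scalar, with the Jacquet-integral functional $\lambda_{\psi'}$ appearing in Corollary \ref{corwhittakerps} --- that is, the one-dimensionality of the $\psi'$-Whittaker space of the reducible principal series $\pi_\psi(\mu)$, or equivalently the failure of $\mathit{St}_{\psi,\chi}$ to be $\psi'$-generic (which can be seen from Proposition \ref{prop:numgeneric} together with the reducibility structure in Proposition \ref{propreducibility}) --- together with the small bookkeeping identifying the level $M=c^\varepsilon_\eta(\omega_{\psi,\chi}^+)$ with the correct row $i=c(\chi)$ of Corollary \ref{corwhittakerps}(4). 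The remainder is a routine assembly of Theorems \ref{thmps} and \ref{thmevenweil}, Corollaries \ref{corwhittakerps} and \ref{corwhittakerevenweil}, and Proposition \ref{propreducibility}.
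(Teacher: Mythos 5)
Your proof is correct, and it refines the paper's argument into a single uniform case. The paper splits into $c(\chi)=0$ and $c(\chi)=1$: for $c(\chi)=0$ it runs exactly your Whittaker-functional argument (via Corollary \ref{corwhittakerps}(4) with $i=0$), but for $c(\chi)=1$ it switches to a dimension count, showing $\dim(\mathit{St}_{\psi,\chi})^{K_{2c(\eta\chi)+1}}_\eta\leq 3<4=\dim\pi_\psi(\mu)^{K_{2c(\eta\chi)+1}}_\eta$ via Theorem \ref{thm:rs-Mp2} and Proposition \ref{prop:numgeneric} and the left-exactness of \eqref{eqseqm}. You instead apply Corollary \ref{corwhittakerps}(4) with $i=1$ in the ramified case, using the identity $c^\varepsilon_\eta(\omega_{\psi,\chi}^+)=c^\varepsilon_\eta(\pi_\psi(\mu))+c(\chi)$ to land on the correct level; this avoids the separate dimension count and treats both cases in one stroke. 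Your bookkeeping ($c(\eta\mu)=c(\eta\mu^{-1})=c(\eta\chi)\geq 1$, $\psi'\in\{\psi_{\varpi^{c(\chi)}},\psi_{\xi\varpi^{c(\chi)}}\}$, target space one-dimensional) is correct.

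One remark: your middle paragraph, identifying $\lambda'\circ\mathcal{M}$ with the Jacquet-integral functional $\lambda_{\psi'}$ up to a nonzero scalar, is more than you need. Since $\ker\mathcal{M}_M\subset\mathit{St}_{\psi,\chi}$ and $\mathit{St}_{\psi,\chi}$ is not $\psi'$-generic, the functional $\lambda_{\psi'}$ already vanishes on $\ker\mathcal{M}_M$; so once Corollary \ref{corwhittakerps}(4) produces $f\in\pi_\psi(\mu)^{K_{M}}_\eta$ with $\lambda_{\psi'}(f)\neq 0$, it follows directly that $f\notin\ker\mathcal{M}_M$ and $\mathcal{M}_M\neq 0$. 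This is exactly how the paper phrases its $c(\chi)=0$ case, and it sidesteps the one-dimensionality discussion entirely. The implicit fact that $\mathit{St}_{\psi,\chi}$ is not $\psi'$-generic (equivalently, that $\omega_{\psi,\chi}^+$ is the $\psi'$-generic constituent of $\pi_\psi(\mu)$) is used without elaboration in the paper's own proof as well, so you are not relying on anything the paper does not.
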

\begin{proof}
    We shall first consider the case $c(\chi)=0$ and $c(\eta)\geq 1$.
    In this case, Theorems \ref{thmps} and \ref{thmevenweil} tell us that
    \begin{equation*}
        c^\varepsilon_\eta(\omega_{\psi,\chi}^+) = c^\varepsilon_\eta(\pi_\psi(\chi \cdot |-|^{\frac{1}{2}})) = 2c(\eta),
    \end{equation*}
    and
    \begin{equation*}
        \dim_\C (\omega_{\psi,\chi}^+)^{K_{2c(\eta)}}_\eta = 1.
    \end{equation*}
    Choose the additive character $\psi'\in \{\psi, \psi_\xi, \psi_\varpi, \psi_{\xi\varpi}\}$ such that $\omega_{\psi,\chi}^+$ is isomorphic to $\omega_{\psi'}^+$.
    By Proposition \ref{corwhittakerps}, there exists a nonzero vector $f\in \pi_\psi(\chi \cdot |-|^{\frac{1}{2}})^{K_{2c(\eta)}}_\eta$ such that $\lambda_{\psi'}(f)\neq 0$.
    Since $\mathit{St}_{\psi,\chi}$ is not $\psi'$-generic, it does not contain $f$.
    This, combining with the exact sequence \eqref{eqseqm}, implies that $\mathcal{M}_{2c(\eta)}(f)\neq 0$.
    Hence $\mathcal{M}_{2c(\eta)}$ is surjective.

    Next we shall consider the case $c(\chi)=1$ and $c(\eta\chi) \geq 1$.
    In this case, Theorems \ref{thmps} and \ref{thmevenweil} tell us that
    \begin{align*}
        c^\varepsilon_\eta(\pi_\psi(\chi \cdot |-|^{\frac{1}{2}})) & = 2c(\eta\chi),     \\
        c^\varepsilon_\eta(\omega_{\psi,\chi}^+)                   & = 2c(\eta\chi) + 1,
    \end{align*}
    and
    \begin{align*}
        \dim_\C \pi_\psi(\chi \cdot |-|^{\frac{1}{2}})^{K_{2c(\eta\chi)}}_\eta   & = 2, \\
        \dim_\C \pi_\psi(\chi \cdot |-|^{\frac{1}{2}})^{K_{2c(\eta\chi)+1}}_\eta & = 4, \\
        \dim_\C \left( \omega_{\psi,\chi}^+ \right)^{K_{2c(\eta\chi)+1}}_\eta    & = 1.
    \end{align*}
    Then the exact sequence in Proposition \ref{propreducibility} (2) implies that $c^\varepsilon_\eta(\mathit{St}_{\psi,\chi})=2c(\eta\chi)$.
    It follows from Theorem \ref{thm:rs-Mp2} and Proposition \ref{prop:numgeneric} that the dimension of $(\mathit{St}_{\psi,\chi})^{K_{2c(\eta\chi)+1}}_\eta$ is less than or equal to $3$.
    Therefore, combining this with \eqref{eqseqm}, we see that $\mathcal{M}_{2c(\eta\chi)+1}$ is nonzero, and hence surjective.
\end{proof}
\begin{lemma}\label{lemintwsurjall}
    Let $\chi$ be a quadratic or trivial character of $F^\times$, and $\eta$ a character of $\mathcal{O}^\times$ such that $\eta(-1)=\chi(-1)$.
    Then the map $\mathcal{M}_m$ is surjective for every $m\geq0$.
\end{lemma}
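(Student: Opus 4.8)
The plan is to prove the statement by induction on $m$, using Lemmas \ref{lemintwsurj1} and \ref{lemintwsurj2} to handle the base case at the level $m = c^\varepsilon_\eta(\omega_{\psi,\chi}^+)$ and then propagating surjectivity to higher levels via the oldform structure of the even Weil representation.

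First I would dispose of the range $m \le c^\varepsilon_\eta(\omega_{\psi,\chi}^+)$: for $m < c^\varepsilon_\eta(\omega_{\psi,\chi}^+)$ the target $(\omega_{\psi,\chi}^+)^{K_m}_\eta$ is zero by the definition of the conductor, so $\mathcal{M}_m$ is trivially surjective; and for $m = c^\varepsilon_\eta(\omega_{\psi,\chi}^+)$ this is precisely the content of Lemma \ref{lemintwsurj1} (when $\eta = \chi|_{\mathcal{O}^\times}$) and Lemma \ref{lemintwsurj2} (otherwise). Then, for $m > c^\varepsilon_\eta(\omega_{\psi,\chi}^+)$, I would argue by strong induction: assuming $\mathcal{M}_j$ is surjective for all $j < m$, I invoke Corollary \ref{corevenweil}, which gives $(\omega_{\psi,\chi}^+)^{K_m, \new}_\eta = \{0\}$, so that $(\omega_{\psi,\chi}^+)^{K_m}_\eta$ coincides with its oldform subspace, namely the span of the natural image of $(\omega_{\psi,\chi}^+)^{K_{m-1}}_\eta$ and of $\alpha_2\bigl((\omega_{\psi,\chi}^+)^{K_{m-2}}_\eta\bigr)$ (with the $\alpha_2$-summand suppressed in the exceptional case $m = 1$, which can occur only when $c^\varepsilon_\eta(\omega^+_{\psi,\chi}) = 0$). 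Since $\mathcal{M}$ is a morphism of $\widetilde{G}$-representations it commutes both with the inclusions $\pi_\psi(\chi \cdot |-|^{\frac{1}{2}})^{K_{m-1}}_\eta \hookrightarrow \pi_\psi(\chi \cdot |-|^{\frac{1}{2}})^{K_m}_\eta$ and with $\alpha_2$ (the latter being the action of the fixed element $(t(\varpi^{-1}),1) \in \widetilde{G}$ on both representations). Hence, lifting a basis of $(\omega_{\psi,\chi}^+)^{K_{m-1}}_\eta$ along $\mathcal{M}_{m-1}$ and a basis of $(\omega_{\psi,\chi}^+)^{K_{m-2}}_\eta$ along $\mathcal{M}_{m-2}$, and then applying respectively the inclusion and $\alpha_2$, produces vectors in $\pi_\psi(\chi \cdot |-|^{\frac{1}{2}})^{K_m}_\eta$ whose images under $\mathcal{M}_m$ span $(\omega_{\psi,\chi}^+)^{K_m}_\eta$, completing the induction.

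I do not expect a genuine obstacle here: the substantive input, surjectivity at the conductor level itself, has already been extracted in Lemmas \ref{lemintwsurj1} and \ref{lemintwsurj2}, and the rest is the formal observation that $\mathcal{M}$, being $\widetilde{G}$-equivariant, transports oldforms to oldforms. The only points requiring a moment's care are the bookkeeping around the special definition of the oldform space at level $m = 1$ (where it is $\pi^{K_0}_\eta$ rather than a two-term span, so the $\alpha_2$-piece must be dropped), and keeping straight that strong induction, using both levels $m-1$ and $m-2$, is genuinely needed, since the image of $(\omega_{\psi,\chi}^+)^{K_{m-1}}_\eta$ alone does not exhaust $(\omega_{\psi,\chi}^+)^{K_m}_\eta$ whenever $m - c^\varepsilon_\eta(\omega^+_{\psi,\chi})$ is even.
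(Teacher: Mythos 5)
Your proof is correct and uses the same essential ingredients as the paper's: surjectivity of $\mathcal{M}_M$ at the conductor level $M = c^\varepsilon_\eta(\omega_{\psi,\chi}^+)$ (Lemmas \ref{lemintwsurj1}, \ref{lemintwsurj2}), the fact from Theorem \ref{thm:rs-Mp2} and Proposition \ref{prop:numgeneric} that all higher levels of $\omega_{\psi,\chi}^+$ consist entirely of oldforms, and $\widetilde{G}$-equivariance of $\mathcal{M}$ to commute it past the inclusions and $\alpha_2$. The paper simply unrolls your strong induction into a closed-form statement, writing an arbitrary $\varphi \in (\omega_{\psi,\chi}^+)^{K_m}_\eta$ as $\sum_{i=0}^{\lfloor (m-M)/2\rfloor} a_i\,\alpha_2^i(\mathcal{M}_M f_0)$ for a single chosen preimage $f_0$ at level $M$, and then observes that this equals $\mathcal{M}\bigl(\sum_i a_i\,\alpha_2^i f_0\bigr)$ with $\alpha_2^i f_0 \in \pi_\psi(\chi|-|^{1/2})^{K_m}_\eta$; your inductive formulation is equivalent.
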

\begin{proof}
    Put $M=c^\varepsilon_\eta(\omega_{\psi,\chi}^+)$.
    Let $f_0$ be a nonzero element in $\pi_\psi(|-|^{\frac{1}{2}}\cdot \chi)^{K_M}_\eta$ such that $(\omega_{\psi,\chi}^+)^{K_M}_\eta = \C \mathcal{M}_M f_0$, given by Lemmas \ref{lemintwsurj1} or \ref{lemintwsurj2}.
    Then, it follows from Proposition \ref{prop:numgeneric} and Theorem \ref{thm:rs-Mp2} that any element $\varphi\in (\omega_{\psi,\chi}^+)^{K_m}_\eta$ can be written as a linear sum
    \begin{equation*}
        \varphi=\sum_{i=0}^{\left\lfloor \frac{m-M}{2} \right\rfloor} a_i \alpha_2^i(\mathcal{M}_M f_0), \quad a_i \in \C,
    \end{equation*}
    which is equal to
    \begin{equation*}
        \mathcal{M} \left( \sum_{i=0}^{\left\lfloor \frac{m-M}{2} \right\rfloor} a_i \alpha_2^i(f_0) \right).
    \end{equation*}
    Since $f_0 \in \pi_\psi(|-|^{\frac{1}{2}}\cdot \chi)^{K_M}_\eta$, this lies in the image of $\mathcal{M}_m$.
    This completes the proof.
\end{proof}
The lemma gives a short exact sequence
\begin{equation}\label{eqexact}
    0 \longrightarrow (\mathit{St}_{\psi,\chi})^{K_m}_\eta \longrightarrow \pi_\psi(|-|^{\frac{1}{2}}\cdot \chi)^{K_m}_\eta \overset{\mathcal{M}_m}{\longrightarrow} (\omega_{\psi,\chi}^+)^{K_m}_\eta \longrightarrow 0,
\end{equation}
for every $m\geq 0$.
\begin{theorem}\label{thmsteinberg}
    Let $\chi$ be a quadratic or trivial character of $F^\times$, and $\eta$ a character of $\mathcal{O}^\times$ such that $\eta(-1)=\chi(-1)$.
    Then we have
    \begin{align*}
        c^\varepsilon_\eta(\mathit{St}_{\psi,\chi})=\begin{cases*}
                                                        1,            & if $\eta=\chi|_{\mathcal{O}^\times}$,    \\
                                                        2c(\eta\chi), & if $\eta\neq\chi|_{\mathcal{O}^\times}$,
                                                    \end{cases*}
    \end{align*}
    and
    \begin{align*}
        \dim_\C (\mathit{St}_{\psi,\chi})^{K_m}_\eta
         & =\begin{dcases*}
                \left( \left\lceil \frac{3(m-2c(\eta))}{2} \right\rceil +1 -2\delta_{c(\eta)} \right)^+,            & if $\chi$ is unramified, \\
                \left( \left\lceil \frac{3(m-2c(\eta\chi))-1}{2} \right\rceil  +2 -2\delta_{c(\eta\chi)} \right)^+, & if $\chi$ is ramified.
            \end{dcases*}
    \end{align*}
    In particular, $c^\varepsilon_{\min}(\mathit{St}_{\psi,\chi})=c^\varepsilon_\eta(\mathit{St}_{\psi,\chi})$ if and only if $\eta=\chi|_{\mathcal{O}^\times}$, and $c^\varepsilon_{\min}(\mathit{St}_{\psi,\chi})=1$.
\end{theorem}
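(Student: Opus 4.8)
The plan is to read off the dimensions of $(\mathit{St}_{\psi,\chi})^{K_m}_\eta$ from the short exact sequence \eqref{eqexact}. By Lemma \ref{lemintwsurjall} the map $\mathcal{M}_m$ is surjective for every $m\ge 0$, so \eqref{eqexact} gives
\[
    \dim_\C (\mathit{St}_{\psi,\chi})^{K_m}_\eta = \dim_\C \pi_\psi(\chi\cdot|-|^{\frac{1}{2}})^{K_m}_\eta - \dim_\C (\omega_{\psi,\chi}^+)^{K_m}_\eta ,
\]
and both terms on the right are already computed: the first by Theorem \ref{thmps} applied to $\mu=\chi\cdot|-|^{\frac12}$, the second by Theorem \ref{thmevenweil}. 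Since $\chi$ is quadratic or trivial we have $\mu|_{\mathcal{O}^\times}=\mu^{-1}|_{\mathcal{O}^\times}=\chi|_{\mathcal{O}^\times}$, hence $c(\eta\mu)=c(\eta\mu^{-1})=c(\eta\chi)$ and $c(\mu)=c(\chi)\in\{0,1\}$, which collapses the case distinction in Theorem \ref{thmps}.

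Concretely, I would first record that in both the unramified case $c(\chi)=0$ (where $\chi|_{\mathcal{O}^\times}$ is trivial, so $c(\eta\chi)=c(\eta)$) and the ramified case $c(\chi)=1$, Theorem \ref{thmps} yields
\[
    \dim_\C \pi_\psi(\chi\cdot|-|^{\frac12})^{K_m}_\eta = 2\bigl(m-2c(\eta\chi)+1\bigr)^+ - 2\delta_{c(\eta\chi)}
\]
for every $m\ge 0$, with the single exception of the value $1$ at $m=c(\eta)=0$ in the unramified case; in the ramified case one checks that the intermediate regime $c(\mu)\le m<2c(\mu)$, which is just $m=1$, produces the same value as this formula. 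Subtracting $\dim_\C (\omega_{\psi,\chi}^+)^{K_m}_\eta=\bigl(\lfloor (m-2c(\eta\chi)-c(\chi))/2\rfloor+1\bigr)^+$ and simplifying with the identity $2k-\lfloor k/2\rfloor=\lceil 3k/2\rceil$ (and its shift-by-one analogue when $c(\chi)=1$) produces the two claimed closed formulas; the $m=0$ exception is harmless because there $\dim_\C (\mathit{St}_{\psi,\chi})^{K_0}_1=1-1=0$, which the formula also gives.

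Then $c^\varepsilon_\eta(\mathit{St}_{\psi,\chi})$ is simply the least $m$ at which the resulting $(\cdot)^+$ is positive, namely $m=1$ when $\eta=\chi|_{\mathcal{O}^\times}$ and $m=2c(\eta\chi)$ otherwise. For the final assertion, the central sign of $\mathit{St}_{\psi,\chi}$ with respect to $\psi^\varepsilon$ is $\chi(-1)$, since $\mathit{St}_{\psi,\chi}$ embeds in $\pi_\psi(\chi\cdot|-|^{\frac12})$ whose central sign is $\chi(-1)$; hence the characters $\eta$ competing for $c^\varepsilon_{\min}$ are exactly those with $\eta(-1)=\chi(-1)$, and among these $c^\varepsilon_\eta(\mathit{St}_{\psi,\chi})=1$ precisely for $\eta=\chi|_{\mathcal{O}^\times}$ while $c^\varepsilon_\eta(\mathit{St}_{\psi,\chi})=2c(\eta\chi)\ge 2$ otherwise, so $c^\varepsilon_{\min}(\mathit{St}_{\psi,\chi})=1$ and it is attained there alone. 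The only real labour is the floor/ceiling bookkeeping together with the handful of small-$m$ and ramified-versus-unramified cases; there is no conceptual obstacle, since all the input has been prepared in the preceding lemmas.
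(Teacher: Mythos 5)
Your proof is correct and follows exactly the paper's own approach: pass to the short exact sequence \eqref{eqexact}, whose right-exactness is guaranteed by Lemma \ref{lemintwsurjall}, subtract the dimensions given by Theorem \ref{thmevenweil} from those given by Theorem \ref{thmps} specialized to $\mu = \chi\cdot|-|^{1/2}$, and bookkeep the floor/ceiling and small-$m$ cases. The paper compresses all of this into a three-line citation; your version makes the arithmetic simplification (including the identity $2k - \lfloor k/2\rfloor = \lceil 3k/2\rceil$, the $m=0$ unramified exception, and the collapse of the intermediate $c(\mu)\le m < 2c(\mu)$ regime) explicit, and correctly identifies the central sign of $\mathit{St}_{\psi,\chi}$ as $\chi(-1)$ for the final $c^\varepsilon_{\min}$ assertion.
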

\begin{proof}
    The short exact sequence \eqref{eqexact} implies that
    \begin{equation*}
        \dim_\C (\mathit{St}_{\psi,\chi})^{K_m}_\eta = \dim_\C \pi_\psi(|-|^{\frac{1}{2}}\cdot \chi)^{K_m}_\eta - \dim_\C (\omega_{\psi,\chi}^+)^{K_m}_\eta.
    \end{equation*}
    The assertions now follow from Theorems \ref{thmps} and \ref{thmevenweil}.
\end{proof}

\begin{corollary}\label{corsteinberg}
    Let $\chi$ be a quadratic or trivial character of $F^\times$, and $\eta$ a character of $\mathcal{O}^\times$ such that $\eta(-1)=\chi(-1)$.
    Put $M=c^\varepsilon_\eta(\mathit{St}_{\psi,\chi})$.
    \begin{enumerate}[(1)]
        \item If $c(\chi)=c(\eta)=0$, then we have $c^\varepsilon_\eta(\mathit{St}_{\psi,\chi})=1$ and
              \begin{align*}
                  (\mathit{St}_{\psi,\chi})^{K_1, \new}_\eta & =1, &  &                     & \\
                  (\mathit{St}_{\psi,\chi})^{K_2, \new}_\eta & =1, &  &                     & \\
                  (\mathit{St}_{\psi,\chi})^{K_3, \new}_\eta & =1, &  &                     & \\
                  (\mathit{St}_{\psi,\chi})^{K_m, \new}_\eta & =0, &  & \text{if } m\geq 4. &
              \end{align*}
        \item If one of $c(\chi)$ or $c(\eta\chi)$ is 0 and the other is not, then we have
              \begin{align*}
                  (\mathit{St}_{\psi,\chi})^{K_M, \new}_\eta     & =1, &  &                       & \\
                  (\mathit{St}_{\psi,\chi})^{K_{M+1}, \new}_\eta & =2, &  &                       & \\
                  (\mathit{St}_{\psi,\chi})^{K_m, \new}_\eta     & =0, &  & \text{if } m\geq M+2. &
              \end{align*}
        \item If $c(\chi)\neq 0 \neq c(\eta\chi)$, then we have
              \begin{align*}
                  (\mathit{St}_{\psi,\chi})^{K_M, \new}_\eta     & =2, &  &                       & \\
                  (\mathit{St}_{\psi,\chi})^{K_{M+1}, \new}_\eta & =1, &  &                       & \\
                  (\mathit{St}_{\psi,\chi})^{K_m, \new}_\eta     & =0, &  & \text{if } m\geq M+2. &
              \end{align*}
    \end{enumerate}
\end{corollary}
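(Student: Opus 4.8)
The plan is to deduce all the dimensions by a purely numerical count, in the spirit of the proof of Corollary \ref{corps}, from Theorem \ref{thmsteinberg} together with Theorem \ref{thm:rs-Mp2} and Proposition \ref{prop:numgeneric}; in the present situation, unlike the principal-series case, no auxiliary computation with the operator $\alpha_2$ is required. Write $M=c^\varepsilon_\eta(\mathit{St}_{\psi,\chi})$ and $d_m=\dim_\C(\mathit{St}_{\psi,\chi})^{K_m}_\eta$. First I would read off from Theorem \ref{thmsteinberg} that $d_m=0$ for $m<M$ and that the first nonzero values are $(d_M,d_{M+1},d_{M+2})=(1,2,4)$ in case (1), where $M=1$; $(d_M,d_{M+1},d_{M+2})=(1,3,4)$ in case (2); and $(d_M,d_{M+1},d_{M+2})=(2,3,5)$ in case (3). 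In every case $M\geq1$.

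Next I would compute the oldform and newform dimensions at the two smallest relevant levels directly. Since the natural map $(\mathit{St}_{\psi,\chi})^{K_{m-1}}_\eta\hookrightarrow(\mathit{St}_{\psi,\chi})^{K_m}_\eta$ is injective and $\alpha_2$ is a bijection carrying $(\mathit{St}_{\psi,\chi})^{K_{m-2}}_\eta$ into $(\mathit{St}_{\psi,\chi})^{K_m}_\eta$, one has $\dim_\C(\mathit{St}_{\psi,\chi})^{K_m,\old}_\eta\leq d_{m-1}+d_{m-2}$ for $m\geq2$, with the left-hand side equal to $d_{m-1}$ as soon as $d_{m-2}=0$. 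Because $d_m=0$ for $m<M$, this forces $(\mathit{St}_{\psi,\chi})^{K_M,\old}_\eta=\{0\}$ (which also holds directly from the definition when $M=1$), hence $\dim_\C(\mathit{St}_{\psi,\chi})^{K_M,\new}_\eta=d_M$; and it forces $\dim_\C(\mathit{St}_{\psi,\chi})^{K_{M+1},\old}_\eta=d_M$, hence $\dim_\C(\mathit{St}_{\psi,\chi})^{K_{M+1},\new}_\eta=d_{M+1}-d_M$. Substituting the values above yields the asserted dimensions at $K_M$ and $K_{M+1}$: these are $(1,1)$ in case (1) (that is, at $K_1$ and $K_2$), $(1,2)$ in case (2), and $(2,1)$ in case (3).

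Finally I would invoke Theorem \ref{thm:rs-Mp2} and Proposition \ref{prop:numgeneric}: since $\mathit{St}_{\psi,\chi}$ is a Steinberg representation, $\sum_{m\geq0}\dim_\C(\mathit{St}_{\psi,\chi})^{K_m,\new}_\eta=3$, while $\dim_\C(\mathit{St}_{\psi,\chi})^{K_m,\new}_\eta=0$ for $m<M$. In cases (2) and (3) the two values already computed sum to $3$, so every newform space with $m\geq M+2$ vanishes, which is the claim. In case (1) the computed values contribute $1+1=2$, so the newform dimensions for $m\geq3$ sum to $1$; combining this with $\dim_\C(\mathit{St}_{\psi,\chi})^{K_3,\new}_\eta\geq d_3-d_2-d_1=4-2-1=1$ (the oldform bound above) and nonnegativity gives $\dim_\C(\mathit{St}_{\psi,\chi})^{K_3,\new}_\eta=1$ and vanishing for $m\geq4$. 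The only slightly delicate point is this last step, where the global count $3$ is used to promote a one-sided bound to an equality; no explicit identification of an $\alpha_2$-image (as was needed in Corollary \ref{corps}(1)) enters here, precisely because $M=c^\varepsilon_\eta(\mathit{St}_{\psi,\chi})\geq1$ makes $(\mathit{St}_{\psi,\chi})^{K_{M-1}}_\eta=\{0\}$.
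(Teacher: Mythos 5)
Your proposal is correct and reproduces in detail exactly what the paper's proof compresses into a single sentence ("This follows immediately from Theorems \ref{thmsteinberg} and \ref{thm:rs-Mp2} and Proposition \ref{prop:numgeneric}"). The dimensions $(d_M,d_{M+1},d_{M+2})=(1,2,4),(1,3,4),(2,3,5)$ in the three cases all check out against Theorem \ref{thmsteinberg}, and the bookkeeping — exact newform dimensions at $K_M$ and $K_{M+1}$ because $d_{M-1}=0$, then the global count from Theorem \ref{thm:rs-Mp2} and Proposition \ref{prop:numgeneric}, plus in case (1) the lower bound $d_3-d_2-d_1=1$ to locate the final newform — is sound and is the arithmetic the paper leaves implicit.

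One minor remark on your closing sentence: the reason case (1) here avoids the explicit $\alpha_2$-image computation of Corollary \ref{corps}(1) is not merely that $M\geq1$ forces $d_{M-1}=0$. In both corollaries that fact already lets one compute the newform dimension exactly through level $M+1$; the real difference is that here the remaining budget from the global count (namely $1$) happens to coincide with the a priori lower bound $d_{M+2}-d_{M+1}-d_M$, so the inequality closes on its own, whereas in Corollary \ref{corps}(1) the remaining budget is $2$ against a lower bound of $1$, leaving a genuine ambiguity that the paper resolves by showing $\alpha_2(\pi^{K_0}_1)\not\subset\pi^{K_1}_1$. Your argument is unaffected, but the explanation could be made slightly more precise.
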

\begin{proof}
    This follows immediately from Theorems \ref{thmsteinberg} and \ref{thm:rs-Mp2} and Proposition \ref{prop:numgeneric}.
\end{proof}

\begin{corollary}\label{corwhittakersteinberg}
    Let $\chi$ be a quadratic or trivial character of $F^\times$ and $\eta$ a character of $\mathcal{O}^\times$ such that $\eta(-1)=\chi(-1)$.
    Put $M=c^\varepsilon_\eta(\mathit{St}_{\psi,\chi})$.
    Let $i=0$ or $1$.
    Then, for a nontrivial additive character $\Psi$ of $F$ such that $\mathit{St}_{\psi,\chi}$ is $\Psi$-generic and $c(\Psi)=-\varepsilon-i$, the $\Psi$-Whittaker functional for $\mathit{St}_{\psi,\chi}$ is nonzero on $(\mathit{St}_{\psi,\chi})^{K_{M+i}}_\eta$.
\end{corollary}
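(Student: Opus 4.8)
The plan is to realize $\mathit{St}_{\psi,\chi}$ as a quotient of a principal series and transfer the assertion to Corollary~\ref{corwhittakerps}. I would start from the short exact sequence of Proposition~\ref{propreducibility}(3),
\begin{equation*}
    0 \longrightarrow \omega_{\psi,\chi}^+ \longrightarrow \pi_\psi(|-|^{-\frac{1}{2}}\chi) \overset{q}{\longrightarrow} \mathit{St}_{\psi,\chi} \longrightarrow 0 .
\end{equation*}
Fix a nontrivial additive character $\Psi$ with $c(\Psi)=-\varepsilon-i$ for which $\mathit{St}_{\psi,\chi}$ is $\Psi$-generic. By Proposition~\ref{prop:numgeneric} the even Weil representation $\omega_{\psi,\chi}^+$ is then not $\Psi$-generic, so $\Hom_N(\omega_{\psi,\chi}^+,\Psi)=\{0\}$; hence the $\Psi$-Whittaker functional $\lambda_\Psi$ on $\pi_\psi(|-|^{-\frac{1}{2}}\chi)$ introduced before Corollary~\ref{corwhittakerps} vanishes on the subrepresentation $\omega_{\psi,\chi}^+$ and therefore descends to a nonzero $\Psi$-Whittaker functional $\bar\lambda_\Psi$ on $\mathit{St}_{\psi,\chi}$, which by Waldspurger's multiplicity one is the $\Psi$-Whittaker functional of $\mathit{St}_{\psi,\chi}$ up to a scalar. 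Since $q$ is surjective and $K^\varepsilon_{M+i}$ is compact open, $q$ carries $\pi_\psi(|-|^{-\frac{1}{2}}\chi)^{K^\varepsilon_{M+i}}_\eta$ onto $(\mathit{St}_{\psi,\chi})^{K^\varepsilon_{M+i}}_\eta$; so it suffices to exhibit a vector of $\pi_\psi(|-|^{-\frac{1}{2}}\chi)^{K^\varepsilon_{M+i}}_\eta$ that is not annihilated by $\lambda_\Psi$.

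I would then feed this into Corollary~\ref{corwhittakerps} with $\mu:=|-|^{-\frac{1}{2}}\chi$. Since $\mu|_{\mathcal{O}^\times}=\chi|_{\mathcal{O}^\times}$ is quadratic or trivial, $c(\eta\mu)=c(\eta\mu^{-1})=c(\eta\chi)$, so $(\mu,\eta)$ falls into case (1) of Corollary~\ref{corwhittakerps} when $c(\chi)=c(\eta)=0$, into case (2) when $c(\chi)=1$ and $\eta=\chi|_{\mathcal{O}^\times}$, and into case (4) when $c(\eta\chi)\geq1$. A comparison with Theorems~\ref{thmps}, \ref{thmevenweil}, \ref{thmsteinberg} shows $c^\varepsilon_\eta(\pi_\psi(|-|^{-\frac{1}{2}}\chi))=\min(c^\varepsilon_\eta(\mathit{St}_{\psi,\chi}),c^\varepsilon_\eta(\omega_{\psi,\chi}^+))\leq M$, and that $M+i$ always lies in the range of levels treated in the relevant case of Corollary~\ref{corwhittakerps}: in case (1) the minimal level is $0$ and one needs levels $1$ and $2$, both among the levels $0,1,2$ listed there, while in cases (2) and (4) the minimal level equals $M$ and one needs levels $M$ and $M+1$, exactly the levels displayed there. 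In each situation the matching non-vanishing statement of Corollary~\ref{corwhittakerps} supplies the vector: for $i=0$, where $\Psi$ is a square twist of $\psi$ or $\psi_\xi$, one invokes that $\lambda_\psi$ and $\lambda_{\psi_\xi}$ are nonzero on $f^{M+i}_w$ (and on $f^{M+i}_1$) in cases (1) and (2), and that each of $\lambda_\psi,\lambda_{\psi_\xi}$ is nonzero on one of $f^{M+i}_{0,2},f^{M+i}_{0,\xi}$ in case (4); for $i=1$, where $\Psi$ is a square twist of $\psi_\varpi$ or $\psi_{\xi\varpi}$, one invokes that each of $\lambda_{\psi_\varpi},\lambda_{\psi_{\xi\varpi}}$ is nonzero on one of $f^{M+i}_{1,2},f^{M+i}_{1,\xi}$. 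This produces the required vector and finishes the proof.

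Once the quotient presentation is in hand the rest is bookkeeping, but there are two points to get right, and I expect the second to be the main obstacle. First, the factorization $\lambda_\Psi=\bar\lambda_\Psi\circ q$ must be justified; this rests precisely on $\omega_{\psi,\chi}^+$ not being $\Psi$-generic, i.e.\ on the genericity count of Proposition~\ref{prop:numgeneric}. Second, and more delicate, is the level comparison: one must check case by case (according to the values of $c(\chi)$ and $c(\eta\chi)$) that $M+i$ is exactly a level for which Corollary~\ref{corwhittakerps} records a non-vanishing statement, the awkward instance being $c(\chi)=c(\eta)=0$, where $c^\varepsilon_\eta(\mathit{St}_{\psi,\chi})=1>0=c^\varepsilon_\eta(\pi_\psi(|-|^{-\frac{1}{2}}\chi))$, so one reads off the level-$1$ (and, for $i=1$, level-$2$) data of Corollary~\ref{corwhittakerps}(1) rather than its minimal-level data. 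One should also keep in mind that the hypothesis that $\mathit{St}_{\psi,\chi}$ be $\Psi$-generic genuinely restricts the admissible $\Psi$ (for instance, for $\chi$ trivial and $i=0$ only $\Psi=\psi_\xi$ occurs), but since Corollary~\ref{corwhittakerps} treats $\lambda_\psi,\lambda_{\psi_\xi}$ (respectively $\lambda_{\psi_\varpi},\lambda_{\psi_{\xi\varpi}}$) uniformly, no admissible $\Psi$ is lost.
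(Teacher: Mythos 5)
Your argument is correct and essentially identical to the paper's: both use the short exact sequence of Proposition~\ref{propreducibility}(3) to identify the $\Psi$-Whittaker functional of $\mathit{St}_{\psi,\chi}$, pulled back along the quotient map, with $\lambda_\Psi$ up to scalar, and then invoke Corollary~\ref{corwhittakerps} for $\mu=\chi\cdot|-|^{-\frac{1}{2}}$. The only cosmetic difference is that the paper goes directly via Waldspurger's uniqueness of Whittaker functionals applied to $\Lambda_\Psi\circ\mathcal{L}$ (which makes the non-$\Psi$-genericity of $\omega^+_{\psi,\chi}$ a consequence rather than a prerequisite, so one never needs to extract that fact from Proposition~\ref{prop:numgeneric}), whereas you establish the factorization of $\lambda_\Psi$ first; the detailed case-by-case level bookkeeping you carry out is the content the paper compresses into ``the assertion follows from Corollary~\ref{corwhittakerps}.''
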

\begin{proof}
    We shall write $\mathcal{L}$ for the canonical surjective mapping from $\pi_\psi(\chi\cdot|-|^{-\frac{1}{2}})$ to $\mathit{St}_{\psi,\chi}$.
    The short exact sequence
    \begin{equation*}
        0 \longrightarrow \omega_{\psi,\chi}^+ \longrightarrow \pi_\psi(\chi\cdot|-|^{-\frac{1}{2}}) \overset{\mathcal{L}}{\longrightarrow} \mathit{St}_{\psi,\chi} \longrightarrow 0,
    \end{equation*}
    in Proposition \ref{propreducibility} (3) gives an exact sequence
    \begin{equation*}
        0 \longrightarrow \left( \omega_{\psi,\chi}^+ \right)^{K_m}_\eta \longrightarrow \pi_\psi(\chi\cdot|-|^{-\frac{1}{2}})^{K_m}_\eta \overset{\mathcal{L}_m}{\longrightarrow} \left( \mathit{St}_{\psi,\chi} \right)^{K_m}_\eta,
    \end{equation*}
    for any $m\geq 0$, where $\mathcal{L}_m$ denotes the restriction of $\mathcal{L}$ to $\pi_\psi(\chi\cdot|-|^{-\frac{1}{2}})^{K_m}_\eta$.
    Let $\Psi$ be a nontrivial additive character of $F$ such that $\mathit{St}_{\psi,\chi}$ is $\Psi$-generic, and $\Lambda_\Psi$ a $\Psi$-Whittaker functional for $\mathit{St}_{\psi,\chi}$.
    Then the composition $\Lambda_\Psi \circ \mathcal{L}$ is a $\Psi$-Whittaker functional for $\pi_\psi(\chi\cdot|-|^{-\frac{1}{2}})$, which is equal to $\lambda_\Psi$ up to a scalar multiple.
    Therefore, if there exists $f\in \pi_\psi(\chi\cdot|-|^{-\frac{1}{2}})^{K_m}_\eta$ such that $\lambda_\Psi(f)\neq 0$, then $v=\mathcal{L}_m(f) \in (\mathit{St}_{\psi,\chi})^{K_m}_\eta$ satisfies $\Lambda_\Psi(v)\neq 0$.
    Now the assertion follows from Corollary \ref{corwhittakerps}.
\end{proof}

\section{Supercuspidal representations}\label{secsc}
In this section we shall consider irreducible genuine supercuspidal representations of $\widetilde{G}=\widetilde{\SL_2}(F)$.
For this we need some propositions on how they are constructed.
In addition, we also need those on how the irreducible supercuspidal representations of $G=\SL_2(F)$ are constructed and how those constructions are related to each other.
\subsection{Classifications of supercuspidal representations}\label{sectype}
We begin by reviewing the constructions and the classifications of irreducible supercuspidal representations of $\widetilde{G}$ and $G$ given by Manderscheid\cite{man1} and those of $G$ given by Kutzko--Sally\cite{ks}.
Let $\delta\in\{0,1\}$.
For $l\geq1$, let $J^\delta_l$ be a subgroup of $K^\delta$ defined by
\begin{align*}
    J^\delta_l = \Set{\left( \begin{array}{cc}a&b\\ c&d \end{array} \right) \in G | a, d\in 1+\mathfrak{p}^l,\ b\in\mathfrak{p}^{l-\delta},\ c\in\mathfrak{p}^{l+\delta} }.
\end{align*}
Put $J^\delta_0=K^\delta$.
For any integer $j$, let $N_j$ be the subgroup of $N$ consisting of $n(b)$ where $b\in\mathfrak{p}^j$.

Let $\sigma$ be a finite dimensional representation of $K^\delta$.
Then $\sigma$ is trivial on $J^\delta_m$ for some $m\geq0$.
We shall write $c(\sigma)$ for the minimum of such $m$, and call it the conductor of $\sigma$.
A representation $\sigma$ is said to be strongly cuspidal if
\begin{align*}
    \Hom_{N_{c(\sigma)-2\delta-1}} (\sigma|_{N_{c(\sigma)-2\delta-1}}, 1)=\{0\},
\end{align*}
where $1$ denotes the trivial representation of $N_{c(\sigma)-2\delta-1}$.
Moreover, a strongly cuspidal representation $\sigma$ of $K^\delta$ is said to have defect 1 if
\begin{align*}
    \Hom_{N_{c(\sigma)-\delta-1}} (\sigma|_{N_{c(\sigma)-\delta-1}}, 1)\neq\{0\},
\end{align*}
where $1$ denotes the trivial representation of $N_{c(\sigma)-\delta-1}$.
Otherwise, we say that $\sigma$ has defect 0.
Note that if the defect of $\sigma$ is 1 then $\delta$ is 1.
For two representations $\sigma_1$ and $\sigma_2$ of $K^{\delta_1}$ and $K^{\delta_2}$ respectively, we say that $\sigma_1$ and $\sigma_2$ are equivalent if $\delta_1=\delta_2$ and $\sigma_1$ and $\sigma_2$ are equivalent as representations of $K^{\delta_1}=K^{\delta_2}$.
Manderscheid \cite{man1} gave the following constructions and classifications of irreducible supercuspidal representations of $\SL_2(F)$ and $\widetilde{\SL_2}(F)$.
\begin{theorem}[\cite{man1}]\label{thmmansl2}
    For any irreducible strongly cuspidal representation $\sigma$ of $K^\delta$, $\cInd^G_{K^\delta}(\sigma)$ is an irreducible supercuspidal representation of $G$, and every irreducible supercuspidal representation of $G$ can be obtained in this way.
    Moreover, inequivalent irreducible strongly cuspidal representations compactly induce inequivalent irreducible supercuspidal representations.
\end{theorem}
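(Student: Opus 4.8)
The plan is to follow the standard route for constructing and exhausting supercuspidal representations by compact induction from a maximal compact subgroup, in the spirit of Mautner, Shalika and Kutzko: all three assertions reduce to Mackey-theoretic intertwining computations, which for $G=\SL_2(F)$ are controlled by the Bruhat--Tits tree. I would establish in turn that $\cInd^G_{K^\delta}(\sigma)$ is irreducible supercuspidal, that every irreducible supercuspidal representation of $G$ arises in this way, and that inequivalent strongly cuspidal representations yield non-isomorphic compact inductions.

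First I would treat irreducibility. Since $K^\delta$ is open and compact modulo the centre of $G$ and $\sigma$ is finite-dimensional and irreducible, Mackey's criterion reduces irreducibility of $\cInd^G_{K^\delta}(\sigma)$ to showing $\Hom_{K^\delta\cap\,\prescript{g}{}{K^\delta}}(\sigma,\prescript{g}{}{\sigma})=\{0\}$ for every $g\in G\setminus K^\delta$. Using the representatives $t(\varpi^k)$ ($k\ge1$) of $K^\delta\backslash G/K^\delta$ supplied by the Cartan decomposition, one checks that each intersection $K^\delta\cap\prescript{g}{}{K^\delta}$ is a deeper congruence subgroup of $K^\delta$ containing large subgroups of $N$ and of $N^{\mathrm{op}}$ --- on the tree this reflects that the vertices fixed by $K^\delta$ and by $\prescript{g}{}{K^\delta}$ are far apart --- and the strong-cuspidality hypothesis $\Hom_{N_{c(\sigma)-2\delta-1}}(\sigma,1)=\{0\}$, together with its $w$-conjugate statement for $N^{\mathrm{op}}$ and, when $\delta=1$ and $\sigma$ has defect $1$, the defect hypothesis, is exactly what forces the intertwining to vanish; these are the computations of \cite{man1}. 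Supercuspidality is then automatic, because the matrix coefficients of a compact induction from a subgroup that is compact modulo the centre are compactly supported modulo the centre, and admissibility follows from a parallel Mackey decomposition of $(\cInd^G_{K^\delta}(\sigma))^{K'}$ for $K'$ compact open, whose terms vanish for all but finitely many double cosets by the same strong-cuspidality argument.

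Next, and this is the step I expect to be the main obstacle, comes the exhaustion. Let $\pi$ be an irreducible supercuspidal representation of $G$; its Jacquet module $\pi_N$ vanishes, so no nonzero vector of $\pi$ is fixed by any $N_j$. I would restrict $\pi$ to the two maximal compact subgroups $K^0$ and $K^1$ and analyse the restrictions through the descending filtrations by the groups $J^\delta_l$, with the aim of extracting an irreducible constituent $\sigma$ of some $\pi|_{K^\delta}$ that is strongly cuspidal of the correct defect. Granting this, compactness of $K^\delta$ makes $\pi|_{K^\delta}$ semisimple, so $\sigma$ embeds into it, and Frobenius reciprocity $\Hom_G(\cInd^G_{K^\delta}(\sigma),\pi)\cong\Hom_{K^\delta}(\sigma,\pi|_{K^\delta})$ produces a nonzero $G$-map $\cInd^G_{K^\delta}(\sigma)\to\pi$; since both sides are irreducible --- the source by the first step --- it is an isomorphism. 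The genuinely delicate point, which is the technical heart of \cite{man1} and \cite{ks}, is to verify that the constituent $\sigma$ one extracts really satisfies the strong-cuspidality and defect conditions: this is precisely where the hypothesis that $\pi$ is supercuspidal (equivalently, that it has no nonzero $N_j$-fixed vectors) is used to force the coinvariant spaces occurring in the definition of strong cuspidality to vanish, and it requires a careful minimality argument on the levels occurring in $\pi|_{K^\delta}$.

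Finally, for the injectivity, if $\cInd^G_{K^{\delta_1}}(\sigma_1)\cong\cInd^G_{K^{\delta_2}}(\sigma_2)$ then Mackey's formula gives
\[
\Hom_G\big(\cInd^G_{K^{\delta_1}}(\sigma_1),\,\cInd^G_{K^{\delta_2}}(\sigma_2)\big)\;\cong\;\bigoplus_{g\in K^{\delta_1}\backslash G/K^{\delta_2}}\Hom_{K^{\delta_1}\cap\,\prescript{g}{}{K^{\delta_2}}}\!\big(\sigma_1,\prescript{g}{}{\sigma_2}\big),
\]
and the same tree computation as in the irreducibility step shows that every summand with $g\notin K^{\delta_1}$ --- in particular every summand when $\delta_1\neq\delta_2$, since $K^0$ and $K^1$ are not conjugate in $\SL_2(F)$ --- vanishes by the strong-cuspidality or defect hypothesis. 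Hence a nonzero homomorphism forces $\delta_1=\delta_2$ with $g\in K^{\delta_1}$, which yields $\sigma_1\cong\sigma_2$; combined with the first two steps this gives the asserted bijection between equivalence classes of irreducible strongly cuspidal representations of $K^0$ and $K^1$ and isomorphism classes of irreducible supercuspidal representations of $G$.
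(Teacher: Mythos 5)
The paper does not prove Theorem \ref{thmmansl2}; it cites it directly from Manderscheid \cite{man1} and relies on that reference. There is therefore no ``paper proof'' to compare your argument against. Treating your outline on its own merits: it correctly reconstructs the standard three-step architecture that Manderscheid's proof follows --- Mackey irreducibility via vanishing of the intertwining $\Hom_{K^\delta\cap\,\prescript{g}{}{K^\delta}}(\sigma,\prescript{g}{}{\sigma})$ for $g\notin K^\delta$ forced by strong cuspidality (with the Cartan representatives $t(\varpi^k)$, $k\ge1$, serving for both $\delta=0$ and $\delta=1$ since $\beta t(\varpi^k)\beta^{-1}=t(\varpi^k)$); supercuspidality and admissibility from the compactness of support of matrix coefficients and a parallel Mackey decomposition; exhaustion by restricting a given supercuspidal $\pi$ to $K^0$ and $K^1$ and extracting a strongly cuspidal constituent; and injectivity by the same intertwining computation.

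Two small imprecisions worth flagging. First, for $G=\SL_2(F)$ the centre $\{\pm I\}$ already lies in $K^\delta$, so $K^\delta$ is genuinely compact, not merely compact modulo centre; this simplifies the support argument rather than complicating it, but the phrasing should match the group at hand. Second, and more substantively, you correctly identify the exhaustion step as the hard part but leave it at the level of a plan: the actual extraction of a strongly cuspidal $\sigma$ with the correct conductor and defect from $\pi|_{K^\delta}$ is not a formality. One must show that the minimal $l$ for which some $J^\delta_l$ acts trivially on a $K^\delta$-constituent of $\pi$ is achieved by a constituent whose restriction to the relevant $N_j$ has no trivial subrepresentation, and this uses $\pi_N=0$ together with a comparison between the $\delta=0$ and $\delta=1$ filtrations. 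Your sketch acknowledges this but does not carry it out, so as written the argument has a genuine gap there; the rest is a faithful, if compressed, account of the route in \cite{man1} (and the analogous argument in \cite{ks} for the level-one case).
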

The splitting $x\mapsto(x,\bm{s}^\delta(x))$ over $K^\delta$ gives a bijective correspondence between the representations of $K^\delta$ and the genuine representations of $\widetilde{K}^\delta$.
For an irreducible strongly cuspidal representation $\sigma$ of $K^\delta$, let $\sigma\bm{s}^\delta$ denote the genuine irreducible representation $(x,\epsilon\bm{s}^\delta(x)) \mapsto \epsilon\sigma(x)$.
\begin{theorem}[\cite{man1}]
    For any irreducible strongly cuspidal representation $\sigma$ of $K^\delta$, $\cInd^{\widetilde{G}}_{\widetilde{K}^\delta}(\sigma\bm{s}^\delta)$ is an irreducible genuine supercuspidal representation of $\widetilde{G}$, and every irreducible genuine supercuspidal representation of $\widetilde{G}$ can be obtained in this way.
    Moreover, inequivalent irreducible strongly cuspidal representations compactly induce inequivalent irreducible genuine supercuspidal representations.
\end{theorem}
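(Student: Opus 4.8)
The plan is to deduce the statement from its linear counterpart, Theorem~\ref{thmmansl2}, via the comparison of $\widetilde{G}$ and $G=\SL_2(F)$ furnished by the splitting $\bm{s}^\delta$. As recalled above, $x\mapsto(x,\bm{s}^\delta(x))$ is a group homomorphism and $\{\pm1\}$ is central and meets its image trivially, so $K^\delta\times\{\pm1\}\to\widetilde{K}^\delta$, $(x,\epsilon)\mapsto(x,\epsilon\bm{s}^\delta(x))$, is a group isomorphism; through it the genuine irreducible representations of $\widetilde{K}^\delta$ are exactly the $\sigma\bm{s}^\delta\cong\sigma\boxtimes\mathrm{sgn}$, where $\sigma$ runs over the irreducible representations of $K^\delta$ and $\mathrm{sgn}$ is the nontrivial character of $\{\pm1\}$. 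In particular $\sigma\bm{s}^\delta$ is irreducible and finite-dimensional, and its conductor, strong cuspidality and defect are by definition those of $\sigma$. Finally, taking preimages identifies the maximal compact subgroups of $\widetilde{G}$ with those of $G$, so $\widetilde{K}^0$ and $\widetilde{K}^1$ are, up to conjugacy, the maximal compact subgroups of $\widetilde{G}$, and each contains the (compact) center of $\widetilde{G}$.

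Granting this, I would argue as follows. The irreducibility — hence, as $\widetilde{K}^\delta$ is maximal compact, the supercuspidality — of $\cInd^{\widetilde{G}}_{\widetilde{K}^\delta}(\sigma\bm{s}^\delta)$ follows from the standard criterion for compact induction from an open compact subgroup (see \cite{ks,man1}) once one shows that the intertwining set $\{\tilde{g}\in\widetilde{G}:\Hom_{\widetilde{K}^\delta\cap{}^{\tilde{g}}\widetilde{K}^\delta}(\sigma\bm{s}^\delta,{}^{\tilde{g}}(\sigma\bm{s}^\delta))\neq\{0\}\}$ equals $\widetilde{K}^\delta$. Since $\{\pm1\}$ is central and lies in $\widetilde{K}^\delta$, the covering map induces a bijection $\widetilde{K}^\delta\backslash\widetilde{G}/\widetilde{K}^\delta\cong K^\delta\backslash G/K^\delta$ and identifies $\widetilde{K}^\delta\cap{}^{\tilde{g}}\widetilde{K}^\delta$ with the preimage of $L:=K^\delta\cap{}^{g}K^\delta$, where $g$ is the image of $\tilde{g}$. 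Comparing the unique splitting of $\widetilde{K}^\delta$ over $L$ with the $\tilde{g}$-conjugate of the splitting of $\widetilde{K}^\delta$ over $L^g$ yields a quadratic character $\omega_{\tilde{g}}$ of $L$ — trivial when $g\in K^\delta$ — such that $\Hom_{\widetilde{K}^\delta\cap{}^{\tilde{g}}\widetilde{K}^\delta}(\sigma\bm{s}^\delta,{}^{\tilde{g}}(\sigma\bm{s}^\delta))\cong\Hom_{L}(\sigma,{}^{g}\sigma\otimes\omega_{\tilde{g}})$. By Theorem~\ref{thmmansl2} the representation $\cInd^{G}_{K^\delta}(\sigma)$ is irreducible, so $\Hom_{L}(\sigma,{}^{g}\sigma)=\{0\}$ for every $g\notin K^\delta$; in the proof of this vanishing (carried out for $\SL_2(F)$ in \cite{ks,man1}) the obstruction already lives on a pro-$p$ unipotent subgroup contained in $L$, on which the quadratic character $\omega_{\tilde{g}}$ is trivial because $p$ is odd, and the same argument then gives $\Hom_{L}(\sigma,{}^{g}\sigma\otimes\omega_{\tilde{g}})=\{0\}$. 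Hence the intertwining set is $\widetilde{K}^\delta$, which proves the first assertion.

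For exhaustiveness, every irreducible genuine supercuspidal $\pi$ of $\widetilde{G}$ is compactly induced from a maximal compact subgroup, hence from some $\widetilde{K}^\delta$ and from a genuine irreducible representation $\sigma\bm{s}^\delta$; running the same computation in reverse, the irreducibility of $\pi$ forces $\Hom_{L}(\sigma,{}^{g}\sigma)=\{0\}$ for $g\notin K^\delta$, so $\cInd^{G}_{K^\delta}(\sigma)$ is irreducible supercuspidal and $\sigma$ is strongly cuspidal by Theorem~\ref{thmmansl2}. The final assertion is analogous: an isomorphism $\cInd^{\widetilde{G}}_{\widetilde{K}^{\delta_1}}(\sigma_1\bm{s}^{\delta_1})\cong\cInd^{\widetilde{G}}_{\widetilde{K}^{\delta_2}}(\sigma_2\bm{s}^{\delta_2})$ produces, by Mackey theory, an element $\tilde{g}$ with $\Hom_{K^{\delta_1}\cap{}^{g}K^{\delta_2}}(\sigma_1,{}^{g}\sigma_2\otimes\omega_{\tilde{g}})\neq\{0\}$, and the argument above together with the injectivity part of Theorem~\ref{thmmansl2} forces $\delta_1=\delta_2$ and $\sigma_1\cong\sigma_2$.

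I expect the two points needing genuine care to be the cocycle bookkeeping and the exhaustion input. For the former, one must check that conjugating the splittings introduces nothing worse than the quadratic character $\omega_{\tilde{g}}$ of $L$, and that $\omega_{\tilde{g}}$ is trivial on the unipotent subgroups that detect the vanishing in \cite{ks,man1}; this is a computation with the Kubota cocycle and the explicit formulas for $\bm{s}^0,\bm{s}^1$ from \cite[Lemma 1.1]{hi}, using the standard simplifications of the Hilbert symbol and the Weil index on units and the fact that a pro-$p$ group has no nontrivial character of order $2$ when $p$ is odd. For the latter, the statement that every irreducible genuine supercuspidal of $\widetilde{G}$ is compactly induced from a maximal compact subgroup is, exactly as for $\SL_2(F)$, itself a substantive ingredient: it is part of Manderscheid's analysis in \cite{man1} and can alternatively be obtained by transporting the $\SL_2(F)$ exhaustion argument along the same comparison. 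Once these are settled, the rest of the argument of \cite{ks,man1} carries over essentially verbatim, the cover contributing only the inert central factor $\mathrm{sgn}$.
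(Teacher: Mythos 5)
The paper gives no proof of this theorem: it is cited verbatim from Manderscheid \cite{man1}, and neither of the two statements attributed to \cite{man1} in this section is accompanied by an argument. There is therefore no paper proof to compare your sketch against, and the relevant question is whether your outline is sound on its own.

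As a free-standing outline, your reduction to Theorem \ref{thmmansl2} via the $\bm{s}^\delta$-splitting and Mackey theory is natural and aligns with how the paper handles the $\widetilde{G}$-versus-$G$ comparison in the supercuspidal setting elsewhere (Lemma \ref{lemgeneric}, the proof of Theorem \ref{thmsc}). Your cocycle bookkeeping is in fact more cautious than the paper's: where you allow for a quadratic character $\omega_{\tilde g}$ of $L=K^\delta\cap{}^{g}K^\delta$ measuring the discrepancy between the two splittings, the proof of Theorem \ref{thmsc} simply asserts that the splittings coincide on $K_m\cap{}^{g}K^\delta$ without justification. Your fallback --- detect the vanishing of intertwining on a pro-$p$ congruence subgroup, where any quadratic character is forced to be trivial since $p$ is odd --- is a sensible safety net, and modulo a careful look at where the $\SL_2$ intertwining computation of \cite{ks,man1} actually lives, it makes the irreducibility and injectivity parts of the outline plausible.

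The genuine gap is the exhaustion step. The claim that every irreducible genuine supercuspidal of $\widetilde{G}$ is compactly induced from a maximal open compact subgroup is the substance of Manderscheid's theorem, not a formal consequence of the corresponding fact for $\SL_2(F)$. The Kutzko--Sally exhaustion argument for $\SL_2(F)$ passes through $\GL_2(F)$, for which $\widetilde{G}$ has no analogue, so ``transporting the $\SL_2(F)$ exhaustion argument along the same comparison'' is not a mechanical step and would itself require a new argument. Your sketch ends up citing \cite{man1} for exactly this ingredient --- which is defensible, and indeed is what the paper does --- but it means the outline is, like the paper's statement, a reduction to \cite{man1} at its crux rather than a self-contained proof.
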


Next, we shall review the result of Kutzko--Sally\cite{ks} after recalling Kutzko's construction(\cite{kut1, kut2}) of minimal supercuspidal representations of $\GL_2(F)$.
See also \cite{car}, \cite{bk} or \cite[Chapter 4]{bh}.
Put
\begin{align*}
    I=I_0 & =\Set{\left( \begin{array}{cc}a&b\\ c&d \end{array} \right) \in \GL_2(F) | a,d \in \mathcal{O}^\times,\ b\in\mathcal{O},\ c\in\mathfrak{p}}, \\
    H=H_0 & =\GL_2(\mathcal{O}).
\end{align*}
For $l\geq1$, let $I_l$ and $H_l$ be subgroups of $I$ and $H$, respectively, defined by
\begin{align*}
    I_l & =\Set{\left( \begin{array}{cc}a&b\\ c&d \end{array} \right) \in I | a,d \in 1+\mathfrak{p}^l,\ b\in\mathfrak{p}^l,\ c\in\mathfrak{p}^{l+1}}, \\
    H_l & =\Set{\left( \begin{array}{cc}a&b\\ c&d \end{array} \right) \in H | a,d \in 1+\mathfrak{p}^l,\ b,c\in\mathfrak{p}^l}.
\end{align*}
Let $Z$ be the center of $\GL_2(F)$ and $Z'$ the subgroup of $\GL_2(F)$ generated by
\begin{align*}
    \left( \begin{array}{cc}&1\\ \varpi& \end{array} \right).
\end{align*}
Consider an irreducible finite dimensional representation $\rho$ of $ZH$ or $Z'I$.
It is said to be very cuspidal of level $l\geq1$ if the following two conditions hold:
\begin{enumerate}
    \item $\rho$ is trivial on $H_l$ (resp. $I_l$) if it is a representation of $ZH$ (resp. $Z'I$);
    \item $\Hom_{N_{l-1}} (\rho|_{N_{l-1}}, 1)=\{0\}$,
\end{enumerate}
where $1$ denotes the trivial representation of $N_{l-1}$.

An irreducible admissible representation $\tau$ of $\GL_2(F)$ is said to be minimal if
\begin{equation*}
    c(\tau) \leq c(\tau \otimes(\chi\circ\det))
\end{equation*}
for any character $\chi$ of $F^\times$, where $c(\tau)$ denotes the conductor of $\tau$ defined by Casselman \cite{cas}.
Kutzko \cite{kut1,kut2} gave the following classifications of irreducible minimal supercuspidal representations of $\GL_2(F)$.
\begin{proposition}\label{propkutclas}
    Let $\Gamma=ZH$ or $Z'I$.
    For any irreducible very cuspidal representation $\rho$ of $\Gamma$, $\cInd^{\GL_2(F)}_\Gamma(\rho)$ is an irreducible minimal supercuspidal representation of $\GL_2(F)$, and every irreducible minimal supercuspidal representation of $\GL_2(F)$ can be obtained in this way.
    Moreover, inequivalent irreducible very cuspidal representations compactly induce inequivalent irreducible minimal supercuspidal representations.
\end{proposition}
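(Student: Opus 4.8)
The plan is to recall Kutzko's original argument \cite{kut1,kut2}, whose structure is by now standard in the theory of types, organizing it into three parts matching the three assertions.

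First, that $\cInd^{\GL_2(F)}_\Gamma(\rho)$ is an irreducible supercuspidal representation. Supercuspidality is immediate once irreducibility is known: $\Gamma$ is open and compact modulo the centre $Z$ of $\GL_2(F)$, so every matrix coefficient of $\cInd^{\GL_2(F)}_\Gamma(\rho)$ is compactly supported modulo $Z$, equivalently its Jacquet module along the Borel vanishes. For irreducibility I would invoke the Mackey-theoretic criterion: since $\Gamma$ is open and $\rho$ is irreducible, $\cInd^{\GL_2(F)}_\Gamma(\rho)$ is irreducible if and only if the intertwining space
\[
    \Hom_{\Gamma \cap \prescript{g}{}{\Gamma}}(\prescript{g}{}{\rho}, \rho)
\]
vanishes for every $g \in \GL_2(F) \setminus \Gamma$. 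Using the Cartan decomposition (for $\Gamma=ZH$) or the affine Bruhat decomposition (for $\Gamma=Z'I$) one reduces to a finite set of double coset representatives $g$; the nontrivial ones force any intertwining operator to be invariant under a large unipotent subgroup containing $N_{c(\rho)-1}$, which contradicts condition (2) in the definition of very cuspidality. This is the heart of the implication ``very cuspidal $\Rightarrow$ irreducible''.

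Second, for the exhaustion statement, let $\tau$ be an irreducible minimal supercuspidal representation of $\GL_2(F)$. Being supercuspidal, $\tau$ is compactly induced from some open subgroup that is compact modulo $Z$, and the task is to show this subgroup may be taken to be $ZH$ or $Z'I$ with very cuspidal inducing datum. I would analyze the restriction of $\tau$ to the congruence filtrations $\{H_l\}$ and $\{I_l\}$, locate the minimal level $l$ at which $\tau$ has nonzero invariants, extract an irreducible constituent $\rho$ of the corresponding restriction to $ZH$ or $Z'I$, and verify that $\rho$ is very cuspidal of level $l$; Frobenius reciprocity then yields a nonzero map $\cInd^{\GL_2(F)}_\Gamma(\rho)\to\tau$, an isomorphism by the irreducibility of the source established above. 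The minimality of $\tau$ is used precisely here: it rules out the possibility that a character twist $\tau\otimes(\chi\circ\det)$ has smaller conductor, which is what would occur if the type were supported on a group strictly between $Z$ and one of $ZH$, $Z'I$, so minimality is exactly what makes the unramified/ramified dichotomy clean. Third, inequivalence of the induced representations for inequivalent very cuspidal data follows once more from the same computation: an isomorphism $\cInd^{\GL_2(F)}_{\Gamma_1}(\rho_1)\cong\cInd^{\GL_2(F)}_{\Gamma_2}(\rho_2)$ produces, via Frobenius reciprocity, a nonzero element of $\Hom_{\Gamma_1 \cap \prescript{g}{}{\Gamma_2}}(\prescript{g}{}{\rho_2}, \rho_1)$ for some $g$, and the double coset analysis forces $\Gamma_1=\prescript{g}{}{\Gamma_2}$ and $\rho_1\cong\prescript{g}{}{\rho_2}$, i.e.\ $\rho_1$ and $\rho_2$ are equivalent.

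The main obstacle is the exhaustion step: showing that an arbitrary minimal supercuspidal contains a very cuspidal type on one of the two prescribed groups requires the theory of fundamental (minimal) strata in $\Mat_2(F)$ together with the combinatorics of the two conjugacy classes of maximal compact-mod-centre subgroups, and it is there that minimality enters essentially. The irreducibility and separation statements, by contrast, are formal consequences of the intertwining computation once the relevant double cosets are enumerated. For the details we refer to \cite{kut1,kut2}.
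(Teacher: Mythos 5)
The paper does not prove Proposition~\ref{propkutclas}; it is recalled verbatim as a known result, with the attribution to Kutzko's papers serving in place of a proof. Your sketch is a faithful high-level account of Kutzko's original argument, and the strategy you describe --- the Mackey intertwining criterion for irreducibility and for separating inequivalent inducing data, supercuspidality from compact support of matrix coefficients modulo the centre, and the theory of (minimal) strata together with the minimality hypothesis for the exhaustion step --- is exactly what the cited references carry out, so the two ``approaches'' agree in the only sense possible here.
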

An irreducible minimal supercuspidal representation $\cInd^{\GL_2(F)}_\Gamma(\rho)$ is said to be unramified (resp. ramified) if $\Gamma=ZH$ (resp. $Z'I$).
\begin{proposition}\label{propkutcond}
    If $\rho$ is an irreducible very cuspidal representation of $\Gamma=ZH$ (resp. $Z'I$) of level $l$, then the conductor of $\cInd^{\GL_2(F)}_\Gamma(\rho)$ is $2l$ (resp. $2l+1$).
\end{proposition}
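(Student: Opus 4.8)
The plan is to compute, for every $n\ge 0$, the space of vectors in $\tau=\cInd^{\GL_2(F)}_\Gamma(\rho)$ fixed by the Casselman congruence subgroup
\[
    K_1(\mathfrak{p}^n)=\Set{k\in\GL_2(\mathcal{O}) | k\equiv\left(\begin{array}{cc}\ast&\ast\\0&1\end{array}\right)\bmod\mathfrak{p}^n}
\]
(twisting by the central character where necessary), because $c(\tau)$ is by definition the least $n$ for which this space is nonzero. By Mackey's formula for the restriction of a compactly induced representation to the compact open subgroup $K_1(\mathfrak{p}^n)$, combined with Frobenius reciprocity,
\[
    \dim_\C\tau^{K_1(\mathfrak{p}^n)}=\sum_{g}\dim_\C\rho^{\,\Gamma\cap gK_1(\mathfrak{p}^n)g^{-1}},
\]
the sum running over a set of representatives $g$ of $\Gamma\backslash\GL_2(F)/K_1(\mathfrak{p}^n)$, where $\rho^S$ denotes the space of vectors of $\rho$ fixed by a subgroup $S$. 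Since $\Gamma$ contains the centre together with either $\GL_2(\mathcal{O})$ (the unramified case) or the Iwahori subgroup $I$ (the ramified case), and $K_1(\mathfrak{p}^n)\subset\GL_2(\mathcal{O})$, the Cartan decomposition---refined on the right modulo $K_1(\mathfrak{p}^n)$, and in the ramified case also by the generator $\left(\begin{array}{cc}0&1\\\varpi&0\end{array}\right)$ of $Z'$---furnishes an explicit, manageable family of representatives $g$; for each one the group $S=\Gamma\cap gK_1(\mathfrak{p}^n)g^{-1}$ can be written down as an explicit product of a unipotent part $N_k$ (with $k$ a function of $g$ and $n$), a diagonal part, and a deep congruence part.

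The vanishing half of the statement should then follow from the relation $\Hom_{N_{l-1}}(\rho|_{N_{l-1}},1)=\{0\}$ in the definition of a very cuspidal representation of level $l$: since $N_j\supseteq N_{l-1}$ for $j\le l-1$, any $S$ containing some such $N_j$ has $\rho^S=\{0\}$, and one expects a direct count of the unipotent level $k$ attached to the representatives $g$ to show that this happens for \emph{every} $g$ as long as $n\le 2l-1$ in the unramified case (resp. $n\le 2l$ in the ramified case). Hence $\tau^{K_1(\mathfrak{p}^n)}=\{0\}$ below the claimed conductor.

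For the other half, one uses that $\rho$ is trivial on $H_l$ (resp. $I_l$). At $n=2l$ (resp. $n=2l+1$) the distinguished ``Atkin--Lehner type'' double coset represented by $g=\left(\begin{array}{cc}0&\varpi^{-l}\\1&0\end{array}\right)$ (resp. its analogue built from $\left(\begin{array}{cc}0&1\\\varpi&0\end{array}\right)$) has $S$ lying inside $ZH_l$ (resp. $Z'I_l$) up to the split-torus part $\Set{\operatorname{diag}(1,d) | d\in\mathcal{O}^\times}$, with $S$ meeting $N$ only in $N_l$ or deeper; since $\rho$ is trivial on $H_l$ (resp. $I_l$) and since the very cuspidal condition forces $\rho$ to carry a (one-dimensional) space of vectors fixed by that split torus, one gets $\rho^S\ne\{0\}$, whence $c(\tau)=2l$ (resp. $2l+1$). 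The extra ``$+1$'' in the ramified case is precisely what appears when $\left(\begin{array}{cc}0&1\\\varpi&0\end{array}\right)$ is conjugated across $K_1(\mathfrak{p}^n)$, reflecting the ramification index $2$ of the Iwahori order.

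I expect the main obstacle to be the exhaustive bookkeeping behind the vanishing half: producing, for each $n$ below the threshold, a complete and irredundant list of double-coset representatives $g$, and verifying for each that $S=\Gamma\cap gK_1(\mathfrak{p}^n)g^{-1}$ really does contain some $N_j$ with $j\le l-1$---together with the dual check that the single distinguished coset at the threshold has $S$ conjugate into $ZH_l$ (resp. $Z'I_l$) and meeting $N$ only in $N_l$ or deeper. Carrying the centre and, in the ramified case, the generator of $Z'$ uniformly through these computations, so that the single value $2l$ (resp. $2l+1$) drops out, also needs care. An alternative route, trading this bookkeeping for more external input, is to pass through the explicit matching of very cuspidal representations with dihedral data $(E/F,\theta)$---$E/F$ unramified for $\Gamma=ZH$, ramified for $\Gamma=Z'I$---and to invoke the conductor--discriminant formula $a(\operatorname{Ind}_E^F\theta)=d_{E/F}+f_{E/F}\,a_E(\theta)$; but establishing that matching is tantamount to the explicit local Langlands correspondence for $\GL_2$, which this paper would rather cite than reprove.
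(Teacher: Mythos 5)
The paper does not prove Proposition~\ref{propkutcond}; it quotes it as part of Kutzko's classification \cite{kut1,kut2}, so your sketch has to stand on its own. The Mackey-plus-Frobenius strategy you adopt is sound (it is essentially what Lansky--Raghuram \cite[\S 3.3]{lr} do in the $\SL_2$ analogue that the paper imports in \S\ref{secsc}), but two of your steps are not ``bookkeeping''---they are missing arguments. In the vanishing half you claim that for $n<2l$ every $S=\Gamma\cap gK_1(\mathfrak p^n)g^{-1}$ contains some $N_j$ with $j\le l-1$. This already fails for $g=\operatorname{diag}(\varpi^m,1)$ with $m\ge l$ (such double cosets occur for every $m\ge 0$): there $S\cap N=N_m\subset N_l$, on which $\rho$ is trivial, so the very cuspidal condition applied to $N$ yields nothing. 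For those $g$ one must instead observe that $S$ meets the opposite unipotent $\{n^{\mathrm{op}}(c)\}$ in level $\max(0,n-m)\le l-1$, and use $w\in H$ (which conjugates $N_{l-1}$ to its opposite) to transport the very cuspidal condition there; in general the vanishing argument must range over the entire $H$-conjugacy class of $N_{l-1}$, not the single root group you invoke.

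In the positivity half, the assertion that ``the very cuspidal condition forces $\rho$ to carry a (one-dimensional) space of vectors fixed by that split torus'' $\{\operatorname{diag}(1,d):d\in\mathcal O^\times\}$ is not a formal consequence of the very cuspidal condition; it is exactly the nontrivial input any proof of this proposition must supply. It \emph{is} provable, but only via a further argument you do not give: the very cuspidal condition together with triviality on $N_l$ forces $\rho|_{N_0}$ to decompose into additive characters of $\mathcal O/\mathfrak p^l$ of conductor exactly $l$; the torus $\{\operatorname{diag}(a,1):a\in\mathcal O^\times\}$ permutes these simply transitively, so by Clifford theory for the mirabolic the restriction of $\rho$ to that torus is a multiple of the regular representation of $\mathcal O^\times/(1+\mathfrak p^l)$ and hence contains the trivial character, and $w$-conjugation moves this to $\{\operatorname{diag}(1,d)\}$. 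Without this step (or an equivalent input, such as the Kirillov model of the cuspidal type or the matching with admissible pairs that you mention only to dismiss), you have shown only the lower bound $c(\tau)\ge 2l$ (resp.\ $2l+1$), not equality.
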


Let us now recall the result of Kutzko--Sally\cite{ks}.
We shall begin with the unramified case.
Let $\rho$ be an irreducible very cuspidal representation of $ZH$ of level $l$, and $\sigma$ its restriction to $ZH\cap G= K^0$.
Put $\tau=\cInd^{\GL_2(F)}_{ZH}(\rho)$, which is an irreducible supercuspidal representation of $\GL_2(F)$.
Suppose first that $\sigma$ is irreducible.
Then two representations $\cInd^G_{K^0}(\sigma)$ and $\cInd^G_{K^1}(\prescript{\beta}{}{\sigma})$ are irreducible, and the restriction of $\tau$ to $G$ is a direct sum of those two irreducible supercuspidal representations.
The set of those two is called an unramified supercuspidal $L$-packet of cardinality two.
Suppose next that $\sigma$ is reducible.
In this case, we have $l=1$ and  $\sigma=\sigma_1 \oplus \sigma_2$, where $\sigma_1$ and $\sigma_2$ are irreducible representations of dimension $(q-1)/2$.
Then the restriction of $\tau$ to $G$ is a direct sum of four irreducible supercuspidal representations $\cInd^G_{K^0}(\sigma_1)$, $\cInd^G_{K^1}(\prescript{\beta}{}{\sigma_1})$, $\cInd^G_{K^0}(\sigma_2)$, and $\cInd^G_{K^1}(\prescript{\beta}{}{\sigma_2})$.
The set of those four is called the unramified supercuspidal $L$-packet of cardinality four.

Now we come to the ramified case.
Let $\rho$ be an irreducible very cuspidal representation of $Z'I$ of level $l$, and $\sigma$ its restriction to $Z'I\cap G= I\cap G$.
Put $\tau=\cInd^{\GL_2(F)}_{Z'I}(\rho)$, which is an irreducible supercuspidal representation of $\GL_2(F)$.
Then we have $\sigma=\sigma_1 \oplus \sigma_2$, where $\sigma_1$ and $\sigma_2$ are irreducible representations of $I\cap G$.
The restriction of $\tau$ to $G$ is a direct sum of two irreducible supercuspidal representations $\cInd^G_{I\cap G}(\sigma_1)$ and $\cInd^G_{I\cap G}(\sigma_2)$.
The set of those two is called a ramified supercuspidal $L$-packet.

Every element in those supercuspidal $L$-packets is irreducible supercuspidal representation of $G$.
Conversely, every irreducible supercuspidal representation of $G$ can be obtained in this way.

The construction of Manderscheid is compatible with that of Kutzko--Sally.
The proof of the following lemma is straightforward.
\begin{lemma}\label{lem:cptbmanks}
    \begin{enumerate}[(1)]
        \item Let $\rho$ be an irreducible very cuspidal representation of $ZH$ of level $l$, and $\sigma$ an irreducible component of its restriction to $ZH\cap G= K^0$.
              Then $\sigma$ and $\prescript{\beta}{}{\sigma}$ are strongly cuspidal representations of conductor $l$ and defect $0$, of $K^0$ and $K^1$, respectively.
        \item Let $\rho$ be an irreducible very cuspidal representation of $Z'I$ of level $l$, and $\sigma$ an irreducible component of its restriction to $Z'I\cap G= I\cap G$.
              Then $\cInd^{K^1}_{I\cap G}(\sigma)$ is an irreducible strongly cuspidal representation of $K^1$ of conductor $l+1$ and defect $1$.
    \end{enumerate}
\end{lemma}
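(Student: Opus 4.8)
The plan is to unwind the definition of \emph{very cuspidal} (for $\rho$) and those of \emph{strongly cuspidal} and \emph{defect $0$/$1$} (for representations of $K^\delta$), and to check that they correspond under (a) restriction of $\rho$ to $G$ and (b) conjugation by $\beta$. First I would record the elementary facts used throughout: $ZH\cap G=K^0$ and $Z'I\cap G=I\cap G$ (the determinant forces the central factor to be trivial); $J^0_m=H_m\cap G$, $J^1_m=\prescript{\beta}{}{J^0_m}$, and $J^1_m\subseteq I\cap G$ for $m\ge1$; each $J^\delta_m$ is normal in $K^\delta$; and $\beta^{-1}n(b)\beta=n(b\varpi)$, so conjugation by $\beta$ carries the filtration $(J^0_m)$ on $K^0$ to $(J^1_m)$ on $K^1$ and raises the level of a unipotent subgroup $N_j$ by one. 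I would also note $J^0_l\subseteq H_l$, $J^1_{l+1}\subseteq I_l$, $N_{l-1}\subseteq J^0_{l-1}$, $N_{l-1}\subseteq J^1_l$, and $N_{l-1}\subseteq K^\delta$ (all because $l\ge1$).

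For part (1): since $\rho$ is trivial on $H_l$ (level $l$) and $J^0_l=H_l\cap G$, the component $\sigma$ of $\rho|_{K^0}$ is trivial on $J^0_l$, so $c(\sigma)\le l$; very cuspidality of $\rho$, i.e.\ $\Hom_{N_{l-1}}(\rho|_{N_{l-1}},1)=0$, restricts (and passes to direct summands) to $\Hom_{N_{l-1}}(\sigma|_{N_{l-1}},1)=0$, so $\sigma$ is nontrivial on $N_{l-1}\subseteq J^0_{l-1}$ and $c(\sigma)\ge l$; hence $c(\sigma)=l$. The same vanishing is, with $c(\sigma)-2\cdot0-1=l-1$, precisely strong cuspidality of $\sigma$ on $K^0$; and on $K^0$ a strongly cuspidal representation is automatically of defect $0$, since for $\delta=0$ the defect-$1$ condition is the literal negation of strong cuspidality. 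For $\prescript{\beta}{}{\sigma}$ on $K^1$: $J^1_m=\prescript{\beta}{}{J^0_m}$ gives $c(\prescript{\beta}{}{\sigma})=c(\sigma)=l$, and $\prescript{\beta}{}{\sigma}(n(b))=\sigma(n(b\varpi))$ identifies $\prescript{\beta}{}{\sigma}|_{N_j}$ with $\sigma|_{N_{j+1}}$; thus the strong-cuspidality index $c(\prescript{\beta}{}{\sigma})-2\cdot1-1=l-3$ of $\prescript{\beta}{}{\sigma}$ corresponds to index $l-2$ of $\sigma$, and $\Hom_{N_{l-2}}(\sigma|_{N_{l-2}},1)\hookrightarrow\Hom_{N_{l-1}}(\sigma|_{N_{l-1}},1)=0$ because $N_{l-1}\subseteq N_{l-2}$; while the defect index $c(\prescript{\beta}{}{\sigma})-1-1=l-2$ corresponds to index $l-1$ of $\sigma$, where $\Hom_{N_{l-1}}(\sigma|_{N_{l-1}},1)=0$; so $\prescript{\beta}{}{\sigma}$ has defect $0$.

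For part (2): put $\sigma'=\cInd^{K^1}_{I\cap G}(\sigma_i)$, a finite-dimensional genuine representation of $K^1$ because $[K^1:I\cap G]=q+1$. Irreducibility comes for free from transitivity of compact induction: $\cInd^G_{I\cap G}(\sigma_i)=\cInd^G_{K^1}(\sigma')$, and since $\cInd^G_{I\cap G}(\sigma_i)$ is irreducible (Kutzko--Sally) while $\cInd^G_{K^1}(-)$ is additive and kills no nonzero representation, $\sigma'$ admits no nontrivial direct-sum decomposition. For the conductor, note that if $J\trianglelefteq K^1$ and $J\subseteq I\cap G$ then (since $xJx^{-1}=J$) a one-line computation in the induced module shows $\sigma'$ is trivial on $J$ iff $\sigma_i$ is; applying this to $J=J^1_{l+1}\subseteq I_l$, on which $\rho$ hence $\sigma_i$ is trivial, gives $c(\sigma')\le l+1$, while $\sigma_i$ is nontrivial on $N_{l-1}\subseteq J^1_l$ by very cuspidality, giving $c(\sigma')\ge l+1$; hence $c(\sigma')=l+1$.

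It remains --- and this is the only delicate point --- to show $\sigma'$ is strongly cuspidal of defect exactly $1$ on $K^1$, i.e.\ (plugging $c(\sigma')=l+1$, $\delta=1$ into the indices $c(\sigma')-2\delta-1$ and $c(\sigma')-\delta-1$) that
\[
\Hom_{N_{l-2}}\!\big(\sigma'|_{N_{l-2}},1\big)=0,\qquad \Hom_{N_{l-1}}\!\big(\sigma'|_{N_{l-1}},1\big)\ne0.
\]
I would compute each side by Frobenius reciprocity together with the Iwahori--Bruhat decomposition $K^1=(I\cap G)\sqcup(I\cap G)\,w_1\,(I\cap G)$, $w_1=\beta w\beta^{-1}=\left(\begin{smallmatrix}0&\varpi^{-1}\\-\varpi&0\end{smallmatrix}\right)$: the two double cosets contribute $\Hom_{N_j\cap(I\cap G)}(1,\sigma_i)$ and $\Hom_{N_j\cap\prescript{w_1}{}{(I\cap G)}}(1,\prescript{w_1}{}{\sigma_i})$, and the matrix identities $N_j\subseteq I\cap G$, $N_j\subseteq\prescript{w_1}{}{(I\cap G)}$, $w_1^{-1}n(b)w_1=n^{\mathrm{op}}(-\varpi^2b)$ turn these into the statements that $\sigma_i$ has no fixed vector under $N_{l-2}$ resp.\ $N_{l-1}$ (upper unipotent) and none under the lower unipotent subgroups $\{n^{\mathrm{op}}(c):c\in\mathfrak{p}^l\}$ resp.\ $\{n^{\mathrm{op}}(c):c\in\mathfrak{p}^{l+1}\}$. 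The upper-unipotent assertions are immediate from very cuspidality and from $N_{l-1}\subseteq N_{l-2}$; for the lower ones I would conjugate by the generator $w'=\left(\begin{smallmatrix}0&1\\\varpi&0\end{smallmatrix}\right)$ of $Z'$, which normalizes $I\cap G$ and satisfies $w'^{-1}n(b)w'=n^{\mathrm{op}}(b\varpi)$: a vector fixed by all $n^{\mathrm{op}}(c)$ with $c\in\mathfrak{p}^l$ is carried by $\rho(w')$ to an $N_{l-1}$-fixed vector of $\rho$, which must vanish --- giving the vanishing at $j=l-2$ --- whereas triviality of $\rho$ on $I_l\supseteq\{n^{\mathrm{op}}(c):c\in\mathfrak{p}^{l+1}\}$ forces $\prescript{w_1}{}{\sigma_i}|_{N_{l-1}}$ to carry an invariant line, giving the nonvanishing at $j=l-1$ that pins the defect down to $1$. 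I expect the bookkeeping of these one-step level shifts (built into the $\delta=1$ offset in $J^1_m$), and in particular the observation that it is the Weyl double coset, not the identity one, that produces the defect-$1$ line, to be the main obstacle; the rest is matching definitions, and the reducible case of $\rho|_{K^0}$ in part (1) (which forces $l=1$) needs no change since a trivial $N_j$-subrepresentation of a summand is one of the whole restriction.
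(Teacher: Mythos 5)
The paper gives no proof of this lemma (it only declares it ``straightforward''), so there is nothing to compare against directly; on its merits, your part (1), the irreducibility and conductor computation in part (2), and the defect-$1$ non-vanishing $\Hom_{N_{l-1}}(\sigma'|_{N_{l-1}},1)\ne 0$ are all correct, and the $\rho(w')$-conjugation trick is the right idea. There is, however, a genuine gap in the strong-cuspidality step $\Hom_{N_{l-2}}(\sigma'|_{N_{l-2}},1)=0$ of part (2).

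You compute this $\Hom$ via Mackey theory but index the summands by the Iwahori--Bruhat double cosets $(I\cap G)\backslash K^1/(I\cap G)=\{1,w_1\}$. The Mackey decomposition of $\Res_{N_{l-2}}\Ind^{K^1}_{I\cap G}\sigma_i$ is instead indexed by $N_{l-2}\backslash K^1/(I\cap G)$, and for $l\ge 2$ this set has $q+1$ elements, not two: since $N_{l-2}\subseteq J^1_1$, which is normal in $K^1$ and contained in $I\cap G$, the group $N_{l-2}$ acts trivially on $K^1/(I\cap G)$, so every one of the $q+1$ left cosets is its own $N_{l-2}$-orbit. You thus must show that $\Hom_{N_{l-2}}(\prescript{g}{}{\sigma_i},1)=0$ for all $q+1$ representatives $g$, but your argument only treats $g=1$ and $g=w_1$. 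For the remaining representatives $g=n^{\mathrm{op}}(c)\,w_1$ with $c\in\varpi\mathcal{O}^\times$ one finds
\[
g^{-1}\,n(b)\,g \;=\; w_1^{-1}n^{\mathrm{op}}(-c)\,n(b)\,n^{\mathrm{op}}(c)\,w_1 \;=\; \begin{pmatrix}1-cb & \varpi^{-2}c^2 b\\ -\varpi^2 b & 1+cb\end{pmatrix},
\]
which, for $b\in\mathfrak{p}^{l-2}$, is \emph{not} contained in $N$ or in $N^{\mathrm{op}}$ (both off-diagonal entries vary nontrivially). Consequently the ``conjugate by $w'\in Z'$ and quote $\Hom_{N_{l-1}}(\rho,1)=0$'' step does not apply to these pieces: $\rho(w')v$ is fixed under a subgroup of $H_{l-1}$ that is neither $N_{l-1}$ nor $N^{\mathrm{op}}_{\,l+1}$, and very cuspidality of $\rho$ says nothing directly about such subgroups. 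You need an additional argument covering these $q-1$ Mackey pieces (for instance, a further change of variable reducing them to $N_{l-1}$, or a different bookkeeping of the double cosets) before the strong cuspidality of $\cInd^{K^1}_{I\cap G}\sigma_i$ is established.
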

\begin{proposition}\label{prop:compatibilityManKS}
    Let $\pi$ be an irreducible supercuspidal representation of $G$.
    Assume that $\rho$ is an irreducible very cuspidal representation of $\Gamma\in\{ZH, Z'I\}$ such that $\pi$ is a subrepresentation of $\cInd^{\GL_2(F)}_\Gamma(\rho)$.
    In addition, assume that $\sigma$ is an irreducible strongly cuspidal representation of $K^\delta$ ($\delta=0,1$), such that $\pi=\cInd^G_{K^\delta}(\sigma)$.
    Then we have
    \begin{align*}
        l(\rho) & =c(\sigma)-d(\sigma),     \\
        \Gamma  & =\begin{cases*}
                       ZH,  & if $d(\sigma)=0$, \\
                       Z'I, & if $d(\sigma)=1$,
                   \end{cases*}
    \end{align*}
    where $l(\rho)$ and $d(\sigma)$ denote the level of $\rho$ and the defect of $\sigma$, respectively.
\end{proposition}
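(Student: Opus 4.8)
The plan is to rewrite the Kutzko--Sally presentation of $\pi$ in the form of Manderscheid's construction --- that is, as $\cInd^G_{K^{\delta'}}(\sigma')$ for an explicit irreducible strongly cuspidal representation $\sigma'$ whose conductor and defect can be read off from Lemma \ref{lem:cptbmanks} --- and then to invoke the uniqueness clause of Theorem \ref{thmmansl2} to identify $\sigma'$ with $\sigma$, which yields the stated relations.

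First I would treat the case $\Gamma=ZH$. Let $\sigma_0$ denote the restriction of $\rho$ to $ZH\cap G=K^0$. If $\sigma_0$ is irreducible, the description of the unramified $L$-packet of cardinality two shows that $\pi$ is equivalent to $\cInd^G_{K^0}(\sigma_0)$ or to $\cInd^G_{K^1}(\prescript{\beta}{}{\sigma_0})$; if $\sigma_0$ is reducible then $l(\rho)=1$, $\sigma_0=\sigma_1\oplus\sigma_2$ with the $\sigma_i$ irreducible, and $\pi$ is equivalent to one of $\cInd^G_{K^0}(\sigma_i)$ or $\cInd^G_{K^1}(\prescript{\beta}{}{\sigma_i})$. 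In each case $\pi\cong\cInd^G_{K^{\delta'}}(\sigma')$ where $\sigma'$ is an irreducible component of $\sigma_0$, or its $\beta$-conjugate, and by Lemma \ref{lem:cptbmanks}(1) the representation $\sigma'$ is strongly cuspidal with $c(\sigma')=l(\rho)$ and $d(\sigma')=0$. Next, in the case $\Gamma=Z'I$, write the restriction of $\rho$ to $Z'I\cap G=I\cap G$ as $\sigma_1\oplus\sigma_2$ with the $\sigma_i$ irreducible; the description of the ramified $L$-packet gives $\pi\cong\cInd^G_{I\cap G}(\sigma_i)$ for some $i$. Since $I\cap G\subset K^1\subset G$, transitivity of compact induction gives $\cInd^G_{I\cap G}(\sigma_i)\cong\cInd^G_{K^1}\bigl(\cInd^{K^1}_{I\cap G}(\sigma_i)\bigr)$, so $\pi\cong\cInd^G_{K^1}(\sigma')$ with $\sigma'=\cInd^{K^1}_{I\cap G}(\sigma_i)$, which by Lemma \ref{lem:cptbmanks}(2) is irreducible strongly cuspidal on $K^1$ with $c(\sigma')=l(\rho)+1$ and $d(\sigma')=1$.

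In both cases we have obtained an irreducible strongly cuspidal $\sigma'$ on some $K^{\delta'}$ with $\pi\cong\cInd^G_{K^{\delta'}}(\sigma')$. Since $\pi\cong\cInd^G_{K^\delta}(\sigma)$ as well, the fact from Theorem \ref{thmmansl2} that inequivalent irreducible strongly cuspidal representations compactly induce inequivalent supercuspidal representations forces $\delta'=\delta$ and $\sigma'\cong\sigma$, whence $c(\sigma')=c(\sigma)$ and $d(\sigma')=d(\sigma)$. Consequently, if $d(\sigma)=0$ then we cannot be in the case $\Gamma=Z'I$ (which would give $d(\sigma')=1$), so $\Gamma=ZH$ and $l(\rho)=c(\sigma')=c(\sigma)=c(\sigma)-d(\sigma)$; and if $d(\sigma)=1$ then we cannot be in the case $\Gamma=ZH$ (which would give $d(\sigma')=0$), so $\Gamma=Z'I$ and $l(\rho)=c(\sigma')-1=c(\sigma)-1=c(\sigma)-d(\sigma)$. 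This proves the proposition.

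The step requiring the most care is the case-by-case translation of the Kutzko--Sally $L$-packet descriptions into Manderscheid's form: one must track whether $\rho|_{K^0}$ is irreducible, and verify that the transitivity isomorphism in the ramified case produces precisely a representation of the type that appears in Manderscheid's classification, so that its uniqueness clause applies. Granting this, the rest is a purely formal combination of Lemma \ref{lem:cptbmanks}, the $L$-packet descriptions, and Theorem \ref{thmmansl2}, involving no further computation.
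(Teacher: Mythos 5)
Your proof is correct and follows exactly the route the paper intends: translate the Kutzko--Sally induction datum into Manderscheid's form via Lemma \ref{lem:cptbmanks}, then apply the uniqueness clause of Theorem \ref{thmmansl2} to identify the two strongly cuspidal representations. The paper's proof is a one-line citation of those same two results; you have supplied the details it leaves implicit.
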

\begin{proof}
    The proposition follows from Lemma\ref{lem:cptbmanks} and Theorem \ref{thmmansl2}.
\end{proof}

\subsection{Some results of Lansky--Raghuram}\label{subsec:resultsofLR}
We shall recall some results of Lansky--Raghuram \cite{lr} on supercuspidal representations of $G=\SL_2(F)$.
For an irreducible strongly cuspidal representation $\sigma$ of $K^\delta$ ($\delta=0,1$), there exists $z(\sigma)\in\{\pm1\}$ such that $\sigma(-1_2)$ is a scalar multiplication by $z(\sigma)$.
We shall call $z(\sigma)$ the central sign of $\sigma$.
Note that the central character of $\cInd^G_{K^\delta}(\sigma)$ is  given by $-1 \mapsto z(\sigma)$.
\begin{theorem}\label{thm:lrsc}
    Let $\varepsilon, \delta\in\{0,1\}$, and put $K_m=K^\varepsilon_m$ and $K=K^\varepsilon$.
    Let $\sigma$ be an irreducible strongly cuspidal representation of $K^\delta$, and put $\pi=\cInd^G_{K^\delta}(\sigma)$, which is an irreducible supercuspidal representation of $G$.
    Let $\eta$ be a character of $\mathcal{O}^\times$ such that $\eta(-1)=z(\sigma)$.
    \begin{enumerate}[(1)]
        \item If the defect of $\sigma$ is 0, then $\pi^{K_{2c(\sigma)-1}}_\eta=\{0\}$. Moreover, if $c(\eta)\leq c(\sigma)$, then the followings hold.
              \begin{enumerate}[(I)]
                  \item If $\pi$ does not belong to the unramified supercuspidal $L$-packet of cardinality four, then we have
                        \begin{align*}
                            \dim_\C \pi^{K_m}_\eta=\begin{dcases*}
                                                       0,                                                    & if $m\leq 2c(\sigma)-1$,                                         \\
                                                       2\left\lfloor \frac{m-2c(\sigma)+1}{2} \right\rfloor, & if $c(\sigma)+\varepsilon+\delta$ is odd and $m\geq2c(\sigma)$,  \\
                                                       2\left\lceil \frac{m-2c(\sigma)+1}{2} \right\rceil,   & if $c(\sigma)+\varepsilon+\delta$ is even and $m\geq2c(\sigma)$.
                                                   \end{dcases*}
                        \end{align*}
                  \item If $\pi$ belongs to the unramified supercuspidal $L$-packet of cardinality four, then $c(\sigma)=1$ and we have
                        \begin{align*}
                            \dim_\C \pi^{K_m}_\eta=\begin{dcases*}
                                                       0,                                        & if $m\leq 1$,                                  \\
                                                       \left\lfloor \frac{m-1}{2} \right\rfloor, & if $\varepsilon+\delta$ is even and $m\geq 2$, \\
                                                       \left\lceil \frac{m-1}{2} \right\rceil,   & if $\varepsilon+\delta$ is odd and $m\geq 2$.
                                                   \end{dcases*}
                        \end{align*}
              \end{enumerate}
        \item If the defect of $\sigma$ is 1, then $\pi^{K_{2c(\sigma)-2}}_\eta=\{0\}$. Moreover, if $c(\eta)\leq c(\sigma)-1$, then we have
              \begin{align*}
                  \dim_\C \pi^{K_m}_\eta=\begin{cases*}
                                             0,              & if $m\leq 2c(\sigma)-2$, \\
                                             m-2c(\sigma)+2, & if $m\geq 2c(\sigma)-1$.
                                         \end{cases*}
              \end{align*}
    \end{enumerate}
\end{theorem}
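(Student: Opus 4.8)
The plan is to compute $\dim_\C\pi^{K_m}_\eta$ by Mackey theory, following the method of Lansky--Raghuram \cite{lr}. Frobenius reciprocity for compact induction, combined with the Mackey decomposition of $\Res^G_{K_m}\cInd^G_{K^\delta}(\sigma)$ along $K_m$--$K^\delta$ double cosets, gives
\[
    \dim_\C\pi^{K_m}_\eta=\sum_{g\in K_m\backslash G/K^\delta}\dim_\C\Hom_{K_m^g\cap K^\delta}\bigl(\eta^g,\sigma\bigr),
\]
where $\eta^g$ is the character of $K_m^g\cap K^\delta$ obtained from $\eta$ through the appropriate matrix entry (this is exactly the device already used in the proof of Theorem \ref{thmps}). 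The first task is therefore to choose good representatives $g$. I would work on the Bruhat--Tits tree of $G$, on which $K^0$ (resp.\ $K^1$) is the stabilizer of an ``even'' (resp.\ ``odd'') vertex: then $K^\varepsilon\backslash G/K^\delta$ is indexed by the distance $n$ between the two base vertices ($n$ even when $\varepsilon=\delta$, odd when $\varepsilon\ne\delta$), realized by powers of $t(\varpi)$ up to conjugation by $\beta$, and each such class is refined by the finite quotient $K^\varepsilon_m\backslash K^\varepsilon$, exactly as in Lemma \ref{lemcoset} and Corollary \ref{corcoset}. This yields an explicit, a priori infinite, index set.

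The second step is to make the sum finite, and this is where strong cuspidality is used. For each representative $g$, the group $K_m^g\cap K^\delta$ contains a congruence unipotent subgroup $N_{j(g)}$ with $j(g)$ an explicit affine function of $m$, $\varepsilon$, $\delta$ and the distance $n$ (the factor $2$ in the $n$-dependence coming from $t(\varpi^{-k})\,n(b)\,t(\varpi^{k})=n(\varpi^{-2k}b)$). If $j(g)\le c(\sigma)-2\delta-1$ then $\Hom_{N_{j(g)}}(\sigma,1)=\{0\}$ by strong cuspidality, so that summand vanishes; and when $\sigma$ has defect $0$ the sharper relation $\Hom_{N_{c(\sigma)-\delta-1}}(\sigma,1)=\{0\}$ kills one more shell. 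Reading off the smallest surviving $m$ gives the two vanishing assertions, $\pi^{K_{2c(\sigma)-1}}_\eta=\{0\}$ in the defect-$0$ case and $\pi^{K_{2c(\sigma)-2}}_\eta=\{0\}$ in the defect-$1$ case.

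For the cosets that do survive, I would use the hypothesis $c(\eta)\le c(\sigma)$ (resp.\ $c(\eta)\le c(\sigma)-1$) to check that $\eta^g$ is compatible with the action of $\sigma$ on the relevant finite quotient, so that each contributing coset gives a Hom-space of the expected dimension; summing over the surviving shells produces the stated formulas, the floor/ceiling alternative being governed by the parity of $c(\sigma)+\varepsilon+\delta$ (it records whether or not the innermost shell of the tree is fixed by $\sigma$), and the constant factor $2$ in case (I) reflecting that, at each surviving level, two double cosets (equivalently, the two ``ends'' of the Cartan direction) contribute. The exceptional case (II) is the one where $\sigma$ fails to be maximally generic ($c(\sigma)=1$, $\dim_\C\sigma=(q-1)/2$); there I would deduce the count from the reducible situation $\sigma=\sigma_1\oplus\sigma_2$, which the Mackey computation handles directly, together with $\cInd^G_{K^0}(\sigma_1\oplus\sigma_2)=\cInd^G_{K^0}(\sigma_1)\oplus\cInd^G_{K^0}(\sigma_2)$ and the symmetry between $\sigma_1$ and $\sigma_2$, which halves the answer. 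The main obstacle is the bookkeeping in the first two steps: pinning down the double-coset representatives and the associated unipotent levels $j(g)$ precisely and uniformly in $\varepsilon$ and $\delta$, since every parity subtlety in the final formula is controlled there.
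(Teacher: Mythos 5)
Your proposal goes a genuinely different route from the paper. The paper's proof of this theorem is a one-sentence citation: it invokes Propositions 3.3.4, 3.3.6 and 3.3.8 of Lansky--Raghuram, which already compute exactly these dimensions, and uses Proposition \ref{prop:compatibilityManKS} as the translation dictionary — converting the level $l(\rho)$ of the very cuspidal representation (the Kutzko--Sally datum Lansky--Raghuram work with) into the conductor and defect $c(\sigma), d(\sigma)$ of the strongly cuspidal representation (the Manderscheid datum used in the statement of the theorem), and matching $\Gamma=ZH$ or $Z'I$ against $d(\sigma)=0$ or $1$. You are instead proposing to re-derive the Lansky--Raghuram propositions from scratch; in effect you are sketching their method rather than citing it. There is nothing wrong with wanting a self-contained argument, but you should recognize that this is the content of the reference, not of the paper's own proof.

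As a self-contained sketch, the weak point is the middle of your third step. Strong cuspidality and its defect-$0$ refinement correctly kill the double cosets $g$ with $j(g)$ below the relevant threshold, and the Bruhat--Tits/Cartan bookkeeping gives the index $j(g)$ as a function of $m,\varepsilon,\delta$ and the tree distance — that part of the plan is sound. But on the shells that survive, you assert that $c(\eta)\le c(\sigma)$ guarantees ``the expected dimension'' of $\Hom_{K_m^g\cap K^\delta}(\eta^g,\sigma)$ and that each surviving level contributes a factor $2$ by symmetry. Neither strong cuspidality nor the conductor bound by itself determines these Hom-space dimensions; one needs more precise information about the restriction of $\sigma$ to the intermediate congruence subgroups, in particular to the torus part. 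In Lansky--Raghuram this information comes from the fact that $\sigma$ is the restriction (or a constituent of the restriction) of a very cuspidal representation $\rho$ of $ZH$ or $Z'I$ — it is the Kutzko--Sally structure that makes the $K_m^g\cap K^\delta$-isotypic decomposition of $\sigma$ computable shell by shell, and it is this structure, not an abstract ``symmetry'', that produces the factor $2$ in case (I) and its disappearance in the cardinality-four packet of case (II). If you want to carry the direct computation through, you should either import that $\GL_2$-side structure into your argument explicitly (at which point you have essentially reconstructed \cite[\S3.3]{lr}), or, as the paper does, cite Lansky--Raghuram and prove only the dictionary Proposition \ref{prop:compatibilityManKS}.
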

\begin{proof}
    The theorem is an immediate consequence of \cite[Propositions 3.3.4, 3.3.6, 3.3.8]{lr} and Proposition \ref{prop:compatibilityManKS}.
\end{proof}

\begin{theorem}\label{thm:lrwhittakersc}
    Let $\varepsilon, \delta\in\{0,1\}$, and put $K_m=K^\varepsilon_m$, $K=K^\varepsilon$, and $\psi=\psi^\varepsilon$.
    Let $\sigma$ be an irreducible strongly cuspidal representation of $K^\delta$, and put $\pi=\cInd^G_{K^\delta}(\sigma)$, which is an irreducible supercuspidal representation of $G$.
    Let $\eta$ be a character of $\mathcal{O}^\times$ such that $\eta(-1)=z(\sigma)$.
    Let $M$ be a positive integer such that $\pi^{K_{M-1}}_\eta \{0\}$ and $\pi^{K_M}_\eta \neq \{0\}$.
    \begin{enumerate}[(1)]
        \item If the defect of $\sigma$ is 0 and $c(\eta)\leq c(\sigma)$, then the followings hold.
              \begin{enumerate}[(I)]
                  \item If $\pi$ does not belong to the unramified supercuspidal $L$-packet of cardinality four, then $\pi^{K_M}_\eta$ is $2$-dimensional, and $\pi$ is $\Psi$-generic if and only if $c(\Psi)+\delta$ is even.
                        Moreover, there does not exist a nonzero vector in $\pi^{K_M}_\eta$ at which both of a $\psi_{\xi\varpi^{|\delta-\varepsilon|}}$-Whittaker functional and a $\psi_{\varpi^{|\delta-\varepsilon|}}$-Whittaker functional vanish.
                  \item If $\pi$ belongs to the unramified supercuspidal $L$-packet of cardinality four, then $\pi^{K_M}_\eta$ is $1$-dimensional, and $\pi$ is $\Psi$-generic for exactly one of $\Psi=\psi^\delta$ or $\psi^\delta_\xi$. Let $\Psi\in\{\psi, \psi_\xi, \psi_\varpi, \psi_{\xi\varpi}\}$ be an additive character such that $\pi$ is $\Psi$-generic. Then no representation in the packet other than $\pi$ is $\Psi$-generic, and moreover a $\Psi$-Whittaker functional does not vanish on $\pi^{K_M}_\eta$.
              \end{enumerate}
        \item If the defect of $\sigma$ is 1 and $c(\eta)\leq c(\sigma)-1$, then $\pi^{K_M}_\eta$ is $1$-dimensional, $\pi^{K_{M+1}}_\eta$ is $2$-dimensional, and $\pi$ is $\Psi$-generic for exactly one of $\Psi=\psi$ or $\psi_\xi$. In addition, $\pi$ is also $\Psi'$-generic for exactly one of $\Psi'=\psi_\varpi$ or $\psi_{\xi\varpi}$. Let $\Psi \in \{\psi, \psi_\xi, \psi_\varpi, \psi_{\xi\varpi}\}$ be an additive character such that $\pi$ is $\Psi$-generic. Then the other representation in the ramified supercuspidal packet containing $\pi$ is $\Psi_\xi$-generic. Moreover, a $\Psi$-Whittaker functional does not vanish on $\pi^{K_M}_\eta$ (resp. $\pi^{K_{M+1}}_\eta$) if $c(\Psi)+\varepsilon$ is even (resp. odd).
    \end{enumerate}
\end{theorem}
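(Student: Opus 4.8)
The plan is to proceed as in Theorem \ref{thm:lrsc}: the assertions are, in essence, the Whittaker-functional analysis of Lansky--Raghuram \cite{lr} for supercuspidal representations of $\SL_2(F)$, transported through Proposition \ref{prop:compatibilityManKS}, which matches the conductor $c(\sigma)$ and the defect $d(\sigma)$ of the strongly cuspidal datum with the level $l(\rho)$ and the type $\Gamma\in\{ZH,Z'I\}$ of Kutzko's datum. Realize $\pi=\cInd^G_{K^\delta}(\sigma)$ on the space of $V_\sigma$-valued functions $f$ on $G$ with $f(kg)=\sigma(k)f(g)$ and support compact modulo $K^\delta$. Since $N\cap K^\delta=N_{-\delta}$ is open in $N$, the restriction to $N$ of any such $f$ is supported on finitely many cosets of $N_{-\delta}$, so every $\Psi$-Whittaker functional on $\pi$ is of the form
\[
  f\longmapsto \int_{N_{-\delta}\backslash N}\Phi\bigl(f(n)\bigr)\,\overline{\Psi(n)}\,dn ,
\]
for a linear form $\Phi\in V_\sigma^*$ with $\Phi(\sigma(n(b))v)=\Psi(b)\Phi(v)$ for $b\in\mathfrak{p}^{-\delta}$, and conversely every such $\Phi$ gives a (possibly zero) functional of this shape; in particular $\pi$ is $\Psi$-generic exactly when $\Hom_{N_{-\delta}}(\sigma,\Psi)\ne\{0\}$. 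The hypotheses ``$\sigma$ is strongly cuspidal of conductor $c(\sigma)$ and defect $d(\sigma)$'' are precisely what pin down which restrictions $\Psi|_{N_{-\delta}}$ occur in $\sigma$; feeding this, together with the description of the $\sigma$'s attached to each kind of supercuspidal $L$-packet recalled in \S\ref{sectype}, into the relevant propositions of \cite{lr} and rewriting the outcome in terms of $\varepsilon$, $\delta$ and $c(\Psi)$ gives the genericity statements in (1)(I), the first part of (1)(II) and the genericity assertions in (2). The claim in (2) that the other member of the ramified packet is $\Psi_\xi$-generic holds because that member is $\sigma$ conjugated by $\operatorname{diag}(\xi,1)\in\GL_2(\mathcal{O})$, an operation that replaces $n(b)$ by $n(\xi b)$ and so carries a $\Psi$-eigenform on $N_{-\delta}$ to a $\Psi_\xi$-eigenform; in the same way the members of the packet of four are permuted simply transitively by the outer automorphisms of $\SL_2(F)$ coming from conjugation by $\operatorname{diag}(c,1)$, $c\in\{1,\xi,\varpi,\xi\varpi\}$, each of which twists the $N$-character, so that each member is generic for exactly one character class and no two share one.

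For the non-vanishing statements I would substitute into the displayed integral the explicit description of $\pi^{K_m}_\eta$ underlying Theorem \ref{thm:lrsc}: a basis is indexed by the double cosets $K_m g K^\delta$ on which $\eta$ is compatible with $\prescript{g}{}{\sigma}$, the basis vector $f_g$ being supported on $K_m g K^\delta$ with a prescribed value in the corresponding $\eta$-isotypic line of $V_\sigma$, and the $L$-packet and defect data dictate which coset yields the new vector at level $M$ and whether a companion coset yields one at level $M+1$. Plugging $f_g$ into $\int_{N_{-\delta}\backslash N}\Phi(f_g(n))\overline{\Psi(n)}\,dn$ and carrying out the coset computation reduces it to a single dominant term equal to a nonzero constant times one of the Gauss sums $g(\chi,\Psi')$ or $h(\chi,\Psi')$ of \S\ref{secgausssum}, where $\chi$ is the character of $\mathcal{O}^\times$ built from $\eta$ and the central sign $z(\sigma)$ and $\Psi'$ is $\Psi$ twisted by the power of $\varpi$ occurring in the coset representative; the conductor of $\Psi'$ is then governed by the parity of $c(\Psi)+\delta$ in the defect-$0$ case and of $c(\Psi)+\varepsilon$ in the defect-$1$ case. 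Lemmas \ref{lemgausssum} and \ref{lemgausssum2} say exactly when such a term is nonzero, which is precisely the claimed dichotomy between the $\Psi$-Whittaker functional being nonzero on $\pi^{K_M}_\eta$ and only on $\pi^{K_{M+1}}_\eta$. For the ``no common zero'' assertion in (1)(I) one computes, for the two basis vectors of the $2$-dimensional space $\pi^{K_M}_\eta$ and the two characters $\psi_{\varpi^{|\delta-\varepsilon|}}$ and $\psi_{\xi\varpi^{|\delta-\varepsilon|}}$, the resulting $2\times2$ matrix of $h$-values and shows it invertible using Lemma \ref{lemgausssum2} and the Remark following it; the same computation on the $1$-dimensional space in the packet-of-four case gives the non-vanishing there.

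The step I expect to be the main obstacle is the two-parameter bookkeeping in the defect-$1$ case. There the new vector sits at level $M$ for either choice of $\varepsilon$, but a $\Psi$-Whittaker functional detects it at $K_M$ only when $c(\Psi)+\varepsilon$ is even and at $K_{M+1}$ otherwise; making this precise requires keeping the conjugation by $\beta=\operatorname{diag}(1,\varpi)$ --- which interchanges $K^0_m$ with $K^1_m$ and shifts the relevant conductors by one --- synchronised with the choice of double-coset representatives and with the way $\chi$ depends on $\eta$. Isolating the dominant double coset, identifying the exact Gauss sum it produces, and verifying that every other double coset contributes zero --- again via Lemma \ref{lemgausssum}, or via the vanishing of the oscillatory integrals of \cite{szp} --- is where the real work lies; the genericity counts themselves are, as for Theorem \ref{thm:lrsc}, essentially a transcription from \cite{lr}.
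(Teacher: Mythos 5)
Your route is genuinely different from the paper's, and it has a gap at its foundation. For the non‑vanishing assertions the paper follows Lansky--Raghuram: pass to the minimal supercuspidal representation $\widetilde{\pi}$ of $\GL_2(F)$ whose restriction to $G$ contains $\pi$, realize it in its Kirillov model $(K(\widetilde{\pi},\Psi),C^\infty_0(F^\times))$ where the $\Psi$-Whittaker functional is evaluation at $1$, write down the explicit vector $\phi_\eta\in C^\infty_0(F^\times)$ supported on $\mathcal{O}^{\times 2}$, and then invoke \cite[Proposition 3.4]{tun} together with the argument of \cite[Proposition 3.3.5]{lr} to check that the congruence subgroup at the right level acts on $\phi_\eta$ through $\eta$; non‑vanishing at the new vector is then immediate because $\phi_\eta(1)=1$. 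You instead stay on $\SL_2$ and try to evaluate the Whittaker functional directly on the compactly induced model, which is closer in spirit to Lemma~\ref{lem:Yamamotowhittakersupercuspidal} than to the paper's proof of this theorem.

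The gap: the formula
\[
  f\longmapsto \int_{N_{-\delta}\backslash N}\Phi\bigl(f(n)\bigr)\,\overline{\Psi(n)}\,dn
\]
realizes only the identity-coset contribution in the Mackey decomposition, and the asserted consequence that ``$\pi$ is $\Psi$-generic exactly when $\Hom_{N_{-\delta}}(\sigma,\Psi)\neq\{0\}$'' is false. As in the proof of Lemma~\ref{lemgeneric}, Frobenius reciprocity and Mackey theory give
\[
  \Hom_N(\pi,\Psi)\;\cong\;\prod_{g\in K^\delta\backslash G/N}\Hom_{K^\delta\cap\prescript{g}{}{N}}\bigl(\sigma,\prescript{g}{}{\Psi}\bigr),
\]
and since this space is at most one-dimensional, the unique $\Psi$-Whittaker functional is supported on whichever coset $g=t(\varpi^k)$ twists $\Psi$ by $\varpi^{2k}$ into an $N_{-\delta}$-character actually occurring in $\sigma$. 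Strong cuspidality pins down the conductor of the $N$-characters in $\sigma$, but $\pi$ is generic for a whole $F^{\times 2}$-orbit of additive characters, so for most of the $\Psi$'s relevant to the theorem (conductors $-\varepsilon$ and $-\varepsilon-1$) the active coset is not the identity and your integral is identically zero regardless of whether the Whittaker functional vanishes on $\pi^{K_m}_\eta$. The parity conditions on $c(\Psi)+\delta$ and $c(\Psi)+\varepsilon$ that you are trying to establish are precisely what decide which coset is active, so starting from the false biconditional undermines the whole argument. The remaining scheme --- choosing basis vectors of $\pi^{K_m}_\eta$ supported on double cosets in $K_m\backslash G/K^\delta$ and reducing to Lemmas~\ref{lemgausssum} and~\ref{lemgausssum2} --- is in principle workable once the Whittaker side is repaired by isolating the active coset first, but as written the proposal cannot get off the ground.
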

\begin{proof}
    The dimensions of $\pi^{K_m}_\eta$ are given in Theorem \ref{thm:lrsc}.
    The assertions on the genericities are shown in Proposition 3.3.5, Corollary 3.3.7, and Proposition 3.3.9 in \cite{lr}.
    We need to show the assertions on Whittaker functionals.
    The proofs are similar to those of \cite[Proposition 3.3.5, Corollary 3.3.7, and Proposition 3.3.9]{lr}.
    We shall consider the assertion $(1)-(I)$ in the case of $\varepsilon=0$.
    The others can be proven analogously.

    Let $\widetilde{\pi}$ be the minimal irreducible supercuspidal representation of $\GL_2(F)$ whose restriction to $G$ contains $\pi$, and $(K(\widetilde{\pi}, \Psi), C^\infty_0(F^\times))$ its Kirillov model with respect to $\Psi$, for any additive character $\Psi$.
    Let $\omega$ be the central character of $\widetilde{\pi}$.
    Since $\eta(-1)=z(\sigma)$, the kernel of $\eta^{-1}\omega|_{\mathcal{O}^\times}$ contains $\{\pm1\}$.
    Then there exists a character $\chi$ of $\mathcal{O}^{\times2}$ such that $\eta^{-1}(a)\omega(a)=\chi(a^2)$ for any $a\in\mathcal{O}^\times$.
    We shall define $\phi_\eta \in C^\infty_0(F^\times)$ by
    \begin{align*}
        \phi_\eta(x)= \begin{cases*}
                          \chi(x), & if $x \in \mathcal{O}^{\times2}$,    \\
                          0,       & if $x \notin \mathcal{O}^{\times2}$.
                      \end{cases*}
    \end{align*}
    Combined with \cite[Proposition 3.4]{tun}, the argument of \cite[Proposition 3.3.5]{lr} shows that $K(\widetilde{\pi}, \psi^0)(K^0_{2c(\sigma)})$ and $K(\widetilde{\pi}, \psi^0_\varpi)(K^0_{2c(\sigma)+1})$ act on $\phi_\eta$ via $\eta$, and the assertion $(1)-(I)$ holds when $\varepsilon=0$.
\end{proof}

\subsection{Main results for supercuspidal representations}
Now we shall calculate the dimensions of $\pi^{K^\varepsilon_m}_\eta$ for supercuspidal representations $\pi$.
Let $\varepsilon\in\{0,1\}$.
Put $\psi=\psi^\varepsilon$ and $K_m=K^\varepsilon_m$.
Note that $\bm{s}^\delta\bm{s}^\varepsilon$ can be regarded as a character on $K^\delta$.
The central sign of $\cInd^{\widetilde{G}}_{\widetilde{K}^\delta}(\sigma\bm{s}^\delta)$ with respect to $\psi$ is equal to that of $\sigma \otimes(\bm{s}^\delta\bm{s}^\varepsilon)$, and that of $\cInd^G_{K^\delta}(\sigma)$ is that of $\sigma$.
\begin{lemma}\label{lemgeneric}
    Let $\sigma$ be an irreducible strongly cuspidal representation of $K^\delta$ ($\delta=0,1$).
    Put $\pi=\cInd^{\widetilde{G}}_{\widetilde{K}^\delta}(\sigma\bm{s}^\delta)$ and $\pi'=\cInd^G_{K^\delta}(\sigma)$, which are irreducible supercuspidal representations of $\widetilde{G}$ and $G$, respectively.
    Then, for any nontrivial additive character $\Psi$ of $F$, $\pi$ is $\Psi$-generic if and only if $\pi'$ is $\Psi$-generic.
\end{lemma}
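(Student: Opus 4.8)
The plan is to compute $\Hom_N(-,\Psi)$ for the two compact inductions by Mackey theory and to observe that the two computations produce literally the same space. Since $K^\delta\le G$ and $\widetilde{K}^\delta\le\widetilde{G}$ are open, restriction to $N$ of a compact induction decomposes as a genuine direct sum over double cosets,
\[
\Res^G_N\cInd^G_{K^\delta}(\sigma)\cong\bigoplus_{g\in K^\delta\backslash G/N}\cInd^N_{N\cap(K^\delta)^g}(\sigma^g),\qquad
\Res^{\widetilde{G}}_N\cInd^{\widetilde{G}}_{\widetilde{K}^\delta}(\sigma\bm{s}^\delta)\cong\bigoplus_{\widetilde{g}\in\widetilde{K}^\delta\backslash\widetilde{G}/N}\cInd^N_{N\cap(\widetilde{K}^\delta)^{\widetilde{g}}}\big((\sigma\bm{s}^\delta)^{\widetilde{g}}\big),
\]
the sums being direct because any vector in a compact induction is supported on finitely many cosets of the inducing subgroup and hence meets only finitely many double cosets. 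As $N$ is abelian and each $N\cap(K^\delta)^g$, $N\cap(\widetilde{K}^\delta)^{\widetilde{g}}$ is compact open in $N$, Frobenius reciprocity for $\cInd$ gives $\Hom_N(\cInd^N_H(\mu),\Psi)\cong\Hom_H(\mu,\Psi|_H)$; since $\sigma$ (hence $\sigma\bm{s}^\delta$) is finite dimensional, this is nonzero exactly when $\Psi|_H$ occurs in $\mu|_H$. Thus $\pi'$ (resp.\ $\pi$) is $\Psi$-generic if and only if $\Hom_{K^\delta\cap\prescript{g}{}{N}}(\sigma,\prescript{g}{}{\Psi})\neq\{0\}$ (resp.\ $\Hom_{\widetilde{K}^\delta\cap\prescript{\widetilde{g}}{}{N}}(\sigma\bm{s}^\delta,\prescript{\widetilde{g}}{}{\Psi})\neq\{0\}$) for some double coset, after conjugating the summands back by $g$ (resp.\ $\widetilde{g}$).

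It remains to match these two families of conditions. Since $N$ maps isomorphically onto its image in $G$ and $\widetilde{K}^\delta$ is the full preimage of $K^\delta$ (so it contains the kernel of $\widetilde{G}\to G$), projection induces a bijection $\widetilde{K}^\delta\backslash\widetilde{G}/N\to K^\delta\backslash G/N$; fix representatives $g$ and the lifts $\widetilde{g}=(g,1)$. The group $\prescript{\widetilde{g}}{}{N}$ is a conjugate of $N\subset\widetilde{G}$, hence meets the kernel $\{\pm1\}$ trivially and is the image of a splitting $s_g$ of $\widetilde{G}\to G$ over $\prescript{g}{}{N}$. Now $K^\delta\cap\prescript{g}{}{N}$ is a closed subgroup of the conjugate unipotent group $\prescript{g}{}{N}\cong(F,+)$; because $p$ is odd, $2$ is a unit, so $(F,+)$ and every closed subgroup of it are uniquely $2$-divisible and therefore admit no nontrivial homomorphism to $\{\pm1\}$, whence the covering $\widetilde{G}\to G$ splits uniquely over $K^\delta\cap\prescript{g}{}{N}$. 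Consequently $\bm{s}^\delta$ and $s_g$ agree on $K^\delta\cap\prescript{g}{}{N}$, so $\widetilde{K}^\delta\cap\prescript{\widetilde{g}}{}{N}=\{(x,\bm{s}^\delta(x)):x\in K^\delta\cap\prescript{g}{}{N}\}$ maps isomorphically onto $K^\delta\cap\prescript{g}{}{N}$; under this isomorphism $\sigma\bm{s}^\delta$ restricts to $\sigma$, and $\prescript{\widetilde{g}}{}{\Psi}$ restricts to $\prescript{g}{}{\Psi}$. Hence $\Hom_{\widetilde{K}^\delta\cap\prescript{\widetilde{g}}{}{N}}(\sigma\bm{s}^\delta,\prescript{\widetilde{g}}{}{\Psi})\cong\Hom_{K^\delta\cap\prescript{g}{}{N}}(\sigma,\prescript{g}{}{\Psi})$ for every double coset, and the asserted equivalence follows term by term.

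I expect the only real subtlety to be the cocycle/splitting bookkeeping in the second paragraph --- making precise that $\prescript{\widetilde{g}}{}{N}$ is the image of a splitting and that this splitting is forced to coincide with $\bm{s}^\delta$ on the overlap $K^\delta\cap\prescript{g}{}{N}$. Once one isolates the point that $p$ odd makes torsion-free, $2$-divisible (in particular, unipotent) subgroups carry a unique splitting of the metaplectic cover, the computation of the Whittaker space for the covering group collapses onto that for $\SL_2(F)$ with no residual discrepancy, so the two genericity criteria become identical. (The statement also follows once one feeds the Mackey computation into the genericity criteria of \cite{man1}, but the direct comparison above avoids invoking the full classification.)
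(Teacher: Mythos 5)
Your proof is correct and follows essentially the same route as the paper's: a Mackey decomposition of the Whittaker space as a product over $K^\delta\backslash G/N\cong\widetilde{K}^\delta\backslash\widetilde{G}/N$, followed by the observation that the cover splits uniquely over $K^\delta\cap\prescript{g}{}{N}$ (because it is an open compact, hence $2$-divisible, subgroup of a unipotent group), forcing $\bm{s}^\delta$ and the conjugate-of-$N$ splitting to agree there and making the corresponding Hom spaces literally equal term by term. The only stylistic difference is that you derive the product decomposition via $\Res_N$ of the compact induction plus Frobenius reciprocity for $\cInd$, whereas the paper goes through $\Hom_{\widetilde{G}}(\cInd\,\sigma\bm{s}^\delta,\Ind^{\widetilde{G}}_N\Psi)$; the two are equivalent and lead to the same conclusion.
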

\begin{proof}
    Using Frobenius reciprocity and Mackey theory (\cite[Corollary 2.7 (1)]{yam}), we have
    \begin{align*}
        \Hom_N(\pi, \Psi)
         & =\Hom_{\widetilde{G}}(\cInd^{\widetilde{G}}_{\widetilde{K}^\delta} (\sigma\bm{s}^\delta), \Ind^{\widetilde{G}}_N (\Psi))                                               \\
         & \cong \prod_{g\in \widetilde{K}^\delta \backslash \widetilde{G} / N} \Hom_{\widetilde{K}^\delta \cap \prescript{g}{}{N}} (\sigma\bm{s}^\delta, \prescript{g}{}{\Psi}).
    \end{align*}
    Similarly, we have
    \begin{equation*}
        \Hom_N(\pi', \Psi)
        \cong \prod_{g\in K^\delta \backslash G / N} \Hom_{K^\delta \cap \prescript{g}{}{N}} (\sigma, \prescript{g}{}{\Psi}).
    \end{equation*}

    We shall identify $\widetilde{K}^\delta \backslash \widetilde{G} / N$ with $K^\delta \backslash G / N$.
    For any $g \in K^\delta \backslash G / N$, the intersection $K^\delta \cap \prescript{g}{}{N}$ is an open compact subgroup of $\prescript{g}{}{N}\cong F$, and thus isomorphic to $\mathcal{O}$.
    Since $\mathcal{O}$ has no nontrivial quadratic character, the splittings $K^\delta\hookrightarrow \widetilde{G}$ and $\prescript{g}{}{N}\hookrightarrow \widetilde{G}$ coincide on $K^\delta \cap \prescript{g}{}{N}$.
    More precisely, $\bm{s}^\delta$ is trivial on $K^\delta \cap \prescript{g}{}{N}$, and we have
    \begin{equation*}
        \widetilde{K}^\delta \cap \prescript{g}{}{N}
        = \Set{ (x,1) | x\in K^\delta \cap \prescript{g}{}{N}}.
    \end{equation*}
    Here, note that $\prescript{g}{}{N}=\{(\prescript{g}{}{n},1) \mid n\in N\}$ as subgroups of $\widetilde{G}$, since the conjugation action of $G$ (or $\widetilde{G}$) on $\widetilde{G}$ preserves the second component.
    Therefore, we have
    \begin{equation*}
        \Hom_{\widetilde{K}^\delta \cap \prescript{g}{}{N}} (\sigma\bm{s}^\delta, \prescript{g}{}{\Psi})
        = \Hom_{K^\delta \cap \prescript{g}{}{N}} (\sigma, \prescript{g}{}{\Psi}).
    \end{equation*}
    Thus, we have
    \begin{align*}
        \Hom_N(\pi, \Psi) \cong \Hom_N(\pi', \Psi).
    \end{align*}
    This completes the proof.
\end{proof}

Every irreducible strongly cuspidal representation of $K^\delta$ with conductor 1 can be identified with an irreducible representation of $\SL_2(\mathcal{O}/\mathfrak{p})$ whose restriction to
\begin{equation*}
    \Set{\left( \begin{array}{cc}1&b\\ &1 \end{array} \right) \in \SL_2(\mathcal{O}/\mathfrak{p}) | b \in \mathcal{O}/\mathfrak{p}}
\end{equation*}
is nontrivial.
Indeed, we have isomorphisms
\begin{equation*}
    K^1/K^1_1 \overset{\cong}{\longrightarrow} K^0/K^0_1 \overset{\cong}{\longrightarrow} \SL_2(\mathcal{O}/\mathfrak{p}),
\end{equation*}
where the first isomorphism is given by $x\mapsto \beta^{-1} x \beta$, and the second one is a natural reduction map.
Tanaka \cite{tan} gave a classification of irreducible representations of $\SL_2(\mathcal{O}/\mathfrak{p})$, which tells us that there are exactly two such representations with dimension $\frac{1}{2}(q-1)$, up to equivalent.
\begin{proposition}
    Let $\sigma^\delta_1$ and $\sigma^\delta_2$ be the two inequivalent irreducible strongly cuspidal $(q-1)/2$-dimensional representations of $K^\delta$ with conductor 1, for each $\delta=0,1$.
    Then $\cInd^{\widetilde{G}}_{\widetilde{K}^\delta} (\sigma^\delta_i\bm{s}^\delta)$, ($\delta=0,1$, $i=1,2$) are all four odd Weil representations.
    Moreover, if $\cInd^{\widetilde{G}}_{\widetilde{K}^\delta} (\sigma^\delta_i\bm{s}^\delta)=\omega_\psi^-$ for some additive character $\psi$ of $F$, then $\delta+1 \equiv c(\psi)$ modulo 2.
\end{proposition}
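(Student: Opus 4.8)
The plan is to reduce everything to $\SL_2(F)$, using Lemma~\ref{lemgeneric} together with the material of \S\ref{subsec:resultsofLR}.

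First I would record the easy points. For each $\delta$ and $i$ the representation $\cInd^{\widetilde G}_{\widetilde K^\delta}(\sigma^\delta_i\bm s^\delta)$ is irreducible genuine supercuspidal by Manderscheid's theorem, and the four of them are pairwise inequivalent: $\sigma^0_1,\sigma^0_2,\sigma^1_1,\sigma^1_2$ are pairwise inequivalent (those of $K^0$ and those of $K^1$ being inequivalent by our convention), so by the uniqueness clause of Manderscheid's theorem so are their compact inductions. Since $p$ is odd, $F^\times/F^{\times2}$ has order $4$ and hence there are exactly four odd Weil representations $\omega_{\psi_a}^-$. So it will be enough to show that every $\cInd^{\widetilde G}_{\widetilde K^\delta}(\sigma^\delta_i\bm s^\delta)$ is an odd Weil representation, the final sentence of the Proposition then specifying which one.

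To see that they are odd Weil representations I would first observe, by conjugation by $\beta$ (which carries $(K^0,J^0_1)$ onto $(K^1,J^1_1)$, preserves dimension, and by Lemma~\ref{lem:cptbmanks} preserves strong cuspidality and conductor, since $\sigma^0_1,\sigma^0_2$ are, by Tanaka's classification, the components of the restriction to $K^0$ of a level-$1$ very cuspidal representation of $ZH$), that $\{\sigma^1_1,\sigma^1_2\}=\{\prescript{\beta}{}{\sigma^0_1},\prescript{\beta}{}{\sigma^0_2}\}$; consequently the four representations $\pi'_{\delta,i}:=\cInd^G_{K^\delta}(\sigma^\delta_i)$ are precisely the members of the unramified supercuspidal $L$-packet of cardinality four. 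By Lemma~\ref{lemgeneric}, $\cInd^{\widetilde G}_{\widetilde K^\delta}(\sigma^\delta_i\bm s^\delta)$ and $\pi'_{\delta,i}$ are generic with respect to the same additive characters of $F$, and by Theorem~\ref{thm:lrwhittakersc}(1)(II) each member of this packet is generic with respect to exactly one additive character modulo $F^{\times2}$. Hence $\#F_\psi(\cInd^{\widetilde G}_{\widetilde K^\delta}(\sigma^\delta_i\bm s^\delta))/F^{\times2}=1$, so Proposition~\ref{prop:numgeneric} forces this representation to be an even or odd Weil representation, and since it is supercuspidal it is odd. Combined with the count above, the four representations $\cInd^{\widetilde G}_{\widetilde K^\delta}(\sigma^\delta_i\bm s^\delta)$ then exhaust the odd Weil representations.

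For the congruence $\delta+1\equiv c(\psi)\pmod 2$, suppose $\cInd^{\widetilde G}_{\widetilde K^\delta}(\sigma^\delta_i\bm s^\delta)=\omega_\psi^-$. On one hand, $\omega_\psi^-$ is $\Psi$-generic precisely for $\Psi=\psi_b$ with $b\in F^{\times2}$, and then $c(\Psi)=c(\psi)-\operatorname{ord}(b)\equiv c(\psi)\pmod 2$; thus every generic character of $\omega_\psi^-$ has conductor $\equiv c(\psi)\pmod 2$. On the other hand, by Lemma~\ref{lemgeneric} this common parity is that of the conductor of the unique (modulo $F^{\times2}$) generic character of $\pi'_{\delta,i}=\cInd^G_{K^\delta}(\sigma^\delta_i)$, which I would determine by the Mackey/Frobenius computation used in the proof of Lemma~\ref{lemgeneric}: for an additive character $\psi^0_a$ the contribution of the double coset of $t(\varpi^k)$ to $\Hom_N(\pi'_{\delta,i},\psi^0_a)$ is $\Hom_{K^\delta\cap N}(\sigma^\delta_i,\psi^0_{a\varpi^{-2k}})$, and since $\sigma^\delta_i$ factors through a cuspidal representation of $K^\delta/J^\delta_1\cong\SL_2(\mathcal{O}/\mathfrak{p})$, such a contribution is nonzero only when $\psi^0_{a\varpi^{-2k}}$ restricts to a nontrivial character of $K^\delta\cap N$ that is trivial on $J^\delta_1\cap N$. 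As $K^0\cap N=N_0\supset N_1=J^0_1\cap N$ while $K^1\cap N=N_{-1}\supset N_0=J^1_1\cap N$, this forces $\operatorname{ord}(a)$ odd when $\delta=0$ and even when $\delta=1$, so the generic characters of $\pi'_{\delta,i}$ have conductor $\equiv\delta+1\pmod 2$. Combining the two sides gives $c(\psi)\equiv\delta+1\pmod 2$.

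The hard part is the computation in the last paragraph: organizing the double cosets $K^\delta\backslash G/N$ and the intersections $N\cap\prescript{g}{}{K^\delta}$, $N\cap\prescript{g}{}{J^\delta_1}$ carefully, and feeding in the input from the representation theory of $\SL_2(\mathcal{O}/\mathfrak{p})$—that a cuspidal representation of dimension $(q-1)/2$ restricts to the unipotent radical as exactly one of the two orbits of nontrivial characters under the diagonal torus—which also re-proves $\#F_\psi/F^{\times2}=1$ without invoking Theorem~\ref{thm:lrwhittakersc}. Everything else is formal once Lemma~\ref{lemgeneric} and Proposition~\ref{prop:numgeneric} are in hand.
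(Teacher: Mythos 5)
Your proof is correct and takes a noticeably more explicit route than the paper's. The paper's proof cites Corollary~3.3.7 of Lansky--Raghuram \cite{lr} for the whole assertion --- that each $\cInd^G_{K^\delta}(\sigma^\delta_i)$ is $\Psi_a$-generic for exactly one class $a\in F^\times/F^{\times2}$, and that distinct members of the packet of four are generic for distinct classes --- and then invokes Lemma~\ref{lemgeneric} together with the characterization of the odd Weil representations in \S\ref{secrep}, leaving the parity congruence $\delta+1\equiv c(\psi)\pmod 2$ implicit in that citation. You instead extract the parity from an explicit Mackey/double-coset computation on the $\SL_2$ side: with representatives $t(\varpi^k)$ for $K^\delta\backslash G/N$, and the intersections $K^0\cap N=N_0\supset N_1=J^0_1\cap N$ versus $K^1\cap N=N_{-1}\supset N_0=J^1_1\cap N$, the contribution of a given double coset is nonzero only if $\psi^0_{a\varpi^{-2k}}$ is trivial on $J^\delta_1\cap N$ and nontrivial on $K^\delta\cap N$, which forces $\operatorname{ord}(a)\equiv\delta+1\pmod 2$, whence the congruence. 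As you remark, once one knows that a cuspidal representation of $\SL_2(\F_q)$ of dimension $(q-1)/2$ restricts to $N(\F_q)$ as a single torus-orbit of nontrivial characters, the same computation re-derives $\#F_\psi/F^{\times2}=1$, so your argument is self-contained modulo Lemma~\ref{lemgeneric} and Proposition~\ref{prop:numgeneric}, whereas the paper delegates both genericity facts to \cite{lr}. This is a genuine gain in transparency, especially for the parity statement, whose origin the paper does not spell out. One small notational slip in your closing paragraph: the intersections occurring in the Mackey formula are $K^\delta\cap\prescript{g}{}{N}$ and $J^\delta_1\cap\prescript{g}{}{N}$, not $N\cap\prescript{g}{}{K^\delta}$; the body of your argument uses the correct ones, so this has no effect.
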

\begin{proof}
    Fix a nontrivial additive character $\Psi$ of $F$.
    By \cite[Corollary 3.3.7]{lr}, an irreducible supercuspidal representation $\cInd^G_{K^\delta} (\sigma^\delta_i)$ ($\delta=0,1$, $i=1,2$) is $\Psi_a$-generic for only one $a\in F^\times/F^{\times2}$, and different $\cInd^G_{K^\delta} (\sigma^\delta_i)$ are $\Psi_a$-generic for different $a \in F^\times/F^{\times2}$.
    Now the assertion follows from Lemma \ref{lemgeneric} and the characterization of the odd Weil representations (\S\ref{secrep}).
\end{proof}

\begin{theorem}\label{thmsc}
    Let $\varepsilon\in\{0,1\}$, and put $K_m=K^\varepsilon_m$.
    Let $\delta\in\{0,1\}$.
    Consider an irreducible genuine supercuspidal representation $\pi=\cInd^{\widetilde{G}}_{\widetilde{K}^\delta}(\sigma\bm{s}^\delta)$ of $\widetilde{G}$, where $\sigma$ is an irreducible strongly cuspidal representation $K^\delta$.
    Let $\eta$ be a character of $\mathcal{O}^\times$ such that the central character of $\sigma$ is equal to $\eta|_{\{\pm1\}}$.
    \begin{enumerate}[(1)]
        \item If the defect of $\sigma$ is 0, then $\pi^{K_m}_\eta=\{0\}$ for $m\leq 2c(\sigma)-1$. If moreover $c(\eta)\leq c(\sigma)$, then the followings hold.
              \begin{enumerate}[(I)]
                  \item If $\pi$ is not isomorphic to any odd Weil representation, then we have
                        \begin{align*}
                            \dim_\C \pi^{K_m}_\eta=\begin{dcases*}
                                                       2\left\lfloor \frac{m-2c(\sigma)+1}{2} \right\rfloor, & if $c(\sigma)+\varepsilon+\delta$ is odd and $m\geq2c(\sigma)$,  \\
                                                       2\left\lceil \frac{m-2c(\sigma)+1}{2} \right\rceil,   & if $c(\sigma)+\varepsilon+\delta$ is even and $m\geq2c(\sigma)$.
                                                   \end{dcases*}
                        \end{align*}
                        In particular, $c^\varepsilon_{\min}(\pi)=c^\varepsilon_\eta(\pi)$ if $c(\eta)\leq c(\sigma)$, and $c^\varepsilon_{\min}(\pi)=2c(\sigma)$ (resp. $2c(\sigma)+1$) if $c(\sigma)+\varepsilon+\delta$ is even (resp. odd).
                  \item If $\pi$ is isomorphic to $\omega_{\Psi}^-$ for some $\Psi$, then $c(\sigma)=1$ and we have
                        \begin{align*}
                            \dim_\C \pi^{K_m}_\eta=\begin{dcases*}
                                                       \left\lfloor \frac{m-1}{2} \right\rfloor, & if $\varepsilon+\delta$ is even and $m\geq 2$, \\
                                                       \left\lceil \frac{m-1}{2} \right\rceil,   & if $\varepsilon+\delta$ is odd and $m\geq 2$.
                                                   \end{dcases*}
                        \end{align*}
                        In particular, $c^\varepsilon_{\min}(\pi)=c^\varepsilon_\eta(\pi)$ if $c(\eta)\leq 1$, and $c^\varepsilon_{\min}(\pi)=2$ (resp. $3$) if $\varepsilon+\delta$ is odd (resp. even).
              \end{enumerate}
        \item If the defect of $\sigma$ is 1, then $\pi^{K_m}_\eta=\{0\}$ for $m\leq 2c(\sigma)-2$. If moreover $c(\eta)\leq c(\sigma)-1$, then we have
              \begin{equation*}
                  \dim_\C \pi^{K_m}_\eta = m-2c(\sigma)+2,
              \end{equation*}
              for $m\geq 2c(\sigma)-1$.
              In particular, $c^\varepsilon_{\min}(\pi)=c^\varepsilon_\eta(\pi)$ if $c(\eta)\leq c(\sigma)-1$, and $c^\varepsilon_{\min}(\pi)=2c(\sigma)-1$.
    \end{enumerate}
\end{theorem}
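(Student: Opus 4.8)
The plan is to deduce this from its $\SL_2(F)$-counterpart, Theorem~\ref{thm:lrsc}, by constructing for every $m\ge 0$ a linear isomorphism $\pi^{K^\varepsilon_m}_\eta\cong(\pi')^{K^\varepsilon_m}_{\eta'}$, where $\pi'=\cInd^G_{K^\delta}(\sigma)$ is the irreducible supercuspidal representation of $G$ attached to the \emph{same} $\sigma$ and $K^\delta$, and $\eta'$ is $\eta$ twisted by an explicit quadratic (or trivial) character $\lambda$ of $\mathcal O^\times$ that measures the difference between the splittings $\bm s^\delta$ and $\bm s^\varepsilon$, so that $\eta'=\eta$ when $\delta=\varepsilon$. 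Since $\pi'$ is built from the same $\sigma$, the conductor $c(\sigma)$ and the defect that enter Theorem~\ref{thm:lrsc} for $\pi'$ are those of $\sigma$; the three cases of the present theorem therefore correspond to the three cases of Theorem~\ref{thm:lrsc} (defect $0$ with $\pi'$ not in the unramified supercuspidal $L$-packet of cardinality four / defect $0$ with $\pi'$ in that packet / defect $1$), and $\pi$ is isomorphic to an odd Weil representation precisely when $\pi'$ lies in the cardinality-four packet — this follows from the Proposition immediately preceding the theorem (odd Weil representations are the $\cInd^{\widetilde G}_{\widetilde K^\delta}(\sigma^\delta_i\bm s^\delta)$ for the two $(q-1)/2$-dimensional strongly cuspidal $\sigma^\delta_i$ of $K^\delta$ of conductor $1$) together with the Kutzko--Sally description recalled in \S\ref{sectype}, by which the members of that packet are exactly the $\cInd^G_{K^\delta}(\sigma^\delta_i)$. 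Granting the isomorphism, parts (1)(I), (1)(II) and (2) follow term by term from the corresponding parts of Theorem~\ref{thm:lrsc}; the vanishing of $\pi^{K_m}_\eta$ for $m\le 2c(\sigma)-1$, resp.\ $m\le 2c(\sigma)-2$, follows from the vanishing statements there together with $\pi^{K_m}_\eta\subseteq\pi^{K_{m+1}}_\eta$ and the convention $\pi^{K_m}_\eta=\{0\}$ for $m<c(\eta)$; and the assertions on $c^\varepsilon_{\min}(\pi)$ and $c^\varepsilon_\eta(\pi)$ are read off the resulting dimension formulae.

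To construct the isomorphism I would argue as in the proof of Lemma~\ref{lemgeneric}. By Frobenius reciprocity and Mackey theory (\cite[Corollary 2.7 (1)]{yam}), and using that the covering map induces a bijection $K^\varepsilon_m\backslash\widetilde G/\widetilde K^\delta\to K^\varepsilon_m\backslash G/K^\delta$ under which $K^\varepsilon_m\cap\prescript{g}{}{\widetilde K^\delta}$ is the image of $K^\varepsilon_m\cap\prescript{g}{}{K^\delta}$ under the section $\bm s^\varepsilon$, one gets
\[
    \pi^{K^\varepsilon_m}_\eta\;\cong\;\bigoplus_{g}\Hom_{K^\varepsilon_m\cap\prescript{g}{}{K^\delta}}\!\bigl(\eta,\ \prescript{g}{}{(\sigma\bm s^\delta)}\bigr),
    \qquad
    (\pi')^{K^\varepsilon_m}_{\eta'}\;\cong\;\bigoplus_{g}\Hom_{K^\varepsilon_m\cap\prescript{g}{}{K^\delta}}\!\bigl(\eta',\ \prescript{g}{}{\sigma}\bigr),
\]
the sums running over a fixed system of double coset representatives. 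For each $g$, the genuine representation $\prescript{g}{}{(\sigma\bm s^\delta)}$, pulled back along $\bm s^\varepsilon$ to $K^\varepsilon_m\cap\prescript{g}{}{K^\delta}$, is the twist $\lambda_g\otimes\prescript{g}{}{\sigma}$ of $\prescript{g}{}{\sigma}$ by a quadratic character $\lambda_g$ of that group — the discrepancy between $\bm s^\varepsilon$ and the conjugate $\prescript{g}{}{\bm s^\delta}$ of $\bm s^\delta$, together with the relevant Kubota-cocycle corrections. The key point is then to check that $\lambda_g$ is the restriction to $K^\varepsilon_m\cap\prescript{g}{}{K^\delta}$ of a single character $\left(\begin{array}{cc}a&b\\c&d\end{array}\right)\mapsto\lambda(d)$ of $K^\varepsilon_m$, independent of $g$; setting $\eta'=\eta\lambda^{-1}$, the $g$-th summands then coincide.

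The $g$-independence of $\lambda_g$ is where the real work lies. When $\delta=\varepsilon$ one expects $\lambda_g\equiv 1$: $K^0$ and $K^1$, hence all their conjugates, admit no nontrivial homomorphism into $\{\pm1\}$ (as $\SL_2(\mathcal O/\mathfrak p)$ has none), which pins down the splittings, and a direct check on the double coset representatives of Lemma~\ref{lemcoset} and Corollary~\ref{corcoset} — whose nontrivial members are built from one-parameter unipotent and torus pieces on which $\bm s^\varepsilon$ and $\prescript{g}{}{\bm s^\varepsilon}$ agree — gives $\lambda_g=1$. When $\delta\ne\varepsilon$, writing $\psi^\delta=\psi^\varepsilon_z$ with $\operatorname{ord}(z)$ odd, one has $\gamma_F(a,\psi^\delta)=\chi_z(a)\gamma_F(a,\psi^\varepsilon)$, and the identity $\gamma_F(ac,\psi)=\gamma_F(c,\psi)$ for $a\in\mathcal O^\times$ with $\operatorname{ord}(c)$ of the parity of $c(\psi)$ (recalled in the Convention section) collapses the cocycle terms, so that $\lambda$ is the restriction of $\chi_z$; alternatively one can reduce this case to $\delta=\varepsilon$ by conjugating by $\beta$, as in the proof of Corollary~\ref{corcoset}. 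It remains to check that the hypotheses transfer to $(\pi',\eta')$: the central-sign constraint holds after the routine bookkeeping with $\gamma_F(-1,\psi^\varepsilon)$, and since $c(\lambda)\le 1$ one gets $c(\eta')\le c(\sigma)$ in the defect-$0$ case ($c(\sigma)\ge 1$) and $c(\eta')\le c(\sigma)-1$ in the defect-$1$ case ($c(\sigma)\ge 2$, by Lemma~\ref{lem:cptbmanks}). Theorem~\ref{thm:lrsc} then yields all the stated dimensions, and hence the values of $c^\varepsilon_{\min}(\pi)$ and $c^\varepsilon_\eta(\pi)$.
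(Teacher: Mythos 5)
Your proposal follows the same overall strategy as the paper: use Mackey theory (via \cite[Corollary 2.7]{yam}) to decompose $\pi^{K^\varepsilon_m}_\eta$ and $(\pi')^{K^\varepsilon_m}_{\eta'}$ over the double cosets $K^\varepsilon_m\backslash G/K^\delta$, match the summands, and then invoke Lansky--Raghuram (Theorem~\ref{thm:lrsc}, i.e.\ \cite[\S3.3]{lr}) together with Proposition~\ref{prop:compatibilityManKS}. The one place you diverge is substantive: you allow the two splittings (the $\bm s^\varepsilon$-splitting of $K^\varepsilon_m$ and the conjugate of $\bm s^\delta$ on $\prescript{g}{}{K^\delta}$) to differ by a quadratic character $\lambda_g$, leading to an isomorphism $\pi^{K^\varepsilon_m}_\eta\cong(\pi')^{K^\varepsilon_m}_{\eta'}$ with a twisted $\eta'$. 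The paper instead asserts flatly that ``the splittings coincide on $K^\varepsilon_m\cap\prescript{g}{}{K^\delta}$'' and concludes $\dim\pi^{K^\varepsilon_m}_\eta=\dim(\pi')^{K^\varepsilon_m}_\eta$ with the \emph{same} $\eta$. Your version is the more defensible one: already for $g=1$ and $\delta\neq\varepsilon$, the two sections disagree on torus elements, since $\bm s^\varepsilon(t(a))/\bm s^\delta(t(a))=\gamma_F(a,\psi^\varepsilon)/\gamma_F(a,\psi^\delta)=(a,z)_2$ for $a\in\mathcal O^\times$, which is the nontrivial quadratic residue character of conductor $1$; and the paper's own Lemma~\ref{lem:Yamamotowhittakersupercuspidal}, whose proof explicitly invokes the identification used ``in the proof of \ldots Theorem~\ref{thmsc}'', in fact writes the matching with $\eta'=\eta\cdot(\chi_{\psi^\varepsilon}\chi_{\psi^\delta})$ rather than with $\eta$. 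So the blanket ``splittings coincide'' in the paper is an overclaim precisely in the $\delta\neq\varepsilon$ case, and your twist $\lambda$ is exactly $\chi_{\psi^\varepsilon}\chi_{\psi^\delta}|_{\mathcal O^\times}$.

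Two things to be aware of. First, you correctly flag that the real content is the $g$-uniformity of $\lambda_g$ (that it is the restriction of a single character $d\mapsto\lambda(d)$ of $K^\varepsilon_m$, independent of the Mackey representative $g$); your sketch via the explicit representatives of Lemma~\ref{lemcoset}/Corollary~\ref{corcoset} and the $\beta$-conjugation trick is plausible, but this is precisely the step that must be written out carefully — it is also the step the paper elides. Second, the twist is numerically harmless for the dimension count because the formulas in Theorem~\ref{thm:lrsc} depend on $\eta$ only through the bound $c(\eta)\le c(\sigma)$ (resp.\ $c(\sigma)-1$) and through $\eta(-1)$; twisting by a character of conductor at most $1$ does not leave that range since $c(\sigma)\ge 1$ (resp.\ $\ge 2$). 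But the central-sign bookkeeping you call ``routine'' is not entirely cosmetic: one must match $\eta'(-1)=z(\sigma)$ against $\eta(-1)=z_{\psi^\varepsilon}(\pi)=(\bm s^\delta\bm s^\varepsilon)(-1_2)\,z(\sigma)$, and when $\delta\neq\varepsilon$ and $(-1,\varpi)_2=-1$ the factor $\lambda(-1)=(-1,z)_2$ is exactly what reconciles the two. This is worth writing explicitly, both because it is the reason the theorem's hypothesis is phrased as it is, and because it is the point at which the paper's ``splittings coincide'' shortcut would otherwise produce an apparent central-character contradiction.
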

\begin{proof}
    Using Frobenius reciprocity and Mackey theory (\cite[Corollary 2.7 (2)]{yam}), we have
    \begin{align*}
        \pi^{K_m}_\eta
         & \cong\Hom_{K_m}(\eta, \cInd^{\widetilde{G}}_{\widetilde{K}^\delta}(\sigma\bm{s}^\delta))                                                                                     \\
         & =\Hom_{\widetilde{G}}(\cInd^{\widetilde{G}}_{K_m}(\eta), \cInd^{\widetilde{G}}_{\widetilde{K}^\delta}(\sigma\bm{s}^\delta))                                                  \\
         & \cong \bigoplus_{g\in K_m\backslash \widetilde{G}/\widetilde{K}^\delta} \Hom_{K_m\cap \prescript{g}{}{(\widetilde{K}^\delta)}} (\eta, \prescript{g}{}{\sigma\bm{s}^\delta}).
    \end{align*}
    Similarly, we put $\pi'=\cInd^{G}_{K^\delta}(\sigma)$ and obtain
    \begin{align*}
        (\pi')^{K_m}_\eta
        \cong \bigoplus_{g\in K_m\backslash G/K^\delta} \Hom_{K_m\cap \prescript{g}{}{(K^\delta)}} (\eta, \prescript{g}{}{\sigma}).
    \end{align*}
    Since the splittings $K_m \hookrightarrow \widetilde{G}$ and $\prescript{g}{}{K^\delta}\hookrightarrow \widetilde{G}$ coincide on $K_m\cap \prescript{g}{}{(K^\delta)}$ for any $g\in K_m\backslash G/K^\delta$, we have
    \begin{equation*}
        \bigoplus_{g\in K_m\backslash \widetilde{G}/\widetilde{K}^\delta} \Hom_{K_m\cap \prescript{g}{}{(\widetilde{K}^\delta)}} (\eta, \prescript{g}{}{\sigma\bm{s}^\delta})
        \cong \bigoplus_{g\in K_m\backslash G/K^\delta} \Hom_{K_m\cap \prescript{g}{}{(K^\delta)}} (\eta, \prescript{g}{}{\sigma}).
    \end{equation*}
    Therefore, we have
    \begin{equation*}
        \dim_\C \pi^{K_m}_\eta = \dim_\C (\pi')^{K_m}_\eta.
    \end{equation*}
    Now the assertions follow from \cite[3.3]{lr} and Proposition \ref{prop:compatibilityManKS}.
\end{proof}
\begin{corollary}\label{corsc}
    Let $\eta$ be a character of $\mathcal{O}^\times$ such that the central character of $\sigma$ is equal to $\eta|_{\{\pm1\}}$.
    Assume that $c(\eta)\leq c(\sigma)-d(\sigma)$, where $d(\sigma)$ denotes the defect of $\sigma$.
    Put $M=c^\varepsilon_\eta(\pi)$.
    \begin{enumerate}[(1)]
        \item If the defect of $\sigma$ is 0, then $\pi^{K_m, \new}_\eta$ is zero unless $m=M$, in which case we have
              \begin{align*}
                  \dim_\C \pi^{K_M, \new}_\eta
                  =\begin{cases*}
                       2, & if $\pi$ is not isomorphic to any odd Weil representation,   \\
                       1, & if $\pi$ is isomorphic to $\omega_{\Psi}^-$ for some $\Psi$.
                   \end{cases*}
              \end{align*}
        \item If the defect of $\sigma$ is 1, then $\pi^{K_m, \new}_\eta$ is zero unless $m=M, M+1$, in which cases we have
              \begin{equation*}
                  \dim_\C \pi^{K_M, \new}_\eta =\dim_\C \pi^{K_{M+1}, \new}_\eta = 1.
              \end{equation*}
    \end{enumerate}
\end{corollary}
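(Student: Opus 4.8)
The plan is to combine the dimension count of Theorem~\ref{thmsc} with the global newform count of Theorem~\ref{thm:rs-Mp2}. Throughout I would write $M=c^\varepsilon_\eta(\pi)$, so that $\pi^{K_m}_\eta=\{0\}$ for $m<M$; note that the hypothesis $c(\eta)\leq c(\sigma)-d(\sigma)$ is precisely what is needed to apply Theorem~\ref{thmsc}, and that $M\geq1$ always, with $M\geq2$ whenever $d(\sigma)=0$, since $c(\sigma)\geq1$. The one elementary observation I would isolate first is that at the bottom level the space of oldforms vanishes: $\pi^{K_M,\old}_\eta$ is by definition spanned by $\pi^{K_{M-1}}_\eta$ and $\alpha_2(\pi^{K_{M-2}}_\eta)$ (or is $\pi^{K_0}_\eta$ if $M=1$, or $\{0\}$ if $M=0$), and all of these vanish by minimality of $M$; hence $\dim_\C\pi^{K_M,\new}_\eta=\dim_\C\pi^{K_M}_\eta$.

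For $d(\sigma)=0$: Theorem~\ref{thmsc}(1) gives $\dim_\C\pi^{K_M}_\eta=2$ when $\pi$ is not an odd Weil representation and $\dim_\C\pi^{K_M}_\eta=1$ when $\pi\cong\omega_\Psi^-$ for some $\Psi$, so by the previous observation $\dim_\C\pi^{K_M,\new}_\eta$ equals $2$ or $1$ accordingly. On the other hand Proposition~\ref{prop:numgeneric} gives $\#F_\psi(\pi)/F^{\times2}=2$ (resp.\ $1$) in these two cases, so Theorem~\ref{thm:rs-Mp2} shows that the single level $M$ already exhausts the total $\sum_{m\geq0}\dim_\C\pi^{K_m,\new}_\eta$. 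Therefore $\pi^{K_m,\new}_\eta=\{0\}$ for all $m\neq M$, which is assertion~(1).

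For $d(\sigma)=1$: Theorem~\ref{thmsc}(2) gives $M=2c(\sigma)-1$ and $\dim_\C\pi^{K_m}_\eta=m-2c(\sigma)+2$ for $m\geq M$, so $\dim_\C\pi^{K_M}_\eta=1$ and $\dim_\C\pi^{K_{M+1}}_\eta=2$; by the bottom-level observation $\dim_\C\pi^{K_M,\new}_\eta=1$. At level $M+1$ the oldform space is spanned by $\pi^{K_M}_\eta$ and $\alpha_2(\pi^{K_{M-1}}_\eta)=\{0\}$, hence equals the image of $\pi^{K_M}_\eta$ in $\pi^{K_{M+1}}_\eta$; since $\pi^{K_M}_\eta\subseteq\pi^{K_{M+1}}_\eta$ is an inclusion of subspaces of $V$, this image is $1$-dimensional and so $\dim_\C\pi^{K_{M+1},\new}_\eta=2-1=1$. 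A defect-$1$ supercuspidal representation is never an odd Weil representation, since by Theorem~\ref{thmsc}(1)(II) the latter have defect $0$; hence Proposition~\ref{prop:numgeneric} gives $\#F_\psi(\pi)/F^{\times2}=2$, and Theorem~\ref{thm:rs-Mp2} shows that the levels $M$ and $M+1$ already account for the whole sum. Thus $\pi^{K_m,\new}_\eta=\{0\}$ for $m\neq M,M+1$, giving assertion~(2).

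I expect no real obstacle here: the content is entirely contained in Theorem~\ref{thmsc}, and the only points requiring care are the vanishing of $\pi^{K_M,\old}_\eta$ (handled by minimality of $M$, with a glance at the $M=0,1$ conventions for the oldform space) and the exact one-dimensionality of the oldform space at level $M+1$ in the defect-$1$ case, for which one uses that $\pi^{K_M}_\eta\hookrightarrow\pi^{K_{M+1}}_\eta$ loses no dimension. The global count of Theorem~\ref{thm:rs-Mp2} then removes any need to analyze the intersections of the two images defining oldforms at higher levels.
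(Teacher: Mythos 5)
Your proposal is correct and follows essentially the same route as the paper: the paper's proof simply cites Theorems~\ref{thmsc} and \ref{thm:rs-Mp2} and Proposition~\ref{prop:numgeneric}, and your argument spells out exactly how those three results combine (bottom-level vanishing of oldforms, the one-dimensional oldform space at level $M+1$ in the defect-$1$ case, and the global count forcing all other levels to vanish). No gaps.
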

\begin{proof}
    This follows immediately from Theorems \ref{thmsc} and \ref{thm:rs-Mp2} and Proposition \ref{prop:numgeneric}.
\end{proof}

We shall consider the multiplicity one property for newforms in terms of the Whittaker functionals.
The main idea for the proof of the following lemma was provided by Yamamoto.
\begin{lemma}\label{lem:Yamamotowhittakersupercuspidal}
    Let $\varepsilon\in\{0,1\}$.
    Let $\sigma$ be an irreducible strongly cuspidal representation of $K^\delta$ ($\delta=0,1$), and $\Psi$ a nontrivial additive character of $F$.
    Put $\pi=\cInd^{\widetilde{G}}_{\widetilde{K}^\delta}(\sigma\bm{s}^\delta)$ and $\pi'=\cInd^G_{K^\delta}(\sigma)$, which are irreducible supercuspidal representations of $\widetilde{G}$ and $G$, respectively.
    Let $\eta$ be a character of $\mathcal{O}^\times$ such that $\eta(-1)=z(\sigma)$.
    Put $\eta'=\eta \cdot (\chi_{\psi^\varepsilon}\chi_{\psi^\delta})$.
    Assume that both of $\pi$ and $\pi'$ are $\Psi$-generic.
    Then, for any $m\geq c(\eta)$, the $\Psi$-Whittaker functional for $\pi$ is nonzero on $\pi^{K^\varepsilon_m}_\eta$ if and only if that for $\pi'$ is nonzero on $(\pi')^{K^\varepsilon_m}_{\eta'}$.
\end{lemma}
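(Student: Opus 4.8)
Here is a plan for proving Lemma \ref{lem:Yamamotowhittakersupercuspidal}.

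The plan is to exhibit a single linear isomorphism $\Phi\colon\pi\to\pi'$ that is $N$-equivariant -- with $N$ embedded in $\widetilde{G}$ via the splitting $n\mapsto(n,1)$ -- and that carries $\pi^{K^\varepsilon_m}_\eta$ onto $(\pi')^{K^\varepsilon_m}_{\eta'}$ for every $m\ge c(\eta)$. Granting this, since the space of $\Psi$-Whittaker functionals of an irreducible admissible representation is at most one-dimensional (\cite{wal1}) and $\pi,\pi'$ are both $\Psi$-generic (consistent by Lemma \ref{lemgeneric}), the $N$-equivariance of $\Phi$ makes $\lambda'_\Psi\circ\Phi$ a nonzero $\Psi$-Whittaker functional of $\pi$, hence $\lambda_\Psi$ is a nonzero multiple of $\lambda'_\Psi\circ\Phi$, where $\lambda_\Psi,\lambda'_\Psi$ denote the $\Psi$-Whittaker functionals of $\pi,\pi'$. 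Therefore $\lambda_\Psi$ is nonzero on $\pi^{K^\varepsilon_m}_\eta$ if and only if $\lambda'_\Psi$ is nonzero on $\Phi(\pi^{K^\varepsilon_m}_\eta)=(\pi')^{K^\varepsilon_m}_{\eta'}$, which is exactly the assertion (the vanishing cases, where $\pi^{K^\varepsilon_m}_\eta=\{0\}$, being subsumed once the equality $\Phi(\pi^{K^\varepsilon_m}_\eta)=(\pi')^{K^\varepsilon_m}_{\eta'}$ is known).

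To construct $\Phi$: since $\pi=\cInd^{\widetilde{G}}_{\widetilde{K}^\delta}(\sigma\bm{s}^\delta)$ is genuine, every function $F$ in its model has the form $F(g,\epsilon)=\epsilon\,\phi(g)$ with $\phi\colon G\to W_\sigma$ ($W_\sigma$ the space of $\sigma$) compactly supported modulo $K^\delta$, and the group law of $\widetilde{G}$ gives $\phi(kg)=\bm{s}^\delta(k)\bm{c}(k,g)\sigma(k)\phi(g)$ for $k\in K^\delta$. Because $\bm{c}|_{K^\delta\times K^\delta}$ is the coboundary of $\bm{s}^\delta$, because $\bm{s}^\delta$ is trivial on unipotent subgroups, and because conjugation in $\widetilde{G}$ preserves the second coordinate (as noted in the proof of Lemma \ref{lemgeneric}), one may choose a sign function $\mu\colon G\to\{\pm1\}$ with $\mu|_{K^\delta}=\bm{s}^\delta$, with $\mu(kg)=\bm{s}^\delta(k)\bm{c}(k,g)\mu(g)$, and right $N$-invariant (fix representatives of $K^\delta\backslash G/N$ and extend). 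Then $\Phi(F)=[\,g\mapsto\mu(g)\phi(g)\,]$ is a linear bijection of $\pi$ onto the usual model of $\pi'=\cInd^G_{K^\delta}(\sigma)$, and its $N$-equivariance follows from the elementary fact that $\bm{x}(g)=\bm{x}(g\,n(b))$ for all $g\in G$ and $b\in F$, which gives $\bm{c}(g,n(b))=1$, together with the right $N$-invariance of $\mu$.

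It remains to check $\Phi(\pi^{K^\varepsilon_m}_\eta)=(\pi')^{K^\varepsilon_m}_{\eta'}$, and this is the main obstacle. Writing out how $\Phi$ transports the action of $(k,\bm{s}^\varepsilon(k))$ for $k\in K^\varepsilon_m$ (here $m\ge1$, since $\pi^{K^\varepsilon_0}_\eta=\{0\}$ by Theorem \ref{thmsc}), one finds that $\pi'(k)$ gets multiplied by the sign $\bm{s}^\varepsilon(k)\,\bm{c}(g,k)\,\mu(gk)\,\mu(g)$, and one must verify that this equals $(\chi_{\psi^\varepsilon}\chi_{\psi^\delta})(d)$ -- with $d$ the lower-right entry of $k$ -- independently of $g\in G$; then $\pi(k,\bm{s}^\varepsilon(k))F=\eta(k)F$ translates into $\pi'(k)\Phi(F)=\eta(k)(\chi_{\psi^\varepsilon}\chi_{\psi^\delta})(d)\Phi(F)=\eta'(k)\Phi(F)$. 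After decomposing $k$ into the generators $n(\cdot)$, $n^{\mathrm{op}}(\cdot)$, $t(\cdot)$ of $K^\varepsilon_m$, this reduces to the explicit formulas for $\bm{s}^0$, $\bm{s}^1$ recalled in \S\ref{secmetap} and the Weil-index identities $\gamma_F(ac^2,\psi)=\gamma_F(a,\psi)$ and $\gamma_F(ab,\psi)=\gamma_F(a,\psi)\gamma_F(b,\psi)(a,b)_2$: all the Hilbert-symbol contributions coming from the two splittings, the Kubota cocycle, and the correcting sign $\mu$ cancel, leaving exactly $\gamma_F(d,\psi^\varepsilon)\gamma_F(d,\psi^\delta)=(\chi_{\psi^\varepsilon}\chi_{\psi^\delta})(d)$, which is quadratic or trivial on $\mathcal{O}^\times$. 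The delicate part is precisely this bookkeeping -- tracking the Kubota cocycle and using that $\bm{s}^\varepsilon$ is not locally constant as a $\{\pm1\}$-valued function -- and confirming that the leftover character depends only on $d$ (so that $\eta'$ is genuinely a character of $\mathcal{O}^\times$) and does not spoil the $N$-equivariance used above.
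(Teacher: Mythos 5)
Your strategy is genuinely different from the paper's. The paper never constructs a global isomorphism $\Phi\colon\pi\to\pi'$: instead it expresses $\lambda\circ f$ (the restriction of the Whittaker functional to the $K^\varepsilon_m$-isotypic space) as a sum indexed by double cosets, via the Mackey decompositions of both $\Hom_N(\pi,\Psi)$ and $\Hom_{K^\varepsilon_m}(\eta,\pi)$, and observes that the resulting pairing $\Xi$ agrees \emph{term by term} with the analogous pairing $\Xi'$ for $\pi'$, because on each intersection $K^\delta\cap\prescript{g}{}{N}$ and $K^\varepsilon_m\cap\prescript{h}{}{K^\delta}$ the splittings into $\widetilde G$ agree. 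Your plan -- one $N$-equivariant bijection $\Phi$ carrying $\pi^{K^\varepsilon_m}_\eta$ onto $(\pi')^{K^\varepsilon_m}_{\eta'}$ -- would, if it worked, be a cleaner statement, and the first two-thirds of it are fine: the functional equation $\mu(kg)=\bm{s}^\delta(k)\bm{c}(k,g)\mu(g)$ is consistent by the cocycle relation, and right $N$-invariance of $\mu$ is compatible with it because $\bm{x}(gn)=\bm{x}(g)$, so $\bm{c}(k,gn)=\bm{c}(k,g)$, exactly as you say. The $N$-equivariance of $\Phi$ then follows.

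The gap is in the last step, and it is not merely bookkeeping. You build $\mu$ by choosing its value arbitrarily on a set of representatives of $K^\delta\backslash G/N$ and propagating via the two invariance rules. The condition you then need, $\mu(gk)\,\mu(g)\,\bm{s}^\varepsilon(k)\,\bm{c}(g,k)=(\chi_{\psi^\varepsilon}\chi_{\psi^\delta})(d)$ for all $g\in G$, $k\in K^\varepsilon_m$, constrains the ratio $\mu(gk)/\mu(g)$, and for generic $g$ and $k$ the points $g$ and $gk$ lie in \emph{different} double cosets in $K^\delta\backslash G/N$. Flipping the sign of $\mu$ at a single representative $g_0$ (leaving all others unchanged) flips $\mu(g_0 k)\mu(g_0)$ for every $k$ such that $g_0 k\notin K^\delta g_0 N$, and hence destroys the desired identity -- so there is no way for the cancellation to come out "independently of $g$" for an \emph{arbitrary} choice. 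What you actually need is to exhibit a specific $\mu$ for which the three conditions (compatibility with $\bm{s}^\delta$ on the left, right $N$-invariance, and the twisted compatibility with $\bm{s}^\varepsilon$ on the right) hold simultaneously; in effect this amounts to showing that a certain sign-cocycle on $K^\delta\backslash G/N$ is a coboundary. You do verify the resulting identity at $g=1$, where $\mu(k)/\mu(1)$ is forced by the left rule and reduces to the genuinely checkable statement that $\bm{s}^\delta\cdot\bm{s}^\varepsilon$ is a homomorphism on $K^\delta\cap K^\varepsilon_m$ equal to $(\chi_{\psi^\varepsilon}\chi_{\psi^\delta})(d)$; but the consistency for other $g$, including $g$ not related to $1$ by elements of $K^\varepsilon_m$, is precisely what must be proved and is not. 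The paper sidesteps this global compatibility problem entirely by never assembling the local isomorphisms into one $\Phi$; instead it matches $\lambda\circ f$ with $\lambda'\circ f'$ coset by coset. If you want to keep your approach, you should either write down $\mu$ explicitly (e.g.\ by a Kubota-type formula in terms of Weil indices of matrix entries) and verify all three conditions directly, or fall back on the double-coset decomposition and accept that the matching is local rather than given by a single map.
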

\begin{proof}
    The $\Psi$-Whittaker functional for $\pi$ is nonzero on $\pi^{K^\varepsilon_m}_\eta$ if and only if $\lambda \circ f$ is nonzero for some $\lambda \in \Hom_N(\pi, \Psi)$ and $f\in \Hom_{K^\varepsilon_m}(\eta, \pi)$.
    Similarly, the $\Psi$-Whittaker functional for $\pi'$ is nonzero on $(\pi')^{K^\varepsilon_m}_{\eta'}$ if and only if $\lambda' \circ f'$ is nonzero for some $\lambda' \in \Hom_N(\pi', \Psi)$ and $f'\in \Hom_{K^\varepsilon_m}(\eta', \pi')$.
    We shall consider the composition maps $\lambda \circ f$ and $\lambda' \circ f'$.

    We shall fix a complete set of representatives $\mathcal{R}(\widetilde{K}^\delta \backslash \widetilde{G} / N)$ for $\widetilde{K}^\delta \backslash \widetilde{G} / N$.
    It follows from the argument in the proof of \cite[Corollary 2.7 (1)]{yam} that a linear isomorphism
    \begin{equation*}
        \Hom_N(\pi, \Psi) \cong \prod_{g\in \mathcal{R}(\widetilde{K}^\delta \backslash \widetilde{G} / N)} \Hom_{\widetilde{K}^\delta \cap \prescript{g}{}{N}} (\sigma\bm{s}^\delta, \prescript{g}{}{\Psi}), \quad \lambda \mapsto \mathcal{F}(\lambda) = \left( \mathcal{F}_g(\lambda) \right)_{g\in \mathcal{R}(\widetilde{K}^\delta \backslash \widetilde{G} / N)},
    \end{equation*}
    which we used in the proof of Lemma \ref{lemgeneric}, is given by
    \begin{equation*}
        [\mathcal{F}_g(\lambda)](w) = \lambda(\pi(g^{-1}) \cdot \phi_w),
    \end{equation*}
    for $\lambda \in \Hom_N(\pi, \Psi)$ and $w\in \sigma$, where $\phi_w$ is an element in $\pi$ given by $\phi_w(x)=\bm{s}^\delta(x) \sigma(x)w$ if $x\in \widetilde{K}^\delta$, and $\phi_w(x)=0$ if $x\in \widetilde{G}\setminus \widetilde{K}^\delta$.

    On the other hand, we shall fix a complete set of representatives $\mathcal{R}(K^\varepsilon_m \backslash \widetilde{G} / \widetilde{K}^\delta)$ for $K^\varepsilon_m \backslash \widetilde{G} / \widetilde{K}^\delta$.
    It follows from the argument in the proof of \cite[Corollary 2.7 (2)]{yam} that a linear isomorphism
    \begin{equation*}
        \Hom_{K^\varepsilon_m}(\eta, \pi) \cong \bigoplus_{h \in \mathcal{R}(K^\varepsilon_m \backslash \widetilde{G} / \widetilde{K}^\delta)} \Hom_{K^\varepsilon_m \cap \prescript{h}{}{(\widetilde{K}^\delta)}} (\eta, \prescript{h}{}{(\sigma \bm{s}^\delta)}), \quad f \mapsto \mathcal{G}(f)=\left(\mathcal{G}_h(f)\right)_{h\in \mathcal{R}(K^\varepsilon_m \backslash \widetilde{G} / \widetilde{K}^\delta)},
    \end{equation*}
    which we used in the proof of Theorem \ref{thmsc}, is given by
    \begin{equation*}
        [\mathcal{G}_h(f)](u)= [f(u)](h^{-1}),
    \end{equation*}
    for $u\in \C$.
    For any $u\in \C$, $y\in K^\varepsilon_m$, $h\in \mathcal{R}(K^\varepsilon_m \backslash \widetilde{G} / \widetilde{K}^\delta)$, $x\in \widetilde{K}^\delta$, $g\in \mathcal{R}(\widetilde{K}^\delta \backslash \widetilde{G} / N)$, and $n \in N$, we have
    \begin{align*}
        \left[ \Psi(n) \circ \mathcal{F}_g(\lambda) \circ (\sigma\bm{s}^\delta)(x) \circ \mathcal{G}_h(f) \circ \eta(y) \right](u)
         & = \lambda \left( \pi(n g^{-1}) \cdot \phi_{[(\sigma\bm{s}^\delta)(x) \circ \mathcal{G}_h(f) \circ \eta(y)](u)} \right), \\
         & = \lambda \left( \pi(n g^{-1} x) \cdot \phi_{f(\eta(y)u)(h^{-1})} \right).
    \end{align*}
    Moreover, a calculation shows that
    \begin{equation*}
        \pi(n g^{-1} x) \phi_{f(\eta(y)u)(h^{-1})} = \mathbf{1}_{\widetilde{K}^\delta x^{-1} g n^{-1}} \cdot \pi(n g^{-1} x h^{-1} y) f(u),
    \end{equation*}
    where $\mathbf{1}_{\widetilde{K}^\delta x^{-1} g n^{-1}}$ denotes the characteristic function of $\widetilde{K}^\delta x^{-1} g n^{-1}$.

    For each $h\in \mathcal{R}(K^\varepsilon_m \backslash \widetilde{G} / \widetilde{K}^\delta)$, we shall fix a complete set of representatives $\mathcal{R}(K^\varepsilon_m \cap \prescript{h}{}{\widetilde{K}^\delta} \backslash K^\varepsilon_m)$.
    Moreover, for each $h\in \mathcal{R}(K^\varepsilon_m \backslash \widetilde{G} / \widetilde{K}^\delta)$ and $y \in \mathcal{R}(K^\varepsilon_m \cap \prescript{h}{}{\widetilde{K}^\delta} \backslash K^\varepsilon_m)$, we shall write $x_{(h,y)}$, $g_{(h,y)}$, and $n_{(h,y)}$ for the unique elements in $\widetilde{K}^\delta$, $\mathcal{R}(\widetilde{K}^\delta \backslash \widetilde{G} / N)$, and $N$, respectively, such that $x^{-1} g n^{-1} = h^{-1} y$.
    We shall now define a linear map
    \begin{equation*}
        \Xi
        \colon \left( \prod_{g\in \mathcal{R}(\widetilde{K}^\delta \backslash \widetilde{G} / N)} \Hom_{\widetilde{K}^\delta \cap \prescript{g}{}{N}} (\sigma\bm{s}^\delta, \prescript{g}{}{\Psi}) \right)
        \otimes_\C \left( \bigoplus_{h \in \mathcal{R}(K^\varepsilon_m \backslash \widetilde{G} / \widetilde{K}^\delta)} \Hom_{K^\varepsilon_m \cap \prescript{h}{}{(\widetilde{K}^\delta)}} (\eta, \prescript{h}{}{(\sigma \bm{s}^\delta)}) \right)
        \to \Hom_\C(\C, \C),
    \end{equation*}
    by
    \begin{equation*}
        \Xi(\mathscr{F} \otimes \mathscr{G})
        = \sum_{h\in \mathcal{R}(K^\varepsilon_m \backslash \widetilde{G} / \widetilde{K}^\delta)}
        \sum_{y \in \mathcal{R}(K^\varepsilon_m \cap \prescript{h}{}{\widetilde{K}^\delta} \backslash K^\varepsilon_m)}
        \Psi(n_{(h,y)}) \circ \mathscr{F}_{g_{(h,y)}} \circ (\sigma\bm{s}^\delta)(x_{(h,y)}) \circ \mathscr{G}_h \circ \eta(y),
    \end{equation*}
    where $\mathscr{F}_g$ denotes the $g$-component of $\mathscr{F}$, and $\mathscr{G}_h$ the $h$-component of $\mathscr{G}$.
    Then we have
    \begin{align*}
        \Xi(\mathcal{F}(\lambda) \otimes \mathcal{G}(f))
         & =\sum_{h\in \mathcal{R}(K^\varepsilon_m \backslash \widetilde{G} / \widetilde{K}^\delta)}
        \sum_{y \in \mathcal{R}(K^\varepsilon_m \cap \prescript{h}{}{\widetilde{K}^\delta} \backslash K^\varepsilon_m)}
        \lambda \left( \mathbf{1}_{\widetilde{K}^\delta h^{-1} y} \cdot f(u) \right)                 \\
         & = \lambda \circ f,
    \end{align*}
    for any $\lambda \in \Hom_N(\pi, \Psi)$ and $f\in \Hom_{K^\varepsilon_m}(\eta, \pi)$.
    Hence, $\lambda \circ f$ is nonzero for some $\lambda$ and $f$ if and only if $\Xi$ is nonzero.

    Similarly, we have two linear isomorphisms
    \begin{align*}
        \mathcal{F}'
         & \colon
        \Hom_N(\pi', \Psi) \overset{\cong}{\longrightarrow} \prod_{g\in \mathcal{R}(K^\delta \backslash G/ N)} \Hom_{K^\delta \cap \prescript{g}{}{N}} (\sigma, \prescript{g}{}{\Psi}), \\
        \mathcal{G}'
         & \colon
        \Hom_{K^\varepsilon_m}(\eta', \pi') \overset{\cong}{\longrightarrow} \bigoplus_{h \in \mathcal{R}(K^\varepsilon_m \backslash G / K^\delta)} \Hom_{K^\varepsilon_m \cap \prescript{h}{}{(K^\delta)}} (\eta', \prescript{h}{}{(\sigma)}),
    \end{align*}
    given by \cite[Corollary 2.7]{yam}, and a linear map
    \begin{equation*}
        \Xi'
        \colon \left( \prod_{g\in \mathcal{R}(K^\delta \backslash G / N)} \Hom_{K^\delta \cap \prescript{g}{}{N}} (\sigma, \prescript{g}{}{\Psi}) \right)
        \otimes_\C \left( \bigoplus_{h \in \mathcal{R}(K^\varepsilon_m \backslash G / K^\delta)} \Hom_{K^\varepsilon_m \cap \prescript{h}{}{(K^\delta)}} (\eta', \prescript{h}{}{\sigma}) \right)
        \to \Hom_\C(\C, \C),
    \end{equation*}
    defined by
    \begin{equation*}
        \Xi'(\mathscr{F}' \otimes \mathscr{G}')
        = \sum_{h\in \mathcal{R}(K^\varepsilon_m \backslash G / K^\delta)}
        \sum_{y \in \mathcal{R}(K^\varepsilon_m \cap \prescript{h}{}{K^\delta} \backslash K^\varepsilon_m)}
        \Psi(n_{(h,y)}) \circ \mathscr{F}'_{g_{(h,y)}} \circ \sigma(x_{(h,y)}) \circ \mathscr{G}'_h \circ \eta'(y),
    \end{equation*}
    for $\mathscr{F}'=(\mathscr{F}'_g)_{g\in \mathcal{R}(K^\delta \backslash G / N)}$ and $\mathscr{G}'=(\mathscr{G}'_h)_{h \in \mathcal{R}(K^\varepsilon_m \backslash G / K^\delta)}$.
    Here $\mathcal{R}(K^\delta \backslash G / N)$ (resp. $\mathcal{R}(K^\varepsilon_m \backslash G / K^\delta)$, resp. $\mathcal{R}(K^\varepsilon_m \cap \prescript{h}{}{K^\delta} \backslash K^\varepsilon_m)$) is the image of $\mathcal{R}(\widetilde{K}^\delta \backslash \widetilde{G} / N)$ (resp. $\mathcal{R}(K^\varepsilon_m \backslash \widetilde{G} / \widetilde{K}^\delta)$, resp. $\mathcal{R}(K^\varepsilon_m \cap \prescript{h}{}{\widetilde{K}^\delta} \backslash K^\varepsilon_m)$) under the covering map $\widetilde{G}\to G$, and, by slight abuse of notation, we again write $x_{(h,y)}$, $g_{(h,y)}$, and $n_{(h,y)}$ for the images of them.
    Then we have
    \begin{equation*}
        \Xi'(\mathcal{F}'(\lambda') \otimes \mathcal{G}'(f'))
        = \lambda' \circ f',
    \end{equation*}
    for any $\lambda' \in \Hom_N(\pi', \Psi)$ and $f'\in \Hom_{K^\varepsilon_m}(\eta', \pi')$.
    Hence, $\lambda' \circ f'$ is nonzero for some $\lambda'$ and $f'$ if and only if $\Xi'$ is nonzero.

    As in the proof of Lemma \ref{lemgeneric} and Theorem \ref{thmsc}, we have
    \begin{align*}
        \Hom_{\widetilde{K}^\delta \cap \prescript{g}{}{N}} (\sigma\bm{s}^\delta, \prescript{g}{}{\Psi})
         & = \Hom_{K^\delta \cap \prescript{g}{}{N}} (\sigma, \prescript{g}{}{\Psi}),     \\
        \Hom_{K_m\cap \prescript{h}{}{(\widetilde{K}^\delta)}} (\eta, \prescript{h}{}{(\sigma\bm{s}^\delta)})
         & = \Hom_{K_m\cap \prescript{h}{}{(K^\delta)}} (\eta', \prescript{h}{}{\sigma}).
    \end{align*}
    Thus $\Xi'$ equals $\Xi$.
    This completes the proof.
\end{proof}

\begin{corollary}\label{cor:whittakersupercuspidal}
    Let $\varepsilon, \delta\in\{0,1\}$, and put $K_m=K^\varepsilon_m$, $K=K^\varepsilon$, and $\psi=\psi^\varepsilon$.
    Let $\sigma$ be an irreducible strongly cuspidal representation of $K^\delta$, and put $\pi=\cInd^{\widetilde{G}}_{\widetilde{K}^\delta}(\sigma\bm{s}^\delta)$, which is an irreducible supercuspidal representation of $\widetilde{G}$.
    Let $\eta$ be a character of $\mathcal{O}^\times$ such that $\eta(-1)=z(\sigma)$.
    Put $M=c^\varepsilon_\eta(\pi)$.
    \begin{enumerate}[(1)]
        \item If the defect of $\sigma$ is 0 and $c(\eta)\leq c(\sigma)$, then the followings hold.
              \begin{enumerate}[(I)]
                  \item If $\pi$ is not isomorphic to any odd Weil representation, then $\pi^{K_M}_\eta$ is $2$-dimensional, and $\pi$ is $\Psi$-generic if and only if $c(\Psi)+\delta$ is even.
                        Moreover, there does not exist a nonzero vector in $\pi^{K_M}_\eta$ at which both of a $\psi_{\varpi^{|\delta-\varepsilon|}}$-Whittaker functional and a $\psi_{\xi\varpi^{|\delta-\varepsilon|}}$-Whittaker functional vanish.
                  \item If $\pi$ is isomorphic to $\omega_\Psi^-$, for some $\Psi\in\{\psi, \psi_\xi, \psi_\varpi, \psi_{\xi\varpi}\}$, then $\pi^{K_M}_\eta$ is $1$-dimensional. Moreover, a $\Psi$-Whittaker functional does not vanish on $\pi^{K_M}_\eta$.
              \end{enumerate}
        \item If the defect of $\sigma$ is 1 and $c(\eta)\leq c(\sigma)-1$, then $\pi^{K_M}_\eta$ is $1$-dimensional, $\pi^{K_{M+1}}_\eta$ is $2$-dimensional, and $\pi$ is $\Psi$-generic for exactly one of $\Psi=\psi$ or $\psi_\xi$. In addition, $\pi$ is also $\Psi'$-generic for exactly one of $\Psi'=\psi_\varpi$ or $\psi_{\xi\varpi}$. Let $\Psi \in \{\psi, \psi_\xi, \psi_\varpi, \psi_{\xi\varpi}\}$ be an additive character such that $\pi$ is $\Psi$-generic. Then a $\Psi$-Whittaker functional does not vanish on $\pi^{K_M}_\eta$ (resp. $\pi^{K_{M+1}}_\eta$) if $c(\Psi)+\varepsilon$ is even (resp. odd).
    \end{enumerate}
\end{corollary}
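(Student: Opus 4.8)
The plan is to reduce the whole statement to the corresponding result for $\SL_2(F)$, namely Theorem~\ref{thm:lrwhittakersc}, in the same spirit in which Theorem~\ref{thmsc} was deduced from Theorem~\ref{thm:lrsc}. Throughout I write $\pi'=\cInd^G_{K^\delta}(\sigma)$ for the associated linear supercuspidal representation and $\eta'=\eta\cdot(\chi_{\psi^\varepsilon}\chi_{\psi^\delta})$ as in Lemma~\ref{lem:Yamamotowhittakersupercuspidal}, noting that $\chi_{\psi^\varepsilon}\chi_{\psi^\delta}$ restricts to a quadratic-or-trivial character of $\mathcal{O}^\times$ (trivial when $\varepsilon=\delta$), so that $c(\eta')\le\max(c(\eta),1)$. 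The dimension assertions then follow at once from Theorem~\ref{thmsc}: the dimensions of $\pi^{K_m}_\eta$ are given by the same floor/ceiling formulae as on the linear side, and evaluating them at the least admissible $m$ yields $M=2c(\sigma)$ or $2c(\sigma)+1$ in case (1)-(I), $M=2$ or $3$ in case (1)-(II), and $M=2c(\sigma)-1$ in case (2), together with $\dim_\C\pi^{K_M}_\eta=2,1,1$ respectively and $\dim_\C\pi^{K_{M+1}}_\eta=2$ in case (2).

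Next, the genericity statements. By Lemma~\ref{lemgeneric} the representation $\pi$ is $\Psi$-generic exactly when $\pi'$ is, so every criterion concerning genericity (``$c(\Psi)+\delta$ even'', ``$\Psi$-generic for exactly one $\Psi\in\{\psi,\psi_\xi\}$'', etc.) transfers verbatim from Theorem~\ref{thm:lrwhittakersc}. In case (1)-(II) one moreover has to identify the distinguished character: here $\pi$ lies in an unramified supercuspidal $L$-packet of cardinality four on the linear side, hence $\pi\cong\omega_\Psi^-$ is an odd Weil representation by the proposition preceding Theorem~\ref{thmsc}; the characterization recalled in \S\ref{secrep} shows that $\pi$ is $\Psi_b$-generic iff $b\in F^{\times2}$, and since that proposition forces $c(\Psi)\equiv\delta+1\bmod 2$ while $\psi,\psi_\xi,\psi_\varpi,\psi_{\xi\varpi}$ exhaust the four square classes (two at each of the conductors $-\varepsilon$, $-\varepsilon-1$), exactly one of them is the required $\Psi$.

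The core of the argument is the non-vanishing of the Whittaker functionals, and for this I would invoke Lemma~\ref{lem:Yamamotowhittakersupercuspidal}: for every additive character $\Psi$ for which $\pi$ (equivalently $\pi'$) is generic and every $m\ge c(\eta)$, the $\Psi$-Whittaker functional is nonzero on $\pi^{K^\varepsilon_m}_\eta$ if and only if it is nonzero on $(\pi')^{K^\varepsilon_m}_{\eta'}$. Before applying Theorem~\ref{thm:lrwhittakersc} on the linear side one checks that $\eta'$ still obeys the required conductor bound: in the defect-$0$ case $c(\eta')\le\max(c(\eta),1)\le c(\sigma)$ since $c(\sigma)\ge 1$, and in the defect-$1$ case $c(\eta')\le\max(c(\eta),1)\le c(\sigma)-1$ since $c(\sigma)\ge 2$ there (by Lemma~\ref{lem:cptbmanks} together with Proposition~\ref{prop:compatibilityManKS}). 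Theorem~\ref{thm:lrwhittakersc}(1)-(II) and (2) then give the asserted non-vanishing on $\pi^{K_M}_\eta$ and $\pi^{K_{M+1}}_\eta$ in cases (1)-(II) and (2).

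The remaining assertion in case (1)-(I) — that no nonzero vector of the $2$-dimensional space $\pi^{K_M}_\eta$ is annihilated simultaneously by a $\psi_{\varpi^{|\delta-\varepsilon|}}$- and by a $\psi_{\xi\varpi^{|\delta-\varepsilon|}}$-Whittaker functional — requires slightly more than the bare statement of Lemma~\ref{lem:Yamamotowhittakersupercuspidal}, and this is the point that needs care. Its proof in fact produces compatible linear isomorphisms $\pi^{K_M}_\eta\cong(\pi')^{K_M}_{\eta'}$ and $\Hom_N(\pi,\Psi)\cong\Hom_N(\pi',\Psi)$ intertwining the evaluation pairings, i.e. $\lambda\circ f=\lambda'\circ f'$ for corresponding data (the maps $\mathcal{F},\mathcal{G}$ versus $\mathcal{F}',\mathcal{G}'$, with $\Xi=\Xi'$); so the property that two such Whittaker functionals are linearly independent on the $2$-dimensional space $\pi^{K_M}_\eta$ transfers directly from Theorem~\ref{thm:lrwhittakersc}(1)-(I). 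Thus the main obstacle is to extract from the proof of Lemma~\ref{lem:Yamamotowhittakersupercuspidal} this pairing-level identification, rather than just the rank-one statement it literally records, which is what lets one carry over the ``no common zero'' conclusion.
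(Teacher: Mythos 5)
Your proposal is correct and follows the same route as the paper: one reduces to the $\SL_2$-statement (Theorem~\ref{thm:lrwhittakersc}) via Lemma~\ref{lem:Yamamotowhittakersupercuspidal}, with the dimension formulae coming from Theorem~\ref{thmsc} and the genericity transfer from Lemma~\ref{lemgeneric}. You also correctly flag a real subtlety that the paper's ``immediate consequence'' phrasing does not make explicit: the ``no common zero'' assertion in (1)-(I) is a statement about linear independence of two Whittaker functionals on a $2$-dimensional space, and the literal statement of Lemma~\ref{lem:Yamamotowhittakersupercuspidal} (a mere nonvanishing equivalence) does not suffice for it, whereas the pairing-level identification $\Xi=\Xi'$ in the lemma's proof does; your added care on this point, and on verifying the conductor bound $c(\eta')\le c(\sigma)$ (resp. $\le c(\sigma)-1$) needed to invoke Theorem~\ref{thm:lrwhittakersc} with $\eta'$, tightens the argument without changing its substance.
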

\begin{proof}
    The corollary is an immediate consequence of Theorem \ref{thm:lrwhittakersc} and Lemma \ref{lem:Yamamotowhittakersupercuspidal}.
\end{proof}

\section{Behaviour under the local theta correspondence}\label{sectheta}
In the last two sections, the conductors of irreducible genuine representations of $\widetilde{G}$ are given explicitly.
In this section, we shall describe how the conductors for $\widetilde{G}$ behave under the local theta correspondence.

Let $\varepsilon\in\{0,1\}$, and put $\psi=\psi^\varepsilon$.
We fix them throughout this section.
We shall identify representations of $\PGL_2(F)$ (resp. $\PSL_2(F)$) with those of $\GL_2(F)$ (resp. $\SL_2(F)$) with trivial central character.
In particular, we define the conductor of an irreducible infinite-dimensional representation of $\PGL_2(F)$ to be that of its pullback to $\GL_2(F)$.
Let $D$ be the quaternion division algebra over $F$.
Then Waldspurger \cite{wal1,wal2} showed that the local theta correspondence gives a bijection
\begin{equation*}
    \theta_\psi \colon \Irr(\widetilde{G}) \longrightarrow \Irr(\PGL_2(F)) \sqcup \Irr(\mathit{PD}^\times),
\end{equation*}
which depends on the choice of the additive character $\psi$.
Here, $\Irr(A)$ denotes the set of equivalence classes of irreducible (genuine) smooth admissible representations of $A=\widetilde{G}$, $\PGL_2(F)$, or $\mathit{PD}^\times$.
For an irreducible genuine representation $\pi$ of $\widetilde{G}$, the conductor of its theta lift $\theta_\psi(\pi)$ is defined by Casselman \cite{cas} if it is an irreducible representation of $\PGL_2(F)$.

First we shall consider non-supercuspidal representations.
For two characters $\mu_1$ and $\mu_2$ of $F^\times$, we shall write $\pi(\mu_1,\mu_2)$ for a representation of $\GL_2(F)$ on the space of locally constant functions $f \colon \GL_2(F) \to \C$ such that
\begin{equation*}
    f(\left( \begin{array}{cc}a&b\\ &d \end{array} \right)g)=\left| \frac{a}{d} \right|^{\frac{1}{2}} \mu_1(a)\mu_2(d) f(g),
\end{equation*}
for any $a,d\in F^\times$, $b\in F$, and $g\in\GL_2(F)$, with the action $[g\cdot f](x)=f(xg)$.
The central character of $\pi(\mu_1,\mu_2)$ is trivial if and only if $\mu_1=\mu_2^{-1}$.
It is known that $\pi(\mu_1,\mu_2)$ is irreducible if and only if $\mu_1\mu_2^{-1}\neq|-|^{\pm1}$.
For a character $\mu$ of $F^\times$, write $\mathit{St}_\mu$ for the unique irreducible subrepresentation of $\pi(\mu|-|^{\frac{1}{2}}, \mu|-|^{-\frac{1}{2}})$.
It is called a (twisted) Steinberg representation of $\GL_2(F)$.
The central character of $\mathit{St}_\mu$ is trivial if and only if $\mu^2=1$.

\begin{proposition}\label{propthetanonsc}
    \begin{enumerate}[(1)]
        \item Let $\mu$ be a character of $F^\times$ such that $\mu^2\neq |-|^{\pm1}$.
              Then $\pi_\psi(\mu)$ is an irreducible $\psi$-generic representation of $\widetilde{G}$, and its theta lift $\theta_\psi(\pi_\psi(\mu))$ is isomorphic to an irreducible representation $\pi(\mu,\mu^{-1})$ of $\PGL_2(F)$.
              In particular, the conductor of $\theta_\psi(\pi_\psi(\mu))$ is $2c(\mu)$.
        \item Let $\chi$ be a quadratic or trivial character of $F^\times$.
              Then the even Weil representation $\omega_{\psi,\chi}^+$.
              Its theta lift $\theta_\psi(\omega_{\psi,\chi}^+)$ is isomorphic to a 1-dimensional irreducible representation $\chi \circ \det$ of $\PGL_2(F)$.
        \item For a quadratic or trivial character $\chi$ of $F^\times$, the following three conditions are equivalent:
              \begin{enumerate}[(i)]
                  \item $\chi$ is not trivial;
                  \item $\mathit{St}_{\psi,\chi}$ is $\psi$-generic;
                  \item $\theta_\psi(\mathit{St}_{\psi,\chi})$ is a representation of $\PGL_2(F)$.
              \end{enumerate}
              Let $\chi$ be a nontrivial quadratic character of $F^\times$.
              Then $\theta_\psi(\mathit{St}_{\psi,\chi})$ is the twisted Steinberg representation $\mathit{St}_\chi$, whose conductor is $1$ (resp. $2$) if $\chi$ is unramified (resp. ramified).
    \end{enumerate}
\end{proposition}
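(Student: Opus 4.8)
The plan is to treat the three parts uniformly: in each case I would (i) record the genericity of the representation of $\widetilde{G}$ involved, (ii) identify its theta lift $\theta_\psi$ from the explicit description of the local theta correspondence between $\widetilde{G}$ and $\PGL_2(F)$ (Waldspurger's ``Correspondance de Shimura'') due to Waldspurger \cite{wal1,wal2} and Manderscheid \cite{man3}, and (iii) compute the conductor of the resulting representation of $\PGL_2(F)$ from Casselman's newform theory for $\GL_2(F)$ \cite{cas}. The conductor inputs needed are only the two standard $\GL_2$ facts: an irreducible $\pi(\mu_1,\mu_2)$ has conductor $c(\mu_1)+c(\mu_2)$, and $\mathit{St}_\mu$ has conductor $\max(1,2c(\mu))$. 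Thus the real content is step (ii), the explicit identification of the lifts.

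For part (1): since $\mu^2\neq|-|^{\pm1}$, Proposition \ref{propreducibility} gives that $\pi_\psi(\mu)$ is irreducible, and Proposition \ref{prop:numgeneric} shows it is $\psi_a$-generic for every square class $a$, in particular $\psi$-generic (the functional $\lambda_\psi$ of Corollary \ref{corwhittakerps} being a concrete choice). The explicit correspondence, which respects parabolic induction, identifies $\theta_\psi(\pi_\psi(\mu))$ with $\pi(\mu,\mu^{-1})$, a representation of $\PGL_2(F)$ that is irreducible precisely because $\mu\cdot(\mu^{-1})^{-1}=\mu^2\neq|-|^{\pm1}$, with conductor $c(\mu)+c(\mu^{-1})=2c(\mu)$. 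The one delicate point is the normalization of the additive character: I would need to check that the $\psi=\psi^\varepsilon$ used to define $\chi_\psi$ (hence $\pi_\psi(\mu)$) is the same $\psi$ entering $\theta_\psi$, so that no spurious quadratic twist $\chi_a$ appears in the lift --- such a twist would change the answer, since in general $c(\mu\chi_a)\neq c(\mu)$.

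For parts (2) and (3): writing $\chi=\chi_b$, the discussion of generic characters in \S\ref{secrep} shows $\omega_{\psi,\chi}^+=\omega_{\psi_b}^+$ is $\psi_a$-generic iff $ab^{-1}\in F^{\times2}$, so $\psi$-generic iff $\chi$ is trivial. Combining this with the exact sequence $0\to\mathit{St}_{\psi,\chi}\to\pi_\psi(\chi\cdot|-|^{\frac{1}{2}})\to\omega_{\psi,\chi}^+\to0$, the fact that the full principal series $\pi_\psi(\chi\cdot|-|^{\frac{1}{2}})$ carries a $\psi_a$-Whittaker functional for every square class, and the count $\#F_\psi(\mathit{St}_{\psi,\chi})/F^{\times2}=3$ from Proposition \ref{prop:numgeneric}, one deduces that $\mathit{St}_{\psi,\chi}$ is $\psi_a$-generic for exactly the three classes with $\chi_a\neq\chi$, hence $\psi$-generic iff $\chi$ is nontrivial; this is (i)$\Leftrightarrow$(ii). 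For the remaining identifications, the explicit correspondence applied to $\pi_\psi(\chi\cdot|-|^{\frac{1}{2}})$, whose theta lift on $\PGL_2(F)$ relates to $\pi(\chi|-|^{\frac{1}{2}},\chi|-|^{-\frac{1}{2}})$ (using $\chi^{-1}=\chi$), gives $\theta_\psi(\omega_{\psi,\chi}^+)=\chi\circ\det$ (which descends to $\PGL_2(F)$ since $\chi^2=1$) and, when $\chi$ is nontrivial, $\theta_\psi(\mathit{St}_{\psi,\chi})=\mathit{St}_\chi$, while for trivial $\chi$ the Steinberg $\mathit{St}_{\psi,\chi}$ (which is then not $\psi$-generic) lifts instead into $\Irr(\mathit{PD}^\times)$; this yields (ii)$\Leftrightarrow$(iii). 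Casselman's conductor of $\mathit{St}_\chi$ is then $1$ if $\chi$ is unramified and $2c(\chi)=2$ if $\chi$ is ramified.

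The step I expect to be the main obstacle is exactly this filtration bookkeeping in (2)--(3): one must see why, on passing the length-two principal series $\pi_\psi(\chi\cdot|-|^{\frac{1}{2}})$ through theta, the quotient $\omega_{\psi,\chi}^+$ always lifts to the split $\PGL_2(F)$ (to $\chi\circ\det$) whereas the subrepresentation $\mathit{St}_{\psi,\chi}$ lifts to $\PGL_2(F)$ only when $\chi$ is nontrivial and otherwise lands on the quaternionic side. This is not explained by a naive ``generic $\Leftrightarrow$ split'' dichotomy, so getting it right requires invoking Waldspurger's explicit dichotomy (in terms of the root number of the Shimura lift) together with Manderscheid's tables rather than a soft exactness argument. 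Once the lifts are pinned down --- and the additive-character normalization in part (1) is checked --- the conductor assertions are immediate from the two standard $\GL_2$ formulas recalled above.
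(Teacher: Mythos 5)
Your proposal is correct and, at its core, takes the same route as the paper: the paper's entire proof is the single sentence ``The proposition is a result of Waldspurger \cite{wal1, wal2},'' and your argument, after the preliminary bookkeeping on genericity and $\GL_2$-conductors, likewise reduces the key identifications $\theta_\psi(\pi_\psi(\mu))\cong\pi(\mu,\mu^{-1})$, $\theta_\psi(\omega_{\psi,\chi}^+)\cong\chi\circ\det$, $\theta_\psi(\mathit{St}_{\psi,\chi})\cong\mathit{St}_\chi$ to Waldspurger's explicit tables (supplemented by Manderscheid). The only added value in your write-up is making the conductor computation on the $\PGL_2$ side and the genericity dichotomy explicit, which the paper leaves implicit; your candor about the normalization-of-$\psi$ subtlety in (1) and about the need for Waldspurger's dichotomy rather than a soft exactness argument in (2)--(3) is exactly the right caution.
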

\begin{proof}
    The proposition is a result of Waldspurger \cite{wal1, wal2}.
\end{proof}

Next we consider supercuspidal representations.
\begin{proposition}\label{propthetaoddweil}
    For $a\in F^\times$ and an odd Weil representation $\omega_{\psi_a}^-$, the following three conditions are equivalent:
    \begin{enumerate}[(i)]
        \item $a\in F^{\times2}$;
        \item $\omega_{\psi_a}^-$ is $\psi$-generic;
        \item $\theta_\psi(\omega_{\psi_a}^-)$ is a representation of $\PGL_2(F)$.
    \end{enumerate}
    Moreover, $\theta_\psi(\omega_\psi^-)$ is the Steinberg representation $\mathit{St}$, whose conductor is 1.
\end{proposition}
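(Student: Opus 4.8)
The plan is to deduce the statement from Waldspurger's explicit description of the local theta correspondence for the dual pair $(\widetilde{G}, \SO_3)$, in the same spirit as the proof of Proposition \ref{propthetanonsc}, together with the characterization of the odd Weil representations recalled in \S\ref{secrep}. The equivalence of (i) and (ii) is immediate: the last bulleted item of \S\ref{secrep} states that $\omega_{\psi_a}^-$ is $\psi_b$-generic precisely when $ab^{-1}\in F^{\times2}$, and specializing to $b=1$, so that $\psi_b=\psi$, gives (i) $\Leftrightarrow$ (ii). This is also consistent with $\#F_\psi(\omega_{\psi_a}^-)/F^{\times2}=1$ from Proposition \ref{prop:numgeneric}, the unique generic square class being that of $a$.

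For the link with (iii) I would appeal to Waldspurger's explicit determination of the correspondence. Here $\PGL_2(F)$ and $\mathit{PD}^\times$ are the split and anisotropic forms of $\SO_3$ over $F$, and $\theta_\psi$ assigns to each irreducible genuine representation of $\widetilde{G}$ an irreducible representation of exactly one of these two groups. For the odd Weil representations, which are supercuspidal (\S\ref{secrep}), Waldspurger's tables \cite{wal1, wal2} — equivalently Manderscheid's explicit correspondence for supercuspidals \cite{man3} — show that $\theta_\psi(\omega_\psi^-)$ is the Steinberg representation $\mathit{St}$ of $\PGL_2(F)$, while for $a\notin F^{\times2}$ the representation $\theta_\psi(\omega_{\psi_a}^-)$ lies in $\Irr(\mathit{PD}^\times)$. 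Combined with (i) $\Leftrightarrow$ (ii) this yields the whole equivalence (i) $\Leftrightarrow$ (ii) $\Leftrightarrow$ (iii) together with the identification of $\theta_\psi(\omega_\psi^-)$. Finally, the conductor of $\mathit{St}$ is $1$ by Casselman's computation \cite{cas}, paralleling the unramified case of Proposition \ref{propthetanonsc}(3).

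The argument is thus essentially a matter of assembling known results. The one step that is not purely formal is the identification $\theta_\psi(\omega_\psi^-)\cong\mathit{St}$: it sends a supercuspidal representation of $\widetilde{G}$ to a non-supercuspidal representation of $\PGL_2(F)$, so it cannot come from any soft preservation-of-type principle and must be extracted directly from Waldspurger's (or Manderscheid's) explicit correspondence. I expect the only real care to be needed there — in pinning down the correct target on the $\SO_3$ side while tracking the dependence of $\theta_\psi$ on $\psi$, since the correspondence genuinely varies with $\psi$ and which square class of $a$ yields a split ($\PGL_2$) lift is measured relative to $\psi$.
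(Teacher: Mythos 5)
The paper gives no explicit proof of this proposition (mirroring Proposition~\ref{propthetanonsc}, whose proof simply cites Waldspurger), and your argument fills in exactly the reasoning the paper implicitly relies on: (i)~$\Leftrightarrow$~(ii) from the genericity characterization of $\omega_{\psi_a}^\pm$ in \S\ref{secrep}, (ii)~$\Leftrightarrow$~(iii) from Waldspurger's dichotomy between $\PGL_2(F)$ and $\mathit{PD}^\times$ lifts, and the explicit identification $\theta_\psi(\omega_\psi^-)\cong\mathit{St}$ with Casselman's conductor computation. This matches the paper's intended route; your correct emphasis that the identification with $\mathit{St}$ must be read off Waldspurger's (or Manderscheid's) tables rather than any type-preservation principle is the one point deserving care, and you handled it properly.
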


\begin{lemma}\label{lemmin}
    Every irreducible supercuspidal representation of $\GL_2(F)$ with trivial central character is minimal.
\end{lemma}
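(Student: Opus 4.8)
The plan is to run the standard twisting argument built on Kutzko's classification of minimal supercuspidals recalled in \S\ref{sectype}, with the hypothesis $p\neq 2$ entering only through the behaviour of ramified characters under squaring. Let $\tau$ be an irreducible supercuspidal representation of $\GL_2(F)$ with trivial central character. Choosing a character $\chi$ of $F^\times$ for which $c(\tau\otimes(\chi\circ\det))$ is minimal, the representation $\tau_0:=\tau\otimes(\chi\circ\det)$ is minimal and $\tau=\tau_0\otimes(\chi^{-1}\circ\det)$. By Proposition \ref{propkutclas} write $\tau_0=\cInd^{\GL_2(F)}_\Gamma(\rho)$ with $\Gamma\in\{ZH,Z'I\}$ and $\rho$ an irreducible very cuspidal representation of level $\ell\geq1$, so that $c(\tau_0)=2\ell$ or $2\ell+1$ by Proposition \ref{propkutcond}. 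It suffices to prove $c(\tau)=c(\tau_0)$: once this is known, for any character $\chi'$ of $F^\times$ we have $\tau\otimes(\chi'\circ\det)=\tau_0\otimes((\chi^{-1}\chi')\circ\det)$, whose conductor is at least $c(\tau_0)=c(\tau)$ by minimality of $\tau_0$, so $\tau$ is minimal.

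The first step is to bound $c(\chi)$. The central character of $\tau_0=\cInd^{\GL_2(F)}_\Gamma(\rho)$ is the restriction of $\rho$ to the centre $Z$, and since $\rho$ is trivial on $H_\ell$ (resp.\ $I_\ell$), whose intersection with $Z$ is $\{zI:z\in 1+\mathfrak{p}^\ell\}$, we get $c(\omega_{\tau_0})\leq\ell$. On the other hand $\omega_{\tau_0}=\omega_\tau\cdot\chi^2=\chi^2$, so $c(\chi^2)\leq\ell$. Since $p$ is odd, for every $j\geq1$ the squaring map on $1+\mathfrak{p}^j$ is bijective: it is an endomorphism inducing multiplication by the unit $2$ on each quotient $\mathfrak{p}^j/\mathfrak{p}^{j+1}$, and it is injective because $-1\notin 1+\mathfrak{p}$. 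Hence $\chi$ and $\chi^2$ are trivial on exactly the same subgroups $1+\mathfrak{p}^j$, and therefore $c(\chi)\leq\max(1,c(\chi^2))\leq\ell$ (using $\ell\geq1$).

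The second step is to observe that twisting $\tau_0$ by $\chi^{-1}\circ\det$ changes nothing. The character $\chi^{-1}\circ\det$ restricts to a character of $\Gamma$; as $\det$ carries $H_\ell$ and $I_\ell$ into $1+\mathfrak{p}^\ell$ and $c(\chi)\leq\ell$, it is trivial on $H_\ell$ (resp.\ $I_\ell$), and it is trivial on each unipotent group $N_j$ since $\det$ is. Therefore $\rho':=\rho\otimes(\chi^{-1}\circ\det|_\Gamma)$ is again an irreducible very cuspidal representation of $\Gamma$ of level $\ell$ — both conditions in the definition are inherited unchanged from $\rho$ — and $\cInd^{\GL_2(F)}_\Gamma(\rho')\cong\tau_0\otimes(\chi^{-1}\circ\det)=\tau$. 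Proposition \ref{propkutcond} then gives $c(\tau)=c(\tau_0)$, which completes the argument as indicated above.

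The only point requiring the hypothesis $p\neq2$ is the bijectivity of squaring on the principal units, i.e.\ the inequality $c(\chi)\leq\max(1,c(\chi^2))$; this is the heart of the matter, while the remaining steps are direct unwindings of the definitions of very cuspidal representations and of the conductor formula in Propositions \ref{propkutclas} and \ref{propkutcond}. If one prefers not to argue that $\rho'$ has level exactly $\ell$, it is enough that $\rho'$ is trivial on $H_\ell$ (resp.\ $I_\ell$) and satisfies the non-degeneracy condition there, so that Proposition \ref{propkutcond} applies with this value; together with $c(\tau)\geq c(\tau_0)$ from minimality of $\tau_0$ this again forces $c(\tau)=c(\tau_0)$.
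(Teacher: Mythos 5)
Your argument is correct, and it does take a genuinely different route from the paper's. The paper argues by contradiction: if $\tau'$ with trivial central character were non-minimal, it would invoke Tunnell's Proposition 3.4 on conductors of twists to conclude $c(\tau')=2c(\chi)>c(\tau)$, hence $c(\chi)\geq l+1$, and then derives $c(\chi)\leq l$ from triviality of the central character on $1+\mathfrak{p}^l$ together with the bijectivity of squaring (the same key use of $p\neq 2$ that you make). You instead dispense with Tunnell entirely: you bound $c(\chi)$ directly from the central-character computation, check that $\rho\otimes(\chi^{-1}\circ\det)$ is again very cuspidal for the same group $\Gamma$ and satisfies the two defining conditions at level $\ell$, and read the conductor off Proposition~\ref{propkutcond}. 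Your primary wording that the level of $\rho'$ is \emph{exactly} $\ell$ is a little quick (a priori the twist by $\chi^{-1}\circ\det$ could be trivial on a larger $H_{\ell'}$), but your closing remark supplies the correct fix: $\rho'$ is very cuspidal of some level $\ell'\leq\ell$, while the minimality of $\tau_0$ gives $c(\tau)\geq c(\tau_0)$, and since both inductions come from the same $\Gamma$ this forces $\ell'=\ell$. What the paper's approach buys is brevity, by outsourcing the conductor-of-a-twist computation to Tunnell; what yours buys is self-containment, replacing that citation by a direct manipulation of Kutzko's very cuspidal data.
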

\begin{proof}
    Let $\tau$ be a minimal irreducible supercuspidal representation of $\GL_2(F)$, and $\chi$ a character of $F^\times$.
    Then $\tau\otimes(\chi\circ\det)$ is also irreducible and supercuspidal, and every irreducible supercuspidal representation of $\GL_2(F)$ is of this form.
    Put $\tau'=\tau\otimes(\chi\circ\det)$, and suppose that $\tau'$ is not minimal and has trivial central character.
    It follows from \cite[Proposition 3.4]{tun} that
    \begin{align*}
        c(\tau')=2c(\chi)>c(\tau),
    \end{align*}
    where $c(\tau)$ and $c(\tau')$ denote the conductors (\cite{cas}) of $\tau$ and $\tau'$, respectively.
    Let $\rho$ be an irreducible very cuspidal representation of $\Gamma=ZH$ or $Z'I$ of level $l\geq1$, such that $\tau=\cInd^{\GL_2(F)}_\Gamma(\rho)$.
    Then by Proposition \ref{propkutcond}, we have $c(\tau)=2l$ or $2l+1$.
    Hence we have $c(\chi)\geq l+1$.
    For any $u\in1+\mathfrak{p}^l$, we have
    \begin{align*}
        \tau'\left( \left( \begin{array}{cc}u&\\ &u \end{array} \right) \right)=1,
    \end{align*}
    since the central character of $\tau'$ is trivial.
    On the other hand, this equals
    \begin{align*}
         & \tau\left( \left( \begin{array}{cc}u&\\ &u \end{array} \right) \right) \cdot \chi\left( \det\left( \begin{array}{cc}u&\\ &u \end{array} \right) \right) \\
         & =\rho\left( \left( \begin{array}{cc}u&\\ &u \end{array} \right) \right) \cdot \chi(u^2)                                                                 \\
         & =\chi(u^2),
    \end{align*}
    since $\rho$ is of level $l$.
    Recall that the residual characteristic $F$ is assumed to be odd.
    Thus a mapping $u\mapsto u^2$ gives an automorphism of $1+\mathfrak{p}^l$.
    Therefore, $\chi$ is trivial on $1+\mathfrak{p}^l$, i.e., $c(\chi)\leq l$.
    This is a contradiction.
\end{proof}

\begin{proposition}\label{propthetasc}
    Let $\delta\in\{0,1\}$, and $\sigma$ be an irreducible strongly cuspidal representation of $K^\delta$.
    Consider an irreducible genuine supercuspidal representation $\pi=\cInd^{\widetilde{G}}_{\widetilde{K}^\delta}(\sigma\bm{s}^\delta)$ of $\widetilde{G}$.
    Assume that $\pi$ is not isomorphic to any odd Weil representation.
    Then, $\theta_\psi(\pi)$ is a representation of $\PGL_2(F)$ if and only if $\pi$ is $\psi$-generic.
    Suppose in addition that $\pi$ is $\psi$-generic.
    Then, the conductor of $\theta_\psi(\pi)$ is equal to $c^\varepsilon_{\min}(\pi)$.
\end{proposition}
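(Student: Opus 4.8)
The plan is to split the statement into its two assertions---the theta dichotomy, and the computation of the conductor---and to deduce the first from Waldspurger's work and the second from Manderscheid's explicit supercuspidal theta correspondence combined with Theorem~\ref{thmsc}. For the first part, recall from Waldspurger \cite{wal1,wal2}---the same input already used in Propositions~\ref{propthetanonsc} and \ref{propthetaoddweil}---that for any irreducible genuine representation of $\widetilde{G}$ the theta lift $\theta_\psi$ lies in $\Irr(\PGL_2(F))$ exactly when that representation is $\psi$-generic, and in $\Irr(\mathit{PD}^\times)$ otherwise. This settles the ``if and only if'' part, so from now on I assume $\pi$ is $\psi$-generic and set $\tau=\theta_\psi(\pi)$, an irreducible representation of $\PGL_2(F)$.

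I would first check that $\tau$ is supercuspidal. By Propositions~\ref{propthetanonsc} and \ref{propthetaoddweil}, every non-supercuspidal irreducible representation of $\PGL_2(F)$---a one-dimensional representation, a twisted Steinberg representation, or an irreducible principal series---occurs as the $\psi$-theta lift of an even or odd Weil representation, of a Steinberg representation $\mathit{St}_{\psi,\chi}$, or of an irreducible principal series of $\widetilde{G}$; since $\theta_\psi$ is injective and $\pi$ is supercuspidal but not an odd Weil representation, $\tau$ can be none of these. Being a supercuspidal representation of $\PGL_2(F)$, $\tau$ has trivial central character, hence is minimal by Lemma~\ref{lemmin}, so $\tau=\cInd^{\GL_2(F)}_\Gamma(\rho)$ for some $\Gamma\in\{ZH,Z'I\}$ and some irreducible very cuspidal $\rho$ of level $l\geq1$. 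By Proposition~\ref{propkutcond} we then have $c(\tau)=2l$ if $\Gamma=ZH$ and $c(\tau)=2l+1$ if $\Gamma=Z'I$.

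To pin down $l$ and $\Gamma$ I would invoke Manderscheid's explicit description of the theta correspondence for supercuspidal representations \cite{man3}, which presents $\theta_\psi\!\left(\cInd^{\widetilde{G}}_{\widetilde{K}^\delta}(\sigma\bm{s}^\delta)\right)$ as a compactly induced supercuspidal representation of $\PGL_2(F)$ whose level and ramification type are read off from the inducing datum $\sigma$. Together with Proposition~\ref{propkutcond} this gives $c(\tau)$ in terms of $c(\sigma)$, $\delta$ and $\varepsilon$, and one must verify that it equals $c^\varepsilon_{\min}(\pi)$ as computed in Theorem~\ref{thmsc}. The crucial input for this comparison is the genericity constraint of Corollary~\ref{cor:whittakersupercuspidal}: when $\sigma$ has defect $0$ (and $\pi$ is not an odd Weil representation), $\pi$ is $\psi^\varepsilon$-generic only for $\varepsilon=\delta$, so $c(\sigma)+\varepsilon+\delta$ has the same parity as $c(\sigma)$ and Theorem~\ref{thmsc}(1)(I) yields $c^\varepsilon_{\min}(\pi)=2c(\sigma)$ or $2c(\sigma)+1$ according as $c(\sigma)$ is even or odd; when $\sigma$ has defect $1$ (so $\delta=1$), Theorem~\ref{thmsc}(2) gives $c^\varepsilon_{\min}(\pi)=2c(\sigma)-1$ for either $\varepsilon$. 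Matching these against the formula for $c(\tau)$ completes the argument.

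I expect the main obstacle to be exactly this final matching: extracting from Manderscheid's correspondence the precise level $l$ and type $\Gamma$ of $\theta_\psi(\pi)$ in every case, and confirming through Proposition~\ref{propkutcond} that they reproduce $c^\varepsilon_{\min}(\pi)$. The delicate feature is that $c^\varepsilon_{\min}(\pi)$ genuinely depends on $\varepsilon$ through the parity of $c(\sigma)+\varepsilon+\delta$, whereas the conductor of the theta lift does not; the two are reconciled by the observation that $\pi$ is $\psi^\varepsilon$-generic---so that $\theta_{\psi^\varepsilon}(\pi)$ even lands in $\Irr(\PGL_2(F))$---only for the value of $\varepsilon$ that makes the two quantities agree.
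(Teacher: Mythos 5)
The opening steps mirror the paper: Waldspurger's dichotomy gives the ``if and only if,'' the lift $\theta_\psi(\pi)$ is supercuspidal (the paper simply asserts this), it has trivial central character, and Lemma~\ref{lemmin} gives minimality. After that you diverge. The paper routes entirely through the Lansky--Raghuram conductor for $\SL_2(F)$: it records that $c^\varepsilon_{\min}(\pi)=c(\cInd^G_{K^\delta}(\sigma))$ (the equation labelled \eqref{eqsc} in the proof), quotes Manderscheid's conditions (a)--(d) to produce a strongly cuspidal $\sigma'$ of the \emph{same} conductor and defect with $\theta_\psi(\pi)|_G \supset \pi'=\cInd^G_{K^{\delta'}}(\sigma')$, and then invokes \cite[Proposition~3.4.1]{lr} to identify the Casselman conductor of the minimal lift with the LR conductor of $\pi'$, which by condition (b) is the LR conductor of $\cInd^G_{K^\delta}(\sigma)$. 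Nowhere is the explicit level $l(\rho)$ or the type $\Gamma$ of the lift written down; the comparison happens one level of abstraction higher. You instead propose to read off $l$ and $\Gamma$ directly from Manderscheid's description and compare $2l$ or $2l+1$ against the formula for $c^\varepsilon_{\min}(\pi)$ in Theorem~\ref{thmsc}. That is a genuinely different (and more hands-on) route, and you are candid that you have not actually carried out the extraction of $l$ and $\Gamma$ --- you never cite \cite[Proposition~3.4.1]{lr}, which is precisely what lets the paper avoid that extraction.

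There is also a concrete difficulty with the matching as you have set it up, and you half-notice it. By Proposition~\ref{prop:compatibilityManKS} and Proposition~\ref{propkutcond}, in the defect-$0$ case the level of the lift is $l=c(\sigma')=c(\sigma)$ and $\Gamma=ZH$, so the $\GL_2$-conductor of $\theta_\psi(\pi)$ is always $2c(\sigma)$, independent of any parity. But with the genericity condition you cite (``$\pi$ is $\psi^\varepsilon$-generic only for $\varepsilon=\delta$'') you then compute $c^\varepsilon_{\min}(\pi)=2c(\sigma)$ or $2c(\sigma)+1$ according as $c(\sigma)$ is even or odd. These two do not agree when $c(\sigma)$ is odd, so the claimed reconciliation (``only for the value of $\varepsilon$ that makes the two quantities agree'') is not delivered by $\varepsilon=\delta$. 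Either the parity of $c(\sigma)$ has to enter the genericity criterion (so that the generic $\varepsilon$ always forces $c(\sigma)+\varepsilon+\delta$ even), or the direct Kutzko comparison has to be abandoned in favour of the paper's route through \cite[Proposition~3.4.1]{lr} and condition (b). As written, your sketch does not resolve this; it is the genuine gap.
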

\begin{proof}
    The first assertion is a result of Waldspurger \cite{wal1,wal2}.
    Suppose that $\pi$ is $\psi$-generic.
    Then $\cInd^G_{K^\delta}(\sigma)$ is also $\psi$-generic (Lemma \ref{lemgeneric}).
    By the Propositions in \cite[\S3.3]{lr}, we have
    \begin{equation}\label{eqsc}
        c^\varepsilon_{\min}(\pi)=c(\cInd^G_{K^\delta}(\sigma)),
    \end{equation}
    where the right hand side is the conductor for $G=\SL_2(F)$ defined by Lansky--Raghuram \cite{lr}.
    It follows from Manderscheid's result \cite[Theorems 2.7, 2.8, 3.5, 3.6, 3.7]{man3} that there exist $\delta'\in\{0,1\}$ and an irreducible strongly cuspidal representation $\sigma'$ of $K^{\delta'}$ such that
    \begin{itemize}
        \item[(a)] the central character of $\sigma'$ is trivial,
        \item[(b)] the conductor and the defect of $\sigma'$ are equal to those of $\sigma$, respectively,
        \item[(c)] the restriction of $\theta_\psi(\pi)$ to $\PSL_2(F)$ contains $\sigma'$, and
        \item[(d)] $\theta_\psi(\pi)$ is an irreducible component of $\Ind^{\PGL_2(F)}_{\PSL_2(F)}(\pi')$,
    \end{itemize}
    where $\pi'$ denotes the irreducible supercuspidal representation $\cInd^G_{K^{\delta'}}(\sigma')$.
    Here, note that $\pi'$ can be regarded as a representation of $\PSL_2(F)$ since the central character of $\pi'$ is trivial by (a).
    We regard $\theta_\psi(\pi)$ as a representation of $\GL_2(F)$.
    It is irreducible and supercuspidal.
    By Lemma \ref{lemmin}, we know that $\theta_\psi(\pi)$ is minimal.
    Moreover, the conditions (c) and (d) imply that the restriction of $\theta_\psi(\pi)$ to $G$ contains $\pi'$.
    Then by \cite[Proposition 3.4.1]{lr} and the condition (b), the conductor of $\theta_\psi(\pi)$ equals $c(\pi')$ and the right hand side of \eqref{eqsc}.
    This completes the proof.
\end{proof}

As a consequence, we have:
\begin{theorem}\label{thmtheta}
    Let $\pi$ be an irreducible $\psi$-generic genuine representation of $\widetilde{G}$
    In particular, $\theta_\psi(\pi)$ is a representation of $\PGL_2(F)$.
    Assume that $z_\psi(\pi)=+1$.
    Then the conductor of $\theta_\psi(\pi)$ is equal to $c^\varepsilon_1(\pi)$.
\end{theorem}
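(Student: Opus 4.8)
The plan is to run through the classification of irreducible genuine representations of $\widetilde{G}$ and to verify the equality case by case. By Proposition \ref{propreducibility} together with the description of supercuspidals in \S\ref{sectype}, a $\psi$-generic $\pi$ is one of: an irreducible principal series $\pi_\psi(\mu)$; an even Weil representation $\omega_{\psi,\chi}^+$; a Steinberg representation $\mathit{St}_{\psi,\chi}$ (necessarily with $\chi$ nontrivial, since $\mathit{St}_{\psi,\chi}$ is $\psi$-generic iff $\chi\neq1$); the odd Weil representation $\omega_\psi^-$ (the unique $\psi$-generic odd Weil representation); or an irreducible supercuspidal $\cInd^{\widetilde{G}}_{\widetilde{K}^\delta}(\sigma\bm{s}^\delta)$ that is not an odd Weil representation. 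In each case I would compute $c^\varepsilon_1(\pi)$ by specializing the relevant dimension theorem (Theorem \ref{thmps}, \ref{thmevenweil}, \ref{thmsteinberg}, or \ref{thmsc}) to the trivial character $\eta=1$, which is an admissible choice precisely because the hypothesis $z_\psi(\pi)=+1$ makes $1(-1)=z_\psi(\pi)$; and I would read off the conductor of $\theta_\psi(\pi)$ from Propositions \ref{propthetanonsc}, \ref{propthetaoddweil}, \ref{propthetasc}, which also guarantee that $\theta_\psi(\pi)$ lands in $\Irr(\PGL_2(F))$ when $\pi$ is $\psi$-generic.

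For the non-supercuspidal cases the comparison is short. If $\pi=\pi_\psi(\mu)$, its central sign with respect to $\psi$ is $\mu(-1)$, so $z_\psi(\pi)=+1$ forces $\mu(-1)=1$; feeding $\eta=1$ into Theorem \ref{thmps} and using $c(\mu)=c(\mu^{-1})$ gives $c^\varepsilon_1(\pi_\psi(\mu))=2c(\mu)$, which is exactly the conductor of $\theta_\psi(\pi_\psi(\mu))=\pi(\mu,\mu^{-1})$ by Proposition \ref{propthetanonsc}(1). If $\pi=\omega_{\psi,\chi}^+$, then $\psi$-genericity (see \S\ref{secrep}) forces $\chi$ to be trivial, $z_\psi(\pi)=+1$ holds automatically, Theorem \ref{thmevenweil} gives $c^\varepsilon_1(\omega_\psi^+)=0$, and $\theta_\psi(\omega_\psi^+)=\mathbf{1}\circ\det$ has conductor $0$. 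If $\pi=\mathit{St}_{\psi,\chi}$ with $\chi$ nontrivial quadratic and $z_\psi(\pi)=\chi(-1)=+1$: when $\chi$ is unramified one has $\chi|_{\mathcal{O}^\times}=1$, so Theorem \ref{thmsteinberg} gives $c^\varepsilon_1=1$, matching the conductor $1$ of $\theta_\psi(\mathit{St}_{\psi,\chi})=\mathit{St}_\chi$; when $\chi$ is ramified one has $\eta=1\neq\chi|_{\mathcal{O}^\times}$, so $c^\varepsilon_1=2c(\chi)=2$, matching the conductor $2$ of $\mathit{St}_\chi$ (Proposition \ref{propthetanonsc}(3)).

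The supercuspidal case is where the preceding machinery does the work. If $\pi\cong\omega_\psi^-$, then $z_\psi(\omega_\psi^-)=-1$ (the central element $-1_\psi$ acts on the odd part $\mathcal{S}^-(F)$ by $-1$), so this subcase is vacuous under the hypothesis. Otherwise, Proposition \ref{propthetasc} gives directly that the conductor of $\theta_\psi(\pi)$ equals $c^\varepsilon_{\min}(\pi)$, so it remains to show $c^\varepsilon_1(\pi)=c^\varepsilon_{\min}(\pi)$; this is the ``In particular'' clause of Theorem \ref{thmsc} applied with $\eta=1$, which is legitimate because $c(1)=0\le c(\sigma)-d(\sigma)$ and, again, $\eta=1$ is admissible by $z_\psi(\pi)=+1$. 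Combining the two equalities completes this case, and hence the theorem.

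There is no hard analytic step here; the argument is an assembly of results already established. The point that will require the most care is the bookkeeping of central signs and additive-character normalizations in the supercuspidal case — in particular, confirming that $\eta=1$ is the character to which Theorem \ref{thmsc} and Proposition \ref{propthetasc} should be fed, which is where the identification of $z_\psi(\pi)$ with the central-character data of $\sigma$, up to the twist by $\chi_{\psi^\varepsilon}\chi_{\psi^\delta}$ already appearing in the proof of Theorem \ref{thmsc}, must be invoked — together with correctly pinning down, in the even Weil and Steinberg cases, exactly which quadratic characters $\chi$ survive the combined constraints of $\psi$-genericity and $z_\psi(\pi)=+1$.
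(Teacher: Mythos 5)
Your proof is correct and follows essentially the same route as the paper's: the paper's own proof of Theorem~\ref{thmtheta} simply reads off $c^\varepsilon_1(\pi)$ from Theorems~\ref{thmps}, \ref{thmevenweil}, \ref{thmsteinberg}, \ref{thmsc} and the conductor of $\theta_\psi(\pi)$ from Propositions~\ref{propthetanonsc}, \ref{propthetaoddweil}, \ref{propthetasc}, leaving the case-by-case matching implicit, whereas you have carried out that matching explicitly (including the correct observations that $\omega_\psi^-$ is excluded by $z_\psi=-1$, that $\psi$-genericity pins down $\chi$ trivial in the even Weil case, and that the supercuspidal case reduces to $c^\varepsilon_1(\pi)=c^\varepsilon_{\min}(\pi)$ via the ``in particular'' clause of Theorem~\ref{thmsc}).
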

\begin{proof}
    The conductor of $\theta_\psi(\pi)$ is given in Propositions \ref{propthetanonsc}, \ref{propthetaoddweil} and \ref{propthetasc}.
    On the other hand, $c^\varepsilon_1(\pi)$ is given in Theorems \ref{thmps}, \ref{thmevenweil}, \ref{thmsteinberg} and \ref{thmsc}.
\end{proof}
\begin{remark}
    The odd Weil representation $\omega_\psi^-$ is an irreducible genuine $\psi$-generic representation of $\widetilde{G}$, but has the central sign $-1$ with respect to $\psi$.
    For any $\eta$, we have $c^\varepsilon_\eta(\omega_\psi^-)=2$ and $c^{1-\varepsilon}_\eta(\omega_\psi^-)=3$ if they exist.
    On the other hand, the conductor of $\theta_\psi(\omega_\psi^-)$ is 1.
\end{remark}
\begin{remark}
    Let $\mathit{JL}$ denotes the Jacquet--Langlands correspondence
    \begin{equation*}
        \mathit{JL} \colon \Irr(\mathit{PD}^\times) \hookrightarrow \Irr(\PGL_2(F)).
    \end{equation*}
    For any $\pi\in\Irr(\widetilde{G})$ such that $\theta_\psi(\pi)\in\Irr(\mathit{PD}^\times)$, put
    \begin{equation*}
        \mathit{Wd}_\psi(\pi) = \mathit{JL} \circ \theta_\psi (\pi) \in \Irr(\PGL_2(F)).
    \end{equation*}
    Although the local theta correspondence $\pi\mapsto \theta_\psi(\pi)$ preserves the conductors in the sense of Theorem \ref{thmtheta}, the correspondence $\mathit{Wd}_\psi$ does not preserve them.
    The counterexamples appear in the supercuspidal representations which are induced by strongly cuspidal representations of defect 0.
\end{remark}


\begin{thebibliography}{99}
    \bibitem{al}A. O. L. Atkin and J. Lehner, {\it Hecke operators on $\Gamma_0(m)$}, Math. Ann. {\bf 185} (1970), pp. 134-160.
    \bibitem{aky}H. Atobe, S. Kondo and S. Yasuda, {\it Local newforms for the general linear groups over a non-archimedean local field}, Forum Math. Pi {\bf 10} (2022), e24.
    \bibitem{bh}C. J. Bushnell and G. Henniart, {\it The Local Langlands Conjecture for $\GL(2)$}, Grundlehren der mathematischen Wissenschaften {\bf 335} Springer, (2006).
    \bibitem{bk}C. J. Bushnell and P. C. Kutzko, The admissible dual of GL(N) via compact open subgroups, Annals of Math. Studies, vol. 129, Princeton University Press, (1993).
    \bibitem{car}H. Carayol, {\it Repr\'{e}sentations cuspidales du groupe lin\'{e}aire}, Ann. Scient. \'{E}c. Norm. Sup. (4) {\bf 17} (1984), pp. 191-225.
    \bibitem{cas}W. Casselman, {\it On some results of Atkin and Lehner}, Math. Ann. {\bf 201} (1973), pp. 301-314.
    \bibitem{cheng}Y. Cheng, {\it Rankin--Selberg integrals for $\SO_{2n+1}\times\GL_r$ attached to newforms and oldforms}, Math. Z. {\bf 301} (2022), pp. 3973-4014.
    \bibitem{hi}K. Hiraga and T. Ikeda, {\it On the Kohnen plus space for Hilbert modular forms of half-integral weight I}, Compos. Math. {\bf 149} (2013), pp. 1963-2010.
    \bibitem{jpss}H. Jacquet, I. I. Piatetski-Shapiro and J. Shalika, {\it Conducteur des repr\'{e}sentations du groupe lin\'{e}aire}, Math. Ann. {\bf 256} (1981), no. 2, pp. 199-214.
    \bibitem{koh}W. Kohnen, {\it Modular forms of half integral weight on $\Gamma_0(4)$}, Math. Ann., {\bf248} (1980), pp. 249-266.
    \bibitem{kohnew}W. Kohnen, {\it Newforms of half-integral weight}, J. Reine Angew. Math. {\bf 333} (1982), pp. 32-72.
    \bibitem{kut1}P.C. Kutzko, {\it On the supercuspidal representations of $Gl_2$}, Amer. J. Math. {\bf 100} (1978), pp. 43-60.
    \bibitem{kut2}P. C. Kutzko, {\it On the supercuspidal representations of $Gl_2$, II}, Amer. J. Math. {\bf 100} (1978), 705-716.
    \bibitem{ks}P. C. Kutzko and P. J. Sally,, Jr., {\it All supercuspidal representations of $\SL_\ell$ over a $p$-adic field are induced}, In: Trombi, P.C. (eds) {\it Representation Theory of Reductive Groups}, Progress in Mathematics, vol. 40, Birkh\"{a}user Boston, (1983).
    \bibitem{lr}J. Lansky and A. Raghuram, {\it Conductors and newforms for $\SL(2)$}, Pacific J. Math. {\bf 231} (2007) (1), pp. 127-153.
    \bibitem{man1}D. Manderscheid, {\it On the supercuspidal representations of $\SL_2$ and its two-fold cover. I}, Math. Ann. {\bf 266} (1984), pp. 287-295.
    \bibitem{man3}D. Manderscheid, {\it Supercuspidal duality for the two-fold cover of $\SL_2$ and the split $\Or_3$}, Amer. J. Math.,{\bf 107} no. 6, (1985), pp. 1305-1324.
    \bibitem{rao}R. Ranga Rao, {\it On some explicitformulas in the theory of Weil representation}, Pacific Journal of Math. {\bf 157} No.2, (1993), pp. 335-371.
    \bibitem{rs-GSp4}B. Roberts and R. Schmidt, {\it Local newforms for $\GSp(4)$}, Lecture Notes in Mathematics, 1918. Springer, Berlin, (2007).
    \bibitem{rs-Mp2}B. Roberts and R. Schmidt, {\it On the number of local newforms in a metaplectic representation}, Arithmetic geometry and automorphic forms, pp. 505-530, Adv. Lect. Math. (ALM), 19, Int. Press, Somerville, MA, (2011).
    \bibitem{shi}G. Shimura, {\it On modular forms of half integral weight}, Ann. of Math. (2) {\bf 97} (1973), pp. 440-481.
    \bibitem{szp}D. Szpruch, {\it Computation of the local coefficients for principal series representations of the metaplectic double cover of $\SL_2(\mathbb{F})$}, J. Number Theory, {\bf 129} (2009) no. 9, pp. 2180-2213.
    \bibitem{tan}S. Tanaka, {\it  Construction and classification or irreducible representations of special linear group of the second order over a finite field}, Osaka J. Math. {\bf 4} (1967), pp. 65-84.
    \bibitem{tsai}P.-Y. Tsai, {\it On newforms for split special odd orthogonal groups}, PhD thesis, Harvard, (2013).
    \bibitem{tun}J.-B. Tunnell, {\it On the local Langlands conjecture for $GL(2)$}, Invent. Math., {\bf 46} (1978), no. 2, pp. 179-200.
    \bibitem{uy}M. Ueda and S. Yamana,{\it On newforms for Kohnen plus spaces}, Math. Z. {\bf 264} (2010), pp. 1-13.
    \bibitem{wal1}J.-L. Waldspurger, {\it Correspondance de Shimura}, J. Math. Pures et Appl. 59 (1980), pp. 1-133.
    \bibitem{wal2}J.-L. Waldspurger, {\it Correspondance de Shimura et quaternions}, Forum Math. 3 (1991), pp. 219-307.
    \bibitem{yam}Y. Yamamoto, {\it On Mackey decomposition for locally profinite groups}, arXiv: 2203. 14262, (2022).
\end{thebibliography}
\end{document}